\documentclass[a4paper, 12pt]{amsart}
\usepackage[utf8]{inputenc}
\usepackage[T2A]{fontenc}
\usepackage{amsmath,amssymb,amsthm}
\usepackage[a4paper,hmargin=2.5cm,vmargin=2.5cm]{geometry}
\usepackage[english]{babel}
\usepackage{physics}
\usepackage{xcolor}
\usepackage{graphicx} 
\usepackage[colorlinks=true, allcolors=blue]{hyperref}

\makeatletter
\renewcommand{\l@subsection}{\@tocline{2}{2pt}{1.5em}{2.3em}{}}
\makeatother

\makeatletter
\renewcommand{\l@subsubsection}{\@tocline{3}{2pt}{3em}{3em}{}}
\makeatother

\theoremstyle{plain}
\newtheorem{lemma}{Lemma}[section]
\newtheorem{proposition}{Proposition}[section]
\newtheorem{definition}{Definition}[section]
\newtheorem{theorem}{Theorem}[section]
\newtheorem{remark}{Remark}[section]

\newcommand\restr[2]{\ensuremath{\left.#1\right|_{#2}}}

\newcommand\dprod[1]{\left\langle#1\right\rangle}
\newcommand\spanning[1]{\mathrm{span}\left(#1\right)}

\title[Conformally critical metrics and optimal bounds for Dirac eigenvalues]{Conformally critical metrics and optimal bounds for Dirac eigenvalues on spin surfaces}

\author{Pavel Martynyuk}

\begin{document}

\begin{abstract}
We study the minimization problem for eigenvalues of the Dirac operator within a fixed conformal class on a closed spin Riemannian manifold. We establish a criterion for the existence of a minimizer for this variational problem, focusing specifically on the case of closed surfaces. Furthermore, we apply our results to derive isoperimetric inequalities for the Dirac operator on the two-dimensional sphere, providing a complete characterization of its conformal spectrum.
\end{abstract}

\maketitle

\tableofcontents

\section{Introduction.}

\subsection{Overview of Dirac eigenvalue optimization.}

Let $(M,g,\sigma)$ be a closed Riemannian spin manifold equipped with a spin structure $\sigma$.  
This structure yields the Hermitian spinor bundle $(\Sigma_g M,\dprod{\cdot,\cdot}_g)$ over $M$, whose smooth sections are called spinor fields.  
We denote by $\Gamma(\Sigma_g M)$ the space of smooth sections of this bundle.

The classical Dirac operator $D_g$ is an elliptic, first–order differential operator acting on spinor fields.  
Its spectrum is real, discrete, unbounded from both above and below, and each eigenvalue has finite multiplicity.

The study of eigenvalue estimates for $D_g$ is a central topic in spectral geometry. A fundamental lower bound was established by Friedrich~\cite{Friedrich}, who proved that for any eigenvalue $\lambda$ of the Dirac operator on a closed spin manifold of dimension $n \geqslant 2$, 
\[
\lambda^2 \geqslant \frac{n}{4(n-1)} \min_M S_g,
\]
where $S_g$ is the scalar curvature of the metric $g$. While numerous variants of this inequality exist (see~\cite{Ginoux2009DiracSpectrum} for a survey), a particularly relevant refinement for our purposes is due to Hijazi~\cite{Hijazi1986}. He showed that for $n \geqslant 3$, any eigenvalue $\lambda$ satisfies
\[
\lambda^2 \geqslant \frac{n}{n-2} \mu_1,
\]
where $\mu_1$ is the first eigenvalue of the conformal Laplacian $L_g \overset{\mathrm{def}}{=} \Delta_g + \frac{n-2}{4(n-1)}S_g$.

In dimension two, one also has a spinorial analogue of the famous Hersch’s inequality~\cite{Hersch1970}. More precisely, B\"ar in~\cite{Baer1992} proved that for any Riemannian metric $g$ on the two-dimensional sphere $\mathbb{S}^2$, any eigenvalue $\lambda$ of the corresponding Dirac operator satisfies
\[
\lambda^2 \mathrm{Vol}(\mathbb{S}^2,g) \geqslant 4\pi.
\]
B\"ar’s result does not generalize to other manifolds, as was shown in~\cite{GinouxGrosjean}. Moreover, it is conjectured that any manifold of dimension $n \geqslant 3$ admits a Riemannian metric $g$ such that $\ker D_g \neq \{\,0\,\}$. This conjecture was partially verified by Hitchin~\cite{HITCHIN19741} for $n \equiv 0,1,7 \pmod{8}$ and by B\"ar~\cite{Bar1996} for $n \equiv 3 \pmod{4}$.

While minimizing Dirac eigenvalues over all Riemannian metrics of fixed volume fails to yield a strictly positive lower bound, the situation differs significantly when the optimization is restricted to a conformal class. Given the well-behaved transformation properties of the Dirac operator under conformal changes~\cite{HITCHIN19741, Hijazi1986}, it is natural to study the lower bounds of its positive eigenvalues in this setting. We denote by $\lambda_k(D_g)$ the $k$-th positive eigenvalue of $D_g$, counted with multiplicity.
We consider the scale–invariant quantities
\[
\bar\lambda_k(D_g)
\overset{\mathrm{def}}{=}\lambda_k(D_g)\,\mathrm{Vol}(M,g)^{1/n}.
\]
A key result by Lott~\cite{pjm/1102700215} and Ammann~\cite{AMMANN200321} ensures that the infimum of these quantities over a conformal class, defined as
\[
\Lambda_k(M,[g],\sigma) \overset{\mathrm{def}}{=} \inf_{\tilde{g} \in [g]} \bar{\lambda}_k(D_{\tilde{g}})
\]   
is strictly positive. 

\begin{remark}
In the literature $\Lambda_1(M,[g],\sigma)$ is also called the B\"ar-Hijazi-Lott invariant of $(M,[g],\sigma)$.
\end{remark}

The question of whether this infimum is attained was addressed by Ammann~\cite{ammann2009smallest} for $k=1$.
He demonstrated that if
\[
\Lambda_1(M, [g], \sigma) < \Lambda_1(\mathbb{S}^n,g_{\mathrm{st}},\sigma_\mathrm{st}) = \frac{n}{2} \omega_n^{1/n},
\]
then there exists a spinor field $\varphi \in \mathcal{C}^{2,\alpha}(\Sigma M) \cap \mathcal{C}^{\infty}(\Sigma(M \setminus \varphi^{-1}(0)))$ on $(M, g, \sigma)$ such that
    \[
    D_{g} \varphi = \Lambda_1(M,[g],\sigma) \abs{\varphi}_g^{2/(n - 1)} \varphi, \quad \norm{\varphi}_{L^{\frac{2n}{n-1}}} = 1,
    \]
and the metric $g_{min} = \abs{\varphi}^{4/(n - 1)}_gg$, satisfies
\[
\bar{\lambda}_1(g_{min}) = \Lambda_{1}(M,[g],\sigma).
\]

This statement closely resembles the famous Yamabe problem. In particular, as it is shown in~\cite{Ammann2008}, we always have the inequality 
\[
\Lambda_1(M, [g], \sigma) \leqslant \Lambda_1(\mathbb{S}^n,g_{\mathrm{st}},\sigma_\mathrm{st}).
\]
However, it is worth mentioning that it is not known in general when we have a strict inequality. Partial results in this direction can be found for example in~\cite{AmmannHumbertMorel2006,SireXu2021,2021arXiv211203640S}.

Recently, in dimension $2$, Karpukhin, Métras, and Polterovich~\cite{KarpukhinMetrasPolterovich2024} established a link between minimizers of Dirac eigenvalues on Riemann surfaces and harmonic maps into complex projective spaces. This development parallels earlier classical results of Nadirashvili~\cite{Nadirashvili1996} and El Soufi–Ilias~\cite{ElSoufiIlias2008}, which connected maximal metrics for Laplace eigenvalues to harmonic maps and minimal immersions into spheres.

The geometric description of metrics maximizing Laplace eigenvalues, together with the characterization of maximal metrics for Steklov eigenvalues by Fraser and Schoen~\cite{Fraser2013MinimalSA}, initiated the construction of maximizers of the first Laplace eigenvalue~\cite{Nadirashvili1996, PetGAFA, NadirashviliSire2015, Pet18IMRN, KarpukhinNadirashviliPenskoiPolterovichSinDG,  KKMS2024, Pet2024geometricspectraloptimization, KPS2025, Pet2025LMS, Vinokurov_2025} and of the first Steklov eigenvalue~\cite{FraserSchoeninventiones2016, PetJDG, KarpukhinSternJEMS, KKMS2024, Pet2025LMS, Vinokurov_2025}.  

Motivated by the characterization obtained in~\cite{KarpukhinMetrasPolterovich2024}, we seek criteria for the existence of minimizers of $\Lambda_k(M,[g],\sigma)$ for any two-dimensional spin Riemannian manifold $(M,g,\sigma)$. We first prove the following result:

\begin{theorem}\label{main theorem for surfaces}
Let $(M,g,\sigma)$ be a closed spin Riemannian manifold of dimension $2$. Let $k$ be a strictly positive integer and assume that 
\[
\Lambda_k(M,[g],\sigma) < \inf_{l_0 + \ldots + l_r = k}
\left(\Lambda_{l_0}(M,[g],\sigma)^2 + \Lambda_{l_1}(\mathbb{S}^2,[g_\mathrm{st}],\sigma_\mathrm{st})^2 + \ldots + \Lambda_{l_r}(\mathbb{S}^2,[g_\mathrm{st}],\sigma_\mathrm{st})^2 \right)^\frac{1}{2}, 
\]
where 
\begin{itemize}
    \item either $l_0 = 0$ and $\Lambda_{l_0}(M,[g],\sigma) = 0$ or $\Lambda_{l_0}(M,[g],\sigma)$ is attained and $l_0 < k$,
    \item for all $i >0$, $l_i > 0$ and $\Lambda_{l_i}(\mathbb{S}^2,[g_\mathrm{st}],\sigma_\mathrm{st})$ is attained.
\end{itemize}
Then there exist smooth spinor fields $\varphi_1,\ldots,\varphi_d$ on $(M, g, \sigma)$ such that
\begin{itemize}
    \item $\beta \overset{\mathrm{def}}{=} \sum\limits_{i = 1}^d \abs{\varphi_i}_g^2$ satisfies $\norm{\beta}_{L^2(M)} = 1$
    \item $D_{g} \varphi_i = \Lambda_k(M,[g],\sigma) \beta \varphi_i$ for all $i \in \{\,1,\ldots,d\,\}$.
\end{itemize}
The metric $g_\mathrm{min} =\beta^2 g$ satisfies
    \[
    \bar{\lambda}_k(g_{min}) = \Lambda_{k}(M,[g],\sigma).
    \]
Moreover, the set of degeneration of $g_\mathrm{min}$, denoted $\mathrm{Sing}(g_{min})$, is finite. Furthermore,
    \[
    \# \mathrm{Sing}(g_{min}) \leqslant \mathrm{genus}(M) - 1 + \frac{k}{2}.
    \]
\end{theorem}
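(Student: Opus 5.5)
We would prove the theorem variationally, in the spirit of Petrides' approach to conformal eigenvalue optimization, adapted to the Dirac operator. In dimension $2$ the conformal covariance of $D_g$ reduces the problem to a weighted eigenvalue problem. For $\tilde g = \beta^2 g$ the eigenvalues of $D_{\tilde g}$ are the critical values of $\int\langle D_g\psi,\psi\rangle / \int \beta\abs{\psi}^2$, and $\mathrm{Vol}(M,\tilde g)^{1/2} = \norm{\beta}_{L^2}$. So $\Lambda_k$ is the infimum of $\lambda_k(\beta)\norm{\beta}_{L^2}$ over nonnegative weights $\beta$, where $\lambda_k(\beta)$ is the $k$-th positive eigenvalue of the pencil $D_g\psi = \lambda\beta\psi$. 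We extend this functional to measurable weights $\beta\in L^2$ (in fact to measures, for the limiting analysis). We choose a minimizing sequence $\beta_m$, regularized so that each is smooth and positive. Then, using Ekeland's variational principle, we replace it by a sequence of almost-critical weights. The first variation of $\lambda_k$ with respect to $\beta$, computed via the min–max characterization and the Hahn–Banach/convex-hull argument, yields for each $m$ eigenspinors $\varphi_1^m,\dots,\varphi_{d}^m$ with $\lambda_k(\beta_m)$ such that $\sum_i\abs{\varphi_i^m}^2$ is close in $L^2$ to $\beta_m/\norm{\beta_m}_{L^2}$. Since $\dim\Sigma M\otimes\mathbb{C}^d$ and the multiplicity are bounded, $d$ can be fixed.

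Next we analyse the limit. The spinors $\Phi_m=(\varphi_i^m)$ satisfy approximately $D_g\Phi_m = \Lambda_k\abs{\Phi_m}^2\Phi_m$, normalized by $\norm{\abs{\Phi_m}^2}_{L^2}=1$. This is the critical (conformally invariant) nonlinear Dirac equation in dimension $2$, with energy $\int\abs{\Phi}^4$. We follow concentration–compactness. First we identify the bubbling points $p$, where $\abs{\Phi_m}^4dv_g$ concentrates a fixed quantum $\varepsilon_0>0$; an $\varepsilon$-regularity lemma holds for the Dirac equation via $W^{1,p}$ estimates for $D_g$. Away from these finitely many points $\Phi_m$ converges in $C^{1,\alpha}_{loc}$ to a solution $\Phi$ on $M$. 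At each bubbling point we rescale and use the conformal invariance to obtain a solution on $\mathbb{R}^2\cup\{\infty\}=\mathbb{S}^2$, using the removable singularity theorem for finite-energy solutions. This is exactly a critical weight for some $\Lambda_{l_i}(\mathbb{S}^2)$, or at least gives eigenvalue index $l_i$.

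The key step, and the main obstacle, is the \emph{energy/index identity}. We need to show that $\Lambda_k^2=\int_M\abs{\Phi}^4\cdot\Lambda_k^2+\sum_i(\ldots)$ and that the eigenvalue count splits as $k=l_0+l_1+\dots+l_r$, with each piece bounded below by $\Lambda_{l_0}(M)^2$ or $\Lambda_{l_i}(\mathbb{S}^2)^2$. This would give $\Lambda_k^2\geq \Lambda_{l_0}^2+\sum\Lambda_{l_i}(\mathbb{S}^2)^2$ whenever any bubble forms or mass is lost, contradicting the hypothesis. The squares appear because the volumes (the $L^2$ norms of the weights) add in squares while $\lambda\norm{\beta}$ is the product. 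Concretely, we would construct test spinor spaces by gluing eigenspinors of the limiting pieces with cutoffs (logarithmic cutoffs, which cost nothing in dimension $2$). The neck regions must be shown to carry no energy, via a three-annulus or Pohozaev-type argument for the Dirac equation. Once no bubbling occurs and $\norm{\abs{\Phi}^2}_{L^2}=1$, the limit $\beta=\abs{\Phi}^2$ is a minimizer. Elliptic regularity gives smoothness of the $\varphi_i$.

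Finally, $\mathrm{Sing}(g_{min})$ is the common zero set of the $\varphi_i$. In dimension $2$, solutions of Dirac-type equations behave like holomorphic sections: with respect to the splitting $\Sigma M=\Sigma^+\oplus\Sigma^-$, the components are, after a Carleman/Bers-type similarity argument, holomorphic sections of square roots of $K$ twisted appropriately. Hence zeros are isolated with positive multiplicities. We bound their number by $\deg$ of the relevant line bundle, i.e. $\mathrm{genus}-1$, plus a contribution controlled by the nodal count. The $k/2$ term would come from a Courant-type argument: zeros of eigenspinors of the $k$-th eigenvalue force additional independent test spinors. We expect this counting to require a careful degree computation, in the spirit of Karpukhin–Métras–Polterovich's harmonic map to $\mathbb{CP}^{d-1}$ interpretation.
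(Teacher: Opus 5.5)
Your proposal leaves two genuine gaps. The existence argument stops at the step you yourself call the main obstacle, and the nodal-set bound rests on a mechanism that would not work.

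\textbf{Existence.} Your route (Ekeland at the critical exponent, then a full bubble tree with an energy and index identity) differs from the paper's, and the step you flag is not routine. The volumes of the limit pieces add up to at most $1$ for free. The hard part is the index count, i.e.\ showing that the $Q(\beta_m,\cdot)$-mass of all $k$ eigenspinors is recovered by the pieces. That involves spinors your system $\Phi_m$ does not control: those for $\lambda_1(\beta_m),\dots,\lambda_{l-1}(\beta_m)$, those with $d_i\to 0$, and the $\ker D_g$-components, which may blow up when $\beta_m\rightharpoonup 0$. It also needs a neck analysis you have not supplied.

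The paper avoids all of this. It takes exact minimizers $\beta_p$ of the subcritical problems ($p\downarrow 2$), which satisfy $\beta_p^{p-1}=\sum_i d_{i,p}\abs{\varphi_{i,p}}_g^2$. Using the sharp Sobolev-type inequality of the appendix, it proves strong local convergence off a finite concentration set $A$. It then diagonalizes $B(\beta,\cdot,\cdot)$ in each eigenvalue cluster and takes the cut-off eigenspinors with zero weak limit, together with $\beta_p$ near $A$, as test spinors on disjoint balls of a single round sphere. This gives $\Lambda_{\omega_0}(M,[g],\sigma)^2+\Lambda_{k-\omega_0}(\mathbb{S}^2,[g_\mathrm{st}],\sigma_\mathrm{st})^2\leqslant\Lambda_k(M,[g],\sigma)^2$ with no quantization or neck analysis.

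Even granting your identity, an inequality $\Lambda_k^2\geqslant\Lambda_{l_0}^2+\sum_i\Lambda_{l_i}(\mathbb{S}^2)^2$ does not contradict the hypothesis, which only concerns decompositions whose pieces are attained. You still need the Aubin-type upper bound, obtained by gluing near-minimizers of $\Lambda_{l_0}(M)$ and $\Lambda_{l_i}(\mathbb{S}^2)$ rather than your limit pieces. That bound forces equality, and equality forces each limit piece to be a minimizer. The paper, which collects all concentration on one sphere, additionally splits non-attained sphere pieces by applying the same result on $\mathbb{S}^2$.

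\textbf{Nodal bound.} This is where your plan would fail. The components of a solution of $D_g\varphi=\lambda\beta\varphi$ are not holomorphic. The equation couples the chiral halves: a $\bar\partial$-type operator applied to $\varphi^+$ equals $\lambda\beta\varphi^-$. So no similarity principle applies to a single component, and $\varphi^+$ may vanish with negative local index. Hence the degree $\gamma-1$ of $K^{1/2}$ bounds nothing. Moreover, the set to be counted is the common zero set of all the $\varphi_i$, and the term $k/2$ does not come from any Courant-type argument.

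The paper's mechanism, following Ammann, is metric.
\begin{itemize}
\item Since $\beta$ is smooth, B\"ar's nodal theorem makes $\beta^{-1}(0)$ finite.
\item On $M\setminus\beta^{-1}(0)$ the metric $\tilde g=\beta^2g$ has unit area. By conformal covariance, $D_{\tilde g}\varphi_i=\Lambda_k\varphi_i$ with $\sum_i\abs{\varphi_i}^2_{\tilde g}\equiv 1$.
\item The Friedrich connection $\nabla_X+\frac{\Lambda_k}{2}X\cdot$ and the Schr\"odinger--Lichnerowicz formula then give $K_{\tilde g}\leqslant\Lambda_k^2$ pointwise.
\item Apply Gauss--Bonnet after removing small discs around the zeros; their boundary terms are $-2\pi(1+\mathrm{ord}_p\beta)$, with $\mathrm{ord}_p\beta\geqslant 2$.
\item Combine this with $\Lambda_k^2\leqslant 4\pi\lceil k/2\rceil$, which follows from the Aubin-type inequality and $\Lambda_1(\mathbb{S}^2)=\Lambda_2(\mathbb{S}^2)=2\sqrt{\pi}$.
\end{itemize}
These ingredients together produce the terms $\gamma-1$ and $\frac{k}{2}$.
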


\begin{remark}
We actually prove a slightly more general result. More precisely, we show that if $(M,g,\sigma)$ is locally conformally flat, then under similar assumptions we can find a generalized metric such that $\Lambda_k(M,[g],\sigma)$ is attained. However, when $\dim M > 2$, this metric would be just H\"older continuous.
\end{remark}

This theorem is a generalization of Ammann's result~\cite{ammann2009smallest} in dimension 2 to higher eigenvalues. As noticed before, the Dirac operator shares many features with the conformal Laplacian. For instance, Ammann's result is a version of Yamabe's problem when we replace the first eigenvalue of the conformal Laplacian by the first non-zero Dirac eigenvalue. As studied in \cite{AmmannHumbert2006,GurskyPerez2022,humbert2025extremisingeigenvaluesgjmsoperators} the Yamabe problem can be generalized by optimization of the other eigenvalues of the conformal Laplacian. We adopt the same approach for higher Dirac eigenvalues. The similarities between the eigenvalues of conformally covariant operators permitted us to adapt techniques developed by Humbert, Petrides, and Premoselli for GJMS operators in~\cite{humbert2025extremisingeigenvaluesgjmsoperators} to our setting.

The existence theorem provided above allows us to derive several qualitative properties of the conformal spectrum.

For any closed Riemannian spin manifold $(M,g,\sigma)$ the sequence $(\Lambda_k(M,[g],\sigma))_{k \geqslant 1}$ is monotone. Recalling that eigenvalues of the Dirac operator on closed Riemann surfaces have even multiplicity, it holds that $\Lambda_{2k-1}(M,[g],\sigma) = \Lambda_{2k}(M,[g],\sigma)$. 
Our existence theorem further implies a strict gap between $\Lambda_{2k}$ and $\Lambda_{2k + 2}$:

\begin{theorem}\label{theorem:strict mononicity}
Let $(M,g,\sigma)$ be a two-dimensional closed Riemannian spin manifold, then the sequence $(\Lambda_{2k}(M,[g],\sigma))_{k \geqslant 1}$ is strictly increasing.
\end{theorem}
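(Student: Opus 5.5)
We prove that $\Lambda_{2k} < \Lambda_{2k+2}$ for every $k \geqslant 1$ (writing $\Lambda_j$ for $\Lambda_j(M,[g],\sigma)$) by strong induction on $k$, using Theorem~\ref{main theorem for surfaces} together with a standard ``gluing of bubbles'' upper bound. That upper bound is the inequality
\[
\Lambda_{k}(M,[g],\sigma)^2 \leqslant \Lambda_{l_0}(M,[g],\sigma)^2 + \sum_{i\geqslant 1}\Lambda_{l_i}(\mathbb{S}^2,[g_\mathrm{st}],\sigma_\mathrm{st})^2, \qquad l_0+\dots+l_r=k,
\]
which should appear in the paper, or is at least implicit in the statement of Theorem~\ref{main theorem for surfaces}. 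It is obtained by concentrating conformal factors at small disks and transplanting almost-minimizing metrics of the sphere. In the special case $l_0 = k-2$, $l_1 = 2$, and using $\Lambda_2(\mathbb{S}^2)=\Lambda_1(\mathbb{S}^2)=2\sqrt{\pi}$ (B\"ar), this gives
\[
\Lambda_{2k}^2 \leqslant \Lambda_{2k-2}^2 + 4\pi \quad\text{and}\quad \Lambda_{2k+2}^2 \leqslant \Lambda_{2k}^2+4\pi.
\]

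\textbf{Key step.} The key step is to show that $\Lambda_{2k+2}$ is attained if $\Lambda_{2k}<\Lambda_{2k+2}$ fails in a controlled way. Suppose, for contradiction, that $\Lambda_{2k+2}=\Lambda_{2k}$ for some $k$, with $k$ minimal. We check the hypothesis of Theorem~\ref{main theorem for surfaces} for the index $2k+2$. Any admissible decomposition $l_0+\dots+l_r = 2k+2$ has either $r\geqslant 1$ or $l_0<2k+2$ with $r=0$; the latter is impossible, since the sum must be exactly $2k+2$. So $r\geqslant 1$ and the right-hand side is at least
\[
\bigl(\Lambda_{l_0}^2 + 4\pi\bigr)^{1/2} > \Lambda_{l_0},
\]
using $\Lambda_{l}(\mathbb{S}^2)\geqslant 2\sqrt{\pi}$ for $l\geqslant 1$. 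We need this to exceed $\Lambda_{2k+2}=\Lambda_{2k}$. By monotonicity, $\Lambda_{l_0}\geqslant$ something; but $l_0$ could be small, for instance $l_0=0$ with bubbles only. In that case we need $\sum_i \Lambda_{l_i}(\mathbb{S}^2)^2 > \Lambda_{2k}^2$. This should follow from a uniform bound $\Lambda_{2k}^2 < 4\pi k$ coming from iterating the gluing inequality together with strictness. That strictness is delicate, so I would instead organise the induction so that strict inequalities are propagated: prove simultaneously that $\Lambda_{2j}^2 < \Lambda_{2j-2}^2+4\pi$ whenever $\Lambda_{2j}$ is attained.

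\textbf{Contradiction.} Granting the hypothesis, Theorem~\ref{main theorem for surfaces} yields a minimizing metric $g_{\min}=\beta^2 g$ for $\Lambda_{2k+2}$. We then have $\lambda_{2k+2}(g_{\min}) = \lambda_{2k}(g_{\min})$ with normalized volume, so $\Lambda_{2k} \leqslant \bar\lambda_{2k}(g_{\min}) = \Lambda_{2k+2}$, with equality. Hence $g_{\min}$ also minimizes $\bar\lambda_{2k}$ and $\bar\lambda_{2k-1}$. The eigenvalue $\Lambda_{2k}$ then has multiplicity at least four at $g_{\min}$, in the sense that it is both the $(2k-1)$-st and the $(2k+2)$-nd eigenvalue. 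Now perturb the conformal factor, $\beta^2 \mapsto \beta^2(1+t f)$. Eigenvalue perturbation theory for the multiple eigenvalue gives derivatives equal to the eigenvalues of the quadratic form $Q_f(\psi) = -\tfrac12\int f\,\lambda \abs{\psi}^2 \, dv$ on the eigenspace $E$, corrected by the volume normalization. Criticality for $\lambda_{2k}$ (which sits low in the cluster) requires that for every $f$ with $\int f \beta^2=0$ the form $Q_f$ has a nonpositive eigenvalue. Criticality for $\lambda_{2k+2}$ (which sits at the top of the cluster) requires a nonnegative one; more precisely, the minimizer conditions on the lowest versus highest index in the cluster require opposite sign conditions. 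Combining these, via a Hahn--Banach/convexity argument as in Nadirashvili and El Soufi--Ilias, the minimality for the index $2k$ forces the existence of an orthonormal family with $\sum \abs{\varphi_i}^2=\beta$ spanning only part of the cluster. Choosing $f$ to decrease the bottom two eigenvalues while keeping volume fixed then lowers $\bar\lambda_{2k}$ below $\Lambda_{2k}$, a contradiction. Concretely, take $f$ proportional to $\abs{\psi}^2-c\,\beta$ for $\psi$ in a two-dimensional quaternionic sub-eigenspace; unique continuation prevents $\abs{\psi}^2$ from being proportional to $\beta$ on a cluster of dimension $\geqslant 4$.

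\textbf{Main obstacle.} I expect the main obstacle to be verifying the strict gap hypothesis of Theorem~\ref{main theorem for surfaces} at the relevant index, that is, excluding bubbling configurations. The multiplicity/perturbation contradiction is comparatively standard once a smooth minimizer exists.
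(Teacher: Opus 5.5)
Your argument has a genuine gap at its decisive step, the contradiction at a minimizer. If $\beta$ minimizes $\bar\lambda_{2k}$ and $\lambda_{2k}(\beta)=\lambda_{2k+2}(\beta)$, then $\lambda_{2k}$ sits at position $j=2k-i(2k)+1$ inside its cluster, at least two places below the top. To push $\bar\lambda_{2k}$ down, the first variation must be negative on a whole $j$-dimensional subspace of the cluster. In other words, you must lower all but at most the top two eigenvalues of the cluster.

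A perturbation built from a single $\psi$, such as your $f\propto\abs{\psi}^2-c\beta$, at best lowers the two eigenvalues carried by the quaternionic line of $\psi$. That is enough only when $i(2k)=2k-1$, which you cannot assume. Also, the existence of some $\psi$ in a cluster of dimension $\geqslant 4$ with $\abs{\psi}^2\not\propto\beta$ comes from a rank count, not from unique continuation. For a single spinor, $\abs{\psi}^2=\beta$ does occur, for instance for Ammann's $\Lambda_1$-minimizers.

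The missing idea is the pointwise rank argument. By Lemma~\ref{lemma:Euler-Lagrange for lambda_k}, for \emph{every} $j$-dimensional subspace $Y$ of the eigenspace there are $\psi_i\in Y$ and $d_i>0$ with $\beta=\sum d_i\abs{\psi_i}^2$. Since $\dim_{\mathbb{C}}\Sigma_xM=2$, the elements of $X=\spanning{\varphi_{i(2k)},\ldots,\varphi_{2k+2}}$ vanishing at $x$ form a subspace of codimension at most $2$ in $X$. This subspace therefore contains such a $Y$, so $\beta(x)=0$ for every $x$ (Lemmas~\ref{lemma:linear algebra} and~\ref{lemma:strict monotonicity}). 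The match between ``two places below the top of the cluster'' and the complex rank $2$ of $\Sigma M$ is exactly what closes the proof.

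Your ``key step'' is also not established, and the route you sketch is false. The bound $\Lambda_{2k}^2<4\pi k$ and the auxiliary claim ``$\Lambda_{2j}^2<\Lambda_{2j-2}^2+4\pi$ whenever $\Lambda_{2j}$ is attained'' both already fail for $M=\mathbb{S}^2$ and $j=k=1$: there $\Lambda_2(\mathbb{S}^2)^2=4\pi$, attained by the round metric.

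The paper never verifies the strict-gap hypothesis of Theorem~\ref{main theorem for surfaces}. Instead it splits into two cases.
\begin{itemize}
\item If $\Lambda_{2k+2}$ is attained at $\beta$, then $\beta$ also realizes $\Lambda_{2k}$ with $\lambda_{2k}(\beta)=\lambda_{2k+2}(\beta)$. This is excluded by Lemma~\ref{lemma:strict monotonicity}.
\item If $\Lambda_{2k+2}$ is not attained, Theorems~\ref{theorem:main existence theorem} and~\ref{theorem:Aubin-type} give $\Lambda_{2k+2}^2=\Lambda_{l_0}(M)^2+\sum_{i\geqslant 1}\Lambda_{l_i}(\mathbb{S}^2)^2$ with every piece attained. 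Apply Theorem~\ref{theorem:Aubin-type} to $\Lambda_{2k}=\Lambda_{2k+2}$ after removing two units, either by deleting two bubbles with $l_i=1$ or by replacing $l_r\geqslant 2$ with $l_r-2$. This either gives a strict inequality, or forces $\Lambda_{l_r-2}(\mathbb{S}^2)=\Lambda_{l_r}(\mathbb{S}^2)$ with the latter attained, which again contradicts Lemma~\ref{lemma:strict monotonicity}, now on the sphere.
\end{itemize}
No induction is needed. The same device of removing two units and using that each piece is attained is also what would verify your hypothesis at index $2k+2$, if you wanted to keep your route.
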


Theorem \ref{theorem:strict mononicity} allows the computation of the conformal spectrum of the Dirac operator on $\mathbb{S}^2$. 

\begin{theorem}\label{theorem:spectrum of S^2}
$\Lambda_{2k}(\mathbb{S}^2,[g_\mathrm{st}],\sigma_\mathrm{st}) = 2\sqrt{k\pi}$ for all $k \in \mathbb{Z}_{>0}$, where $2\sqrt{k\pi}$ is the $2k$-th normalized eigenvalue of the Dirac operator on the disjoint union of $k$ unit spheres.
\end{theorem}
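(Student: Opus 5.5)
We prove the identity $\Lambda_{2k}(\mathbb{S}^2,[g_\mathrm{st}],\sigma_\mathrm{st}) = 2\sqrt{k\pi}$ by induction on $k$, proving the lower bound $\Lambda_{2k}\geqslant 2\sqrt{k\pi}$ by contradiction and the matching upper bound by a bubbling construction. We write $\Lambda_j$ for $\Lambda_j(\mathbb{S}^2,[g_\mathrm{st}],\sigma_\mathrm{st})$. The base case is B\"ar's inequality together with the round metric: $\Lambda_1=\Lambda_2=2\sqrt{\pi}$, since the round unit sphere has $\lambda_1=\lambda_2=1$ and area $4\pi$. This value is attained by $g_\mathrm{st}$.

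For the upper bound we show $\Lambda_{2k}\leqslant 2\sqrt{k\pi}$. Take a metric in $[g_\mathrm{st}]$ that degenerates into $k$ round spheres of equal area joined by very thin necks. Concretely, we glue $k$ conformal bubbles and use conformal invariance, arguing as in the standard $\Lambda_k(M)\leqslant(\Lambda_{l_0}(M)^2+\sum\Lambda_{l_i}(\mathbb{S}^2)^2)^{1/2}$ bound implicit in the hypothesis of Theorem~\ref{main theorem for surfaces}. We use cut-off test spinors built from Killing spinors on each bubble. The normalized quantity $\bar\lambda^2=\lambda^2\,\mathrm{Vol}$ for a disjoint union adds in the appropriate way, so the $2k$-th normalized eigenvalue of $k$ disjoint unit spheres equals $(1^2\cdot 4\pi k)^{1/2}=2\sqrt{k\pi}$. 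The min-max characterization of $\lambda_{2k}$, applied to a $2k$-dimensional space of test spinors with small error coming from the necks, then gives the bound. More generally the same gluing yields the subadditivity $\Lambda_{2k}^2\leqslant\Lambda_{2j}^2+\Lambda_{2(k-j)}^2$.

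For the lower bound, assume by induction that $\Lambda_{2j}=2\sqrt{j\pi}$ for all $j<k$, and suppose $\Lambda_{2k}<2\sqrt{k\pi}$. We check the hypothesis of Theorem~\ref{main theorem for surfaces} for $M=\mathbb{S}^2$ and index $2k$. The decompositions $l_0+\dots+l_r=2k$ only need to be taken over even pieces, since odd indices give the same values as the next even one. Every piece is attained by induction, because Theorem~\ref{main theorem for surfaces} applies to the smaller cases. Each admissible sum of squares is then at least $4\pi k$: either $l_0<2k$ and the sum equals $\sum 4\pi(l_i/2)=4\pi k$ by the induction hypothesis, or all pieces lie on the bubble spheres. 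The only exception is the decomposition consisting of the single term $l_1=2k$; there we need that $\Lambda_{2k}$ is attained, which we cannot assume. Note, however, that the strict inequality in the hypothesis involves the infimum over decompositions, and the trivial decomposition $r=1$, $l_1=2k$ gives $\Lambda_{2k}$ itself, which is not strictly smaller. We therefore exclude it, or argue that the theorem's proof only uses decompositions with at least two pieces or with $l_0>0$.

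Granting this, Theorem~\ref{main theorem for surfaces} gives a minimizer for $\Lambda_{2k}$. Theorem~\ref{theorem:strict mononicity} gives $\Lambda_{2k}>\Lambda_{2k-2}=2\sqrt{(k-1)\pi}$, but that alone is not the contradiction. Instead we must refute $\Lambda_{2k}<2\sqrt{k\pi}$ directly, and the plan is to use the equality $\Lambda_{2k}^2=\Lambda_{2k}^2$ via a test: a minimizer $g_\mathrm{min}=\beta^2 g_\mathrm{st}$ has $\lambda_{2k}$ eigenspinors solving $D\varphi_i=\Lambda_{2k}\beta\varphi_i$. This is exactly the setting of \cite{KarpukhinMetrasPolterovich2024}, where such a system corresponds to a harmonic map $\mathbb{S}^2\to\mathbb{CP}^{d-1}$ whose energy is expressed through $\Lambda_{2k}^2$. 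Harmonic maps from $\mathbb{S}^2$ into $\mathbb{CP}^{n}$ are classified (Eells--Wood), and their energies are quantized in multiples of $4\pi$ by degree considerations. Hence $\Lambda_{2k}^2\in 4\pi\mathbb{Z}_{>0}$, so strict monotonicity forces $\Lambda_{2k}^2\geqslant 4\pi k$, a contradiction.

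The main obstacle is this final step: linking the minimizer to the harmonic-map picture precisely enough to get energy quantization. Handling the trivial decomposition in the hypothesis check is a secondary technical point.
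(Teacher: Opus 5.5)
Your proposal is correct and is essentially the paper's proof. The ingredients and their order match: the Aubin-type gluing bound $\Lambda_{2k}^2\leqslant 4\pi k$, induction, the existence theorem to force attainment (the paper phrases this as an explicit attained/not-attained case split, which is also the clean way to dispose of the trivial decomposition), then the Karpukhin--M\'etras--Polterovich correspondence with a branched minimal sphere in complex projective space of area in $4\pi\mathbb{Z}$ (the paper cites Bolton--Jensen--Rigoli--Woodward where you invoke Eells--Wood), and finally strict monotonicity to conclude $\Lambda_{2k}^2=4\pi k$.

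One side claim is false but not needed. For $j\geqslant 2$ the main existence theorem does not give attainment of $\Lambda_{2j}$, since its strict hypothesis fails exactly at $2\sqrt{j\pi}$ ($j$ copies of the attained $\Lambda_2$). Checking the hypothesis for $\Lambda_{2k}$ only requires every admissible sum of squares to exceed $\Lambda_{2k}^2$, and this follows from the induction hypothesis and $\Lambda_{2m-1}=\Lambda_{2m}$ alone.
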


\begin{remark}
Since the spectrum of the Dirac operator on a closed surface is symmetric, i.e., $\lambda_{-k} = -\lambda_k$ for any $k > 0$, it follows that $\Lambda_{-2k}(\mathbb{S}^2,[g_\mathrm{st}],\sigma_\mathrm{st}) = -2\sqrt{k\pi}$ for all $k \in \mathbb{Z}_{>0}$.
\end{remark}

This aligns with previous results established in~\cite{KNPP} for the conformal spectrum of the Laplacian on the unit $2$-dimensional sphere, in~\cite{Karpukhin2021} for the conformal spectrum of the Laplacian on the real projective plane and in~\cite{hersch1974isoperimetric} regarding the conformal Steklov spectrum on the unit disk. Moreover, this result can be seen as the generalization of  B\"ar's inequality. By contrast, the computation of the $k$-th conformal invariant of the round sphere associated to eigenvalues of the GJMS operators is still unknown, while in \cite{humbert2025extremisingeigenvaluesgjmsoperators}, the authors proved that such a result cannot occur in high dimension ($n\geq 7$ for eigenvalues of the conformal Laplacian). A result comparable to Theorem \ref{theorem:spectrum of S^2} for the Dirac operator remains widely open in higher dimensions.

\subsection{Organization of the article.}
In Sections~\ref{section: auxiliary part 1} and~\ref{section:auxiliary part 2} we establish several preliminary results concerning sequences of generalized eigenspinors. Section~\ref{section: auxiliary part 1} is devoted to $H^1$ setting, while Section~\ref{section:auxiliary part 2} treats sequences in $W^{1,\frac{2n}{n+1}}$. 

In Sections~\ref{section:existence easy case} and~\ref{section:existence general case} we prove the existence of eigenspinors for generalized metrics $\beta^2 g$, where $\beta \in L^n(M)$. Section~\ref{section:continuity} is concerned with functional $\lambda_k$ for which we prove lower semi-continuity and, in fact, continuity along any direction. 

Section~\ref{section:variational theory} develops the variational theory of generalized eigenvalues: we compute right derivatives of $\lambda_k$ and prove the existence of solutions to subcritical problems. 

Starting from Section~\ref{section:Aubin} we restrict our attention to locally conformally flat manifolds. In Section~\ref{section:Aubin} we prove the upper bound for $\Lambda_k(M,[g],\sigma)$, while in Section~\ref{section:Minimization} we show that, if $\Lambda_k$ is strictly smaller than a certain threshold, then it is attained. 

Section~\ref{section:applications} is devoted to applications of the techniques developed in this work; in particular, we prove Theorems~\ref{theorem:strict mononicity} and~\ref{theorem:spectrum of S^2}. Finally, in Appendix~\ref{appendix}, we establish a Sobolev-type inequality for Dirac operators on closed Riemannian spin manifolds, which is of independent interest.

\subsection{Acknowledgements} 
The author is deeply grateful to Romain Petrides for his invaluable guidance and for many helpful discussions we had during the preparation of the manuscript. His feedback on every version of the draft, from the first to the last, was essential to the final result.

\subsection{Notations.}
We use the letters $\varphi$, $\phi$, and $\psi$ to denote spinor fields, while conformal factors are typically denoted by $\beta$. We denote the volume form associated to $g$ by $dv_g$. 

We usually omit the bundle $\Sigma_g M$ when indicating the regularity of spinor fields; for instance, we simply write $\varphi \in L^p$ to mean that $\varphi$ is an $L^p$-spinor field. Similarly, we write $W^{1,p}$ for the space of $W^{1,p}$-spinor fields. When $p = 2$, we write $H^1$ instead of $W^{1,2}$.  In contrast, when referring to functions on $M$, we explicitly include the manifold in the notation: $L^p$-functions on $M$ are denoted by $L^p(M)$, Sobolev functions on $M$ by $W^{1,p}(M)$, etc.

We say that $f \in L^p_{\geqslant c}(M)$ if $f \in L^p(M)$ and $f \geqslant c$ almost everywhere on $M$.

If $(X,\norm{\cdot})$ is a Banach space, then we use the notation \[
a_n \overset{X}{\to} a \quad (\text{resp. } a_n \overset{X}{\rightharpoonup} a)
\]
to say that the sequence $(a_n)_n$ converges strongly (resp. weakly) to $a$.

We now recall the fundamental Sobolev embedding theorems for the Sobolev spaces $W^{1,p}$ of spinor fields. Ideas of the proof can be found in~\cite{Ammann2002Variational}. These results will be used frequently throughout the text.
\begin{itemize}
    \item[(1)] If $1 \leq p < n$, then for all $q \in \left[ p, \frac{np}{n - p} \right]$, there exists a continuous embedding
    \[
    W^{1,p} \hookrightarrow L^q.
    \]

    \item[(2)] If $p = n$, then
    \[
    W^{1,n} \hookrightarrow L^q, \quad \text{for all } q < \infty.
    \]

    \item[(3)] Suppose that $\alpha \in (0,1)$ and $\frac{1}{p} \leqslant \frac{1  - \alpha}{n}$, then
    \[
    W^{1,p} \hookrightarrow \mathcal{C}^{0,\alpha}.
    \]

    \item[(4)] (Rellich–Kondrashov theorem) For $1 \leq p < n$, the embedding
    \[
    W^{1,p} \hookrightarrow L^q
    \]
    is compact for all $q \in \left[ 1, \frac{np}{n - p} \right)$.
\end{itemize}

\subsection{Setting for the problem.}
In this article we study the following problem: Let $(M,g,\sigma)$ be an $n$-dimensional closed spin Riemannian manifold. The conformal class of $g$ is the set
\[
[g] \overset{\mathrm{def}}{=} \{\,f^2 g \mid f \in \mathcal{C}^{\infty}(M) \quad \text{and} \quad f > 0\,\}.
\]
Let $k > 0$ be a positive integer. We denote by $\lambda_k(f)$ the $k$-th positive eigenvalue of $D_{f^2g}$ and by $\lambda_{-k}(f)$ the $k$-th negative eigenvalue of $D_{f^2g}$. Define 
\[
\Lambda_{k}(M,[g],\sigma) \overset{\mathrm{def}}{=}
\begin{cases}
\inf\limits_{f^2g \in [g]} \bar{\lambda}_k(f) \quad &\text{if } k >0,\\
\sup\limits_{f^2g \in [g]} \bar{\lambda}_k(f) \quad &\text{if } k < 0,
\end{cases}
\]
where $\bar{\lambda}_k(f) \overset{\mathrm{def}}{=} \lambda_k(f)\mathrm{Vol}(M,f^2g)^\frac{1}{n} = \lambda_k(f)\norm{f}_{L^n(M)}$.

Is $\Lambda_k(M,[g],\sigma)$ attained by some smooth Riemannian metric? In general, the answer to this question is negative as can be seen for example from Ammann's result~\cite{ammann2009smallest}. However, if we extend the class of admissible metrics, then we can prove the existence of a minimizer (resp. maximizer) for positive (resp. negative) $\Lambda_k(M,[g],\sigma)$ provided our manifold satisfies certain geometrical and analytical assumptions. Firstly, we adopt an important convention.

\begin{remark}
In order to simplify the notation, we focus exclusively on the case $k > 0$. All results remain true for negative eigenvalues because we can always replace $D_g$ by $-D_g$. Moreover, the most interesting case for us is when $\dim M = 2$, i.e., when $M$ is a surface. In this case the spectrum of $D_g$ is symmetric, i.e., $\lambda_k = -\lambda_k$, and that's why minimizers for $\Lambda_k$ are automatically maximizers for $-\Lambda_k$.
\end{remark}

Since the spectrum of the Dirac operator is unbounded both from below and from above, the minimax description of eigenvalues is quite complicated. Namely, for a fixed metric $g$ we have
\begin{equation}\label{minimax}
\lambda_k^{-1} = \sup_{E \in \mathcal{G}_k\left(H^1 \setminus \ker D_g\right)} \min_{\varphi \in E\setminus \{\,0\,\}}\frac{\int_M \dprod{D_g\varphi,\varphi}_gdv_g}{\int_M\abs{D_g\varphi}_g^2dv_g},    
\end{equation}
where $\mathcal{G}_k\left(H^1 \setminus \ker D_g\right)$ is the Grassmannian of all $k$-dimensional subspaces $E$ in the Sobolev space $H^1$ of spinor fields such that $E \cap \ker D_g = \{\,0\,\}$. The formula~\eqref{minimax} follows easily if one writes $\varphi$ as the sum of eigenspinors.

The Dirac operator is conformally covariant in the following sense:

\begin{lemma}[Conformal covariance of the Dirac operator~\cite{HITCHIN19741,Hijazi1986}]\label{lemma:conformal-Dirac-transf}
Let $(M,g,\sigma)$ be a closed Riemannian spin manifold. Take $f^2g \in [g]$
Then there exists a vector-bundle isometry
\[
T : \Sigma_gM \;\longrightarrow\; \Sigma_{f^2g}M 
\]
such that for every spinor $\varphi \in \Gamma(\Sigma_gM)$,
\[
D_{f^2g} \bigl(T(\varphi)\bigr)
     = T\bigl( f^{-\frac{n+1}{2}}\,D_{g}\bigl( f^{\frac{n-1}{2}}\varphi \bigr)\bigr).
\]
Equivalently, if one defines the modified identification
\[
\tilde{T}(\varphi) \overset{\mathrm{def}}{=} T\bigl( f^{-\frac{n-1}{2}} \varphi\bigr),
\]
then under this identification the Dirac operator transforms as
\[
D_{f^2g}(\varphi) = f^{-1} D_{g}(\varphi),
\]
and the fibrewise pointwise norm satisfies
\[
\abs{\varphi}_{f^2g} \;=\; f^{-\frac{n-1}{2}}\abs{\varphi}_{g}.
\]
\end{lemma}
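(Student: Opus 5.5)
The plan is to derive the formula from the classical Hitchin--Hijazi construction. We do not construct $T$ by hand but build it from the identification of spin frame bundles, and then compute $D_{f^2g}$ in a local orthonormal frame.

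Write $\tilde g = f^2 g$ and $f = e^{u}$. If $(e_1,\dots,e_n)$ is a local $g$-orthonormal frame, then $(\tilde e_i) = (f^{-1}e_i)$ is a local $\tilde g$-orthonormal frame. The map $e_i \mapsto f^{-1} e_i$ gives an $SO(n)$-equivariant isomorphism $P_{SO}(M,g)\to P_{SO}(M,\tilde g)$. It lifts uniquely to the spin principal bundles of the fixed spin structure $\sigma$, since both are double covers pulled back from the same topological spin structure on $GL^+$-frames. Taking associated bundles with the spinor representation yields a fibrewise unitary isomorphism $T:\Sigma_gM\to\Sigma_{\tilde g}M$ that intertwines Clifford multiplication: $T(X\cdot_g\varphi) = f^{-1}X\,\tilde\cdot\, T\varphi$.

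Next compare the Levi-Civita connections. Koszul's formula gives
\[
\tilde\nabla_XY = \nabla_XY + X(u)Y + Y(u)X - g(X,Y)\nabla u .
\]
The spinor connection is determined by the connection one-forms via $\nabla_X\varphi = X(\varphi) + \tfrac14\sum_{i,j} g(\nabla_X e_i,e_j)\,e_i\cdot e_j\cdot\varphi$. Substituting the relation above gives
\[
\tilde\nabla_X T\varphi = T\bigl(\nabla_X\varphi - \tfrac12 X\cdot\nabla u\cdot\varphi - \tfrac12 X(u)\varphi\bigr).
\]
We then contract with $\tilde e_i\,\tilde\cdot$ and sum, using $\sum_i e_i\cdot e_i\cdot\nabla u = -n\nabla u$. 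This yields
\[
D_{\tilde g}T\varphi = T\Bigl(f^{-1}\bigl(D_g\varphi + \tfrac{n-1}{2}\nabla u\cdot\varphi\bigr)\Bigr).
\]
Since $D_g(f^{\frac{n-1}{2}}\varphi) = f^{\frac{n-1}{2}}\bigl(D_g\varphi + \tfrac{n-1}{2}\nabla u\cdot\varphi\bigr)$, this is exactly
\[
D_{\tilde g}T\varphi = T\bigl(f^{-\frac{n+1}{2}}D_g(f^{\frac{n-1}{2}}\varphi)\bigr).
\]
The main obstacle is sign and convention bookkeeping in this connection computation. I would first carry it out in a frame with $\nabla e_i = 0$ at the point, which reduces the Clifford algebra to a handful of terms.

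For the equivalent form, set $\tilde T\varphi = T(f^{-\frac{n-1}{2}}\varphi)$. Then
\[
D_{\tilde g}\tilde T\varphi = T\bigl(f^{-\frac{n+1}{2}}D_g\varphi\bigr) = \tilde T\bigl(f^{-1}D_g\varphi\bigr),
\]
which is the statement $D_{\tilde g}\varphi = f^{-1}D_g\varphi$ under the identification. Since $T$ is an isometry, $\abs{\tilde T\varphi}_{\tilde g} = f^{-\frac{n-1}{2}}\abs{\varphi}_g$, which gives the norm formula.
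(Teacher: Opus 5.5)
Your argument is correct and is the classical Hitchin--Hijazi computation that the paper invokes by citation; the paper gives no proof of its own. The connection formula $\tilde\nabla_X T\varphi = T\bigl(\nabla_X\varphi - \tfrac12 X\cdot\nabla u\cdot\varphi - \tfrac12 X(u)\varphi\bigr)$, the contraction giving $f^{-1}\bigl(D_g\varphi+\tfrac{n-1}{2}\nabla u\cdot\varphi\bigr)$, and the passage to $\tilde T$ all check out; one harmless quibble is that the lift to spin bundles is unique only up to the deck involution, i.e. up to the sign of $T$.
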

\begin{remark}
It follows that $\dim_\mathbb{C} \ker D_g$ is a conformal invariant. Moreover, the functional
\[
\varphi \mapsto \int_M \dprod{D_g\varphi , \varphi}_g d\mathrm{vol}_g
\]
is conformally invariant. 
\end{remark}
Thus, it follows that 
\begin{equation}\label{minimax for conformal class}
\lambda_k^{-1}(f) = \sup_{E \in \mathcal{G}_k\left(H^1 \setminus \ker D_g\right)} \min_{\varphi \in E\setminus \{\,0\,\}}\frac{\int_M \dprod{D_g\varphi,\varphi}_gdv_g}{\int_M \frac{1}{f}\abs{D_g\varphi}_g^2dv_g},    
\end{equation}

For $\beta \in L^n(M)$ such that $\beta \geqslant 0$ define the functional
\[
\mathcal{F}(\beta)[\varphi] \overset{\mathrm{def}}{=} \frac{\int_M \dprod{D_g\varphi,\varphi}_gdv_g}{\int_M\frac{1}{\beta}\abs{D_g\varphi}_g^2dv_g},
\]
where $\varphi \in H^1$ (see Section~\ref{section:existence general case} for further details). We can now define generalized eigenvalues for non-smooth conformal factors.

\begin{definition}
For $\beta \in L^n(M)$ such that $\beta \geqslant 0$ define 
\[
\lambda_k^{-1}(\beta) = \sup_{E \in \mathcal{G}_k\left(H^1 \setminus \ker D_g\right)} \min_{\varphi \in E\setminus \{\,0\,\}} \mathcal{F}(\beta)[\varphi].
\] 
\end{definition}

Our goal is to provide conditions that guarantees the existence of a positive function $\beta \in L^n(M)$ that is actually $\mathcal{C}^{0,\alpha}(M)$ such that $\Lambda_k(M,[g],\sigma) = \lambda_k(\beta)\norm{\beta}_{L^n(M)}$, where $(M,g)$ is locally conformally flat. 

\section{Auxiliary results. Part I.}\label{section: auxiliary part 1}
In this section we collect several results that will be used later. 

\subsection{Compactness.}
\begin{lemma}\label{lemma:compact opetator from H^1 to L^2(gamma^2)}
If $\gamma_i \overset{L^n(M)}{\to} \gamma$ and $\varphi_i \overset{H^1}{\rightharpoonup}\varphi$, then
\[
\int_{M}\gamma_i^2\abs{\varphi_i - \varphi}_g^2dv_g \to 0.
\]
\end{lemma}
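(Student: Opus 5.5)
The plan is to split the integrand with the triangle inequality for $\gamma_i$: writing $\gamma_i = \gamma + (\gamma_i - \gamma)$, we have
\[
\int_M \gamma_i^2 \abs{\varphi_i - \varphi}_g^2 dv_g \leqslant 2\int_M \gamma^2 \abs{\varphi_i-\varphi}_g^2 dv_g + 2\int_M (\gamma_i-\gamma)^2\abs{\varphi_i-\varphi}_g^2dv_g,
\]
and to treat the two terms separately. For the second term I use Hölder with exponents $\frac{n}{2}$ and $\frac{n}{n-2}$ (for $n\geqslant 3$). This bounds it by $\norm{\gamma_i-\gamma}_{L^n(M)}^2 \norm{\varphi_i-\varphi}_{L^{2n/(n-2)}}^2$. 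Since weakly convergent sequences are bounded in $H^1$, the Sobolev embedding (1) with $p=2$ gives a uniform bound on the $L^{2n/(n-2)}$ norm, and the first factor tends to $0$.

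In dimension $n=2$, which is the main case of the paper, the critical exponent is infinite and $H^1\not\hookrightarrow L^\infty$. Here I instead apply Hölder with exponents $1$ and $\infty$ and want $\norm{\gamma_i-\gamma}_{L^2}^2\norm{\varphi_i-\varphi}_{L^\infty}^2$. That fails, so I will handle it differently.

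For the first term (and for the $n=2$ case) I use a truncation argument. Fix $\varepsilon>0$ and split $\gamma = \gamma\mathbf{1}_{\{\gamma\leqslant K\}} + \gamma\mathbf{1}_{\{\gamma> K\}}$. On $\{\gamma\leqslant K\}$ the contribution is at most $K^2\norm{\varphi_i-\varphi}_{L^2}^2$, which tends to $0$ by Rellich–Kondrashov (4). On $\{\gamma>K\}$, Hölder as above (for $n\geqslant3$) bounds it by $\norm{\gamma\mathbf{1}_{\{\gamma>K\}}}_{L^n}^2 \sup_i\norm{\varphi_i-\varphi}^2_{L^{2n/(n-2)}}$, which is less than $\varepsilon$ for $K$ large by dominated convergence.

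For $n=2$ I apply the same truncation directly to $\gamma_i$, and this is the main obstacle. Since $\gamma_i\to\gamma$ in $L^2$, the family $\{\gamma_i^2\}$ is uniformly integrable. I use the exponent pair $(1+\delta)'$ and $1+\delta$: by (2), $H^1\hookrightarrow L^q$ for every $q<\infty$, so $\abs{\varphi_i-\varphi}^2$ is bounded in every $L^{q}$. Hölder on the set $\{\gamma_i>K\}$ then cannot be applied with $\gamma_i^2$ only in $L^1$, so I will instead pass to a subsequence with $\gamma_i\to\gamma$ a.e. and use a Vitali-type argument. Concretely, the contribution from $\{\gamma_i>K\}$ is at most $\int \gamma_i^2 \mathbf{1}_{\{\gamma_i>K\}}\abs{\varphi_i-\varphi}^2$, and I need uniform integrability of the product.

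I expect this to require the two-dimensional borderline: if it fails with plain $L^2$ convergence, I would invoke an Orlicz/Trudinger-type bound ($H^1\hookrightarrow \exp L^2$) together with $\gamma_i^2\in L\log L$. I am least sure about this step. Since the problem here involves spinors with $D_g$, I might alternatively use the structure in the later sections, where the $\varphi_i$ arise as eigenspinors with better integrability. I would first check whether the paper only needs $n\geqslant3$ here, or an $L^{n+}$ hypothesis in dimension two.
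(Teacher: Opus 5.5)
Your argument for $n\geqslant 3$ is correct and is essentially the paper's proof. The paper writes $\gamma_i^2=(\gamma_i^2-\gamma^2)+\gamma^2$ and bounds the first piece by $\norm{\gamma_i^2-\gamma^2}_{L^{n/2}(M)}\norm{\varphi_i-\varphi}^2_{L^{2n/(n-2)}}$, where you use $\gamma_i^2\leqslant 2\gamma^2+2(\gamma_i-\gamma)^2$. It then truncates at a level $R$ exactly as you do: Rellich--Kondrashov on $\{\abs{\gamma}\leqslant R\}$, and H\"older against $L^{2n/(n-2)}$ on $\{\abs{\gamma}>R\}$. One small correction: truncate $\abs{\gamma}$ rather than $\gamma$, since no sign is assumed.

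For $n=2$, the step you could not close cannot be closed by any argument, because the statement is false in dimension two. The paper's proof also breaks there without comment: it uses a uniform bound on $\norm{\varphi_i-\varphi}_{L^{2n/(n-2)}}$, i.e.\ an $L^\infty$ bound, which $H^1$ does not give.

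Here is a counterexample. Fix $x_0\in M$, set $r=d_g(x_0,\cdot)$ and $s=\log\frac{1}{r}$. Let $\chi$ be a cut-off supported in $\{r<e^{-1}\}$ with $\chi=1$ on $\{s>s_1\}$, and let $\psi_0$ be a smooth spinor with $\abs{\psi_0}_g=1$ near $x_0$. Put
\[
\gamma=\chi\, r^{-1}s^{-3/4}\geqslant 0,\qquad \psi=\chi\, s^{1/3}\psi_0 .
\]
Since $dv_g$ is comparable to $r\,dr\,d\theta$ and $ds=-dr/r$, you get
\[
\int_M\gamma^2dv_g\lesssim\int_1^\infty s^{-3/2}ds<\infty,\qquad \int_M\abs{\nabla\psi}^2dv_g\lesssim 1+\int_1^\infty s^{-4/3}ds<\infty,
\]
whereas $\int_M\gamma^2\abs{\psi}_g^2dv_g\gtrsim\int_{s_1}^\infty s^{-5/6}ds=+\infty$.

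Take $\gamma_i=\gamma$, $\varphi=0$ and $\varphi_i=\psi/i$. Then $\varphi_i\to 0$ even strongly in $H^1$, yet every integral is $+\infty$. For finite-valued counterexamples, replace $s^{1/3}$ by $\min(s,N_i)^{1/3}$ in $\varphi_i$, with $N_i$ chosen so that $\int_M\gamma^2\abs{\varphi_i}_g^2dv_g\geqslant 1$.

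This also settles your Orlicz question. Here $\gamma^2\notin L\log L$, so the Trudinger route needs a hypothesis that is not available. In dimension two the lemma needs an extra assumption, e.g.\ $(\gamma_i)$ bounded in $L^p(M)$ for some $p>2$. Then
\[
\int_M\gamma_i^2\abs{\varphi_i-\varphi}_g^2dv_g\leqslant\norm{\gamma_i}_{L^p(M)}^2\norm{\varphi_i-\varphi}^2_{L^{2p/(p-2)}}\to 0
\]
by the compact embedding $H^1\hookrightarrow L^{2p/(p-2)}$. Your instinct to check whether the lemma really needs $n\geqslant 3$ or an $L^{n+}$ hypothesis was the right one.
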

\begin{proof}
Fix $R > 0$, then
\[
\int_M \abs{\varphi - \varphi_i}_g^2 \gamma_i^2 dv_g = \int_M\abs{\varphi_i - \varphi}_g^2(\gamma_i^2 - \gamma^2)dv_g + \int_{\abs{\gamma} > R}\abs{\varphi - \varphi_i}_g^2\gamma^2 dv_g + \int_{\abs{\gamma} \leqslant R}\abs{\varphi - \varphi_i}_g^2\gamma^2 dv_g
\]
Since the embedding $H^1 \to L^2$ is compact, it follows that 
\[
\abs{\int_{\gamma \leqslant R} \abs{\varphi - \varphi_i}_g^2\gamma^2 dv_g} \leqslant R^2 \norm{\varphi_i - \varphi}^2_{L^2} \underset{i \to \infty}{\to} 0.
\]
Moreover, since $\varphi_i \overset{L^{\frac{2n}{n - 2}}}{\rightharpoonup} \varphi$, it follows that $\exists C > 0$ such that $\norm{\varphi_i - \varphi}_{L^\frac{2n}{n-2}} < C$ for all $i \in \mathbb{N}$.
By H\"older's inequality, we deduce that
\[
\int_{\abs{\gamma} > R}\abs{\varphi - \varphi_i}_g^2\gamma^2 dv_g \leqslant \norm{\varphi - \varphi_i}^{\frac{1}{2}}_{L^\frac{2n}{n-2}}\left(\int_{\abs{\gamma} > R}\gamma^n dv_g\right)^{\frac{2}{n}} < \sqrt{C} \left(\int_{\abs{\gamma} > R}\gamma^n dv_g\right)^{\frac2{n}}.
\]
Applying H\"older's inequality once again, we obtain that
\[
\abs{\int_M\abs{\varphi_i - \varphi}_g^2(\gamma_i^2 - \gamma^2)dv_g} \leqslant
\left(\int_M \abs{\varphi_i - \varphi}_g^\frac{2n}{n - 2}dv_g\right)^{\frac{n - 2}{n}}\norm{\gamma^2 - \gamma_i^2}_{L^\frac{n}{2}}\underset{i \to \infty}{\to} 0,
\]
because $\gamma_i^2 \overset{L^\frac{n}{2}(M)}{\to} \gamma^2$ and $\norm{\varphi_i - \varphi}_{L^\frac{2n}{n - 2}} < C$ for all $i \in \mathbb{N}$.

Thus, for any $R > 0$, we have that
\[
\limsup_{i \to \infty} \int_M \abs{\varphi - \varphi_i}_g^2 \gamma_i^2 dv_g \leqslant \sqrt{C} \left(\int_{\abs{\gamma} > R}\gamma^n dv_g\right)^{\frac{2}{n}}.
\]
Passing to the limit with respect to $R$ we get that
\[
\limsup_{i \to \infty} \int_M \abs{\varphi - \varphi_i}_g^2 \gamma_i^2 dv_g = 0.
\]
\end{proof}

\subsection{Regularity.}
\begin{lemma}\label{lemma:regularity}
Suppose $D_g \varphi = \beta \varphi$, where $\varphi \in W^{1,q_0}$ for some $q_0 > 1$ and $\beta \in L^n_{\geqslant 0}(M)$, then $\varphi \in L^{p}$ for all $p \in [1,+\infty)$.
\end{lemma}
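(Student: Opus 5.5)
We run a bootstrap of Brezis–Kato / Trudinger type, made possible by the smallness of $\beta$ in $L^n$ on small sets. The key tools are the standard elliptic estimate for the Dirac operator on the closed manifold $M$: for $1<p<\infty$,
\[
\norm{\psi}_{W^{1,p}} \leqslant C_p\left(\norm{D_g\psi}_{L^p} + \norm{\psi}_{L^p}\right),
\]
and the Sobolev embeddings (1)–(2). Note that $\beta\varphi\in L^{s}$ with $\frac1s=\frac1n+\frac1q$ whenever $\varphi\in L^q$. So $D_g\varphi\in L^s$ gives $\varphi\in W^{1,s}\hookrightarrow L^{q'}$ with $\frac1{q'}=\frac1s-\frac1n=\frac1q$, and we gain nothing. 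The critical nature of $\beta\in L^n$ is the main obstacle; a naive bootstrap stalls at the starting exponent.

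To gain integrability, split $\beta=\beta_1+\beta_2$, where $\beta_2=\beta\mathbf 1_{\{\beta> R\}}$ has $\norm{\beta_2}_{L^n}\leqslant\varepsilon$ and $\beta_1=\beta\mathbf 1_{\{\beta\leqslant R\}}\in L^\infty$. Fix a target exponent $q\in(1,\infty)$ and consider the operator $\psi\mapsto \beta_2\psi$. It maps $W^{1,s}$ into $L^s$ (with $\frac1s=\frac1q+\frac1n$ when $q$ is below the critical range, or directly via embedding (2) when $s\geqslant n$), with norm at most $C_s\varepsilon$. Choosing $\varepsilon$ small relative to the elliptic constants, we then solve
\[
D_g\psi-\beta_2\psi=\beta_1\varphi \pmod{\ker D_g}
\]
in $W^{1,s}$ by a Neumann series. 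We work on the complement of the finite-dimensional kernel, using the inverse $D_g^{-1}:L^s\cap(\ker D_g)^\perp\to W^{1,s}$ and keeping track of the kernel projection, which is smooth and hence harmless.

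The plan is then a uniqueness/identification argument. Starting from $\varphi\in W^{1,q_0}\hookrightarrow L^{q_1}$ with $\frac1{q_1}=\frac1{q_0}-\frac1n$ (or any $q_1<\infty$ if $q_0\geqslant n$), we have $\beta_1\varphi\in L^{q_1}$, better than what is needed. We solve the equation above in $W^{1,s}$ for the larger $s$ dictated by $q_1$, and also in the weaker space $W^{1,q_0}$, where $\varphi$ itself is a solution (the kernel component of $\varphi$ is smooth). Contraction in the weaker space gives uniqueness, so $\psi$ coincides with $\varphi$ up to a smooth kernel term. Hence $\varphi\in W^{1,s}$, and each iteration improves the exponent by the fixed amount $\frac1n$ in $\frac1q$. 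After finitely many steps we reach $W^{1,n}$ or better, which embeds in every $L^p$, $p<\infty$. On a closed manifold, $L^p$ for small $p$ follows trivially.

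The delicate point I expect to check carefully is that $\varepsilon$ (hence $R$) must be chosen for each of the finitely many exponents used, with the contraction valid simultaneously in the source and target spaces. This is achieved by picking $\varepsilon$ below the minimum of the finitely many thresholds $1/(C_s\cdot C_{\mathrm{Sob}})$. For $n=2$ the cases with $q_0<2$ reduce to exactly one or two steps.
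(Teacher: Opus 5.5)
Your argument is correct in substance, but it takes a different route from the paper.

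\textbf{The paper's route.} The paper removes the kernel by working with the invertible operator $D=D_g+c\,\mathrm{Id}$. More importantly, it truncates according to the size of $\varphi$ rather than of $\beta$. It sets $q_\varepsilon=\chi_{\{|\varphi|>N_\varepsilon\}}(\beta+c)$, which is small in $L^n(M)$ because $\{|\varphi|>N_\varepsilon\}$ has small measure. The remainder $\chi_{\{|\varphi|\leqslant N_\varepsilon\}}(\beta+c)\varphi$ is bounded by $N_\varepsilon(\beta+c)$, so it is already in $L^n$, and $D^{-1}$ of it lies in $W^{1,n}\hookrightarrow L^p$ for every $p<\infty$. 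The map $\psi\mapsto D^{-1}(q_\varepsilon\psi)$ is then a contraction of $L^p$ via $L^p\to L^{\hat p}\to W^{1,\hat p}\hookrightarrow L^p$, with $\hat p=\frac{np}{n+p}$. So each $L^p$ is reached in a single step, with no iteration.

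\textbf{Your route, compared.} Your splitting $\beta_1=\beta\mathbf 1_{\{\beta\leqslant R\}}$, $\beta_2=\beta\mathbf 1_{\{\beta>R\}}$ does not depend on $\varphi$. The price is that the good term $\beta_1\varphi$ is only as integrable as $\varphi$ itself. That is why you need the finite Brezis--Kato iteration gaining $\frac1n$ per step, together with the projection onto $(\ker D_g)^\perp$. In return, you explicitly identify the fixed point with $\varphi$ through uniqueness in the weaker space. The paper's proof passes over this point. Before writing $\varphi=(\mathrm{Id}-\mathcal{H}_\varepsilon)^{-1}D^{-1}\psi_\varepsilon$, one also needs $\mathcal{H}_\varepsilon$ to be a contraction on $L^{q_1}$, with $q_1=\frac{nq_0}{n-q_0}$ (for $q_0<n$), since that is where $\varphi$ is a priori known to lie. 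Your version is more complete on this point.

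\textbf{Two small corrections.} First, multiplication by $\beta_2\in L^n(M)$ does not map $W^{1,s}$ into $L^s$ when $s\geqslant n$. Second, your scheme never reaches $W^{1,n}$, since that would require $\beta\varphi\in L^n$. Neither affects the conclusion. Run the iteration only with exponents $s<n$. Once $\beta_1\varphi\in L^t$ with $t\geqslant n$, you obtain $\varphi\in W^{1,s}$ for every $s<n$, and $\frac{ns}{n-s}\to\infty$ as $s\uparrow n$.
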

\begin{proof}
Since the spectrum of $D_g$ is discrete, there exists a constant $c \geqslant 0$ such that $D \overset{\mathrm{def}}{=}D_g + c \mathrm{Id}$ is invertible. Note that $D^{-1}$ is a pseudodifferential operator of order $-1$. In particular, $D^{-1} \colon L^q \to W^{1,q}$ is continuous for all $q \in (1,+\infty)$. Therefore, it will be convenient for us to consider the following equation:
\[
D \varphi = (\beta + c)\varphi.
\]

Fix $\varepsilon > 0$. Note that since $\varphi$ is integrable, it follows that
\[
\lim_{m \to \infty} \int_{\abs{\varphi} > m}dv_g = 0.
\]
Thus, by the absolute continuity of the integral, there exists $N_{\varepsilon} \in \mathbb{N}$ such that 
\[
\int_{\abs{\varphi} > N_{\varepsilon}}(\beta + c)^n dv_g < \varepsilon^n.
\]
Define $q_\varepsilon \overset{\mathrm{def}}{=} \chi_{\abs{\varphi} > N_{\varepsilon}}(\beta + c)$ and $\psi_{\varepsilon} \overset{\mathrm{def}}{=}\chi_{\abs{\varphi} \leqslant N_{\varepsilon}}(\beta + c) \varphi$. Consequently, $\psi_{\varepsilon} \in L^n$ and $\norm{q_{\varepsilon}}_{L^n(M)} < \varepsilon$. 

Hence, we have an equation
\begin{equation}\label{equation:generalized eigenspinor}
D\varphi - q_\varepsilon\varphi = \psi_{\varepsilon}.  
\end{equation}

Let us define an operator $\mathcal{H}_{\varepsilon}(\phi) \overset{\mathrm{def}}{=} D^{-1}(q_{\varepsilon}\phi)$. and rewrite~\eqref{equation:generalized eigenspinor} in the following form:
\begin{equation}\label{equation:H_epsilon}
\varphi - \mathcal{H}_\varepsilon \varphi = D^{-1}(\psi_{\varepsilon}).
\end{equation}
Since $\psi_{\varepsilon} \in L^n$, elliptic regularity tells us that $D^{-1}(\psi_{\varepsilon}) \in W^{1,n}$. By Sobolev embedding theorem, we have that $W^{1,n} \hookrightarrow L^p$ for all $p \in [1,+\infty)$. Consequently, the right-hand side of~\eqref{equation:H_epsilon} belongs to $L^p$ for all $p \in [1,+\infty)$, Hence, our goal now is to prove that $\mathcal{H}_{\varepsilon}$ maps $L^p$ to $L^p$
.

Fix $p \geqslant 1$ and take $\psi \in L^p$, then by H\"older's inequality,
\begin{equation}\label{inequality:norm of q_e phi}
\int_M \abs{q_{\varepsilon}\psi}^\frac{np}{n+p}dv_g \leqslant \left(\int_M \abs{q_{\varepsilon}}^n dv_g\right)^\frac{p}{n+p} \left(\int_M \abs{\psi}^p dv_g\right)^\frac{n}{n+p},    
\end{equation}
so $q_\varepsilon\psi \in L^\frac{np}{n+p}$. We will denote $\frac{np}
{n+p}$ by $\hat{p}$. 
By continuity of $D^{-1}$, 
\[
\norm{\mathcal{H}_{\varepsilon}\psi}_{W^{1,\hat{p}}} = \norm{D^{-1}(q_{\varepsilon}\psi)}_{W^{1,\hat{p}}} \leqslant C_{\mathrm{elliptic}} \norm{q_\varepsilon \psi}_{L^{\hat{p}}},
\]
where $C_{\mathrm{elliptic}} \overset{\mathrm{def}}{=} \norm{D^{-1}}_{L^{\hat{p}} \to W^{1,\hat{p}}}$. Consequently,
\[
\norm{\mathcal{H}_{\varepsilon}\psi}_{W^{1,{\hat{p}}}} \leqslant C_{\mathrm{elliptic}}\varepsilon\norm{\psi}_{L^p}.
\]
By Sobolev's inequality, we deduce
\[
\norm{\mathcal{H}_{\varepsilon}\psi}_{L^p} \leqslant C_{\mathrm{Sobolev}}\norm{\mathcal{H}_{\varepsilon}\psi}_{W^{1,\hat{p}}} \leqslant C_{\mathrm{Sobolev}}C_{\mathrm{elliptic}}\varepsilon\norm{\psi}_{L^p}.
\]
Choose $\varepsilon > 0$ such that $\norm{\mathcal{H}_{\varepsilon}}_{L^p \to L^p} < \frac{1}{2}$, then $\mathrm{Id} - \mathcal{H}_{\varepsilon}$ is invertible, so from~\eqref{equation:H_epsilon} it follows that
\[
\varphi = (\mathrm{Id} - \mathcal{H}_{\varepsilon})^{-1}(D^{-1}(\psi_{\varepsilon})) \in L^p.
\]
\end{proof}

\subsection{Sequences of eigenspinors.}

\begin{lemma}\label{lemma: H^1 analogue of lemma 2.1}
Let $(\varphi_i)_{i \in \mathbb{N}}$ be a sequence in $H^1$, $(\lambda_i)_{i \in \mathbb{N}}$ be a sequence of strictly positive real numbers, $(\beta_i)_{i \in \mathbb{N}}$ be a sequence of positive nonzero functions in $L^n(M)$. Assume that for all $i \in \mathbb{N}$ we have
\begin{itemize}
    \item $D_g \varphi_i = \lambda_i \beta_i \varphi_i$,
    \item $\int_M \beta_i^2 \abs{\varphi_i}_g^2 dv_g = 1$,
    \item $(\beta_i)_{i \in \mathbb{N}}$ is bounded in $L^n(M)$,
    \item $\limsup\limits_{i \to \infty} {\lambda_i} < \infty$.
\end{itemize}
For all $i \in \mathbb{N}$ we write $\varphi_i = \psi_i + \kappa_i$, where $\psi_i \in \ker D_g^\perp$ and $\kappa_i \in \ker D_g$. Then
\begin{enumerate}
    \item $(\psi_i)_{i \in \mathbb{N}}$ is bounded in $H^1$,
    \item If $\norm{\kappa_i}_{L^2} \to \infty$, then $\beta_i \overset{L^n}{\rightharpoonup} 0$.
\end{enumerate}
\end{lemma}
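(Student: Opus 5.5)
For part (1), I would use that $D_g$ annihilates the kernel component, so $D_g\psi_i = D_g\varphi_i = \lambda_i\beta_i\varphi_i$. The normalization $\int_M\beta_i^2\abs{\varphi_i}_g^2dv_g = 1$ says exactly that $\norm{\beta_i\varphi_i}_{L^2} = 1$. Hence
\[
\norm{D_g\psi_i}_{L^2} = \lambda_i \leqslant C
\]
for $i$ large, since $\limsup\lambda_i<\infty$. Because the spectrum of $D_g$ is discrete, the restriction of $D_g$ to $\ker D_g^\perp$ has a spectral gap. Together with the standard elliptic estimate $\norm{\psi}_{H^1}\leqslant C(\norm{D_g\psi}_{L^2}+\norm{\psi}_{L^2})$, this gives
\[
\norm{\psi}_{H^1}\leqslant C\norm{D_g\psi}_{L^2} \quad \text{for all } \psi\in H^1\cap\ker D_g^\perp .
\]
Applying this to $\psi_i$ shows that $(\psi_i)_i$ is bounded in $H^1$.

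For part (2), I would rescale by $t_i\overset{\mathrm{def}}{=}\norm{\kappa_i}_{L^2}\to\infty$. By the subsequence principle it suffices to show that every subsequence of $(\beta_i)_i$ has a further subsequence converging weakly to $0$ in $L^n(M)$. Since $\ker D_g$ is finite dimensional and consists of smooth spinors, after passing to a subsequence $\kappa_i/t_i\to\kappa$ in every $\mathcal{C}^m$ norm, with $\kappa\in\ker D_g$ and $\norm{\kappa}_{L^2}=1$. Dividing $\beta_i\varphi_i=\beta_i\psi_i+\beta_i\kappa_i$ by $t_i$ gives three estimates:
\begin{itemize}
\item the left side has $L^2$ norm $1/t_i\to0$;
\item by part (1), $\psi_i/t_i\to0$ in $H^1$, hence in $L^{2n/(n-2)}$ (any finite $L^q$ when $n=2$), so H\"older and the $L^n$ bound on $\beta_i$ give $\norm{\beta_i\psi_i}_{L^2}/t_i\to0$;
\item uniform convergence of $\kappa_i/t_i$ and the $L^n$ bound give $\beta_i(\kappa_i/t_i-\kappa)\to0$ in $L^2$.
\end{itemize}
Therefore $\beta_i\abs{\kappa}_g\to0$ in $L^2(M)$.

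It remains to turn this into weak convergence of $\beta_i$ to $0$. Fix $h\in L^\infty(M)$ and $\delta>0$, and split $M$ into $\{\abs{\kappa}_g\geqslant\delta\}$ and $\{\abs{\kappa}_g<\delta\}$. On the first set,
\[
\abs{\int_{\{\abs{\kappa}_g\geqslant\delta\}}\beta_i h\,dv_g}\leqslant\delta^{-1}\norm{h}_{L^2}\norm{\beta_i\abs{\kappa}_g}_{L^2}\to0 .
\]
On the second set the integral is bounded by $\sup_i\norm{\beta_i}_{L^n}\,\norm{h}_{L^\infty}\,\mathrm{Vol}(\{\abs{\kappa}_g<\delta\})^{(n-1)/n}$. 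As $\delta\to0$ this tends to $\mathrm{Vol}(\kappa^{-1}(0))^{(n-1)/n}$. Since $L^\infty$ is dense in $L^{n/(n-1)}$ and $(\beta_i)_i$ is bounded in $L^n$, this gives $\beta_i\rightharpoonup0$ along the subsequence, provided $\kappa^{-1}(0)$ has measure zero.

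The main obstacle is exactly this last point: proving $\mathrm{Vol}(\kappa^{-1}(0))=0$ for a nonzero harmonic spinor $\kappa$. I would invoke the unique continuation property for Dirac operators (e.g.\ B\"ar's result that the zero set of a nontrivial solution of $D_g\kappa=0$ has codimension at least $2$, hence measure zero). Positivity of $\beta_i$ is used only so that $\beta_i\abs{\kappa}_g$ is the correct quantity to control.
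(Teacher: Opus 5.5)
Part (1) is correct and is the paper's argument. In part (2), your second bullet fails in dimension $n=2$, which is the case the paper is ultimately about. There $\beta_i$ is only bounded in $L^2(M)$, and $H^1$ does not embed into $L^\infty$. Knowing $\psi_i/t_i\to0$ in every $L^q$ with $q<\infty$, H\"older gives $\beta_i\psi_i/t_i\to0$ only in $L^r$ for $r<2$. The identity $\beta_i\psi_i=\beta_i\varphi_i-\beta_i\kappa_i$ only shows that $\norm{\beta_i\psi_i}_{L^2}/t_i$ stays bounded. So $\beta_i\abs{\kappa}_g\to0$ in $L^2(M)$ is not established for $n=2$.

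The repair is cheap, because you never need $L^2$. Run all three estimates in $L^1$, using
\begin{itemize}
\item $\norm{\beta_i\varphi_i}_{L^1}\leqslant\mathrm{Vol}(M,g)^{1/2}$,
\item $\norm{\beta_i\psi_i}_{L^1}\leqslant\norm{\beta_i}_{L^2}\norm{\psi_i}_{L^2}$,
\item $\norm{\beta_i(\kappa_i/t_i-\kappa)}_{L^1}\leqslant\norm{\beta_i}_{L^1}\norm{\kappa_i/t_i-\kappa}_{L^\infty}$.
\end{itemize}
This gives $\beta_i\abs{\kappa}_g\to0$ in $L^1(M)$ in every dimension. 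Then bound the integral over $\{\abs{\kappa}_g\geqslant\delta\}$ by $\delta^{-1}\norm{h}_{L^\infty}\norm{\beta_i\abs{\kappa}_g}_{L^1}$. The rest of your argument is unchanged.

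With this fix, your route genuinely differs from the paper's. The paper does not extract smallness of $\beta_i\kappa$ from the normalization. Instead it uses the identity $\int_M\beta_i\langle\varphi_i,\kappa_i\rangle_g\,dv_g=\lambda_i^{-1}\int_M\langle\varphi_i,D_g\kappa_i\rangle_g\,dv_g=0$ and rescales it to $\int_M\beta_i\langle\Phi_i,K_i\rangle_g\,dv_g=0$ with $\Phi_i,K_i\to K$. Passing to the limit against a weak limit $\beta$ of $(\beta_i)$ gives $\int_M\beta\abs{K}_g^2\,dv_g=0$, and $\beta\geqslant0$ plus unique continuation force $\beta=0$.

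That argument only pairs $\beta_i$ with something converging strongly in $L^{n/(n-1)}$, so it works uniformly for all $n\geqslant2$ without adjustment. It does, however, need $\lambda_i\neq0$, self-adjointness of $D_g$, and the sign of $\beta_i$. Your argument uses neither the equation beyond part (1) nor the sign of $\beta_i$, and it yields more. Combining $\beta_i\abs{\kappa}_g\to0$ in $L^1$ with the null zero set of $\kappa$ and the $L^n$ bound gives $\beta_i\to0$ strongly in $L^q(M)$ for every $q<n$.

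Both proofs rest on the same fact: a nontrivial harmonic spinor vanishes only on a null set. So that point is not an extra obstacle relative to the paper.
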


\begin{proof}
The first claim follows from the following estimates:
\begin{align*}
\norm{\psi_i}_{H^1} 
&\leqslant C_1\norm{D_g \psi_i}_{L^2} \quad &&\text{(Elliptic estimates)}\\
&= C_1 \abs{\lambda_i} \norm{\beta_i \varphi_i}_{L^2}\\
&= C_1\abs{\lambda_i} \leqslant C_2 \limsup \abs{\lambda_j}
\end{align*}
Now assume that $\norm{\kappa_i}_{L^2} \to \infty$. Define 
\[
\Phi_i \overset{\mathrm{def}}{=} \frac{\varphi_i}{\norm{\kappa_i}_{L^2}}, \quad K_i \overset{\mathrm{def}}{=} \frac{\kappa_i}{\norm{\kappa_i}_{L^2}} \quad \text{and} \quad \Psi_i \overset{\mathrm{def}}{=} \frac{\psi_i}{\norm{\kappa_i}_{L^2}}.
\]
Since $(\psi_i)_{i \in \mathbb{N}}$ is bounded in $H^1$, it follows that $\Psi_i \overset{H^1}{\to} 0$. Moreover, $(K_i)_{i \in \mathbb{N}}$ is bounded in $\left(\ker D_g, \norm{\cdot}_{H^1}\right)$. Since $\dim \ker D_g < \infty$, we may, after passing to a subsequence, assume that $K_i \overset{H^1}{\to} K$. Therefore, $\Phi_i \overset{H^1}{\to} K$. Since $(\beta_i)_{i \in \mathbb{N}}$ is bounded in $L^n$, we may, once again after passing to a subsequence, assume that $\beta_i \overset{L^n}{\rightharpoonup}\beta$.

Note that $\int_M \beta_i \dprod{\varphi_i,\kappa_i}_g dv_g = \frac{1}{\lambda_i}\int_M \dprod{D_g \varphi_i, \kappa_i}_g dv_g = \frac{1}{\lambda_i} \int_M \dprod{\varphi_i,D_g \kappa_i}_g dv_g = 0$.
Thus, $0 = \int_M \beta_i \dprod{\Phi_i,K_i}_gdv_g$. Passing to the limit, we obtain $0 = \int_M \beta \abs{K}^2 dv_g$. Note that $K$ is nonzero because $\norm{K_i}_{H^1} = 1$ for all $i \in \mathbb{N}$. Moreover, $D_g K = 0$, then by the unique continuation property of $D_g$, $\abs{K}_g > 0$ almost everywhere. Consequently, $\beta = 0$ almost everywhere.

Note that we only proved that there is a subsequence of $(\beta_i)_{i \in \mathbb{N}}$ that converges weakly to $0$. However, any subsequence of $\beta_i$ admits a further subsequence that also converges weakly to $0$ by the same argument. Hence, $\beta_i \overset{L^n}{\rightharpoonup} 0$.
\end{proof}

\begin{lemma}\label{lemma:existence of eigenspinors}
Let $(\beta_m)_{m\in\mathbb{N}}$ be a sequence of functions in $L^n_{\geqslant 0}(M)$ such that $\beta_m \overset{L^n(M)}{\to} \beta$, where $\beta$ is a nonzero function. Assume that $(\lambda_k(\beta_m))_{m \in \mathbb{N}}$ is bounded and that for each $m \in \mathbb{N}$ there exist $\varphi^m_1,\ldots,\varphi^m_k$ in $H^1$ such that 
\begin{itemize}
    \item $D_g \varphi^m_j = \lambda_j(\beta_m)\beta_m\varphi^m_j$ for all $j \in \{\,1,\ldots,k\,\}$,
    \item $\int_M\beta_m^2\abs{\varphi^m_j}_g^2dv_g = 1$ for all $j \in \{\,1,\ldots,k\,\}$,
    \item $\int_M \beta_m \dprod{\varphi^m_i,\varphi^m_j}_g = 0$ for all $i \neq j \in \{\,1,\ldots,k\,\}$.
\end{itemize}
Then there exist nonzero spinors $\phi_1,\ldots,\phi_k$ in $H^1$ such that
\begin{itemize}
    \item $\phi_j$ is a limit point of the sequence $(\varphi^m_j)_k$ for all $j \in \{\,1,\ldots,k\,\}$,  
    \item $D_g \phi_j = (\liminf\limits_{m \to \infty} \lambda_j(\beta_m)) \beta \phi_j$ for all $j \in \{\,1,\ldots,k\,\}$,
    \item $\int_M \beta \dprod{\phi_i,\phi_j}_g = 0$ for all $i \neq j \in \{\,1,\ldots,k\,\}$,
\end{itemize}
Moreover, $\lambda_j(\beta) \leqslant \liminf\limits_{m \to \infty} \lambda_j(\beta_m)$ for all $j \in \{\,1,\ldots,k\,\}$.
\end{lemma}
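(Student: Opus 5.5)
First pass to a subsequence along which $\lambda_j(\beta_m)\to\mu_j\overset{\mathrm{def}}{=}\liminf_{m}\lambda_j(\beta_m)$ for every $j$; this is possible because there are finitely many indices and the sequences are bounded. Then decompose $\varphi^m_j=\psi^m_j+\kappa^m_j$ with $\psi^m_j\perp\ker D_g$ and $\kappa^m_j\in\ker D_g$. By Lemma~\ref{lemma: H^1 analogue of lemma 2.1}(1), each $(\psi^m_j)_m$ is bounded in $H^1$. By part (2) of the same lemma, $\norm{\kappa^m_j}_{L^2}$ must stay bounded: otherwise $\beta_m\rightharpoonup 0$ weakly in $L^n$, which contradicts the strong convergence $\beta_m\to\beta\neq 0$. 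Since $\ker D_g$ is finite dimensional, $\kappa^m_j$ is then bounded in $H^1$ as well. Extracting a further subsequence, we get $\varphi^m_j\rightharpoonup\phi_j$ weakly in $H^1$ for each $j$, so $\phi_j$ is a limit point as required.

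Next we pass to the limit in the equations and the normalizations, using Lemma~\ref{lemma:compact opetator from H^1 to L^2(gamma^2)}. Take $\gamma_m=\beta_m$. That lemma gives $\beta_m(\varphi^m_j-\phi_j)\to0$ in $L^2$. Moreover $\beta_m\phi_j\to\beta\phi_j$ in $L^2$, by H\"older together with $\phi_j\in L^{2n/(n-2)}$ (or $L^q$ for all $q$ when $n=2$). Hence $\beta_m\varphi^m_j\to\beta\phi_j$ strongly in $L^2$. This yields $\int\beta^2|\phi_j|^2=1$, so $\phi_j\neq0$. For the equation, test against a smooth $\eta$: we have $\int\langle\varphi^m_j,D_g\eta\rangle=\lambda_j(\beta_m)\int\beta_m\langle\varphi^m_j,\eta\rangle$, and both sides converge, giving $D_g\phi_j=\mu_j\beta\phi_j$ weakly, hence in $H^1$. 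For orthogonality, write $\int\beta_m\langle\varphi^m_i,\varphi^m_j\rangle=\int\langle\beta_m^{1/2}\varphi^m_i,\beta_m^{1/2}\varphi^m_j\rangle$. Applying Lemma~\ref{lemma:compact opetator from H^1 to L^2(gamma^2)} with $\gamma_m=\beta_m^{1/2}$, which converges in $L^{2n}\subset L^n$, gives strong $L^2$ convergence $\beta_m^{1/2}\varphi^m_j\to\beta^{1/2}\phi_j$. The limit $\int\beta\langle\phi_i,\phi_j\rangle=0$ follows. The same argument shows that $\int\langle D_g\phi_i,\phi_j\rangle=\mu_i\int\beta\langle\phi_i,\phi_j\rangle=0$ for $i\ne j$, and likewise for $\int\beta^{-1}\langle D_g\phi_i,D_g\phi_j\rangle$.

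Finally we prove the eigenvalue inequality. Fix $j$, and assume after reordering within the limit that $\mu_1\le\dots\le\mu_j$; the original $\lambda_i(\beta_m)$ are already ordered, so the $\mu_i$ are ordered too. Let $E=\mathrm{span}(\phi_1,\dots,\phi_j)$. We must check that $E$ is $j$-dimensional and meets $\ker D_g$ only at $0$. Take $\phi=\sum c_i\phi_i$. Then $D_g\phi=\beta\sum c_i\mu_i\phi_i$, and
\[
\int\beta^{-1}|D_g\phi|^2=\sum|c_i|^2\mu_i^2\int\beta^2|\phi_i|^2=\sum|c_i|^2\mu_i^2,
\]
using the orthogonality relations above and the normalization. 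Similarly $\int\langle D_g\phi,\phi\rangle=\sum|c_i|^2\mu_i\int\beta|\phi_i|^2$. The $\mu_i$ are positive, being limits of eigenvalues bounded below by $\lambda_1(\beta_m)$. Strict positivity needs $\liminf\lambda_1(\beta_m)>0$, which follows from a lower bound for the first eigenvalue under $L^n$ control of $\beta_m$: by H\"older, $\int\beta^{-1}|D\varphi|^2$ controls $\norm{D\varphi}^2_{L^{2n/(n+1)}}/\norm{\beta}_{L^n}$, and combining this with Sobolev gives the bound. Given $\mu_i>0$, the quantities above vanish only if $c=0$, so $E\in\mathcal{G}_j(H^1\setminus\ker D_g)$. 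It then remains to compare the quadratic forms: we need $\mathcal F(\beta)[\phi]\ge\mu_j^{-1}$. This follows from
\[
\mu_i\int\beta|\phi_i|^2=\int\beta^{-1}|D_g\phi_i|^2\cdot\mu_i^{-1}\cdot\mu_i^{\,}\ldots,
\]
that is, from $\int\beta|\phi_i|^2=\mu_i\int\beta^2|\phi_i|^2/\mu_i$. More simply, $\int\langle D_g\phi_i,\phi_i\rangle=\mu_i^{-1}\int\beta^{-1}|D_g\phi_i|^2$, so the ratio is a weighted average of the $\mu_i^{-1}\ge\mu_j^{-1}$. Hence $\lambda_j(\beta)^{-1}\ge\mu_j^{-1}$.

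The main obstacle is ensuring that $\beta^{-1}$-weighted integrals make sense where $\beta$ vanishes. Here $D_g\phi_i=\mu_i\beta\phi_i$ makes $\beta^{-1}|D_g\phi_i|^2=\mu_i^2\beta|\phi_i|^2$ well defined, which I would take as the interpretation of $\mathcal F(\beta)$ on such spinors.
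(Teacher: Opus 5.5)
Your route is the paper's: extract a subsequence, get $H^1$ bounds from Lemma~\ref{lemma: H^1 analogue of lemma 2.1}, pass to weak limits in the equations and the orthogonality relations, and test the minimax with $E=\spanning{\phi_1,\ldots,\phi_j}$. You also fill in what the paper only asserts in its last line. You check that $E$ is $j$-dimensional and meets $\ker D_g$ trivially, and that the limits $\mu_i$ are strictly positive; your H\"older--Sobolev bound $\lambda_1(\beta_m)\geqslant c\,\norm{\beta_m}_{L^n(M)}^{-1}$ is correct. Three steps need repair, however.

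First, the denominator is miscomputed. Since $D_g\phi=\beta\sum_i c_i\mu_i\phi_i$, one has $\beta^{-1}\abs{D_g\phi}_g^2=\beta\abs{\sum_i c_i\mu_i\phi_i}_g^2$, and therefore
\[
\int_M\frac{\abs{D_g\phi}_g^2}{\beta}\,dv_g=\sum_i\abs{c_i}^2\mu_i^2\,Q(\beta,\phi_i),\qquad Q(\beta,\phi_i)=\int_M\beta\abs{\phi_i}_g^2\,dv_g .
\]
It is not $\sum_i\abs{c_i}^2\mu_i^2\int_M\beta^2\abs{\phi_i}_g^2\,dv_g$. The normalization plays no role here, and the garbled display should be deleted. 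Your ``more simply'' sentence gives the right conclusion: $\mathcal{F}(\beta)[\phi]$ is an average of the $\mu_i^{-1}$ with weights $\abs{c_i}^2\mu_i^2Q(\beta,\phi_i)$, hence $\geqslant\mu_j^{-1}$. What this actually requires is $Q(\beta,\phi_i)>0$.

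Second, do not derive that positivity from $\int_M\beta^2\abs{\phi_j}_g^2\,dv_g=1$. Your H\"older step $\beta_m\phi_j\to\beta\phi_j$ in $L^2$ uses $\phi_j\in L^{2n/(n-2)}$. It fails for $n=2$, the case the paper is about: there $H^1\not\hookrightarrow L^\infty$ and $\beta_m-\beta$ is only in $L^2$. Argue as the paper does instead. You have $\phi_j\neq0$, since otherwise Lemma~\ref{lemma:compact opetator from H^1 to L^2(gamma^2)} alone gives $\int_M\beta_m^2\abs{\varphi^m_j}_g^2\,dv_g\to0$. If $Q(\beta,\phi_j)=0$, then $D_g\phi_j=\mu_j\beta\phi_j=0$, so unique continuation gives $\abs{\phi_j}_g>0$ a.e.\ and forces $\beta\equiv0$, a contradiction.

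Third, boundedness does not give a single subsequence along which $\lambda_j(\beta_m)\to\liminf_m\lambda_j(\beta_m)$ for every $j$ simultaneously. It only gives a common subsequence on which each $\lambda_j(\beta_m)$ converges to some $\mu_j\geqslant\liminf_m\lambda_j(\beta_m)$; the paper makes the same move. For the inequality $\lambda_j(\beta)\leqslant\liminf_m\lambda_j(\beta_m)$ this is harmless. For fixed $j$, pick a subsequence realizing $\liminf_m\lambda_j(\beta_m)$ and extract further so that all $\lambda_i(\beta_m)$ with $i\leqslant j$ converge. Then $\mu_i\leqslant\mu_j$ for $i\leqslant j$, and your argument runs unchanged.
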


\begin{proof}
By definition, $\lambda_1(\beta_m) \leqslant \ldots \leqslant \lambda_k(\beta_m)$. Therefore, $(\lambda_j(\beta_m))_{m \in \mathbb{N}}$ is bounded for all $j \in \{\,1 \ldots, k\,\}$. After passing to a subsequence, we may assume that 
\begin{equation}\label{equality: limit of lambda_m}
\lim_{m \to \infty}\lambda_j(\beta_m) = \liminf\limits_{m \to \infty} \lambda_j(\beta_m).
\end{equation}
For the sake of simplicity, we will denote $ \liminf\limits_{m \to \infty} \lambda_j(\beta_m)$ by $\lambda_j$.

Since $(\beta_m)_{m \in \mathbb{N}}$ does not converge weakly to $0$, Lemma~\ref{lemma: H^1 analogue of lemma 2.1} implies that $(\varphi^m_j)_{m \in \mathbb{N}}$ is bounded in $H^1$ for all $j \in \{\,1,\ldots,k\,\}$. After passing to a subsequence, we may assume that there exists $\phi_1,\ldots,\phi_k$ in $H^1$ such that $\varphi^m_j 
\overset{H^1}{\rightharpoonup} \phi_j$. We want to prove that $\phi_j 
\neq 0$ for all $j \in \{\,1,\ldots,k\,\}$. Assume the contrary; then by Lemma \ref{lemma:compact opetator 
from H^1 to L^2(gamma^2)}, we get that
$\lim\limits_{m \to \infty}\int_M \beta_m^2 \abs{\varphi^m_j}_g^2dv_g = 0$,
but by assumption $\int_M \beta_m^2\abs{\varphi^m_j}_g^2dv_g = 1$ for all $m \in \mathbb{N}$. Contradiction. 

Note that since $D_g \varphi^m_j \overset{L^2}{\rightharpoonup} D_g\phi_j$, it follows that for any smooth spinor field $\psi \in \mathcal{C}^{\infty}$ we have

\begin{equation}\label{equality: phi is an eigenspinor}
\int_{M}\dprod{D_g \phi_j, \psi}_gdv_g = \lim_{m \to \infty} \int_M \dprod{D_g\varphi^m_j,\psi}_gdv_g = \lim_{m \to \infty} \lambda_k(\beta_m) \int_{M}\dprod{\beta_m\varphi^m_j,\psi}_gdv_g.   
\end{equation}
Recall that $\varphi^m_j \overset{L^\frac{n}{n-1}}{\to} \phi_j$ and $\beta_m \overset{L^n(M)}{\to} \beta$. Therefore, we obtain that 

\begin{equation}\label{equality: limit of beta_m}
\lim_{m \to \infty}\int_{M}\dprod{\beta_m\varphi^m_j,\psi}_gdv_g = \int_{M}\dprod{\beta\phi_j,\psi}_gdv_g.    
\end{equation}
It follows from \eqref{equality: phi is an eigenspinor},\eqref{equality: limit of beta_m} and \eqref{equality: limit of lambda_m} that for all $\psi \in \mathcal{C}^{\infty}$
\[
\int_{M}\dprod{D_g \phi_j, \psi}_gdv_g =\int_M \dprod{ \lambda_j \beta \phi_j, \psi}_gdv_g.
\]
Consequently, $D_g\phi_j = \lambda_j \beta \phi_j$.

By Rellich-Kondrashov theorem, $H^1 \hookrightarrow L^{\frac{2n}{n-1}}$ is compact, so $\dprod{\varphi^i_m,\varphi^j_m}_g \overset{L^\frac{n}{n-1}(M)}{\to} \dprod{\phi_i, \phi_j}_g$. Consequently, we get that 
\[
\lim_{m \to \infty}\int_M\beta_m\dprod{\varphi^i_m,\varphi^j_m}_g dv_g = \int_M \beta  \dprod{\phi_i, \phi_j}_gdv_g.
\]

By definition,
\[
\lambda_j(\beta)^{-1} \geqslant \min_{\psi \in \spanning{\phi_1,\ldots,\phi_j} \setminus \{\,0\,\}} \mathcal{F}(\beta)[\psi] = \lambda_j^{-1}.
\]
Thus, $\lambda_j(\beta) \leqslant \lambda_j$ for all $j \in \{\,1,\ldots,k\,\}$. 
\end{proof}

\section{Existence of eigenspinors. Bounded away from 0 case.}\label{section:existence easy case}
In this section we assume that $\beta \geqslant \delta > 0$ for some positive constant $\delta$. We will show the existence of eigenspinors in this case. Note that under this assumption we have that $\lambda_k(\beta)$ is finite for all $k \in \mathbb{N}_{>0}$. Indeed, since $\beta \geqslant \delta$ almost everywhere, it follows that for all spinors $\Phi$ we have that
\begin{align*}
\frac{\int_M \dprod{D_g\Phi,\Phi}_gdv_g}{\int_M\frac{1}{\beta}\abs{D_g\Phi}_g^2dv_g} \geqslant
\frac{\int_M \dprod{D_g\Phi,\Phi}_gdv_g}{\int_M\frac{1}{\delta}\abs{D_g\Phi}_g^2dv_g}.
\end{align*}
Therefore,
\[
\frac{1}{\lambda_k(\beta)} \geqslant \frac{1}{\lambda_k(\delta)} \Longleftrightarrow \lambda_k(\beta) \leqslant \lambda_k(\delta).
\]

We start this section with a simple observation, that follows directly from the minimax description of generalized eigenvalues.

\begin{lemma}\label{lemma:smooth upper continuity of eigenvalues}
Let $(\beta_m)_{m \in \mathbb{N}}$ be a sequence of strictly positive smooth functions such that $\beta_m \overset{L^n(M)}{\to} \beta$. Assume that $\left(\frac{1}{\beta_m}\right)_{m \in \mathbb{N}}$ is bounded in $L^{\infty}(M)$, then

\[
\limsup\limits_{m \to \infty} \lambda_k(\beta_m) \leqslant \lambda_k(\beta).
\]
\end{lemma}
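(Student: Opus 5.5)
Since $\lambda_k^{-1}$ is defined by a sup--min, an upper bound on $\lambda_k(\beta_m)$ amounts to a lower bound on $\lambda_k^{-1}(\beta_m)$. So I would build one $k$-dimensional test space that works for all large $m$, chosen from the $k$ smallest positive eigenvalues of the limit weight.

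Fix $\varepsilon>0$. By definition of $\lambda_k^{-1}(\beta)$ there is $E\in\mathcal{G}_k(H^1\setminus\ker D_g)$ with
\[
\min_{\varphi\in E\setminus\{0\}}\mathcal{F}(\beta)[\varphi]\geqslant \lambda_k(\beta)^{-1}-\varepsilon .
\]
This value is finite and positive: the $\beta_m$ are uniformly bounded below by some $\delta>0$, hence so is $\beta$ a.e., and the discussion preceding the lemma gives $\lambda_k(\beta)\leqslant\lambda_k(\delta)<\infty$. The goal is then to show
\[
\liminf_{m\to\infty}\,\min_{\varphi\in E\setminus\{0\}}\mathcal{F}(\beta_m)[\varphi]\geqslant \min_{\varphi\in E\setminus\{0\}}\mathcal{F}(\beta)[\varphi].
\]
Since $\lambda_k^{-1}(\beta_m)\geqslant \min_{E}\mathcal{F}(\beta_m)$, this gives $\liminf_m \lambda_k(\beta_m)^{-1}\geqslant \lambda_k(\beta)^{-1}-\varepsilon$. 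Letting $\varepsilon\to0$ yields the claim.

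The numerator of $\mathcal{F}$ does not depend on $m$, so everything reduces to comparing the denominators. The key estimate is uniform convergence on the unit sphere of $E$ (in the $H^1$ norm, say):
\[
\int_M \Big|\frac1{\beta_m}-\frac1\beta\Big|\,|D_g\varphi|_g^2\,dv_g\longrightarrow 0 .
\]
Because $E$ is finite dimensional and $D_g\varphi$ is smooth for smooth $\varphi$, I would first approximate $E$ by a space of smooth spinors; the minimax value moves only slightly, at the cost of one more $\varepsilon$. Then $\sup_{\varphi\in S_E}\|D_g\varphi\|_{L^\infty}<\infty$, and it suffices to show $1/\beta_m\to1/\beta$ in $L^1(M)$. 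This holds because
\[
\Big|\frac1{\beta_m}-\frac1\beta\Big|=\frac{|\beta-\beta_m|}{\beta_m\beta}\leqslant \delta^{-2}|\beta-\beta_m|
\]
and $\beta_m\to\beta$ in $L^n$, hence in $L^1$. Alternatively, one can avoid smoothing: $1/\beta_m\to1/\beta$ in measure with a uniform $L^\infty$ bound, so dominated convergence against the $L^1$ functions $|D_g\varphi|^2$ works uniformly on the compact set $S_E$, using a finite basis and Cauchy--Schwarz.

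It remains to conclude. The denominators $\int \beta^{-1}|D_g\varphi|^2$ are bounded below on $S_E$ by $\|\beta\|_\infty^{-1}$ times a positive constant. But $\beta$ need not be bounded above, so instead I would use that $\int\beta^{-1}|D_g\varphi|^2>0$ is continuous on the compact $S_E$ (since $E\cap\ker D_g=\{0\}$ and $\beta<\infty$ a.e.). The ratios therefore converge uniformly on $S_E$, which gives the liminf inequality for the minima. The points that need care are this uniform-in-$E$ convergence and the degenerate case where the numerator is nonpositive on part of $E$. The latter does not arise for the optimal $E$, since its minimum is positive, so the ratios there are positive and converge uniformly.
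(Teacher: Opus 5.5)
Your proof is correct and takes essentially the same route as the paper. You fix a near-optimal $E$ for $\mathcal{F}(\beta)$, show that $\int_M \beta_m^{-1}|D_g\varphi|_g^2\,dv_g \to \int_M \beta^{-1}|D_g\varphi|_g^2\,dv_g$ uniformly on the unit sphere of $E$, and conclude from the minimax characterization. Your second alternative (dominated convergence on a finite basis, using the uniform $L^\infty$ bound on $1/\beta_m$) is exactly what the paper does via an a.e.-convergent subsequence. The smoothing variant and the explicit lower bound on the limiting denominator are harmless refinements.
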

\begin{proof}
Fix $\varepsilon > 0$. By definition of $\lambda_k(\beta)$, there exists $E \in \mathcal{G}_k\left(H^1 \setminus \ker D_g \right)$ such that 
\[
\min_{\varphi \in E \setminus 0} \mathcal{F}(\beta)[\varphi] \geqslant \frac{1}{\lambda_k(\beta)} - \varepsilon.
\]
In particular, for all $\varphi \in E \setminus \{\,0\,\}$ we have $\int_{M} \frac{1}{\beta} \abs{D_g\varphi}^2 dv_g > 0$ (otherwise $\varphi \in \ker D_g$). Therefore, $\dprod{\varphi,\psi}_\beta \overset{\mathrm{def}}{=} \int_M \frac{1}{\beta} \dprod{D_g\varphi, D_g\psi}dv_g$ defines an Hermitian product on $E$. Let $\varphi_1,\ldots,\varphi_k$ be an orthonormal basis of $(E,\dprod{\cdot,\cdot}_\beta)$. 

Take a subsequence $(\beta_{\varphi(m)})_{m \in \mathbb{N}}$ such 
\[
\lim_{m \to \infty} \lambda_k(\beta_{\varphi(m)}) = \limsup\limits_{m \to \infty}\lambda_k(\beta_m).
\]
Since $\beta_{\varphi(m)}\overset{L^n(M)    }{\to} \beta$, we may, after passing to a subsequence, assume that $\beta_{\varphi(m)} \to \beta$ almost everywhere. Hence, since $\left(\frac{1}{\beta_{\varphi(m)}}\right)$ is bounded in $L^{\infty}$, we may apply dominated convergence theorem and deduce that
\[
\lim_{m \to \infty} \int_M \frac{1}{\beta_{\varphi(m)}} \dprod{D_g\varphi,D_g\psi}_gdv_g = \int_M \frac{1}{\beta} \dprod{D_g\varphi,D_g\psi}_gdv_g.
\]
In other words,
\[
\dprod{\varphi_i,\varphi_j}_{\beta_{\varphi(m)}}= \delta_{ij} + o(1).
\]
Take $\varphi \in E$ such that $\int_M \frac{1}{\beta} \abs{D_g\varphi}dv_g = 1$, i.e., $\varphi = \sum\limits_{j = 1}^ka_j \varphi_j$, where $\sum\limits_{j = 1}^k \abs{a_j}^2 = 1$. Therefore,
\[
\dprod{\varphi,\varphi}_{\beta_{\varphi(m)}} = \sum_{i,j = 1}^k a_i \bar{a}_j(\delta_{ij} + o(1)) = 1 + o(1). 
\]
Note that since $\sum\limits_{j=1}^k \abs{a_j}^2 = 1$, it follows that $o(1)$ depends only  on $m$. Hence, for all $\varphi \in E$ such that $\dprod{\varphi,\varphi}_\beta = 1$ we have
\[
\mathcal{F}(\beta_m)[\varphi] = \frac{\mathcal{F}(\beta)[\varphi]}{1 + o(1)}.
\]
Thus, for $m$ big enough we have 
\[
\min_{\varphi \in E \setminus \{\,0\,\}} \mathcal{F}(\beta_{\varphi(m)})[\varphi] \geqslant \min_{\varphi \in E \setminus \{\,0\,\}} \mathcal{F}(\beta)[\varphi] - \varepsilon \geqslant \frac{1}{\lambda_k(\beta)} - 2\varepsilon.
\]
Consequently, $\lambda_k(\beta_{\varphi(m)})^{-1} \geqslant \lambda_k(\beta)^{-1} - 2\varepsilon$. Since $\varepsilon$ was an arbitrary positive number, it follows that 
\[
\limsup \lambda_k(\beta_m) \leqslant \lambda_k(\beta).
\]
\end{proof}
The following proposition is a straightforward application of Lemma~\ref{lemma:existence of eigenspinors}.
\begin{proposition}[Existence of eigenspinors. Bounded away from zero case]\label{theorem:existence of eigenspinors} Let $\beta$ be a function in $L^n_{\geqslant \delta}(M)$.
For all $k \in \mathbb{N}_{>0}$ there exists $\phi_k \neq 0$ such that
\[
\lambda_k(\beta)^{-1} = \mathcal{F}(\beta)[\phi_{k}].
\]
Moreover, $D_g\phi_{k} = \lambda_k(\beta)\beta\phi_{k}$ and $\int_M \beta \dprod{\phi_i,\phi_j} dv_g = 0$ for all $i \neq j$ in $\mathbb{N}_{>0}$.
\end{proposition}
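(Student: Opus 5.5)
Approximate $\beta$ by smooth positive functions and pass to the limit with Lemma~\ref{lemma:existence of eigenspinors}. Choose smooth $\beta_m$ with $\beta_m \geqslant \delta/2$ and $\beta_m \overset{L^n(M)}{\to} \beta$ (for instance, mollify $\max(\beta,\delta)=\beta$ in charts with a partition of unity; the lower bound is preserved by convolution with a nonnegative kernel). Then $(1/\beta_m)_m$ is bounded in $L^\infty(M)$.

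For each $m$, the metric $\beta_m^2 g$ is a genuine smooth metric. By Lemma~\ref{lemma:conformal-Dirac-transf} and~\eqref{minimax for conformal class}, its Dirac spectrum is given by $\lambda_j(\beta_m)$, and there are eigenspinors $\varphi^m_1,\ldots,\varphi^m_k \in H^1$ with $D_g\varphi^m_j = \lambda_j(\beta_m)\beta_m\varphi^m_j$. Since $D_{\beta_m^2g}$ is self-adjoint with respect to $L^2(\beta_m^2g)$, which transforms into the weight $\beta_m$, these eigenspinors can be chosen orthogonal for $\int_M\beta_m\dprod{\cdot,\cdot}_g$, and normalized so that $\int_M\beta_m^2\abs{\varphi^m_j}^2 = 1$. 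This normalization is just a rescaling.

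Boundedness of $\lambda_k(\beta_m)$ follows from the monotonicity remark at the beginning of the section: $\lambda_k(\beta_m)\leqslant\lambda_k(\delta/2)<\infty$. Lemma~\ref{lemma:existence of eigenspinors} then produces nonzero $\phi_1,\ldots,\phi_k$ with $D_g\phi_j = \lambda_j\beta\phi_j$, where $\lambda_j = \liminf_m \lambda_j(\beta_m)$. They are $\beta$-orthogonal, and $\lambda_j(\beta)\leqslant\lambda_j$. On the other hand, Lemma~\ref{lemma:smooth upper continuity of eigenvalues} gives $\limsup_m\lambda_j(\beta_m)\leqslant\lambda_j(\beta)$. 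Hence $\lambda_j = \lambda_j(\beta)$, and $D_g\phi_j = \lambda_j(\beta)\beta\phi_j$.

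It remains to check that $\mathcal{F}(\beta)[\phi_k] = \lambda_k(\beta)^{-1}$. Pairing the equation with $\phi_k$ gives
\[
\int_M\dprod{D_g\phi_k,\phi_k} = \lambda_k(\beta)\int_M\beta\abs{\phi_k}^2
\quad\text{and}\quad
\int_M\beta^{-1}\abs{D_g\phi_k}^2 = \lambda_k(\beta)^2\int_M\beta\abs{\phi_k}^2.
\]
Both integrals are finite because $\phi_k\in H^1$ and $\beta\geqslant\delta$. The quotient is therefore $\lambda_k(\beta)^{-1}$, with nonzero denominator since $\phi_k\neq0$ and $\beta>0$.

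To get one sequence $\phi_1,\phi_2,\ldots$ that is orthogonal for all indices, I run the argument with a diagonal subsequence over increasing $k$, so the limits $\phi_j$ do not depend on $k$.

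The main obstacle is the setup rather than the limit. One must check that the smooth eigenspinors satisfy exactly the hypotheses of Lemma~\ref{lemma:existence of eigenspinors}, namely the correct weighted orthogonality and normalization after conformal transport. One must also check that the relevant limit exponent $\lambda_j(\beta_m)$ is attached to $\phi_j$ and not $\lambda_k(\beta_m)$, a typo in that lemma's proof that the argument should tolerate.
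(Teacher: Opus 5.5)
Your proposal is correct and follows essentially the same route as the paper. The paper also approximates $\beta$ by smooth $\beta_m$ bounded below and bounds $\lambda_k(\beta_m)$ by the eigenvalue of a constant. It then gets $\lambda_j(\beta)\leqslant\liminf\lambda_j(\beta_m)$ from Lemma~\ref{lemma:existence of eigenspinors}, gets the reverse inequality from Lemma~\ref{lemma:smooth upper continuity of eigenvalues}, and concludes by the sandwich. Your additional checks fill in details the paper leaves implicit: the conformal transport of the orthogonality and normalization hypotheses, the explicit computation of $\mathcal{F}(\beta)[\phi_k]$, the diagonal subsequence giving orthogonality for all indices, and the $\lambda_k$/$\lambda_j$ typo in the lemma's proof.
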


\begin{proof}
There exists a sequence $(\beta_m)_{m \in \mathbb{N}} \in (\mathcal{C}^{\infty}(M))^{\mathbb{N}}$ such that we have $\beta_m \geqslant \delta$ for all $m \in \mathbb{N}$ and $\beta_m \overset{L^n(M)}{\to} \beta$.
Since $\beta_m \geqslant \delta$ for all $m \in \mathbb{N}$, it follows that $\lambda_k(\beta_m) \leqslant \lambda_k(\delta)$. Moreover, since $\beta_m$ is a strictly positive and smooth function, all other assumptions of Lemma~\ref{lemma:existence of eigenspinors} are satisfied. Hence, there exist nonzero spinors $\phi_1,\ldots,\phi_k$ in $H^1$ such that
\begin{itemize}
    \item $D_g \phi_j = \liminf\limits_{m\to \infty}(\lambda_j(\beta_m)) \beta \phi_j$ for all $j \in \{\,1,\ldots,k\,\}$,
    \item $\int_M \beta \dprod{\phi_i,\phi_j} = 0$ for all $i \neq j \in \{\,1,\ldots,k\,\}$,
\end{itemize}
Also, $\lambda_j(\beta) \leqslant \liminf\limits_{m\to \infty}\lambda_j(\beta_m)$ for all $j \in \{\,1,\ldots,k\,\}$. It follows from Lemma~\ref{lemma:smooth upper continuity of eigenvalues} that $\limsup\lambda_j(\beta_m) \leqslant \lambda_j(\beta)$. Consequently, 
\[
\limsup\limits_{m\to \infty}\lambda_j(\beta_m) \leqslant \lambda_j(\beta) \leqslant \liminf\limits_{m\to \infty} \lambda_j(\beta_m),
\]
which implies that $\lambda_j(\beta) = \liminf\limits_{m\to \infty} \lambda_j(\beta_m)$, i.e. $D_g\phi_j =\lambda_j(\beta)\beta\phi_j$ for all $j \in \{\,1,\ldots,k\,\}$.
\end{proof}

\begin{remark}
Note that we actually proved above that $\lim\limits_{m \to \infty}\lambda_k(\beta_m) = \lambda_k(\beta)$. Indeed, we showed that for any sequence $(\beta_m)_{\mathbb{N}} \in \mathcal{C}^{\infty}_{\geqslant\delta}(M)$ such that $\beta_m  \overset{L^n(M)}{\to}\beta$, there exists a subsequence $(\beta_{\varphi(m)})_{m \in \mathbb{N}}$ such that $\lim\limits_{m \to \infty}\lambda_k(\beta_{\varphi(m)}) = \lambda_k(\beta)$. Now by a standard subsequence argument one can deduce that $\lim\limits_{m \to \infty}\lambda_k(\beta_m) = \lambda_k(\beta)$.
\end{remark}

\begin{proposition}
[Continuity of generalized eigenvalues in $L^n_{\geqslant\delta}(M)$]\label{theorem:continuity in L^n_delta}
Fix $\delta > 0$. Assume that $\beta_m \overset{L^n}{\to}\beta$, where $\beta_m,\beta \geqslant \delta$ almost everywhere. Then $\lambda_k(\beta_m) \to \lambda_k(\beta)$ for all $k \in \mathbb{N}$.
\end{proposition}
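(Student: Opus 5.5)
The plan is to prove the two inequalities
\[
\limsup_{m\to\infty}\lambda_k(\beta_m)\leqslant\lambda_k(\beta)
\qquad\text{and}\qquad
\lambda_k(\beta)\leqslant\liminf_{m\to\infty}\lambda_k(\beta_m).
\]
Both are obtained by running the two halves of the argument of Proposition~\ref{theorem:existence of eigenspinors}, now with $\beta_m$ only in $L^n_{\geqslant\delta}(M)$ rather than smooth. As in the remark following that proposition, a subsequence argument reduces everything to showing that every subsequence has a further subsequence along which $\lambda_k(\beta_m)\to\lambda_k(\beta)$. So we may freely pass to subsequences, in particular to one with $\beta_m\to\beta$ almost everywhere.

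\textbf{Upper semicontinuity.} Repeat the proof of Lemma~\ref{lemma:smooth upper continuity of eigenvalues}. That proof never used smoothness of $\beta_m$. It used only that $1/\beta_m$ is uniformly bounded in $L^\infty(M)$, which holds here since $1/\beta_m\leqslant 1/\delta$, and that $\beta_m\to\beta$ almost everywhere along a subsequence. The steps are as follows.
\begin{enumerate}
\item Fix a $k$-dimensional almost-optimal space $E$ for $\lambda_k(\beta)$.
\item By dominated convergence, with dominating function $\delta^{-1}|D_g\varphi|_g|D_g\psi|_g\in L^1$, the Hermitian forms $\int_M\beta_m^{-1}\dprod{D_g\varphi,D_g\psi}_g\,dv_g$ converge to the corresponding form for $\beta$ on the finite-dimensional space $E$.
\item Hence $\min_E\mathcal{F}(\beta_m)\to\min_E\mathcal{F}(\beta)$, and this gives $\limsup_m\lambda_k(\beta_m)\leqslant\lambda_k(\beta)$.
\end{enumerate}
As a byproduct, the sequence $\lambda_k(\beta_m)$ is bounded. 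Alternatively, boundedness follows directly from $\lambda_k(\beta_m)\leqslant\lambda_k(\delta)$, as noted at the start of this section.

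\textbf{Lower semicontinuity.} For each $m$, Proposition~\ref{theorem:existence of eigenspinors} applied to $\beta_m\in L^n_{\geqslant\delta}(M)$ supplies eigenspinors $\varphi^m_1,\ldots,\varphi^m_k$ with the following properties:
\begin{itemize}
\item $D_g\varphi^m_j=\lambda_j(\beta_m)\beta_m\varphi^m_j$;
\item they are $\beta_m$-orthogonal;
\item after normalization, $\int_M\beta_m^2|\varphi^m_j|_g^2\,dv_g=1$.
\end{itemize}
This is legitimate because $\lambda_j(\beta_m)>0$ and $\beta_m\geqslant\delta$ give $\varphi^m_j\notin\ker D_g$, so the normalizing integral is nonzero. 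The sequence $(\lambda_k(\beta_m))_m$ is bounded by $\lambda_k(\delta)$, and the limit $\beta\geqslant\delta$ is nonzero. All hypotheses of Lemma~\ref{lemma:existence of eigenspinors} therefore hold, and it yields $\lambda_j(\beta)\leqslant\liminf_m\lambda_j(\beta_m)$ for every $j\leqslant k$.

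\textbf{Main obstacle.} The one delicate point is verifying that Lemma~\ref{lemma:existence of eigenspinors} really applies to non-smooth $\beta_m$. Its proof passes through Lemma~\ref{lemma: H^1 analogue of lemma 2.1}, which needs the $\varphi^m_j$ to lie in $H^1$ and $\beta_m\in L^n$ bounded; both are supplied by Proposition~\ref{theorem:existence of eigenspinors}. Also, $\beta_m\to\beta\neq 0$ strongly rules out $\beta_m\rightharpoonup 0$, so the kernel components of the $\varphi^m_j$ stay bounded. Once this is checked, combining the two inequalities gives convergence along a subsequence of an arbitrary subsequence, and hence $\lambda_k(\beta_m)\to\lambda_k(\beta)$ for the full sequence.
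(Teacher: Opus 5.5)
Your proof is correct, but it is organized differently from the paper's. The paper never reopens the proof of Lemma~\ref{lemma:smooth upper continuity of eigenvalues}. It cites the remark after Proposition~\ref{theorem:existence of eigenspinors}, which gives continuity of $\lambda_k$ along \emph{smooth} sequences in $\mathcal{C}^\infty_{\geqslant\delta}(M)$. With it, for each $m$ the paper picks a smooth $\tilde\beta_m\geqslant\delta$ with $\norm{\beta_m-\tilde\beta_m}_{L^n}<2^{-m}$ and $\abs{\lambda_k(\beta_m)-\lambda_k(\tilde\beta_m)}<2^{-m}$. It then applies the same remark to the smooth sequence $\tilde\beta_m\to\beta$. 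This is a diagonal reduction to the smooth case.

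You instead observe two things. First, the smoothness hypothesis in Lemma~\ref{lemma:smooth upper continuity of eigenvalues} is never used: only the bound $1/\beta_m\leqslant 1/\delta$ and a.e.\ convergence enter the dominated convergence step. Second, Proposition~\ref{theorem:existence of eigenspinors} already supplies $H^1$ eigenspinors for each non-smooth $\beta_m$, so Lemma~\ref{lemma:existence of eigenspinors} applies directly. Together these give both semicontinuity inequalities for the original sequence with no approximation.

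The paper's route is shorter on the page because it only cites statements. Yours requires checking that an existing proof survives weaker hypotheses, which you do correctly. In exchange it is more transparent, and its lower-semicontinuity half is essentially the argument the paper later uses for Lemma~\ref{lemma:lower semi-continuity of lambda}, specialized to $\beta_m\geqslant\delta$. One minor point: since you prove the $\limsup$ and $\liminf$ inequalities for the whole sequence, the preliminary subsequence-of-subsequence reduction is unnecessary.
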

\begin{proof}
By the argument above we can approximate each $\beta_m$ by a strictly positive smooth function $\Tilde{\beta}_m$, which is greater than or equal to $\delta$, such that 
\[
\abs{\lambda_k(\beta_m) - \lambda_k(\Tilde{\beta}_m)} < 2^{-m}
\]
and 
\[
\norm{\beta_m - \Tilde{\beta}_m}_{L^n} < 2^{-m}.
\]
Consequently, $\Tilde{\beta}_m \overset{L^n}{\to} \beta$ and $\lim\limits_{m \to \infty} \lambda_k(\Tilde{\beta}_m) = \lambda_k(\beta)$. By construction of the sequence $(\Tilde{\beta}_m)_{m \in \mathbb{N}}$, we have that $\lim\limits_{m \to \infty}\lambda_k(\beta_m) = \lambda_k(\beta)$. 
\end{proof}

\section{Existence of eigenspinors. General case.}\label{section:existence general case}
In this section we develop our theory of generalized eigenvalues and eigenspinors when our function $\beta \in L^n(M)$ is just non-negative almost everywhere. In particular, in this section we allow $\beta$ to be $0$ on a set of positive measure. Therefore, we have to develop the following convention: $\frac{r}{0} = +\infty$ for $r > 0$, $\frac{r}{0} = -\infty$ for $r < 0$ and $\frac{0}{0} = 0$, so the main problem for us will be the spinors $\varphi$ such that the set $\{\,x \in M \mid \beta(x) = 0 \quad \text{and} \quad \abs{D_g\varphi(x)} = 0\,\}$ has positive measure. For $\beta \geqslant 0$ almost everywhere we can still define 
\[
\lambda_k(\beta)^{-1} \overset{\mathrm{def}}{=} \sup_{E \in \mathcal{G}_k\left(H^1 \setminus \ker D_g\right)} \min_{\varphi \in E\setminus \{\,0\,\}} \mathcal{F}(\beta)[\varphi],
\]
however, now $\int_M \frac{1}{\beta}\abs{D_g\varphi}^2dv_g$ can be equal to $+\infty$. Consequently, it is possible that for certain $\beta \in L^n_{\geqslant 0}(M)$ we obtain $\lambda_k(\beta) = +\infty$, but since we are looking for minimizers in the conformal class $[g]$, this is simply an inconvenience.

Our first result is related to upper semi-continuity of $\lambda_k(\beta)$.
\begin{lemma}\label{lemma:easy upper continuity of eigenvalues. Positive case}
For all $\beta \in L^n(M)$ we have that
\[
\limsup_{m \to \infty}\lambda_k\left(\beta + \frac{1}{m}\right) \leqslant  \lambda_k(\beta) 
\]
\end{lemma}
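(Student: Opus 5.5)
The plan is to argue directly from the minimax definition, in the same spirit as Lemma~\ref{lemma:smooth upper continuity of eigenvalues}, with monotone convergence in place of dominated convergence.

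Two trivial cases come first. If $\lambda_k(\beta) = +\infty$, there is nothing to prove. Otherwise $\lambda_k(\beta)^{-1} > 0$, so we fix $\varepsilon > 0$ small and choose $E \in \mathcal{G}_k\left(H^1 \setminus \ker D_g\right)$ with
\[
\min_{\varphi \in E \setminus \{0\}} \mathcal{F}(\beta)[\varphi] \geqslant \lambda_k(\beta)^{-1} - \varepsilon > 0.
\]

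Next we show that $\int_M \frac{1}{\beta}\abs{D_g\varphi}_g^2 dv_g$ is finite on $E$ and defines a Hermitian product there. Positivity of $\mathcal{F}(\beta)$ on $E \setminus\{0\}$ forces, for each such $\varphi$, $0 < \int_M \frac1\beta \abs{D_g\varphi}^2 < \infty$; if this integral were $+\infty$, the quotient would be $0$ under the conventions of this section. Consequently $D_g\varphi = 0$ a.e. on $\{\beta = 0\}$, and the sesquilinear form $\dprod{\varphi,\psi}_\beta = \int_M \frac1\beta \dprod{D_g\varphi, D_g\psi}_g dv_g$ is well defined on $E$ by Cauchy–Schwarz. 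It is positive definite, since $\dprod{\varphi,\varphi}_\beta = 0$ would give $D_g\varphi = 0$ and hence $\varphi \in \ker D_g$. The finiteness on all of $E$ follows because finiteness on a basis, together with Cauchy–Schwarz, extends it to every linear combination.

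Now compare with $\beta + \frac1m$. Pointwise $\frac{1}{\beta + 1/m} \uparrow \frac{1}{\beta}$, interpreted as $+\infty$ where $\beta = 0$. On $\{\beta=0\}$ we have $D_g\varphi = 0$, so the integrands there vanish. By monotone convergence, and by dominated convergence for the off-diagonal terms with dominant $\frac1\beta\abs{D_g\varphi_i}\abs{D_g\varphi_j} \in L^1$, we get
\[
\dprod{\varphi_i,\varphi_j}_{\beta + 1/m} \to \dprod{\varphi_i,\varphi_j}_\beta
\]
for a $\dprod{\cdot,\cdot}_\beta$-orthonormal basis $\varphi_1,\dots,\varphi_k$ of $E$. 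Exactly as in Lemma~\ref{lemma:smooth upper continuity of eigenvalues}, this convergence is uniform on the unit sphere of $(E,\dprod{\cdot,\cdot}_\beta)$, because the sphere is compact and $E$ is finite dimensional. The numerator of $\mathcal{F}$ does not depend on $m$. A simpler route even suffices here: since $\frac{1}{\beta+1/m} \leqslant \frac1\beta$, we have $\mathcal{F}(\beta + \frac1m)[\varphi] \geqslant \mathcal{F}(\beta)[\varphi]$ whenever the numerator is positive, which holds on $E$. So in fact $\lambda_k(\beta + \frac1m) \leqslant \lambda_k(\beta)$ for every $m$, with no limit needed.

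It remains to ensure that $E \cap \ker D_g = \{0\}$, which holds by the choice of $E$. Using $E$ as a competitor then gives
\[
\lambda_k(\beta+\tfrac1m)^{-1} \geqslant \lambda_k(\beta)^{-1} - \varepsilon,
\]
and letting $\varepsilon \to 0$ concludes. The only delicate point, and the one I would write out carefully, is the handling of the $0/0$ and $\pm\infty$ conventions: one needs that a competitor space realizing a positive min-ratio has finite, positive $\beta$-weighted energy on all nonzero elements. This is the step where one must check that the numerator is positive and not merely nonnegative, which follows from the min-ratio being positive.
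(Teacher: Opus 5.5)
Your proof is correct. Its first half is exactly the paper's argument: the Hermitian form $\int_M\frac{1}{\beta}\langle D_g\varphi,D_g\psi\rangle_g\,dv_g$ on $E$, convergence of the Gram matrix of an orthonormal basis, and uniformity on the unit sphere of $E$. There the pointwise bound $\frac{1}{\beta+1/m}\leqslant\frac{1}{\beta}$ serves only as a dominating function before the paper reruns the reasoning of Lemma~\ref{lemma:smooth upper continuity of eigenvalues}.

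Your shortcut uses that same bound directly. Every nonzero $\varphi\in E$ has positive numerator, and $0<\int_M\frac{1}{\beta+1/m}|D_g\varphi|_g^2\,dv_g\leqslant\int_M\frac{1}{\beta}|D_g\varphi|_g^2\,dv_g<\infty$. Hence $\min_{\varphi\in E\setminus\{0\}}\mathcal{F}(\beta+\frac{1}{m})[\varphi]\geqslant\min_{\varphi\in E\setminus\{0\}}\mathcal{F}(\beta)[\varphi]$, which gives the stronger, limit-free inequality $\lambda_k(\beta+\frac{1}{m})\leqslant\lambda_k(\beta)$ for every $m$. The paper itself uses this monotonicity at the start of Section~\ref{section:existence easy case} to get $\lambda_k(\beta)\leqslant\lambda_k(\delta)$. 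It would also give the upper bound in Lemma~\ref{lemma:continuity of generalized eigenvalues in positive direction}, since there $b\geqslant 0$.

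What the paper's Gram-matrix argument buys is robustness. It works for approximations that are not pointwise comparable to $\beta$, as in Lemma~\ref{lemma:smooth upper continuity of eigenvalues}, where $\beta_m\to\beta$ only in $L^n$. For this lemma, your monotonicity route is strictly simpler.
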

\begin{proof}
If $\lambda_k(\beta) = +\infty$, then there is nothing to prove, so we restrict our attention to the case $\lambda_k(\beta) < +\infty$.

Fix $\varepsilon > 0$. Since $\lambda_k(\beta)$ is finite, there exists $E \in \mathcal{G}_k(H^1 \setminus \ker D_g)$ such that 
\[
\min_{\varphi \in E \setminus 0} \mathcal{F}(\beta)[\varphi] \geqslant \frac{1}{\lambda_k(\beta)} - \varepsilon.
\]
Thus, in particular, for all $\varphi \in E \setminus \{\,0\,\}$ we have
$\int_M \frac{1}{\beta}\abs{D_g \varphi}^2 dv_g < +\infty$. 

Note that if $\int_M \frac{1}{\beta}\abs{D_g \varphi}_g^2 dv_g = 0$, then $\abs{D_g \varphi}_g$ vanishes almost everywhere. Indeed, $D_g \varphi$ vanishes almost everywhere on a set $\{\,x \in M \mid \beta(x) > 0\,\}$. Moreover, since $\int_M \frac{1}{\beta}\abs{D_g \varphi}_g^2 dv_g < +\infty$, it follows that the set $\{\,x \in M \mid \beta(x) = 0 \quad \text{and} \quad \abs{D_g\varphi(x)}_g > 0\,\}$ is negligible. Consequently, $D_g\varphi = 0$. Since $E \cap \ker D_g = \{\,0\,\}$, it follows that $\varphi = 0$.

Note that $\forall \varphi,\psi \in E$ we have 
\[
\int_{M}\frac{1}{\beta}\abs{\dprod{D_g\varphi,D_g\psi}_g}dv_g \leqslant\int_M \frac{1}{\beta} (\abs{D_g\varphi}_g^2 + \abs{D_g\psi}_g^2)dv_g < \infty.
\]
Thus, $\dprod{\varphi,\psi}_{\beta} \overset{\mathrm{def}}{=} \int_M \frac{1}{\beta} \dprod{D_g\varphi,D_g\psi}_gdv_g$ is a well-defined Hermitian product on $E$. 

Now take a subsequence $\left(\beta + \frac{1}{m_j}\right)_{m \in \mathbb{N}}$ such 
\[
\lim_{m \to \infty} \lambda_k\left(\beta + \frac{1}{m_j}\right) = \limsup\limits_{m \to \infty} \lambda_k\left(\beta + \frac{1}{m}\right).
\]
For any $\varphi,\psi \in E$ we have that $\frac{1}{\beta}\dprod{D_g\varphi,D_g\psi}$ is integrable. Clearly, 
\[
\frac{1}{\beta + \frac{1}{m_j}}\abs{\dprod{D_g\varphi, D_g\psi}_g} \leqslant \frac{1}{\beta}\abs{\dprod{D_g\varphi,D_g\psi}_g} \quad \text{almost everywhere.}
\]
Hence,
by dominated convergence theorem.
\[
\lim_{m \to \infty} \int_M \frac{1}{\beta + \frac{1}{m_j}} \dprod{D_g\varphi,D_g\psi}_gdv_g = \int_M \frac{1}{\beta} \dprod{D_g\varphi,D_g\psi}_gdv_g.
\]
Using now exactly the same reasoning as in the proof of Lemma~\ref{lemma:smooth upper continuity of eigenvalues} we deduce that
\[
\limsup_{m \to \infty}\lambda_k\left(\beta + \frac{1}{m}\right) \leqslant \lambda_k(\beta).
\]
\end{proof}

We now establish the existence of generalized eigenspinors for $\beta \in L^n_{\geqslant 0}(M)$. Obviously, if we want to find $\varphi_k \neq 0$ such that $D_g \varphi_k = \lambda_k(\beta)\beta \varphi_k$, it is necessary to assume that $\lambda_k(\beta)$ is finite.

\begin{proposition}[Existence of eigenspinors. Non-negative  case]\label{theorem:existence of eigenspinors. Positive case.}
Let $\beta \in L^n$ be a non-negative nonzero function. For all $k \in \mathbb{N}_{>0}$ such that $\lambda_k(\beta)$ is finite, there exists a nonzero $\phi_k$ such that
\[
\lambda_k(\beta)^{-1} = \mathcal{F}(\beta)[\phi_{k}].
\]
Moreover, $D_g\phi_{k} = \lambda_k(\beta)\beta\phi_{k}$, $\int_M \beta \dprod{\phi_i,\phi_j}_g dv_g= 0$ for all $i \neq j$ in $\{\,1,\ldots, k\,\}$ and $\int_M \beta^2 \abs{\phi_j}_g^2dv_g = 1$ for all $j \in \{\,1\ldots,k\,\}$.
\end{proposition}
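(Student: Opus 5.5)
We approximate $\beta$ from above by $\beta_m \overset{\mathrm{def}}{=} \beta + \frac{1}{m}$, which lies in $L^n_{\geqslant 1/m}(M)$, and pass to the limit using the machinery of Sections~\ref{section: auxiliary part 1} and~\ref{section:existence easy case}.

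For each $m$, Proposition~\ref{theorem:existence of eigenspinors} gives spinors $\varphi^m_1,\ldots,\varphi^m_k \in H^1$ with $D_g\varphi^m_j = \lambda_j(\beta_m)\beta_m\varphi^m_j$ and $\int_M \beta_m\dprod{\varphi^m_i,\varphi^m_j}_g dv_g = 0$ for $i\neq j$. Each $\varphi^m_j$ is nonzero and not in $\ker D_g$, since $\lambda_j(\beta_m)>0$ and $\beta_m\geqslant 1/m$. Hence $\int_M \beta_m^2\abs{\varphi^m_j}^2_g dv_g = \lambda_j(\beta_m)^{-2}\norm{D_g\varphi^m_j}^2_{L^2} > 0$, and we can renormalize so that $\int_M\beta_m^2\abs{\varphi^m_j}_g^2dv_g = 1$.

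Next we bound the eigenvalues. Since $\beta_m \geqslant \beta$, we have $\mathcal{F}(\beta_m)[\varphi]\geqslant \mathcal{F}(\beta)[\varphi]$ for every $\varphi$, and therefore $\lambda_j(\beta_m)\leqslant\lambda_j(\beta)\leqslant\lambda_k(\beta)<\infty$. So $(\lambda_k(\beta_m))_m$ is bounded. Moreover $\beta_m\overset{L^n(M)}{\to}\beta$ with $\beta\neq 0$. This gives all hypotheses of Lemma~\ref{lemma:existence of eigenspinors}, which produces nonzero $\phi_1,\ldots,\phi_k\in H^1$, weak $H^1$-limits of subsequences of $\varphi^m_j$, satisfying
\[
D_g\phi_j = \Big(\liminf_{m\to\infty}\lambda_j(\beta_m)\Big)\beta\phi_j,\qquad \int_M\beta\dprod{\phi_i,\phi_j}_g dv_g=0 \ (i\neq j),
\]
together with $\lambda_j(\beta)\leqslant\liminf_m\lambda_j(\beta_m)$.

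Combining this with the monotonicity bound $\lambda_j(\beta_m)\leqslant\lambda_j(\beta)$, or with Lemma~\ref{lemma:easy upper continuity of eigenvalues. Positive case}, gives $\lim_m\lambda_j(\beta_m)=\lambda_j(\beta)$. Hence $D_g\phi_j=\lambda_j(\beta)\beta\phi_j$.

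It remains to check the normalization and the Rayleigh quotient identity.
\begin{itemize}
\item \emph{Normalization.} Lemma~\ref{lemma:compact opetator from H^1 to L^2(gamma^2)} with $\gamma_m=\beta_m$ gives $\int_M\beta_m^2\abs{\varphi^m_j-\phi_j}^2_g dv_g\to0$. Combined with $\beta_m^2 \to \beta^2$ in $L^{n/2}$, this yields $\int_M\beta^2\abs{\phi_j}^2_g dv_g=1$.
\item \emph{Rayleigh quotient.} On $\{\beta>0\}$ we have $\frac{1}{\beta}\abs{D_g\phi_j}^2=\lambda_j(\beta)^2\beta\abs{\phi_j}^2$. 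On $\{\beta=0\}$, $D_g\phi_j=0$, which is the harmless $\frac00=0$ convention. Hence $\int_M\frac1\beta\abs{D_g\phi_j}^2_g dv_g=\lambda_j(\beta)^2\int_M\beta\abs{\phi_j}^2_g dv_g$, while $\int_M\dprod{D_g\phi_j,\phi_j}_g dv_g = \lambda_j(\beta)\int_M\beta\abs{\phi_j}_g^2 dv_g$. The quotient is therefore $\lambda_j(\beta)^{-1}$, provided $\int_M\beta\abs{\phi_j}^2_g dv_g>0$.
\end{itemize}

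The main obstacle is justifying this positivity, i.e.\ that $\beta\abs{\phi_j}^2$ does not vanish a.e. This holds because $\beta^2\abs{\phi_j}^2$ has integral $1$ and $\beta\geqslant0$, so $\beta\abs{\phi_j}^2$ cannot vanish a.e. The integrability $\beta\abs{\phi_j}^2\in L^1$ follows from H\"older's inequality since $\phi_j\in H^1\subset L^{2n/(n-2)}$ (or any $L^p$ when $n=2$). Hence $\phi_j\notin\ker D_g$ and $\mathcal F(\beta)[\phi_j]=\lambda_j(\beta)^{-1}$, which is the claim for $\phi_k$.
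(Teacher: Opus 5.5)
Your proposal is correct and follows the paper's proof essentially step by step: approximation by $\beta+\frac1m$, Proposition~\ref{theorem:existence of eigenspinors}, renormalization so that $\int_M\beta_m^2\abs{\varphi^m_j}_g^2\,dv_g=1$, Lemma~\ref{lemma:existence of eigenspinors} combined with an upper bound on $\lambda_j(\beta_m)$, and Lemma~\ref{lemma:compact opetator from H^1 to L^2(gamma^2)} for the normalization; you additionally verify $\mathcal{F}(\beta)[\phi_k]=\lambda_k(\beta)^{-1}$ explicitly, which the paper leaves implicit. One small caveat: the inequality $\mathcal{F}(\beta_m)[\varphi]\geqslant\mathcal{F}(\beta)[\varphi]$ holds only when $\int_M\dprod{D_g\varphi,\varphi}_g\,dv_g\geqslant 0$, not for every $\varphi$, but this suffices because only subspaces on which $\mathcal{F}(\beta)$ is positive matter in the sup--min (and Lemma~\ref{lemma:easy upper continuity of eigenvalues. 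Positive case}, which the paper uses, gives the same bound anyway).
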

\begin{proof}
As one might expect, the proof of this theorem resembles the proof of Theorem~\ref{theorem:existence of eigenspinors}.

Since $\beta \geqslant 0$ almost everywhere, we have that $\beta_m \overset{\mathrm{def}}{=}\beta + \frac{1}{m} \geqslant \frac{1}{m}$ almost everywhere. By Proposition~\ref{theorem:existence of eigenspinors}, for each $m \in \mathbb{N}$ there exist nonzero spinors $\phi^m_1,\ldots,\phi^m_k$ in $H^1$ such that
\begin{itemize}
    \item $D_g \phi^m_j = \lambda_j(\beta_m) \beta_m \phi^m_j$ for all $j \in \{\,1,\ldots,k\,\}$,
    \item $\int_M \beta_m \dprod{\phi^m_i,\phi^m_j}_g = 0$ for all $i \neq j \in \{\,1,\ldots,k\,\}$.
\end{itemize}
Since $\phi^m_j$ is nonzero and $\beta_m$ is strictly positive almost everywhere, it follows that
\[
\int_M \beta_m^2\abs{\phi^m_j}_g^2dv_g > 0,
\]
so we may, after dividing $\phi^m_j$ by a suitable constant, assume that
\[
\int_M \beta_m^2\abs{\phi^m_j}^2_gdv_g = 1,
\]
Also, by Lemma~\ref{lemma:easy upper continuity of eigenvalues. Positive case}, we have that 
\[
\limsup_{m \to \infty}  \lambda_k\left(\beta_m\right) \leqslant \lambda_k(\beta) < \infty,
\]
Now we are allowed to use Lemma~\ref{lemma:existence of eigenspinors} which tells us that there exist nonzero spinors $\phi_1,\ldots,\phi_k$ in $H^1$ such that
\begin{itemize}
    \item $D_g \phi_j = \liminf\limits_{m \to \infty}(\lambda_j(\beta_m)) \beta \phi_j$ for all $j \in \{\,1,\ldots,k\,\}$,
    \item $\int_M \beta \dprod{\phi_i,\phi_j} = 0$ for all $i \neq j \in \{\,1,\ldots,k\,\}$,
\end{itemize}
Moreover, $\lambda_j(\beta) \leqslant \liminf\lambda_j(\beta_m)$ for all $j \in \{\,1,\ldots,k\,\}$. It follows from Lemma~\ref{lemma:easy upper continuity of eigenvalues. Positive case} that $\limsup\lambda_j(\beta_m) \leqslant \lambda_j(\beta)$. Consequently, 
\[
\limsup\limits_{m \to \infty}\lambda_j(\beta_m) \leqslant \lambda_j(\beta) \leqslant \liminf\limits_{m \to \infty} \lambda_j(\beta_m),
\]
which implies that $\lambda_j(\beta) = \liminf \lambda_j(\beta_m)$, i.e. $D_g\phi_j =\lambda_j(\beta)\beta\phi_j$ for all $j \in \{\,1,\ldots,k\,\}$.
In order to conclude we have to show that $\int_M\beta^2 \abs{\phi_j}^2_gdv_g = 1$. By Lemma~\ref{lemma:existence of eigenspinors} we obtain a subsequence $\left(\phi^{m_j}_j\right)_{m_j \in \mathbb{N}}$ such that $\phi^{m_j}_j \overset{H^1}{\rightharpoonup} \phi_j$. Hence, from Lemma~\ref{lemma:compact opetator from H^1 to L^2(gamma^2)} we deduce that
\[
\lim_{m \to \infty} \int_M \beta^2_{m_j}\abs{\phi^{m_j}_j - \phi_j}_g^2 dv_g = 0.
\]
Since $\beta_{m_j} = \beta + \frac{1}{m_j}$ and $\left(\phi^{m_j}_j - \phi_j\right)_{m \in \mathbb{N}}$ is bounded in $L^\frac{2n}{n-2}$ (indeed, this sequence is bounded in $H^1$, so, by Sobolev's embedding theorem, it is also bounded in $L^\frac{2n}{n-1}$), it follows that
\[
\lim_{m \to \infty} \int_M \beta^2\abs{\phi^{m_j}_j - \phi_j}^2 dv_g = 0.
\]
Moreover, for exactly the same reasons,
\[
\lim_{m \to \infty} \int_M \beta\abs{\phi^{m_j}_j}^2 
= \lim_{m \to \infty} \int_M \beta_{m_j}^2\abs{\phi^{m_j}_j}^2 dv_g
= 1.
\]
Consequently,
\[
\int_M \beta^2\abs{\phi_j}_g^2dv_g = \lim_{m \to \infty} \int_M \beta^2\abs{\phi^{m_j}_j}_g^2 = 1.
\]

\end{proof}

\begin{remark}
It follows easily from the proof of Proposition~\ref{theorem:existence of eigenspinors. Positive case.} that if $\lambda_k(\beta)$ is finite, we have that $\lim\limits_{m \to +\infty}\lambda_k\left(\beta + \frac{1}{m}\right) = \lambda_k(\beta)$
\end{remark}

\section{Some results on the continuity of generalized eigenvalues.}\label{section:continuity}
In this section, we prove that the functional $\lambda_k \colon L^n_{\geqslant 0}(M) \to [0,+\infty]$ satisfies reasonable  continuity properties. First of all, we show that $\lambda_k$ is a lower semi-continuous functional. This result will permit us to use Ekeland's variational principle. 

\begin{lemma}\label{lemma:lower semi-continuity of lambda}
Let $(\beta_m)_{m \in \mathbb{N}}$ be a sequence in $L^n_{\geqslant 0}(M)$ such that $\beta_m \overset{L^n(M)}{\to} \beta$. Then
\[
\lambda_k(\beta) \leqslant \liminf\limits_{m \to \infty} \lambda_k(\beta_m).
\]   
\end{lemma}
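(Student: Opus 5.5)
We argue by extracting a subsequence along which the $\liminf$ becomes a limit and then applying Lemma~\ref{lemma:existence of eigenspinors}. If $\liminf_{m\to\infty}\lambda_k(\beta_m)=+\infty$ there is nothing to prove. We may also assume $\beta\neq 0$: if $\beta=0$, then $\mathcal{F}(0)[\varphi]\leqslant 0$ for every $\varphi\notin\ker D_g$, so $\lambda_k(0)=+\infty$. In that case the claim follows if we show $\lambda_k(\beta_m)\to+\infty$, and we treat this degenerate case at the end. Otherwise, pass to a subsequence with $\lambda_k(\beta_m)\to\liminf\lambda_k(\beta_m)<\infty$. Then $\lambda_k(\beta_m)$ is finite and bounded, and since $\beta$ is nonzero we may discard finitely many terms and assume each $\beta_m$ is nonzero.

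For each $m$, Proposition~\ref{theorem:existence of eigenspinors. Positive case.} gives spinors $\varphi^m_1,\ldots,\varphi^m_k\in H^1$. They satisfy $D_g\varphi^m_j=\lambda_j(\beta_m)\beta_m\varphi^m_j$ and $\int_M\beta_m^2|\varphi^m_j|_g^2=1$, and they are $\beta_m$-orthogonal. These are exactly the hypotheses of Lemma~\ref{lemma:existence of eigenspinors}: $\beta_m\to\beta$ in $L^n$, $\beta\neq0$, and $(\lambda_k(\beta_m))$ is bounded. That lemma then yields $\lambda_j(\beta)\leqslant\liminf\lambda_j(\beta_m)$ for all $j\leqslant k$, and in particular for $j=k$. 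Since we passed to a subsequence realizing the original $\liminf$, this is the claim.

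The main obstacle is checking that Lemma~\ref{lemma:existence of eigenspinors} really holds at this generality, i.e.\ for $\beta_m$ that may vanish on sets of positive measure rather than being smooth. Its proof uses Lemma~\ref{lemma: H^1 analogue of lemma 2.1}, which needs only $L^n$ bounds, and Lemma~\ref{lemma:compact opetator from H^1 to L^2(gamma^2)}. It also uses the minimax characterization with the limit spinors $\phi_1,\ldots,\phi_k$. Here we must verify two things. First, the span $E$ of the $\phi_j$ is $k$-dimensional and meets $\ker D_g$ only in $0$. Second, on $E$ we have $\mathcal{F}(\beta)\geqslant\lambda_k^{-1}$, where $\lambda_k$ denotes the limit of $\lambda_k(\beta_m)$.

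For the first point we use $\beta$-orthogonality together with nonvanishing. From $\int\beta^2|\phi_j|^2=1$, obtained via Lemma~\ref{lemma:compact opetator from H^1 to L^2(gamma^2)}, we get $\int\beta|\phi_j|^2>0$. Indeed, $\beta\phi_j\neq0$ and $\int\beta|\phi_j|^2=\lambda_j^{-1}\int\langle D_g\phi_j,\phi_j\rangle$ is a limit of positive quantities. So the $\phi_j$ are linearly independent. For $\psi=\sum a_j\phi_j\in E$, the identities $D_g\phi_j=\lambda_j\beta\phi_j$ give
\[
\int\langle D_g\psi,\psi\rangle=\sum|a_j|^2\lambda_j\int\beta|\phi_j|^2,\qquad \int\beta^{-1}|D_g\psi|^2=\sum|a_j|^2\lambda_j^2\int\beta|\phi_j|^2.
\]
The set $\{\beta=0\}$ contributes nothing to the second integral, because $D_g\psi=0$ there. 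Hence $E\cap\ker D_g=\{0\}$ and $\mathcal{F}(\beta)[\psi]\geqslant\lambda_k^{-1}$, since $\lambda_j\leqslant\lambda_k$. The one point to watch is $\lambda_j>0$. This holds because the limits $\lambda_j$ are nondecreasing in $j$, and $\lambda_1\geqslant c>0$ follows from a uniform lower bound obtained from the $H^1$ bounds and Sobolev.

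It remains to treat $\beta=0$. Suppose instead that $\lambda_k(\beta_m)$ stayed bounded along a subsequence. Then $\lambda_1(\beta_m)$ is bounded too, and we take $\varphi_m=\varphi^m_1$ with $\int_M\beta_m^2|\varphi_m|_g^2=1$. By Lemma~\ref{lemma: H^1 analogue of lemma 2.1}, the component $\psi_m\in(\ker D_g)^\perp$ is bounded in $H^1$. Pairing the equation with $\kappa_m=\varphi_m-\psi_m\in\ker D_g$ gives $\int\beta_m\langle\varphi_m,\kappa_m\rangle=0$, so
\[
\int\beta_m|\varphi_m|^2=\int\beta_m\langle\varphi_m,\psi_m\rangle\leqslant\|\beta_m\varphi_m\|_{L^2}\|\psi_m\|_{L^2}=\|\psi_m\|_{L^2},
\]
which is bounded. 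On the other hand,
\[
\int\beta_m|\varphi_m|^2=\lambda_1(\beta_m)^{-1}\|D_g\psi_m\|^2_{L^2}=\lambda_1(\beta_m)^{-1}\lambda_1(\beta_m)^2\int\beta_m^{-1}\beta_m^2|\varphi_m|^2=\lambda_1(\beta_m)\int\beta_m|\varphi_m|^2,
\]
so $\lambda_1(\beta_m)=1$; more to the point, $\|D_g\psi_m\|^2_{L^2}=\lambda_1(\beta_m)^2\|\beta_m\varphi_m\|^2_{L^2}=\lambda_1(\beta_m)^2$ is bounded below. By Lemma~\ref{lemma:compact opetator from H^1 to L^2(gamma^2)}, $\int\beta_m\langle\varphi_m,\psi_m\rangle\to0$, which forces $\int\beta_m|\varphi_m|^2\to0$ and hence $\lambda_1(\beta_m)^{-1}\|D_g\psi_m\|^2_{L^2}\to0$. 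This contradicts the lower bound on $\|D_g\psi_m\|_{L^2}$ as long as $\lambda_1(\beta_m)$ stays bounded, which it does. This contradiction handles the case $\beta=0$.
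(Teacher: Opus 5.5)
For $\beta\neq 0$ your argument is the paper's: pass to a subsequence realizing the $\liminf$, produce eigenspinors with Proposition~\ref{theorem:existence of eigenspinors. Positive case.}, and apply Lemma~\ref{lemma:existence of eigenspinors}. Your check of the minimax step is correct and more careful than the paper's: linear independence via $\beta$-orthogonality, $E\cap\ker D_g=\{0\}$, and $\mathcal{F}(\beta)\geqslant\lambda_k^{-1}$ on $E$. You are also right that $\beta=0$ needs separate treatment. There $\lambda_k(0)=+\infty$, and Lemma~\ref{lemma:existence of eigenspinors}, which assumes $\beta\neq0$, does not apply; the paper's proof passes over this case.

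\textbf{The gap is your argument for $\beta=0$, which does not work.} The identity $\int_M\beta_m\abs{\varphi_m}_g^2\,dv_g=\lambda_1(\beta_m)^{-1}\norm{D_g\psi_m}_{L^2}^2$ is false. The equation actually gives $\int_M\beta_m\abs{\varphi_m}_g^2\,dv_g=\lambda_1(\beta_m)^{-1}\int_M\dprod{D_g\psi_m,\psi_m}_g\,dv_g$, while $\norm{D_g\psi_m}_{L^2}^2=\lambda_1(\beta_m)^2$. You have conflated $\int\dprod{D_g\psi,\psi}$, $\int\abs{D_g\psi}^2$ and $\int\beta^{-1}\abs{D_g\psi}^2$, which is how you reach the absurd $\lambda_1(\beta_m)=1$.

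The final contradiction also rests on ``$\norm{D_g\psi_m}_{L^2}=\lambda_1(\beta_m)$ is bounded below''. That is a positive lower bound on $\lambda_1(\beta_m)$, which you never prove; you only gesture at it in the case $\beta\neq0$ as well. With the correct identity, $\int_M\beta_m\abs{\varphi_m}_g^2\to0$ only says $\int_M\dprod{D_g\psi_m,\psi_m}_g\,dv_g=o(1)$, which contradicts nothing by itself.

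The missing ingredient is a quantitative Sobolev--H\"older bound, and it makes the contradiction argument unnecessary. Write $\varphi=\psi+\kappa$ with $\psi\perp\ker D_g$ and $\kappa\in\ker D_g$. Elliptic estimates on $(\ker D_g)^\perp$, the embedding $W^{1,\frac{2n}{n+1}}\hookrightarrow L^{\frac{2n}{n-1}}$, and H\"older with exponents $\frac{n+1}{n}$ and $n+1$ give
\begin{align*}
\int_M\dprod{D_g\varphi,\varphi}_g\,dv_g=\int_M\dprod{D_g\psi,\psi}_g\,dv_g
&\leqslant\norm{D_g\varphi}_{L^{\frac{2n}{n+1}}}\norm{\psi}_{L^{\frac{2n}{n-1}}}\leqslant C\norm{D_g\varphi}_{L^{\frac{2n}{n+1}}}^2\\
&\leqslant C\norm{\beta}_{L^n(M)}\int_M\frac{1}{\beta}\abs{D_g\varphi}_g^2\,dv_g .
\end{align*}
Hence $\mathcal{F}(\beta)[\varphi]\leqslant C\norm{\beta}_{L^n(M)}$ for every $\varphi$, so $\lambda_1(\beta)\geqslant\bigl(C\norm{\beta}_{L^n(M)}\bigr)^{-1}$ for every $\beta\in L^n_{\geqslant0}(M)$. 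This is essentially the argument behind Lemma~\ref{lemma:analogue of lemma 2.1}(2) with the constants tracked.

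If $\beta=0$, this gives $\lambda_k(\beta_m)\geqslant\lambda_1(\beta_m)\to+\infty$ directly, which is the claim. If $\beta\neq0$, it gives the uniform bound $\lambda_j\geqslant\lambda_1\geqslant c>0$ that your minimax verification needs, since $\sup_m\norm{\beta_m}_{L^n(M)}<\infty$.
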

\begin{proof}
If $\liminf \lambda_k(\beta_m) = +\infty$, then there is nothing to prove, so we may assume that $\liminf\lambda_k(\beta_m)$ is finite. Take a subsequence $\left(\lambda_k(\beta_{\varphi(m)})\right)$ such that 
\[
\lim\limits_{m \to \infty}\lambda_k\left(\beta_{\varphi(m)}\right) = \liminf\limits_{m \to \infty} \lambda_k(\beta_m).
\]
Without loss of generality, we may assume that $\lambda_k\left(\beta_{\varphi(m)}\right)$ is finite for all $m \in \mathbb{N}$. Then Theorem~\ref{theorem:existence of eigenspinors. Positive case.} tells us that all assumptions of Lemma~\ref{lemma:existence of eigenspinors} can be satisfied. Therefore, we deduce that 
\[
\lambda_k(\beta) \leqslant \lim\limits_{m \to \infty} \lambda_k\left(\beta_{\varphi(m)}\right) = \liminf\limits_{m \to \infty} \lambda_k(\beta_m).
\]
\end{proof}
The following lemma, heuristically, says that $\lambda_k$ is continuous in all positive directions.
\begin{lemma}
[Directional continuity of generalized eigenvalues]\label{lemma:continuity of generalized eigenvalues in positive direction}
Let $\beta$ and $b$ be two functions in $L^n_{\geqslant 0}(M)$, then 
\begin{align*}
\lim_{t \to 0+}\lambda_k(\beta + t b) = \lambda_k(\beta).   
\end{align*}
\end{lemma}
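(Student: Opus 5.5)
We prove the two inequalities $\limsup_{t\to 0+}\lambda_k(\beta+tb)\leqslant\lambda_k(\beta)$ and $\lambda_k(\beta)\leqslant\liminf_{t\to0+}\lambda_k(\beta+tb)$ separately. The second is immediate from Lemma~\ref{lemma:lower semi-continuity of lambda}: for any sequence $t_m\to 0^+$ we have $\beta+t_mb\geqslant 0$ and $\beta+t_m b\overset{L^n(M)}{\to}\beta$, since $\norm{t_m b}_{L^n(M)}=t_m\norm{b}_{L^n(M)}\to 0$. Hence $\lambda_k(\beta)\leqslant\liminf_m\lambda_k(\beta+t_mb)$, and since the sequence $t_m$ was arbitrary this gives the $\liminf$ bound along $t\to0^+$.

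For the upper bound we mimic the proof of Lemma~\ref{lemma:easy upper continuity of eigenvalues. Positive case}. If $\lambda_k(\beta)=+\infty$ there is nothing to prove, so assume it is finite. Fix $\varepsilon>0$ and choose $E\in\mathcal{G}_k(H^1\setminus\ker D_g)$ with
\[
\min_{\varphi\in E\setminus\{0\}}\mathcal{F}(\beta)[\varphi]\geqslant\lambda_k(\beta)^{-1}-\varepsilon.
\]
As in that proof, the form $\dprod{\varphi,\psi}_\beta=\int_M\beta^{-1}\dprod{D_g\varphi,D_g\psi}_g\,dv_g$ is finite and is a Hermitian inner product on $E$. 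Pointwise we have $0\leqslant(\beta+tb)^{-1}\leqslant\beta^{-1}$, with the convention $r/0=+\infty$; the integrands are $0$ where $D_g\varphi=0$ and are dominated by the integrable $\beta^{-1}\abs{D_g\varphi}_g\abs{D_g\psi}_g$ elsewhere. Moreover $(\beta+tb)^{-1}\to\beta^{-1}$ pointwise as $t\to0^+$. This holds where $\beta>0$. Where $\beta=0$, finiteness of $\int_M\beta^{-1}\abs{D_g\varphi}_g^2$ forces $D_g\varphi=0$ almost everywhere there, so the integrand vanishes regardless of what $(tb)^{-1}$ does. Dominated convergence then gives $\dprod{\varphi,\psi}_{\beta+tb}\to\dprod{\varphi,\psi}_\beta$ for all $\varphi,\psi\in E$.

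Since $E$ is finite-dimensional, the argument of Lemma~\ref{lemma:smooth upper continuity of eigenvalues} applies: expand in a $\dprod{\cdot,\cdot}_\beta$-orthonormal basis, and note that the numerator $\int_M\dprod{D_g\varphi,\varphi}_g$ does not depend on $t$. This gives $\mathcal{F}(\beta+tb)[\varphi]=\mathcal{F}(\beta)[\varphi]/(1+o(1))$ uniformly on the unit sphere of $E$. Note that the numerator is bounded below on $E$, because $\mathcal{F}(\beta)$ is nearly $\lambda_k(\beta)^{-1}>0$ there. Therefore $\lambda_k(\beta+tb)^{-1}\geqslant\lambda_k(\beta)^{-1}-2\varepsilon$ for $t$ small, and letting $\varepsilon\to0$ yields the $\limsup$ bound.

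The only delicate point is the pointwise convergence on the zero set of $\beta$, which is handled by the observation that $D_g\varphi$ vanishes there. In fact monotonicity gives the upper bound even more directly: $(\beta+tb)^{-1}\leqslant\beta^{-1}$ implies $\mathcal{F}(\beta+tb)\geqslant\mathcal{F}(\beta)$ whenever the numerator is positive, so $\lambda_k(\beta+tb)\leqslant\lambda_k(\beta)$ for every $t>0$. I would present this one-line monotonicity argument and keep the dominated-convergence argument only as a fallback in case sign issues arise in the numerator.
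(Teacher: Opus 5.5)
Your proof is correct. The $\liminf$ half is exactly the paper's argument: Lemma~\ref{lemma:lower semi-continuity of lambda} applied along arbitrary sequences $t_m\to0^+$. For the $\limsup$ half your presented route differs from the paper's. The paper invokes Proposition~\ref{theorem:existence of eigenspinors. Positive case.} to get a subspace $E$ with $\min_{E\setminus\{0\}}\mathcal{F}(\beta)=\lambda_k(\beta)^{-1}$ exactly. It then uses $(\beta+tb)^{-1}\leqslant\beta^{-1}$ as a dominating bound to show $\dprod{\cdot,\cdot}_{\beta+tb}\to\dprod{\cdot,\cdot}_\beta$ on $E$, and repeats the finite-dimensional argument of Lemma~\ref{lemma:smooth upper continuity of eigenvalues}. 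Your fallback is that same argument with an $\varepsilon$-optimal $E$. This avoids the existence proposition, and it spells out the zero-set issue the paper passes over. Your preferred argument uses the same pointwise bound directly rather than through a limit, and gives $\lambda_k(\beta+tb)\leqslant\lambda_k(\beta)$ for every $t>0$. The paper itself makes this observation at the start of Section~\ref{section:existence easy case} to get $\lambda_k(\beta)\leqslant\lambda_k(\delta)$. It shows that $\lambda_k$ is non-increasing for the pointwise order, so the lemma becomes an immediate corollary of lower semicontinuity. The paper's route yields uniform convergence of $\mathcal{F}(\beta+tb)$ to $\mathcal{F}(\beta)$ on $E$, which is not needed here; neither route handles sign-changing $b$. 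When you write it up, make one step explicit, since the comparison of $\mathcal{F}$ reverses for negative numerators. If $\lambda_k(\beta)<\infty$, then $\lambda_k(\beta)^{-1}>0$, so the supremum may be restricted to subspaces $E$ with $\min_{E\setminus\{0\}}\mathcal{F}(\beta)>0$. On such $E$ every numerator is positive, and every denominator of $\mathcal{F}(\beta+tb)$ is finite and positive. Hence $\min_{E\setminus\{0\}}\mathcal{F}(\beta+tb)\geqslant\min_{E\setminus\{0\}}\mathcal{F}(\beta)$, and no fallback is needed.
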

\begin{proof}
We need to prove that for any sequence of positive numbers $(t_m)_{m \in \mathbb{N}}$ we have that 
\[
\lim_{m \to \infty} \lambda_k(\beta + t_mb) = \lambda_k(\beta).
\]
By Lemma~\ref{lemma:lower semi-continuity of lambda}, we already know that 
\[
\lambda_k(\beta) \leqslant \liminf\limits_{m \to \infty} \lambda_k(\beta + t_mb),
\]
so we only have to prove that 
\[
\limsup\limits_{m \to \infty} \lambda_k(\beta + t_m b) \leqslant \lambda_k(\beta).
\]
If $\lambda_k(\beta) = +\infty$, then there is nothing to prove, so we may assume that $\lambda_k(\beta)$ is finite. The rest of the proof is essentially the same as the proof of Lemma~\ref{lemma:easy upper continuity of eigenvalues. Positive case}.

Take a subsequence $\left(t_{\varphi(m)}\right)_{m \in \mathbb{N}}$ such that
\[
\lim_{m \to \infty}\lambda_k\left(\beta + t_{\varphi(m)}b\right) = \limsup\limits_{m \to \infty} \lambda_k(\beta + t_mb).
\]
 
Since $\lambda_k(\beta)$ is finite, Theorem~\ref{theorem:existence of eigenspinors. Positive case.} tells us that there exists $E \in \mathcal{G}_k(H^1 \setminus \ker D_g)$ such that 
\[
\min_{\varphi \in E \setminus 0} \mathcal{F}(\beta)[\varphi] = \frac{1}{\lambda_k(\beta)}.
\]
Thus, in particular, for all $\varphi \in E \setminus \{\,0\,\}$ we have
$\int_M \frac{1}{\beta}\abs{D_g \varphi}_g^2 dv_g < +\infty$. 

As in the proof of Lemma~\ref{lemma:easy upper continuity of eigenvalues. Positive case}, one shows that $\dprod{\varphi,\psi}_{\beta} \overset{\mathrm{def}}{=} \int_M \frac{1}{\beta} \dprod{D_g\varphi,D_g\psi}_g dv_g$ is a well-defined Hermitian product on $E$. 

For any $\varphi,\psi \in E$ we have that $\frac{1}{\beta}\dprod{D_g\varphi,D_g\psi}$ is integrable. Clearly, 
\[
\frac{1}{\beta + t_{\varphi(m)}b}\abs{\dprod{D_g\varphi, D_g\psi}_g} \leqslant \frac{1}{\beta}\abs{\dprod{D_g\varphi,D_g\psi}_g} \quad \text{almost everywhere.}
\] 
Hence,
by the dominated convergence theorem.
\[
\lim_{m \to \infty} \int_M \frac{1}{\beta + t_{\varphi(m)}b} \dprod{D_g\varphi,D_g\psi}_gdv_g = \int_M \frac{1}{\beta} \dprod{D_g\varphi,D_g\psi}_gdv_g.
\]
Using now exactly the same reasoning from the proof of Lemma~\ref{lemma:smooth upper continuity of eigenvalues} we deduce that
\[
\limsup\limits_{m \to \infty}\lambda_k\left(\beta + t_{m}b\right) \leqslant \lambda_k(\beta).
\]
Thus, we have shown that 
\[
\limsup \lambda_k(\beta + t_m b) \leqslant \lambda_k(\beta) \leqslant \liminf \lambda_k(\beta + t_m b).
\]
Consequently, $\lim\limits_{m \to \infty}\lambda_k(\beta + t_m b) = \lambda_k(\beta)$.
\end{proof}

\section{Auxiliary results. Part II.}\label{section:auxiliary part 2}
In previous sections, we repeatedly used the following idea: consider a sequence of equations
\[
D_g \varphi_m = \beta_m \varphi_m,
\]
where $\beta_m \overset{L^n(M)}{\to} \beta$ and $\beta \neq 0$. If we assume that
\[
\int_M \beta_m^2 \abs{\varphi_m}_g^2\, dv_g = 1 \quad \text{for all } m \in \mathbb{N},
\]
or more generally that the sequence
\[
\left(\int_M \beta_m^2 \abs{\varphi_m}_g^2\, dv_g \right)_{m \in \mathbb{N}}
\]
is bounded, then $(\varphi_m)_{m \in \mathbb{N}}$ admits a subsequence that converges weakly in $H^1$. Moreover, denoting this weak limit by $\varphi$, we obtain that $\varphi$ is non-zero and that it satisfies
\[
D_g \varphi = \beta \varphi.
\]

However, the assumption $\int_M \beta_m^2 \abs{\varphi_m}_g^2\, dv_g = 1$ is quite artificial. In the upcoming sections, we will often deal with sequences of generalized eigenspinors satisfying the normalization
$
\int_M \beta_m |\varphi_m|^2\, dv_g = 1.
$
This condition is considerably more natural since, roughly speaking, it means that $L^2(\beta^2_mg)$-norm of $\varphi_m$ is equal to $1$.

Unfortunately, this hypothesis does not suffice to guarantee weak convergence in $H^1$. Nevertheless, it \emph{does} allow us to extract a subsequence converging weakly in the Sobolev space $W^{1, \frac{2n}{n+1}}$. In this section, we collect several useful results concerning this space.

\subsection{Compactness.}
\begin{lemma}\label{lemma:compact operator from W^1,2n/n+1 to L^2(gamma)}
If $\beta_i \overset{L^n(M)}{\to} \beta$ and $\varphi_i \overset{W^{1,\frac{2n}{n+1}}}{\rightharpoonup}\varphi$, then
\[
\int_{M}\beta_i\abs{\varphi_i - \varphi}_g^2dv_g \to 0.
\]
\end{lemma}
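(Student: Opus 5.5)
The plan is to copy the proof of Lemma~\ref{lemma:compact opetator from H^1 to L^2(gamma^2)}, with the exponents adjusted to $W^{1,\frac{2n}{n+1}}$.

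\emph{Exponents.} For $p=\frac{2n}{n+1}$ the critical Sobolev exponent is
\[
\frac{np}{n-p}=\frac{2n^2/(n+1)}{n-2n/(n+1)}=\frac{2n}{n-1}.
\]
Hence $W^{1,\frac{2n}{n+1}}\hookrightarrow L^{\frac{2n}{n-1}}$ continuously, and by Rellich--Kondrashov the embedding into $L^q$ is compact for every $q<\frac{2n}{n-1}$. In particular it is compact into $L^2$, since $2<\frac{2n}{n-1}$. The pairing is exactly right: $\abs{\varphi}^2\in L^{\frac{n}{n-1}}$, which is the dual of $L^n$, so $\beta\abs{\varphi}^2$ is integrable by H\"older.

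\emph{Setup.} Weak convergence gives a uniform bound $\norm{\varphi_i-\varphi}_{W^{1,\frac{2n}{n+1}}}\leqslant C$. Therefore $\norm{\varphi_i-\varphi}_{L^{\frac{2n}{n-1}}}\leqslant C'$, and $\norm{\varphi_i-\varphi}_{L^q}\to0$ for each $q<\frac{2n}{n-1}$. Fix $R>0$ and split, using $\abs{\beta}$ because the statement does not assume signs:
\[
\int_M\beta_i\abs{\varphi_i-\varphi}^2=\int_M(\beta_i-\beta)\abs{\varphi_i-\varphi}^2+\int_{\abs{\beta}>R}\beta\abs{\varphi_i-\varphi}^2+\int_{\abs{\beta}\leqslant R}\beta\abs{\varphi_i-\varphi}^2 .
\]

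\emph{The three terms.}
\begin{itemize}
\item First term: H\"older with exponents $n$ and $\frac{n}{n-1}$ bounds it by $\norm{\beta_i-\beta}_{L^n}\,\norm{\varphi_i-\varphi}^2_{L^{\frac{2n}{n-1}}}\leqslant C'^2\norm{\beta_i-\beta}_{L^n}\to0$.
\item Third term: it is bounded by $R\norm{\varphi_i-\varphi}^2_{L^2}\to0$, by compactness into $L^2$.
\item Second term: H\"older again bounds it by $C'^2\bigl(\int_{\abs{\beta}>R}\abs{\beta}^n\bigr)^{1/n}$.
\end{itemize}

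\emph{Conclusion.} Taking $\limsup_i$ and then letting $R\to\infty$, absolute continuity of the integral of $\abs{\beta}^n\in L^1$ gives the claim.

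The only delicate point is that the critical embedding into $L^{\frac{2n}{n-1}}$ is not compact. That is why the truncation at level $R$ is needed: on $\{\abs{\beta}\leqslant R\}$ the subcritical compact embedding into $L^2$ handles the term, while on $\{\abs{\beta}>R\}$ only uniform boundedness is used. This works unchanged in dimension $n=2$ as well, where the relevant exponents are $p=\frac43$ and $\frac{2n}{n-1}=4$.
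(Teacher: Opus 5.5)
Your proof is correct and follows the paper's argument essentially verbatim. You use the same three-term split at level $R$, H\"older with exponents $n$ and $\frac{n}{n-1}$ on the first two pieces, and compactness of $W^{1,\frac{2n}{n+1}}\hookrightarrow L^2$ on the truncated piece, then take $\limsup_i$ followed by $R\to\infty$. Your bound $C'^2$ on the second term is also the correct one; the paper's exponent $\frac12$ on $\norm{\varphi-\varphi_i}_{L^{\frac{2n}{n-1}}}$ there is a typo for $2$.
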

\begin{proof}
Fix $R > 0$, then
\[
\int_M \abs{\varphi - \varphi_i}_g^2 \beta_i dv_g = \int_M\abs{\varphi_i - \varphi}_g^2(\beta_i - \beta)dv_g + \int_{\abs{\beta} > R}\abs{\varphi - \varphi_i}_g^2\beta dv_g + \int_{\abs{\beta} \leqslant R}\abs{\varphi - \varphi_i}_g^2\beta dv_g.
\]
Since $W^{1,\frac{2n}{n+1}} \to L^2$ is compact, it follows that 
\[
\abs{\int_{\beta \leqslant R} \abs{\varphi - \varphi_i}_g^2\beta dv_g} \leqslant R \norm{\varphi_i - \varphi}^2_{L^2} \underset{i \to \infty}{\to} 0.
\]
Moreover, since $\varphi_i \overset{L^{\frac{2n}{n - 1}}}{\rightharpoonup} \varphi$, it follows that $\exists C > 0$ such that $\norm{\varphi_i - \varphi}_{L^\frac{2n}{n-1}} < C$ for all $i \in \mathbb{N}$.
By H\"older's inequality, we deduce that
\[
\int_{\abs{\beta} > R}\abs{\varphi - \varphi_i}_g^2\beta dv_g \leqslant \norm{\varphi - \varphi_i}^{\frac{1}{2}}_{L^\frac{2n}{n-1}}\left(\int_{\abs{\beta} > R}\beta^n dv_g\right)^{\frac{1}{n}} < \sqrt{C} \left(\int_{\abs{\beta} > R}\beta^n dv_g\right)^{\frac{1}{n}}.
\]
Applying H\"older's inequality once again, we obtain that
\[
\abs{\int_M\abs{\varphi_i - \varphi}_g^2(\beta_i - \beta)dv_g} \leqslant
\left(\int_M \abs{\varphi_i - \varphi}_g^\frac{2n}{n - 1}dv_g\right)^{\frac{n - 1}{n}}\left(\int_M \abs{\beta - \beta_i}^n dv_g \right)^\frac{1}{n} \underset{i \to \infty}{\to} 0,
\]
because $\beta_i \overset{L^n(M)}{\to} \beta$ and $\norm{\varphi_i - \varphi}_{L^\frac{2n}{n - 1}} < C$ for all $i \in \mathbb{N}$.

Thus, for any $R > 0$ we have that
\[
\limsup_{i \to \infty} \int_M \abs{\varphi - \varphi_i}_g^2 \beta_i dv_g \leqslant \sqrt{C} \left(\int_{\abs{\beta} > R}\beta^n dv_g\right)^{\frac{1}{n}}.
\]
Passing to the limit with respect to $R$ we get that
\[
\limsup_{i \to \infty} \int_M \abs{\varphi - \varphi_i}_g^2 \beta_i dv_g = 0.
\]
\end{proof}

\begin{lemma}\label{lemma: strong local convergence}
Consider a sequence of equations
\[
D_g \varphi_m = \beta_m \varphi_m,
\]
where $\varphi_m \in W^{1,\frac{2n}{n+1}}$, $\beta_m \in L^n_{\geqslant 0}(M)$.
Assume that $\varphi_m \overset{W^{1,\frac{2n}{n+1}}}{\rightharpoonup} \varphi$ and $\beta_m \overset{L^n}{\rightharpoonup} \beta$. Consider a set
\[
A \overset{\mathrm{def}}{=} \left\{\,x \in M \mid \forall \delta > 0, \quad \limsup_{m \to +\infty} \int_{B_{g}(x,\delta)} \beta_m^n dv_g  > \frac{1}{2^n}K(n)^\frac{-n}{4} \,\right\},
\]
where $K(n) = \Lambda_1^{-1}(\mathbb{S}^n,[g_{\mathrm{st}}],\sigma_{\mathrm{st}})$. If $\beta_m$ converges strongly to $\beta$ in $L^p(M)$ for some $p > \frac{2n}{n+1}$, then $\varphi_m$ converges strongly to $\varphi$ in $W^{1,\frac{2n}{n+1}}_{\mathrm{loc}}\left(\Sigma(M \setminus A)\right)$.
\end{lemma}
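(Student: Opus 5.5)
Fix $x \in M\setminus A$. By the definition of $A$ there is a $\delta>0$ with $\limsup_m \int_{B_g(x,2\delta)}\beta_m^n\,dv_g \leqslant \frac{1}{2^n}K(n)^{-n/4}$. After passing to a subsequence we may assume that $\|\beta_m\|_{L^n(B_g(x,2\delta))}\leqslant \frac12 K(n)^{-1/4}+o(1)$. Since $M\setminus A$ is covered by such balls, it suffices to prove strong convergence in $W^{1,\frac{2n}{n+1}}(B_g(x,\delta))$; the usual subsequence argument then gives convergence of the whole sequence. We take a cutoff $\eta\in\mathcal C^\infty_c(B_g(x,2\delta))$ with $\eta\equiv1$ on $B_g(x,\delta)$, and set $\psi_m=\eta(\varphi_m-\varphi)$, which is bounded in $W^{1,\frac{2n}{n+1}}$ and converges weakly to $0$. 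Passing to the limit in the weak formulation, exactly as in Lemma~\ref{lemma:existence of eigenspinors}, gives $D_g\varphi=\beta\varphi$. Here we use $\beta_m\rightharpoonup\beta$ in $L^n$ together with $\varphi_m\to\varphi$ strongly in $L^q$ for every $q<\frac{2n}{n-1}$, which is Rellich–Kondrashov.

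Next we compute
\[
D_g\psi_m=\nabla\eta\cdot(\varphi_m-\varphi)+\eta(\beta_m\varphi_m-\beta\varphi)
=\nabla\eta\cdot(\varphi_m-\varphi)+\beta_m\psi_m+\eta(\beta_m-\beta)\varphi .
\]
The first term tends to $0$ in $L^{\frac{2n}{n+1}}$ by compactness. For the last term we use $\varphi\in L^p$ for all $p<\infty$, which follows from Lemma~\ref{lemma:regularity} applied to $D_g\varphi=\beta\varphi$ (possibly after splitting $\beta$ into positive and negative parts, or arguing on $|\beta|$). Combined with the strong convergence $\beta_m\to\beta$ in $L^p$ for some $p>\frac{2n}{n+1}$, Hölder gives $\|(\beta_m-\beta)\varphi\|_{L^{\frac{2n}{n+1}}}\to0$. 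This is exactly where the hypothesis $p>\frac{2n}{n+1}$ enters.

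For the middle term, Hölder with $\frac{n+1}{2n}=\frac1n+\frac{n-1}{2n}$ gives
\[
\|\beta_m\psi_m\|_{L^{\frac{2n}{n+1}}}\leqslant \|\beta_m\|_{L^n(B_g(x,2\delta))}\|\psi_m\|_{L^{\frac{2n}{n-1}}}.
\]
The key tool is then a Sobolev-type inequality for the Dirac operator of the form
\[
\|\psi\|_{L^{\frac{2n}{n-1}}}^2\leqslant K(n)^{1/2}\,\|D_g\psi\|_{L^{\frac{2n}{n+1}}}^2+C\|\psi\|^2_{L^{q}},\qquad q<\tfrac{2n}{n-1}.
\]
The natural source is the appendix inequality, with the sharp constant coming from $\Lambda_1(\mathbb S^n)$, so that the constant matches $K(n)$ up to harmless factors. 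This lower-order term vanishes in the limit by compactness, since $\psi_m\to0$ in $L^q$. Inserting these bounds,
\[
\|D_g\psi_m\|\leqslant \tfrac12 K(n)^{-1/4}\bigl(K(n)^{1/4}\|D_g\psi_m\|+o(1)\bigr)+o(1)=\tfrac12\|D_g\psi_m\|+o(1).
\]
Hence $\|D_g\psi_m\|_{L^{\frac{2n}{n+1}}}\to0$. Elliptic estimates, $\|\psi\|_{W^{1,\frac{2n}{n+1}}}\leqslant C(\|D_g\psi\|_{L^{\frac{2n}{n+1}}}+\|\psi\|_{L^1})$, then give $\psi_m\to0$ in $W^{1,\frac{2n}{n+1}}$, and therefore $\varphi_m\to\varphi$ in $W^{1,\frac{2n}{n+1}}(B_g(x,\delta))$.

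The main obstacle is obtaining the Sobolev inequality with the constant precisely tied to $K(n)$, with an exponent and power arrangement compatible with the threshold $\frac{1}{2^n}K(n)^{-n/4}$, and with only a compact lower-order remainder. I would first try to get it locally in small balls: using the conformal flatness or normal-coordinate comparison with the round sphere, where the inequality $\|\psi\|_{L^{2n/(n-1)}}\leqslant K(n)^{1/4}$-type bound holds via Hijazi/Ammann, plus an $\varepsilon$-loss absorbed by shrinking $\delta$. Failing the sharp constant, the factor $\frac{1}{2^n}$ in the definition of $A$ leaves slack, which suggests the proof only needs the constant up to a factor $2$.
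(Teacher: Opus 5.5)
Your proof has a genuine gap, exactly at the step you flag as the main obstacle. The contraction rests on an inequality
\[
\norm{\psi}_{L^{\frac{2n}{n-1}}}^2\leqslant S^2\norm{D_g\psi}_{L^{\frac{2n}{n+1}}}^2+C\norm{\psi}_{L^q}^2,\qquad q<\tfrac{2n}{n-1}.
\]
The threshold $\frac{1}{2^n}K(n)^{-n/4}$ forces $S<2K(n)^{1/4}$, and nothing in the paper provides such a constant.

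The appendix bounds a different quantity, the \emph{indefinite quadratic form} $\abs{\int_M\dprod{D_g\varphi,\varphi}_g\,dv_g}$, by $(K(n)+\varepsilon)\norm{D_g\varphi}^2_{L^{\frac{2n}{n+1}}}+B_\varepsilon\norm{\varphi}^2_{L^{\frac{2n}{n+1}}}$. Such a bound cannot control $\norm{\varphi}_{L^{\frac{2n}{n-1}}}$. H\"older only gives the opposite comparison: any admissible $S$ satisfies $S\geqslant K(n)$. So ``the constant matches $K(n)$ up to harmless factors'' is unfounded.

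Elliptic estimates plus Sobolev embedding do give \emph{some} $S$. With that $S$, however, your argument only proves the lemma with an unspecified threshold of order $S^{-n}$ in place of $\frac{1}{2^n}K(n)^{-n/4}$. That is weaker than the statement, and the ``factor-$2$ slack'' is a hope, not an argument. An external tool helps in low dimension: Lieb's sharp Hardy--Littlewood--Sobolev inequality, with $\abs{G(x)}=\abs{x}^{1-n}/\abs{\mathbb{S}^{n-1}}$ for the Euclidean Dirac kernel, gives e.g. $S\leqslant\pi^{-1/2}<K(2)^{1/4}$ for $n=2$. But that bound tends to $e^{-1/2}$ while $K(n)^{1/4}\to0$, so it does not cover all $n$, and you would still have to localize it to $M$.

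The missing idea in the paper's proof is to never estimate $\norm{\psi_m}_{L^{\frac{2n}{n-1}}}$ at all. Set $B=B_g(x,2\delta)$. Since $\beta_m\geqslant0$, split H\"older as
\[
\norm{\beta_m\psi_m}_{L^{\frac{2n}{n+1}}}\leqslant\norm{\beta_m}_{L^n(B)}^{1/2}\Bigl(\int_M\beta_m\abs{\psi_m}_g^2\,dv_g\Bigr)^{1/2}.
\]
Then use the equation $\beta_m(\varphi_m-\varphi)=D_g(\varphi_m-\varphi)-(\beta_m-\beta)\varphi$ to turn the weighted term into the Dirac quadratic form:
\[
\int_M\beta_m\abs{\psi_m}_g^2\,dv_g=\int_M\dprod{D_g\psi_m,\psi_m}_g\,dv_g+o(1).
\]
Here the cutoff term $\eta\dprod{\grad_g\eta\cdot(\varphi_m-\varphi),\varphi_m-\varphi}_g$ is purely imaginary and tends to $0$. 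The $(\beta_m-\beta)\varphi$ term is $o(1)$ by the $L^p$ hypothesis, using $p>\frac{2n}{n+1}$.

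The right-hand side is exactly what the appendix controls with the sharp constant, which yields
\[
\norm{D_g\psi_m}_{L^{\frac{2n}{n+1}}}\leqslant\bigl((K(n)+\varepsilon)\norm{\beta_m}_{L^n(B)}\bigr)^{1/2}\norm{D_g\psi_m}_{L^{\frac{2n}{n+1}}}+o(1).
\]
With $\norm{\beta_m}_{L^n(B)}\leqslant\frac12K(n)^{-1/4}+o(1)$, the factor is at most $\bigl(\tfrac12K(n)^{3/4}+O(\varepsilon)\bigr)^{1/2}<1$, since $K(n)<1$. This is how $K(n)$ legitimately enters the threshold.

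The rest of your outline matches the paper: passing to the limit to get $D_g\varphi=\beta\varphi$, the cutoff-gradient term, the $(\beta_m-\beta)\varphi$ term via Lemma~\ref{lemma:regularity}, and the final elliptic estimate.
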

\begin{proof}
By Rellich-Kondrashov theorem, $\varphi_m \overset{L^\frac{n}{n-1}}{\to} \varphi$. Hence, after passing to the weak limit, we get that $D_g\varphi = \beta \varphi$. 

Once again, by Rellich-Kondrashov theorem, we have that $\varphi_m \overset{L^\frac{2n}{n+1}}{\to} \varphi$, so it remains to show only that $D_g\varphi_m$ converges strongly to $D_g\varphi$ in $W^{1,\frac{2n}{n+1}}_{\mathrm{loc}}\left(\Sigma(M \setminus A)\right)$.

Fix $x \in M \setminus A$, then $\exists \delta > 0$ such that 
\[
\limsup_{m \to \infty}\int_{B_g(x,\delta)} \beta_m^n \leqslant \frac{1}{2^n}K(n)^\frac{-n}{4}.
\]
Take $\eta \in \mathcal{C}^{\infty}_c(B_g(x,\delta))$. Denote $\varphi_m - \varphi$ by $\psi_m$. We have to show that $D_g(\eta \psi_m) \overset{L^\frac{2n}{n+1}}{\to} 0$.

Note that 
$D_g(\eta \psi_m) = \grad_g \eta \cdot \psi_m + \eta D_g\psi_m.$
By Rellich-Kondrashov theorem, the embedding $W^{1,\frac{2n}{n+1}} \hookrightarrow L^\frac{2n}{n+1}$ is compact. Hence, 
$\grad_g \eta \cdot \psi_m \overset{L^\frac{2n}{n+1}}{\to} 0$.

Now fix $\varepsilon > 0$, then by H\"older's inequality
\begin{align*}
\norm{\eta D_g \psi_m}_{L^\frac{2n}{n+1}} 
= \norm{\eta (\beta_m \varphi_m - \beta \varphi)}_{L^\frac{2n}{n+1}}
&= \norm{\eta (\beta_m \psi_m + (\beta_m - \beta) \varphi)}_{L^\frac{2n}{n+1}}\\
&\leqslant \norm{\eta \beta_m \psi_m}_{L^\frac{2n}{n+1}} + \norm{\eta(\beta_m - \beta)\varphi}_{L^\frac{2n}{n+1}}.
\end{align*}
By lemma~\ref{lemma:regularity}, $\varphi \in L^q$ for all $1 \leqslant q < \infty$. Therefore, since $\beta_m \overset{L^p(M)}{\to} \beta$, we deduce from H\"older's inequality that $\norm{\eta(\beta_m - \beta)\varphi}_{L^\frac{2n}{n+1}} = o(1)$.

We now estimate $\norm{\eta \beta_m \psi_m}_{L^\frac{2n}{n+1}}$. Note that, by H\"older's inequality,
\begin{align*}
\int_M \abs{\eta \beta_m \psi_m}_g^\frac{2n}{n+1}dv_g \leqslant
\left(\int_{B_g(x,\delta)} \beta_m^n dv_g\right)^\frac{1}{n+1} 
\left(\int_M \beta_m \eta^2 \abs{\psi_m}_g^2 dv_g\right)^\frac{n}{n+1}.
\end{align*}
Moreover,
\begin{align*}
\int_M \beta_m \eta^2 \abs{\psi_m}_g^2 dv_g 
&= \int_M \eta^2 \dprod{D_g \psi_m - (\beta_m - \beta)\varphi, \psi_m}_g dv_g\\
&= \int_M \eta^2 \dprod{D_g \psi_m, \psi_m}_g dv_g - \int_M \eta^2(\beta_m - \beta)\dprod{\varphi,\psi_m}_gdv_g.
\end{align*}

Moreover, 
\[
\int_M \eta^2 \dprod{D_g \psi_m, \psi_m}_g dv_g = \int_M \dprod{D_g (\eta \psi_m), \eta\psi_m}_g dv_g + o(1).
\]  
Indeed, we have 
\[
\dprod{D_g (\eta \psi_m), \eta\psi_m}_g = \eta^2 \dprod{D_g \psi_m, \psi_m}_g + \eta\dprod{\grad_g \eta \cdot \psi_m,\psi_m}_g,
\]
and, by Relich-Kondrashov theorem, $\eta\dprod{\grad_g \eta \cdot \psi_m,\psi_m}_g = o(1)$. 

By Theorem~\ref{theorem: Sobolev Constant}, $\exists B > 0$ such that
\[
\int_M \dprod{D_g (\eta \psi_m), \eta\psi_m}_g dv_g \leqslant \frac{3}{2}K(n)\left(\int_M\abs{D_g (\eta \psi_m)}_g^\frac{2n}{n+1} dv_g\right)^\frac{n+1}{n} + B\norm{\eta \psi_m}^2_{L^2}
\]
By Rellich-Kondrashov theorem, $W^{1,\frac{2n}{n+1}} \hookrightarrow L^2$ is compact. Hence, $ B\norm{\eta \psi_m}^2_{L^2} = o(1)$.
Moreover, by H\"older's inequality 
\begin{align*}
\abs{\int_M \eta^2(\beta_m - \beta)\dprod{\varphi,\psi_m}_gdv_g} 
\leqslant \norm{\eta^2(\beta_m - \beta)}_{L^p(M)}\norm{\varphi}_{L^\frac{2np}{np + p - n}} \norm{\psi_m}_{L^\frac{2n}{n-1}} = o(1),
\end{align*}
because $\norm{\psi_m}_{L^\frac{2n}{n-1}}$ is bounded.

Let's summarize. We have shown that
\begin{align*}
\norm{D_g (\eta\psi_m)}_{L^\frac{2n}{n+1}} 
&\leqslant \norm{\grad_g \eta \cdot \psi_m}_{L^\frac{2n}{n + 1}} + \norm{\eta D_g\psi_m}_{L^\frac{2n}{n+1}}\\
&\leqslant o(1) + \norm{\eta D_g\psi_m}_{L^\frac{2n}{n + 1}}\\
&\leqslant o(1) + \norm{\eta \beta_m \psi_m}_{L^\frac{2n}{n + 1}} + \norm{\eta(\beta_m - \beta)\varphi}_{L^\frac{2n}{n + 1}}\\
&\leqslant o(1) + \left(\int_{B_g(x,\delta)}\beta_m^n dv_g\right)^\frac{2}{n}\left(\frac{3}{2}K(n)\norm{D_g(\eta \psi_m)}_{L^\frac{2n}{n+1}}^2 + o(1)\right)^\frac{1}{2}\\
&\leqslant o(1) + \frac{1}{4\sqrt{K(n)}} \cdot \frac{\sqrt{3K(n)}}{\sqrt{2}}\norm{D_g(\eta\psi_m)}_{L^\frac{2n}{n+1}}.
\end{align*}
Consequently,
\[
\norm{D_g (\eta\psi_m)}_{L^\frac{2n}{n+1}}  = o(1),
\]
i.e., $\psi_m$ converges to $0$ in $W^{1,\frac{2n}{n+1}}_{\mathrm{loc}}\left(\Sigma(M \setminus A)\right)$.
\end{proof}

\subsection{Sequences of eigenspinors.}

We state and prove an analogue of Lemma~\ref{lemma: H^1 analogue of lemma 2.1} for $W^{1,\frac{2n}{n+1}}$ (see also Lemma $2.2$ in \cite{humbert2025extremisingeigenvaluesgjmsoperators}).

\begin{lemma}\label{lemma:analogue of lemma 2.1} Consider a sequence of equations
\[
D_g \varphi_i = \lambda_i \beta_i \varphi_i,
\]
where $\varphi_i \in W^{1,\frac{2n}{n+1}}$, $\beta_i \in L^{p_i}_{\geqslant 0}(M)$ and $p_i \geqslant n$. Moreover, we assume that
\begin{itemize}
    \item $\int_M \beta_i \abs{\varphi_i}_g^2 dv_g = 1$,
    \item $\sup\limits_{i \in \mathbb{N}}\left(\int_M \beta_i^{p_i} dv_g\right)$ is finite,
    \item $\limsup \lambda_i < \infty$ and $\lambda_i > 0$ for all $i \in \mathbb{N}$.
\end{itemize}
For all $i \in \mathbb{N}$ we write $\varphi_i = \psi_i + \kappa_i$, where $\psi_i \in \ker D_g^\perp$ and $\kappa_i \in \ker D_g$. Then
\begin{enumerate}
    \item $(\psi_i)_{i \in \mathbb{N}}$ is bounded in $W^{1,\frac{2n}{n+1}}$,
    \item $\exists c > 0$ such that $\norm{\psi_i}_{W^{\frac{2n}{n+1}}} \geqslant c$ and $\lambda_i > c$ for all $i \in \mathbb{N}$.
    \item If a sequence $(\varphi_i)_{i \in \mathbb{N}}$ is unbounded in ${W^{1,\frac{2n}{n+1}}}$, then we have that $p_i \to n$, $\beta_i \overset{L^n(M)}{\rightharpoonup} 0$ and $\psi_i \overset{W^{1,\frac{2n}{n+1}}}{\rightharpoonup} 0$.
\end{enumerate}
\end{lemma}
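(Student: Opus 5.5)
For (1) I would combine the elliptic estimate on $\ker D_g^\perp$ with a H\"older splitting adapted to the weight $\beta_i$. Set $q = \frac{2n}{n+1}$. Since $\psi_i \perp \ker D_g$, the elliptic estimate gives $\norm{\psi_i}_{W^{1,q}} \leqslant C\norm{D_g\psi_i}_{L^q}$. Moreover $D_g\psi_i = D_g\varphi_i = \lambda_i\beta_i\varphi_i$. Writing $\beta_i\abs{\varphi_i} = \beta_i^{1/2}\cdot\beta_i^{1/2}\abs{\varphi_i}$, H\"older gives
\[
\int_M (\beta_i\abs{\varphi_i}_g)^{q}dv_g \leqslant \Big(\int_M\beta_i^n dv_g\Big)^{\frac{1}{n+1}}\Big(\int_M \beta_i\abs{\varphi_i}_g^2dv_g\Big)^{\frac{n}{n+1}}.
\]
Because $p_i\geqslant n$, H\"older again gives $\int_M\beta_i^n \leqslant \mathrm{Vol}(M,g)^{1-n/p_i}(\int_M\beta_i^{p_i})^{n/p_i}$, which is uniformly bounded. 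Hence $\norm{D_g\psi_i}_{L^q}\leqslant C\lambda_i$, and $(\psi_i)$ is bounded in $W^{1,q}$.

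For (2) I would start from the normalisation. Self-adjointness and $D_g\kappa_i=0$ kill the kernel terms, so
\[
1=\int_M\beta_i\abs{\varphi_i}^2_g = \lambda_i^{-1}\int_M\dprod{D_g\varphi_i,\varphi_i}_g = \lambda_i^{-1}\int_M\dprod{D_g\psi_i,\psi_i}_g.
\]
I would then bound the right-hand integral by $\norm{D_g\psi_i}_{L^q}\norm{\psi_i}_{L^{\frac{2n}{n-1}}}$. One version of this uses $\norm{D_g\psi_i}_{L^q}\leqslant C\lambda_i$ and Sobolev, giving $\lambda_i\leqslant C\lambda_i\norm{\psi_i}_{W^{1,q}}$. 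Another uses Sobolev together with the elliptic estimate on $\ker D_g^\perp$, giving $\lambda_i\leqslant C\norm{D_g\psi_i}_{L^q}^2\leqslant C\lambda_i^2$. Dividing by $\lambda_i>0$ in each yields the two uniform lower bounds.

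For (3), since $(\psi_i)$ is bounded, unboundedness forces $\norm{\kappa_i}\to\infty$. I would then follow the proof of Lemma~\ref{lemma: H^1 analogue of lemma 2.1}. Normalise $K_i=\kappa_i/\norm{\kappa_i}_{L^2}$ and pass to a limit $K\neq0$ in the finite-dimensional space $\ker D_g$, so that $\Phi_i=\varphi_i/\norm{\kappa_i}_{L^2}\to K$ in $W^{1,q}\subset L^{\frac{2n}{n-1}}$. The identity $\int_M\beta_i\dprod{\varphi_i,\kappa_i}_g=\lambda_i^{-1}\int_M\dprod{\varphi_i,D_g\kappa_i}_g=0$ then passes to the limit for any weak $L^n$ limit $\beta$ of $(\beta_i)$, giving $\int_M\beta\abs{K}^2_g=0$. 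Unique continuation forces $\beta=0$, and the usual subsequence argument gives $\beta_i\overset{L^n}{\rightharpoonup}0$.

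For $\psi_i\rightharpoonup0$, let $\psi$ be a weak limit and test against a smooth $\eta$:
\[
\abs{\int_M\dprod{\psi_i,D_g\eta}_g} = \lambda_i\abs{\int_M\beta_i\dprod{\varphi_i,\eta}_g}\leqslant\lambda_i\Big(\int_M\beta_i\abs{\eta}_g^2\Big)^{1/2}\to0,
\]
because $\beta_i\rightharpoonup0$ in $L^n$. Thus $D_g\psi=0$, while $\psi\in\ker D_g^\perp$ since that space is weakly closed, so $\psi=0$.

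The main obstacle is $p_i\to n$. Suppose instead that along a subsequence $p_i\geqslant p>n$; then $(\beta_i)$ is bounded in $L^p$ and still $\beta_i\rightharpoonup0$. Expanding the orthogonality relation gives $\int_M\beta_i\abs{\kappa_i}^2_g=-\int_M\beta_i\dprod{\psi_i,\kappa_i}_g$, and Cauchy--Schwarz yields $\int_M\beta_i\abs{\kappa_i}_g^2\leqslant\int_M\beta_i\abs{\psi_i}^2_g$. Since $\frac{2p}{p-1}<\frac{2n}{n-1}$, Rellich--Kondrashov gives $\psi_i\to\psi$ strongly in $L^{\frac{2p}{p-1}}$ along a subsequence, so $\abs{\psi_i}^2_g\to\abs{\psi}^2_g$ in $L^{p'}(M)$. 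Weak convergence $\beta_i\rightharpoonup0$ in $L^p$ then gives $\int_M\beta_i\abs{\psi_i}^2_g\to0$. This contradicts $1=\int_M\beta_i\abs{\varphi_i}^2_g\leqslant2\int_M\beta_i(\abs{\psi_i}^2_g+\abs{\kappa_i}^2_g)$.
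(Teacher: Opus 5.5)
Your proof is correct and follows the paper's route. Part (1) uses the elliptic estimate on $\ker D_g^\perp$ with the weighted H\"older splitting. Part (2) uses self-adjointness to discard the kernel terms, then Sobolev duality between $L^{\frac{2n}{n+1}}$ and $L^{\frac{2n}{n-1}}$. Part (3) normalizes in the finite-dimensional kernel, applies unique continuation, and tests against smooth spinors. For $p_i\to n$ you use, as the paper does, the compactness of $W^{1,\frac{2n}{n+1}}\hookrightarrow L^{\frac{2p}{p-1}}$ for $p>n$.

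The remaining differences are cosmetic. In (2) you start from $\lambda_i=\int_M\dprod{D_g\psi_i,\psi_i}_g\,dv_g$, whereas the paper derives $\int_M\beta_i\abs{\psi_i}_g^2\,dv_g=1+\int_M\beta_i\abs{\kappa_i}_g^2\,dv_g\geqslant 1$. In the $p_i\to n$ step you get the lower bound on $\int_M\beta_i\abs{\psi_i}_g^2\,dv_g$ from Cauchy--Schwarz and send it to zero by weak--strong pairing with $\beta_i\rightharpoonup 0$. The paper instead combines that identity with $\psi_i\to 0$ strongly in $L^{\frac{2p}{p-1}}$.
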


\begin{proof}
The first claim follows from the following estimates:
\begin{align*}
\norm{\psi_i}_{W^{1,\frac{2n}{n+1}}} 
&\leqslant C_1\norm{D_g \psi_i}_{L^\frac{2n}{n+1}} \quad &&\text{(Elliptic estimates)}\\
&= C_1 \lambda_i \norm{\beta_i \varphi_i}_{L^\frac{2n}{n+1}}\\
&\leqslant C_2 \norm{\beta_i}_{L^n}\left(\int_M \beta_i \abs{\varphi}_g^2 dv_g\right)^\frac{1}{2} \quad &&\text{(H\"older's inequality)}\\
&\leqslant C_3.
\end{align*}

Moreover,
\begin{align*}
\int_M \lambda_i\beta_i \abs{\psi_i}_g^2dv_g 
&= \int_M \dprod{\lambda_i\beta_i(\varphi_i - \kappa_i),\psi_i}_gdv_g \\
&= \int_M \dprod{\lambda_i\beta_i\varphi_i,\psi_i}_gdv_g - \int_M \dprod{\kappa_i,\lambda_i\beta_i\psi_i}_gdv_g\\
&= \int_M \dprod{D_g\varphi_i,\phi_i - \kappa_i}_gdv_g - \int_M\dprod{\kappa_i,D_g\psi_i - \lambda_i\beta_i\kappa_i}_gdv_g\\
&= \int_M \lambda_i\beta_i\abs{\varphi_i}_g^2dv_g  + \int_M \lambda_i\beta_i\abs{\kappa_i}_g^2dv_g \geqslant \lambda_i.
\end{align*}
Since $\lambda_i > 0$, it follows that 
\[
\int_M \beta_i \abs{\psi_i}_g^2dv_g \geqslant 1.
\]
Therefore,
\begin{align*}
1 
&\leqslant \int_M\beta_i\abs{\psi_i}_g^2\\
&\leqslant \norm{\beta_i}_{L^n}\norm{\psi_i}^2_{L^\frac{2n}{n-1}} \quad &&\text{(H\"older's inequality)}\\
&\leqslant C_{\mathrm{Sobolev}} \norm{\beta_i}\norm{\psi_i}^2_{W^{1,\frac{2n}{n+1}}} \quad &&\text{(Sobolev's inequality)}\\
&\leqslant C\norm{\psi_i}^2_{W^{1,\frac{2n}{n+1}}},
\end{align*}
so there exists $c > 0$ such that $\norm{\psi_i}_{W^{1,\frac{2n}{n+1}}} > c$ for all $i \in \mathbb{N}$. 

Using H\"older's inequality and elliptic estimates, we have already shown that $\exists C_1 > 0$ such that
\[
\norm{\psi_i}_{W^{1,\frac{2n}{n+1}}} \leqslant C_1 \lambda_i \norm{\beta_i}_{L^n}\left(\int_M \beta_i \abs{\varphi}_g^2 dv_g\right)^\frac{1}{2} = C_1 \lambda_i \norm{\beta_i}_{L^n}.
\]
Hence, $\exists \tilde{c} > 0$ such that 
$\lambda_i \geqslant \tilde{c}$ for all $i \in \mathbb{N}$.

After passing to a subsequence, we may assume that $\norm{\varphi_i}_{W^{1,\frac{2n}{n+1}}} \to \infty$. Define 
\[
\Phi_i \overset{\mathrm{def}}{=} \frac{\varphi_i}{\norm{\varphi_i}_{W^{1,\frac{2n}{n+1}}}}, \quad K_i \overset{\mathrm{def}}{=} \frac{\kappa_i}{\norm{\varphi_i}_{W^{1,\frac{2n}{n+1}}}} \quad \text{and} \quad {\Psi_i \overset{\mathrm{def}}{=} \frac{\psi_i}{\norm{\varphi_i}_{W^{1,\frac{2n}{n+1}}}}}.
\]
Since $(\psi_i)_{i \in \mathbb{N}}$ is bounded in $W^{1,\frac{2n}{n+1}}$, it follows that $\Psi_i \overset{W^{1,\frac{2n}{n+1}}}{\to} 0$. Moreover, $(K_i)_{i \in \mathbb{N}}$ is bounded in $\left(\ker D_g, \norm{\cdot}_{W^{1,\frac{2n}{n+1}}}\right)$. Since $\dim \ker D_g < \infty$, we may, after passing to a subsequence, assume that $K_i \overset{W^{1,\frac{2n}{n+1}}}{\to} K$. Therefore, $\Phi_i \overset{W^{1,\frac{2n}{n+1}}}{\to} K$. Since $(\beta_i)_{i \in \mathbb{N}}$ is bounded in $L^n(M)$, we may, once again after passing to a subsequence, assume that $\beta_i \overset{L^n}{\rightharpoonup}\beta$. Since $0 = \int_M \beta_i \dprod{\varphi_i,\kappa_i}_gdv_g$, it follows that $0 = \int_M \beta_i \dprod{\Phi_i,K_i}_gdv_g$. Passing to a limit, we obtain $0 = \int_M \beta \abs{K}^2_g dv_g$. Note that $K$ is nonzero because $\norm{\Phi_i}_{W^{1,\frac{2n}{n+1}}} = 1$ for all $i \in \mathbb{N}$. Moreover, $D_g K = 0$, then by the unique continuation property for $D_g$, $\abs{K}_g > 0$ almost everywhere. Consequently, $\beta = 0$ almost everywhere.

Note that we only proved that there is a subsequence of $(\beta_i)_{i \in \mathbb{N}}$ that converges weakly to $0$. However, any subsequence of $\beta_i$ admits a further subsequence that also converges weakly to $0$ by the same argument. Hence, $\beta_i \overset{L^n(M)}{\rightharpoonup} 0$.

Now let's prove that $\psi_i \overset{W^{1,\frac{2n}{n+1}}}{\rightharpoonup} 0$. Since $(\psi_i)_{i \in \mathbb{N}}$ is bounded in $W^{1,\frac{2n}{n+1}}$, we may, after passing to a subsequence, assume that $\psi_i \overset{W^{1,\frac{2n}{n+1}}}{\rightharpoonup} \psi$. Without loss of generality, we may assume that $\lambda_i \to \lambda$. Since $\lambda_i \geqslant \tilde{c} > 0$ for all $i \in \mathbb{N}$, it follows that $\lambda > 0$. Fix a smooth spinor $\Psi \in \mathcal{C}^{\infty}$, then
\begin{align*}
\int_M \dprod{D_g \psi, \Psi}_gdv_g 
&= \int_M \dprod{\psi, D_g\Psi}dv_g \\
&= \lim_{i \to \infty} \int_M \dprod{\psi_i,D_g \Psi}_gdv_g\\
& = \lim_{i \to \infty} \int_M \dprod{D_g\psi_i, \Psi}_gdv_g\\
&= \lim_{i \to \infty} \int_M \dprod{\lambda_i \beta_i\varphi_i,\Psi}_gdv_g\\
&\leqslant \limsup_{i \to +\infty}\lambda_i\norm{\Psi}_{L^\infty}\left(\int_M \beta_i dv_g\right)^{\frac{1}{2}} \left(\int_M \beta_i\abs{\varphi_i}_g^2dv_g\right)^{\frac{1}{2}}\\
&\leqslant \limsup_{i \to +\infty}\lambda_i\norm{\Psi}_{L^\infty}\left(\int_M \beta_i dv_g\right)^{\frac{1}{2}} = 0.
\end{align*}
Hence, $D_g\psi = 0$, but 
\begin{align*}
\int_M \dprod{\psi,\psi}_gdv_g = \lim_{i \to \infty} \int_M \dprod{\psi_i,\psi}_gdv_g =0,
\end{align*}
because $\psi \in \ker D_g$ and $\psi_i \in \ker D_g^{\perp}$. We obtain that $\psi = 0$. Once again note that we only proved that there is a subsequence of $(\psi_i)_{i \in \mathbb{N}}$ that converges weakly to $0$. However, any subsequence of $\psi_i$ admits a further subsequence that also converges weakly to $0$ by the same argument. Hence, $\psi_i \overset{W^{1,\frac{2n}{n+1}}}{\rightharpoonup} 0$.

We now prove $\lim\limits_{i \to \infty}p_i = n$. Assume the converse, then we may, after passing to a subsequence, assume that $\inf p_i = p_{\infty} > n$. By H\"older inequality,
\[
\int_M \beta_i \abs{\psi_i}^2dv_g \leqslant \norm{\psi_i}^2_{L^\frac{2p_{\infty}}{p_{\infty}-1}}\norm{\beta_i}_{L^{p_{\infty}}}.
\]
Since $\int_M \beta_i \abs{\psi_i}_g^2dv_g \geqslant 1$, it follows that $ \norm{\psi_i}^2_{L^\frac{2p_{\infty}}{p_{\infty}-1}}\norm{\beta_i}_{L^{p_{\infty}}} \geqslant 1$. However, $(\beta_i)_{i \in \mathbb{N}}$ is bounded in $L^{p_{\infty}}(M)$ and $\norm{\psi_i}_{L^\frac{2p_{\infty}}{p_{\infty}-1}} \to 0$, because $\psi_i \overset{W^{1,\frac{2n}{n+1}}}{\rightharpoonup} 0$ and $W^{1,\frac{2n}{n+1}} \hookrightarrow L^\frac{2p_{\infty}}{p_{\infty}-1}$ is compact. This leads to a  contradiction.

\end{proof}

\begin{lemma}
Let $(\beta_m)_{m\in\mathbb{N}}$ be a sequence in $L^n_{\geqslant 0}(M)$ such that ${\beta_m \overset{L^n(M)}{\to} \beta}$, where $\beta$ is a nonzero function. Assume that $(\lambda_k(\beta_m))_{m \in \mathbb{N}}$ is bounded. Let $\varphi_m$ be a generalized eigenspinor corresponding to the generalized eigenvalue $\lambda_k(\beta_m)$, i.e.,
\[
D_g \varphi_m = \lambda_k(\beta_m) \beta_m \varphi_m,
\]
and assume that
\[
\int_M \beta_m \abs{\varphi_m}_g^2 \, dv_g = 1 \quad \text{and} \quad \sup\int_M \beta_m^n dv_g < +\infty.
\]
Then $(\varphi_m)_{m \in \mathbb{N}}$ admits a subsequence that converges weakly in $W^{1,\frac{2n}{n+1}}(M)$.
Moreover, if we denote this weak limit by $\varphi$, then $\varphi$ is nonzero and satisfies 
\[
D_g\varphi = \lambda \beta \varphi,
\]
where $\lambda = \liminf \lambda_k(\beta_m) \geqslant \lambda_k(\beta)$.
\end{lemma}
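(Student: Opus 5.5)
The plan is to apply Lemma~\ref{lemma:analogue of lemma 2.1} with $p_i = n$, $\lambda_i = \lambda_k(\beta_m)$ and $\beta_i = \beta_m$. First pass to a subsequence along which $\lambda_k(\beta_m) \to \lambda \overset{\mathrm{def}}{=} \liminf \lambda_k(\beta_m)$. The hypotheses of that lemma hold: the normalization $\int_M \beta_m\abs{\varphi_m}_g^2 dv_g = 1$ is given, $\sup \norm{\beta_m}_{L^n}<\infty$, and $\lambda_k(\beta_m)$ is bounded and positive. Part (2) of the lemma then gives $\lambda_k(\beta_m) \geqslant c>0$, hence $\lambda>0$.

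Next, exclude unboundedness. If $(\varphi_m)$ were unbounded in $W^{1,\frac{2n}{n+1}}$, part (3) would give $\beta_m \overset{L^n}{\rightharpoonup} 0$. But $\beta_m \to \beta$ strongly in $L^n$ with $\beta \neq 0$, which is a contradiction. So $(\varphi_m)$ is bounded, and by reflexivity a subsequence converges weakly in $W^{1,\frac{2n}{n+1}}$ to some $\varphi$.

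For the limit equation, test against a smooth $\Psi$. We have $\int\dprod{D_g\varphi_m,\Psi} \to \int\dprod{D_g\varphi,\Psi}$. By Rellich--Kondrashov, $\varphi_m \to \varphi$ in $L^{\frac{n}{n-1}}$, and $\beta_m\to\beta$ in $L^n$, so $\int \lambda_k(\beta_m)\beta_m\dprod{\varphi_m,\Psi} \to \lambda\int\beta\dprod{\varphi,\Psi}$. Hence $D_g\varphi = \lambda\beta\varphi$.

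Nonvanishing is where I expect the main obstacle, and I would argue by contradiction. If $\varphi = 0$, then Lemma~\ref{lemma:compact operator from W^1,2n/n+1 to L^2(gamma)} gives $\int_M \beta_m\abs{\varphi_m}_g^2 dv_g = \int_M\beta_m\abs{\varphi_m-\varphi}_g^2 dv_g\to 0$, contradicting the normalization. The potential subtlety is only that this lemma needs strong $L^n$ convergence of $\beta_m$, which we have. As a byproduct, the same lemma gives $\int_M \beta\abs{\varphi}_g^2 dv_g = 1$.

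Finally, the inequality $\lambda \geqslant \lambda_k(\beta)$ follows from Lemma~\ref{lemma:lower semi-continuity of lambda}, since $\lambda_k(\beta) \leqslant \liminf\lambda_k(\beta_m) = \lambda$.
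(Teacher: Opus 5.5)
Your proposal is correct and follows the paper's proof essentially step for step: you pass to a subsequence realizing the $\liminf$, get $W^{1,\frac{2n}{n+1}}$-boundedness from Lemma~\ref{lemma:analogue of lemma 2.1}, extract a weak limit, show it is nonzero via the compactness lemma for $\int_M \beta_m\abs{\varphi_m-\varphi}_g^2\,dv_g$, pass to the limit in the equation, and invoke lower semicontinuity for $\lambda \geqslant \lambda_k(\beta)$. The only difference is that you spell out why boundedness holds (part (3) would force $\beta_m \rightharpoonup 0$, contradicting strong convergence to $\beta \neq 0$), which the paper leaves implicit.
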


\begin{proof}
After passing to a subsequence, we may assume that $\lim\limits_{m \to \infty} \lambda_k(\beta_m) = \lambda$. By Lemma~\ref{lemma:lower semi-continuity of lambda}, it follows that $\lambda \geqslant \lambda_k(\beta)$.
By Lemma~\ref{lemma:analogue of lemma 2.1}, $(\varphi_m)_{W^{1,\frac{2n}{n+1}}}$ is bounded in $W^{1,\frac{2n}{n+1}}$. We may, after passing to a subsequence, assume that $\varphi_m \overset{W^{1,\frac{2n}{n+1}}}{\rightharpoonup} \varphi$. By Lemma~\ref{lemma:compact operator from W^1,2n/n+1 to L^2(gamma)}, $\varphi$ is nonzero. Passing to the weak limit in $D_g \varphi_m = \lambda_k(\beta_m)\beta_m\varphi_m$, we deduce that $D_g \varphi = \lambda \beta \varphi$. 
\end{proof}

\section{Variational theory for generalized eigenvalues.}\label{section:variational theory}
In this section we follow~\cite{humbert2025extremisingeigenvaluesgjmsoperators}
and use the following notations:
\begin{itemize}
    \item $i(k) \overset{\mathrm{min}}{=} \min\{\,i \in \mathbb{N}_{>0} \mid \lambda_i(\beta) = \lambda_k(\beta)\,\}$,
    \item  $I(k) \overset{\mathrm{def}}{=} \max\{\,i \in \mathbb{N}_{>0} \mid \lambda_i(\beta) = \lambda_k(\beta)\,\}$.
    \item $E_k(\beta) \overset{\mathrm{def}}{=} \ker (D_g - \lambda_k(\beta)\beta\mathrm{Id})$,
    \item $Q(\beta, \varphi) = \int_M \beta \abs{\varphi}^2dv_g$,
    \item $B(\beta,\varphi,\psi) = \int_M \beta \dprod{\varphi,\psi}dv_g$.
\end{itemize}

\subsection{First variation of generalized eigenvalues.}
\begin{lemma}[Right derivative of $\lambda_k(\beta)$]\label{lemma:right derivative of lambda_k}
Assume that $\beta \in L^n_{\geqslant 0}(M)$ and $\lambda_k(\beta)$ is finite. Then for any $b \in L^n_{\geqslant 0}(M)$ we have
\begin{align*}
\lim_{t \to 0+} \frac{\lambda_k(\beta + tb) - \lambda_k(\beta)}{t} 
&= \min_{V \in \mathcal{G}_{k - i(k) + 1}(E_k(\beta))} \max_{\varphi \in V \setminus \{0\,\}} \left(-\lambda_k(\beta)\frac{\int_M b\abs{\varphi^2}_g dv_g}{\int_M \beta \abs{\varphi}_g^2dv_g}\right)\\
&= \max_{V \in \mathcal{G}_{I(k) - k + 1}(E_k(\beta))} \min_{\varphi \in V \setminus \{0\,\}} \left(-\lambda_k(\beta)\frac{\int_M b\abs{\varphi^2}_gdv_g}{\int_M \beta \abs{\varphi}_g^2dv_g}\right)
\end{align*}
\end{lemma}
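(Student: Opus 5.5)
We prove the two-sided estimate by sandwiching the difference quotient, working with the reciprocal quantity $\mu_k(\beta)\overset{\mathrm{def}}{=}\lambda_k(\beta)^{-1}$ given by the sup--min formula. Write $\lambda=\lambda_k(\beta)$ and $\beta_t=\beta+tb$. The key identity is that for a generalized eigenspinor $\varphi\in E_k(\beta)$ we have $D_g\varphi=\lambda\beta\varphi$. Hence
\[
\int_M\frac{1}{\beta_t}\abs{D_g\varphi}_g^2\,dv_g=\lambda^2\int_M\frac{\beta^2}{\beta+tb}\abs{\varphi}_g^2\,dv_g=\lambda^2\Big(Q(\beta,\varphi)-t\int_M\frac{\beta b}{\beta+tb}\abs{\varphi}_g^2\,dv_g\Big).
\]
Also $\int_M\dprod{D_g\varphi,\varphi}_g\,dv_g=\lambda Q(\beta,\varphi)$. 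By monotone convergence, $\int_M \frac{\beta b}{\beta+tb}\abs{\varphi}^2 \to \int_M b\abs{\varphi}^2$ on $\{\beta>0\}$. Note that $\varphi$ vanishes where $\beta=0$ is irrelevant, since only $\beta\abs{\varphi}^2$ enters. Lemma~\ref{lemma:regularity} gives $\varphi\in L^p$ for all $p<\infty$, so all these integrals are finite. Thus
\[
\mathcal F(\beta_t)[\varphi]=\frac{1}{\lambda}\Big(1+t\,\frac{\int_M b\abs{\varphi}^2}{Q(\beta,\varphi)}+o(t)\Big).
\]
This expansion is uniform on the unit sphere of a finite-dimensional subspace.

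\textbf{Upper bound on $\lambda_k(\beta_t)$ (lower bound on $\mu_k$).} By Proposition~\ref{theorem:existence of eigenspinors. Positive case.}, take eigenspinors $\phi_1,\dots,\phi_{i(k)-1}$ for the strictly smaller eigenvalues, and take $V\in\mathcal G_{k-i(k)+1}(E_k(\beta))$ that is $B(\beta,\cdot,\cdot)$-orthogonal to them. Use the test space $E=\spanning{\phi_1,\dots,\phi_{i(k)-1}}\oplus V$. On $E$, the cross terms between different eigenvalues vanish at $t=0$ by $B$-orthogonality, and they remain $O(t)$. Since the lower eigenvalues satisfy $\mathcal F>1/\lambda$ with a gap, the minimum of $\mathcal F(\beta_t)$ over $E$ is attained, to first order, on $V$. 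This yields
\[
\mu_k(\beta_t)\ge \frac1\lambda+\frac t\lambda\min_{\varphi\in V}\frac{\int_M b\abs{\varphi}^2}{Q(\beta,\varphi)}+o(t).
\]
Converting with $\lambda_k=\mu_k^{-1}$ and optimizing over $V$ gives
\[
\limsup_{t\to0^+}\frac{\lambda_k(\beta_t)-\lambda}{t}\le\min_{V}\max_{\varphi\in V}\Big(-\lambda\,\frac{\int_M b\abs{\varphi}^2}{Q(\beta,\varphi)}\Big).
\]
The cross-term control has to be done carefully. We expand the Gram matrix of the quadratic forms $\int_M\dprod{D_g\cdot,\cdot}$ and $\int_M\beta_t^{-1}\dprod{D_g\cdot,D_g\cdot}$ on $E$, which are analytic-type in $t$ through dominated convergence. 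A perturbation argument for the generalized eigenvalues of this finite matrix pencil then gives the bound.

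\textbf{Lower bound (main obstacle).} For the reverse inequality, take $t_m\to0^+$ realizing the $\liminf$. By Proposition~\ref{theorem:existence of eigenspinors. Positive case.}, choose eigenspinors $\varphi^m_1,\dots,\varphi^m_{I(k)}$ for $\beta_{t_m}$, normalized by $Q(\beta_{t_m},\varphi^m_j)=1$ and mutually $B(\beta_{t_m})$-orthogonal. Lemma~\ref{lemma:continuity of generalized eigenvalues in positive direction} gives $\lambda_j(\beta_{t_m})\to\lambda_j(\beta)$. By the $W^{1,\frac{2n}{n+1}}$ compactness lemma of Section~\ref{section:auxiliary part 2} together with Lemma~\ref{lemma:compact operator from W^1,2n/n+1 to L^2(gamma)}, the $\varphi^m_j$ for $i(k)\le j\le I(k)$ converge to a $B(\beta)$-orthonormal family in $E_k(\beta)$.

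The key computation tests the equation for $\beta_{t_m}$ against $\varphi\in E_k(\beta)$ and vice versa, using self-adjointness of $D_g$:
\[
(\lambda_j(\beta_{t_m})-\lambda)B(\beta,\varphi^m_j,\varphi)=-\lambda_j(\beta_{t_m})\,t_m\int_M b\dprod{\varphi^m_j,\varphi}_g\,dv_g.
\]
Dividing by $t_m$ and passing to the limit shows that the limiting difference quotients are eigenvalues of the form $-\lambda\int_M b\dprod{\cdot,\cdot}$ relative to $B(\beta)$ on the limit span. Ordering then gives $\liminf\ge$ the min--max value for index $k-i(k)+1$.

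The delicate points are two. First, strong convergence is needed to pass to the limit in $\int_M b\dprod{\varphi^m_j,\varphi}$, which we handle by Lemma~\ref{lemma:compact operator from W^1,2n/n+1 to L^2(gamma)} with $b+\beta$ as weight. Second, the limiting family must span an $(I(k)-i(k)+1)$-dimensional space. The equality of the two min--max expressions is then the Courant--Fischer theorem for the Hermitian form $\int_M b\abs{\varphi}^2$ relative to $Q(\beta,\cdot)$ on the finite-dimensional space $E_k(\beta)$ of dimension $I(k)-i(k)+1$.
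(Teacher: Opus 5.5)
\textbf{The gap is in your upper bound (the $\limsup$ half).} Your expansion of $\mathcal F(\beta+tb)[\varphi]$ for $\varphi\in E_k(\beta)$ is false when $\{\beta=0\}$ has positive measure. The integrand $\frac{\beta b}{\beta+tb}\abs{\varphi}_g^2$ vanishes identically on $\{\beta=0\}$, so monotone convergence gives
\[
\mathcal F(\beta+tb)[\varphi]=\frac1\lambda\Bigl(1+t\,\frac{\int_{\{\beta>0\}}b\abs{\varphi}_g^2\,dv_g}{Q(\beta,\varphi)}+o(t)\Bigr).
\]
Your remark that the set $\{\beta=0\}$ is irrelevant is not correct. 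There $D_g\varphi=0$, but nothing forces $\varphi$ to vanish, and $\int_{\{\beta=0\}}b\abs{\varphi}_g^2\,dv_g$ is part of the claimed formula. Hence the test space $\spanning{\phi_1,\dots,\phi_{i(k)-1}}\oplus V$ only yields
\[
\limsup_{t\to0^+}\frac{\lambda_k(\beta+tb)-\lambda}{t}\leqslant\min_{V}\max_{\varphi\in V\setminus\{0\}}\Bigl(-\lambda\,\frac{\int_{\{\beta>0\}}b\abs{\varphi}_g^2\,dv_g}{Q(\beta,\varphi)}\Bigr).
\]
This is in general strictly larger than the claimed value. For instance, if $b$ is supported in $\{\beta=0\}$, you only get $\limsup\leqslant 0$. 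The claimed value, however, is negative as soon as no nonzero element of $E_k(\beta)$ vanishes a.e.\ on $\{b>0\}$. So the sandwich does not close.

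This is not a cosmetic issue with your particular test space. Eigenspinors of $\beta$ cannot exploit the new room $\{\beta=0,\ b>0\}$. Roughly, you would have to perturb them by $t\psi$ with $D_g\psi\approx\lambda b\varphi$ on that set to recover the missing term.

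\textbf{The repair is already in your second half.} Drop the test-space argument entirely. Your identity
\[
(\lambda_j(\beta_{t_m})-\lambda)\,B(\beta,\varphi^m_j,\varphi)=-\lambda_j(\beta_{t_m})\,t_m\int_M b\dprod{\varphi^m_j,\varphi}_g\,dv_g,\qquad \varphi\in E_k(\beta),
\]
is a cleaner substitute for the paper's machinery with $R^t_j=\phi^t_j-\pi_k(\phi^t_j)$ and the normalization $\alpha^t_j$. Two additions are needed.
\begin{enumerate}
\item Boundedness of the quotients. Test the identity with $\varphi=\varphi_j$, the limit. Then $B(\beta,\varphi^m_j,\varphi_j)\to 1$ while $\int_M b\dprod{\varphi^m_j,\varphi_j}_g\,dv_g$ stays bounded, so the difference quotients are bounded. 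This is exactly what the paper obtains by proving $\tau_j\neq0$.
\item The limit family is a basis. We have $\dim E_k(\beta)=I(k)-i(k)+1$: eigenspinors for distinct eigenvalues are $B(\beta,\cdot,\cdot)$-orthogonal, so a larger $E_k(\beta)$ would force $\lambda_{I(k)+1}(\beta)=\lambda$ by min--max. Hence the limit family is a $Q(\beta,\cdot)$-orthonormal basis of $E_k(\beta)$ that diagonalizes $-\lambda\int_M b\dprod{\cdot,\cdot}_g\,dv_g$.
\end{enumerate}

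Now run the argument along an \emph{arbitrary} sequence $t_m\to0^+$, not only one realizing the $\liminf$. Along a subsequence, the quotients $(\lambda_j(\beta+t_mb)-\lambda)/t_m$ for $i(k)\leqslant j\leqslant I(k)$ converge to ordered limits. These limits are precisely the eigenvalues, with multiplicity, of that form relative to $Q(\beta,\cdot)$. The $k$-th one is therefore the Courant--Fischer value, independent of the subsequence. This gives both inequalities at once, and it is how the paper concludes.
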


\begin{proof}
By Lemma~\ref{lemma:continuity of generalized eigenvalues in positive direction}, we know that $\lim\limits_{t \to 0+} \lambda_k(\beta + t b) = \lambda_k(\beta)$, so there exists $\delta > 0$ such that $\lambda_k(\beta + tb)$ is finite for all $t \in (0,\delta)$. 

We denote by 
\[
\varphi^t_{i(k)},\ldots,\varphi^t_{I(k)}
\]
a family of $Q(\beta + tb,\cdot)$-orthogonal generalized eigenspinors associated with the eigenvalues $\lambda_{i(k)}(\beta + tb),\ldots, \lambda_{I(k)}(\beta + tb)$. Moreover we assume that for all $t \in (0,\delta)$ and for all $j \in \{\,i(k),\ldots,I(k)\,\}$ we have
\[
\int_M (\beta + tb)^2\abs{\varphi^t_j}_g^2 dv_g = 1.
\]
By Lemma~\ref{lemma: H^1 analogue of lemma 2.1}, $(\varphi^t_j)_t$ is bounded in $H^1$, so we may, after passing to a subsequence, assume that $\varphi^t_j \overset{H^1}{\rightharpoonup}\varphi_j$. By Lemma~\ref{lemma:existence of eigenspinors}, we have that $D_g\varphi_j = \left(\lim\limits_{t \to 0+}\lambda_j(\beta + tb)\right) \beta \varphi_j$ and $B(\beta,\varphi_i,\varphi_j) = 0$ for $i \neq j$. Moreover, Lemma~\ref{lemma:continuity of generalized eigenvalues in positive direction} implies that $\lim\limits_{t \to 0+}\lambda_j(\beta + tb) = \lambda_j(\beta) = \lambda_k(\beta)$, where the last equality is true since $i(k) \leqslant j \leqslant I(k)$.

Lemma~\ref{lemma:compact opetator from H^1 to L^2(gamma^2)} implies that 
\[
\int_M (\beta + tb)^2 \abs{\varphi_j^t - \varphi_j}_g^2dv_g = 0.
\]
Consequently, $\int_M \beta \abs{\varphi_j}^2_g = \lim\limits_{t \to 0+} \int_M (\beta + tb)^2\abs{\varphi^t_j}_g^2 dv_g = 1$.
Moreover,
\begin{align*}
\lim_{t \to 0+} \int_M \abs{D_g \varphi^t_j}_g^2 = \lim_{t \to 0+} \lambda_j(\beta + tb)^2 \int_M (\beta + tb)^2 \abs{\varphi^t_j}_g^2dv_g =
\lambda_k(\beta) \int_M\beta^2\abs{\varphi_j}_g^2dv_g.
\end{align*}
Thus, $\lim_{t \to 0+}\norm{\varphi^t_j}_{H^1} = \norm{\varphi_j}_{H^1}$. Also, we know that $\varphi^t_j \overset{H^1}{\rightharpoonup}\varphi_j$. Combining these two observations, we deduce that $\varphi^t_j$ converges strongly to $\varphi_j$ in $H^1$.

Since $\varphi^t_j \overset{L^\frac{2n}{n-1}}{\to} \varphi_j$ and $\beta+tb \overset{L^n}{\to} \beta$, we have that 
\[
\lim_{t \to 0+}Q(\beta+tb,\varphi^t_j) = \lim_{t \to 0+}\int_M (\beta + tb)\abs{\varphi^t_j}_g^2dv_g = \int_M \beta \abs{\varphi_j}_g^2dv_g = Q(\beta,\varphi).
\]
Therefore, 
\[
\phi^t_{i(k)} \overset{\mathrm{def}}{=} \frac{\varphi^t_{i(k)}}{Q\left(\beta+tb,\varphi^t_{i(k)}\right)},\ldots,\phi^t_{I(k)} \overset{\mathrm{def}}{=}\frac{\varphi^t_{I(k)}}{Q\left(\beta+tb,\varphi^t_{I(k)}\right)}
\]
is a family of $Q(\beta + tb,\cdot)$-orthonormal generalized eigenspinors associated to the eigenvalues $\lambda_{i(k)}(\beta + tb),\ldots, \lambda_{I(k)}(\beta + tb)$. Moreover, for all $j \in \{\,1,\ldots,k\,\}$ we have that
\[
\phi^t_j \overset{H^1}{\to} \phi_j \overset{\mathrm{def}}{=} \frac{\varphi_j}{Q\left(\beta,\varphi_j\right)}
\]
and $\phi_{i(k)},\ldots,\phi_{I(k)}$ is a $Q(\beta,\cdot)$-orthonormal basis of $E_k(\beta)$.

For $j \in \{\,i(k),\ldots,I(k)\,\}$ we define $R^t_j = \phi^t_j - \pi_k\left(\phi^t_j\right)$, where $\pi_k$ is the orthogonal projection on $E_k(\beta)$ with respect to $Q(\beta,\cdot)$, i.e. for all $\psi \in H^1$ we have
\[
\pi_k(\psi) = \sum_{i = i(k)}^{I(k)}B(\beta,\psi,\phi_i)\phi_i.
\]
Consequently,
\begin{equation}\label{equation:R^t_j}
D_g R^t_j - \lambda_k(\beta) \beta R^t_j 
= \left(\lambda_j(\beta + tb) - \lambda_k(\beta)\right)\beta \phi^t_j + \lambda_j(\beta + tb)tb\phi^t_j.    
\end{equation}
For $j \in \{\,i(k),\ldots,I(k)\,\}$ and $t > 0$ we define
\begin{equation}\label{equality: alpha}
\alpha^t_j = \abs{\lambda_j(\beta + tb) - \lambda_k(\beta)} + t + \norm{R^t_j}_{H^1}
\end{equation}
and
\begin{equation}\label{equaility: R,tau and delta}
\tilde{R}^t_i = \frac{R^t_i}{\alpha^t_i}, \quad \tau^t_i = \frac{t}{\alpha^t_i}, \quad \delta^t_i = \frac{\abs{\lambda_j(\beta + tb) - \lambda_k(\beta)}}{\alpha^t_i}.
\end{equation}
We may, after passing to a subsequence, assume that 
\[
\tilde{R}^t_j \overset{H^1}{\rightharpoonup} \tilde{R}_j, \quad
\tau^t_j \to \tau_j, \quad \delta^t_j \to \delta_j.
\]
Passing to the weak limit in~\eqref{equation:R^t_j} we obtain
\begin{equation}\label{equation: R_j}
D_g \tilde{R}_j - \lambda_k(\beta) \beta \tilde{R}_j 
= \delta_j\beta \phi_j + \lambda_j(\beta)\tau_jb\phi_j.  
\end{equation}
Thus,
\[
D_g\left(\tilde{R}^t_j - \tilde{R}^t_j\right) = \lambda_k(\beta)\beta\left(\tilde{R}^t_j - \tilde{R}_j\right) + o(1), 
\]
in $L^2$ as $t \to 0+$. Since $\tilde{R}^t_j \overset{H^1}{\rightharpoonup} \tilde{R}$, it follows that $\beta\left(\tilde{R}^t_j - \tilde{R}_j\right) \overset{L^2}{\to} 0$. Indeed, by Lemma~\ref{lemma:compact opetator from H^1 to L^2(gamma^2)}, we have that
\[
\lim_{t \to 0+}\int_M \beta^2 \abs{\tilde{R}^t_j - \tilde{R}_j}^2 = 0. 
\]
Consequently. $D_g\left(\tilde{R}^t_j - \tilde{R}_j\right) = o(1)$ in $L^2$ as $t \to 0+$. Moreover, by Rellich-Kondrashov theorem, $\tilde{R}^t_j \overset{L^2}{\to} R_j$, i.e. $\norm{\tilde{R}^t_j - \tilde{R}^t_j}_{L^2} = o(1)$ as $t \to 0+$. Hence, using elliptic estimates, we get that
\[
\norm{\tilde{R}^t_j - \tilde{R}^t_j}_{H^1} \leqslant C_{\mathrm{elliptic}}\left(\norm{D_g\tilde{R}^t_j - D_g\tilde{R}^t_j}_{L^2} + \norm{\tilde{R}^t_j - \tilde{R}^t_j}_{L^2}\right) = o(1) \quad \text{as } t \to 0+.
\]
Therefore, after passing to a subsequence, we may assume that $\tilde{R}^t_j \overset{H^1}{\to} \tilde{R}_j$ as $t \to 0+$. From~\eqref{equality: alpha} and~\eqref{equaility: R,tau and delta} we deduce that
\begin{equation}\label{equality: sum of R, delta and tau}
\norm{\tilde{R}_j}_{H^1} + \abs{\delta_j} + \tau_j = 1.
\end{equation}
Now we integrate~\eqref{equation: R_j} against $\phi_j$ and we obtain that 
\begin{align*}
\delta_j Q(\beta,\phi_j) + \tau_j \lambda_k(\beta) \int_M b\abs{\phi_j}^2dv_g 
&= \int_M \dprod{D_g \tilde{R}_j,\phi_j}_gdv_g -\lambda_k(\beta) B(\beta,\tilde{R}_j,\varphi_j) \\
&= \int_M \dprod{\tilde{R}_j,D_g\phi_j}_gdv_g - 0\\
&= \lambda_k(\beta)\int_M\beta\dprod{\tilde{R}_j,\phi_j}_gdv_g \\
&= \lambda_k(\beta) B(\beta,\tilde{R}_j,\varphi_j) = 0.
\end{align*}

Assume that $\tau_j = 0$, then since $Q(\beta,\phi_j) = 1$, it follows that $\delta_j = 0$. Therefore, from~\eqref{equation: R_j} we deduce that $D_g \tilde{R}_j = \lambda_k(\beta)\beta\tilde{R}_j$, i.e. $\tilde{R}_j \in E_k(\beta)$. However, by construction of $\tilde{R}_j$, we have that $B(\beta,\tilde{R}_j,\phi) = 0$ for all $\phi$ in $E_k(\beta)$, so in particular we have that $0 = Q(\beta,\tilde{R}_j)$. Consequently, $\tilde{R}_j = 0$ but this contradicts~\eqref{equality: sum of R, delta and tau}.

Thus, $\tau_j \neq 0$ and 
\[
\frac{\delta_j}{\tau_j} = -\lambda_k(\beta) \frac{\int_M b \abs{\phi_j}_g^2}{Q(\beta,\phi)}.
\]
Integrating~\eqref{equation: R_j} against $\phi_i$ for $i \neq j$, we 
obtain that $\lambda_k(\beta)\int_M b\dprod{\phi_i,\phi_j}dv_g = 0$, 
so $\phi_{i(k)},\ldots,\phi_{I(k)}$ is an orthogonal family for the 
bilinear form 
\[
(\varphi,\psi) \mapsto - 
\lambda_k(\beta)\int_Mb\dprod{\varphi,\psi}_gdv_g. 
\]
Moreover, the 
family $\left(\frac{\delta^t_i}{\tau^t_i}\right)_{i(k)\leqslant i 
\leqslant I(k)}$ is non-decreasing in $i$ for every fixed $t > 0$. 
Therefore,
\[
\frac{\delta_{i(k)}}{\tau_{i(k)}} \leqslant \ldots \leqslant \frac{\delta_{I(k)}}{\tau_{I(k)}}.
\]
Using min-max formulae for orthonormal diagonalization, we get that
\[
\frac{\delta_i}{\tau_i} = \min_{V \in \mathcal{G}_{i - i(k) + 1}(E_k(\beta))} \max_{\varphi \in V \setminus \{0\,\}} \left(-\lambda_k(\beta)\frac{\int_M b\abs{\varphi^2}_gdv_g}{\int_M \beta \abs{\varphi}_g^2dv_g}\right).
\]
Since the right-hand side is independent of the choice of a subsequence as $t \to 0+$, we deduce that the directional derivative exists and
\[
\lim_{t \to 0+}\frac{\lambda_k(\beta + tb) - \lambda_k(\beta)}{t} = \lim_{t \to 0+} \frac{\delta^t_k}{\tau^t_k} = \frac{\delta_k}{\tau_k}.
\]
\end{proof}

\subsection{Renormalized eigenvalue functionals.}
Fix $p \geqslant n$. For $\beta \in L^p(M)$ we define the following volume-renormalized version of $\lambda_k(\beta)$:
\[
\bar{\lambda}^p_k(\beta) \overset{\mathrm{def}}{=} 
\begin{cases}
\lambda_k(\beta)\norm{\beta}_{L^p} \quad &\text{if }\lambda_k(\beta) < +\infty\\
+\infty \quad &\text{otherwise}.
\end{cases}
\]
The proof of the following lemma can be carried out step by step in the same manner as the proof of Proposition $4.2$ in~\cite{humbert2025extremisingeigenvaluesgjmsoperators}.
\begin{lemma}\label{lemma:Euler-Lagrange for lambda_k}
Assume that $\beta \in L^p_{\geqslant 0}(M)$ is a local minimum of $\bar{\lambda}^k_p$ and that $\lambda_k(\beta)$ is finite. Then, for any subspace $V \in \mathcal{G}_{k - i(k) + 1}(E_k(\beta))$, there exist $r \in \{\,i(k),\ldots,k\,\}$, a $Q(\beta,\cdot)$-orthonormal family $(\varphi_r, \ldots, \varphi_k)$ of elements of $V$, and positive numbers $d_r, \ldots, d_k$ satisfying $\sum\limits_{i = r}^k d_i = 1$ such that 
\[
\beta^{p - 1} = \sum_{i = r}^k d_i \abs{\varphi_i}^2_g \quad \text{almost everywhere on } M.
\]
\end{lemma}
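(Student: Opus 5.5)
We follow the strategy of Proposition 4.2 in~\cite{humbert2025extremisingeigenvaluesgjmsoperators}: first derive a one-sided variational inequality from local minimality using Lemma~\ref{lemma:right derivative of lambda_k}, then convert it into the stated identity by a Hahn--Banach / convexity argument in the space of quadratic forms on $V$.

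\textbf{Step 1: a first-order inequality.} Take any $b \in L^{p}(M)$ with $\beta + tb \geqslant 0$ for small $t>0$. The natural choices are $b \geqslant 0$ and $b = -\beta + h$ with $h \geqslant 0$; more generally, perturbations of the form $\beta + tb$ with $b \geqslant -\beta$. By local minimality,
\[
\liminf_{t \to 0+} \frac{\bar\lambda^p_k(\beta+tb) - \bar\lambda^p_k(\beta)}{t} \geqslant 0 .
\]
Since $\lambda_k$ is $(-1)$-homogeneous, $\lambda_k(s\beta)=s^{-1}\lambda_k(\beta)$, the functional $\bar\lambda^p_k$ is scale invariant. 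This lets us reduce to directions $b \geqslant 0$: write $\beta + t(h-\beta) = (1-t)\bigl(\beta + \tfrac{t}{1-t}h\bigr)$. Lemma~\ref{lemma:right derivative of lambda_k} together with the derivative of $\norm{\beta+tb}_{L^p}$, which equals $\norm{\beta}_{L^p}^{1-p}\int_M \beta^{p-1} b\,dv_g$, then gives for every $b \in L^p_{\geqslant 0}(M)$:
\[
\max_{\varphi\in V\setminus\{0\}} \left( -\lambda_k(\beta)\frac{\int_M b\abs{\varphi}^2_g}{Q(\beta,\varphi)} \right) + \lambda_k(\beta)\frac{\int_M \beta^{p-1}b}{\norm{\beta}^p_{L^p}} \geqslant 0 .
\]
Here we used the $\min$-$\max$ formula and fixed the given $V \in \mathcal{G}_{k-i(k)+1}(E_k(\beta))$, which is admissible since the minimum over $V$ is at most the value on this particular $V$. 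Normalizing $\norm{\beta}_{L^p}=1$, this says: for every $b \geqslant 0$ there is a $Q(\beta,\cdot)$-unit $\varphi \in V$ with
\[
\int_M b\abs{\varphi}^2_g \leqslant \int_M \beta^{p-1} b .
\]
Before applying Lemma~\ref{lemma:right derivative of lambda_k} we must check that $\beta+tb \in L^n$ and that $\lambda_k$ stays finite along the path. The latter follows from Lemma~\ref{lemma:continuity of generalized eigenvalues in positive direction}.

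\textbf{Step 2: separation.} Consider the convex set
\[
K = \overline{\mathrm{conv}}\{\,\abs{\varphi}^2_g : \varphi \in V,\ Q(\beta,\varphi)=1\,\} \subset L^{p'}(M), \qquad p' = \tfrac{p}{p-1}.
\]
It lies in $L^{p'}$ because eigenspinors are in every $L^q$ by Lemma~\ref{lemma:regularity}. Since $V$ is finite dimensional, the unit sphere of $V$ is compact, so $K$ is a compact convex set, namely the convex hull of a compact set in finite dimensions. We claim $\beta^{p-1} \in K - \mathcal{C}$, where $\mathcal{C}$ is the cone of nonnegative functions in $L^{p'}$. If not, Hahn--Banach produces $b \in L^p$ strictly separating $\beta^{p-1}$ from the closed convex set $K-\mathcal{C}$. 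Separation from $-\mathcal{C}$ forces $b \geqslant 0$, and then $\int_M b f > \int_M b\beta^{p-1}$ for all $f \in K$, contradicting Step 1. Hence
\[
\beta^{p-1} \leqslant \sum_i d_i \abs{\varphi_i}^2_g, \qquad \sum_i d_i = 1 .
\]
Integrating against $\beta$ gives equality: the left side integrates to $\int_M \beta^p = 1$, and the right side to $\sum_i d_i\, Q(\beta,\varphi_i) = 1$. So the difference is a nonnegative function with zero $\beta$-weighted integral, which yields equality on $\{\beta>0\}$.

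\textbf{Step 3: the main obstacle.} Two points remain.

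First, obtaining equality on the zero set $\{\beta = 0\}$. For this we instead use perturbations $b = \chi_{\{\beta=0\}}$, or better exploit that the two-sided information (increasing and decreasing $\beta$ in Step 1) pins down the inequality in both directions. An alternative is to run the separation in the full space $L^{p'}$ without the cone. This would use that directions $b$ of arbitrary sign with $\beta + tb \geqslant 0$ are allowed wherever $\beta>0$, while on $\{\beta=0\}$ only $b\geqslant 0$ is admissible. Consequently the cone is needed exactly there, and one must argue that the eigenspinor sum vanishes on that set, presumably via $\lambda_k$ finiteness and the structure of $\mathcal{F}$.

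Second, turning the convex combination into a $Q(\beta,\cdot)$-orthonormal family $\varphi_r,\dots,\varphi_k$ with at most $k-i(k)+1$ terms. Here we diagonalize the Hermitian form $\sum_i d_i \varphi_i \otimes \bar\varphi_i$ on $V$ with respect to $Q(\beta,\cdot)$. Its eigenvalues are positive, at most $\dim V$ of them are nonzero, and their trace is $1$. The form $\sum_i d_i\abs{\varphi_i}^2_g$ is unchanged under this rewriting, which yields the orthonormal family and the weights $d_r,\dots,d_k$ as stated.
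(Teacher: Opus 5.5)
There is a genuine gap in Step~2: the separation argument is oriented the wrong way. Your Step~1 is correct. For every $b\in L^p_{\geqslant 0}(M)$ you get a $Q(\beta,\cdot)$-unit $\varphi\in V$ with $\int_M b\abs{\varphi}_g^2\,dv_g\leqslant\int_M\beta^{p-1}b\,dv_g$, that is, $\min_{f\in K}\int_M bf\leqslant\int_M b\beta^{p-1}$.

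Now suppose a nonzero $b$ strictly separates $\beta^{p-1}$ from $K-\mathcal{C}$. Then $\int_M b(f-sc)$ must stay on one side of a constant as $s\to+\infty$. With $b\geqslant 0$, the only consistent orientation is $\sup_{g\in K-\mathcal{C}}\int_M bg<\int_M b\beta^{p-1}$. This gives $\int_M bf<\int_M b\beta^{p-1}$ for all $f\in K$, which agrees with Step~1 instead of contradicting it.

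So the inclusion $\beta^{p-1}\in K-\mathcal{C}$, i.e.\ $\beta^{p-1}\leqslant\sum_i d_i\abs{\varphi_i}_g^2$, is never established. The one-sided first-order condition from directions $b\geqslant 0$ only controls the opposite inequality. This sign error is also exactly what produces your unresolved ``main obstacle'' on $\{\beta=0\}$. There, your inequality reduces to the vacuous $0\leqslant\sum_i d_i\abs{\varphi_i}_g^2$. The remedies you sketch are not carried out: decreasing $\beta$ is not admissible on that set, and the argument ``via finiteness of $\lambda_k$'' is left unspecified.

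The fix is to separate from $K+\mathcal{C}$ instead. This set is closed and convex, being a compact convex set plus a closed cone. If $\beta^{p-1}\notin K+\mathcal{C}$, Hahn--Banach gives $b\in L^p$ with $\int_M b\beta^{p-1}<\int_M b(f+c)$ for all $f\in K$ and $c\in\mathcal{C}$. Letting $c$ grow forces $b\geqslant 0$. Taking $c=0$ then gives $\int_M b\beta^{p-1}<\min_K\int_M bf$, which does contradict Step~1.

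Hence $\sum_i d_i\abs{\varphi_i}_g^2\leqslant\beta^{p-1}$ almost everywhere. This is the direction appearing in Lemma~\ref{lemma: existence of almost minimizers} and in the proof of Theorem~\ref{theorem:existence of supercritical eigenvalue}. Integrate against $\beta$ and use $\norm{\beta}_{L^p}=1=\sum_i d_i\,Q(\beta,\varphi_i)$; this gives equality on $\{\beta>0\}$. On $\{\beta=0\}$ equality is automatic, since $0\leqslant\sum_i d_i\abs{\varphi_i}_g^2\leqslant\beta^{p-1}=0$. So no separate argument on the zero set is needed.

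With this correction, your last step works as you describe. Diagonalizing $\sum_i d_i\,\varphi_i\otimes\bar\varphi_i$ with respect to $Q(\beta,\cdot)$ on $V$ yields the $Q(\beta,\cdot)$-orthonormal family with at most $\dim V=k-i(k)+1$ positive weights summing to $1$.
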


\begin{remark}\label{remark:regularity}
Combining the conclusion of Lemma~\ref{lemma:Euler-Lagrange for lambda_k} with Lemma~\ref{lemma:regularity}, we deduce using a bootstrap argument that $\beta \in C^{0,\alpha}(M)$ for some $\alpha \in (0,1)$. Moreover, if $p = 2$ which is the case for example when $p = n = 2$, we have that $\beta \in \mathcal{C}^{\infty}(M)$. 
\end{remark}

For $p \geqslant n$ we consider the following variational problem:
\[
\Lambda^p_k(M,[g],\sigma) \overset{\mathrm{def}}{=} \inf_{\beta \in L^p_{\geqslant 0}(M)} \bar{\lambda}^p_k(\beta).
\]

The proof of the next lemma follows exactly the same argument as that of Proposition $4.4$ in~\cite{humbert2025extremisingeigenvaluesgjmsoperators}. However, we note that, in our setting, only the lower semi-continuity of the generalized eigenvalues has been established. Nevertheless, Ekeland's variational principle remains applicable in this case.
\begin{lemma}\label{lemma: existence of almost minimizers}
Let $p \geqslant n$. Define $q \overset{\mathrm{def}}{=} \frac{p - 1}{p}$. For any $\varepsilon > 0$ there exist $\beta_{\varepsilon} \in L^p_{\geqslant 0}(M)$ with $1 \leqslant \norm{\beta_{\varepsilon}}_{L^p} \leqslant 1 +\varepsilon$ such that
\[
\bar{\lambda}^p_k(\beta_{\varepsilon}) \leqslant \Lambda^p_k(M,[g],\sigma) + \varepsilon^2
\]
a strictly positive integer $l_{\varepsilon} \leqslant k$, an orthonormal family $\varphi^{\varepsilon}_{l_{\varepsilon}},\ldots,\varphi^{\varepsilon}_k$ with respect to $Q(\beta_{\varepsilon},\cdot)$ of eigenspinors in $E_k(\beta_{\varepsilon})$, positive numbers $d_{l_{\varepsilon}}, \ldots, d_k$ satisfying $\sum\limits_{i = l_{\varepsilon}}^k d_i = 1$ and a function $f_{\varepsilon} \in L^q(M)$ such that $\norm{f_{\varepsilon}}_{L^q} \leqslant \varepsilon$ and
\[
\bar{\lambda}^p_k(\beta_{\varepsilon})\sum_{i = l_{\varepsilon}}^k d_i \abs{\varphi^{\varepsilon}_i}^2_g \leqslant \bar{\lambda}^p_k(\beta_{\varepsilon})\beta^{p-1}_{\varepsilon} + f_{\varepsilon} \quad \text{almost everywhere on } M.
\]
\end{lemma}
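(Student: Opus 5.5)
We will prove Lemma~\ref{lemma: existence of almost minimizers} by applying Ekeland's variational principle to $\bar\lambda^p_k$ on the closed subset $X=\{\beta\in L^p_{\geqslant 0}(M)\}$ of $L^p(M)$, and then reading off a perturbed Euler--Lagrange inequality from the right derivatives of Lemma~\ref{lemma:right derivative of lambda_k}, as in Proposition~4.4 of~\cite{humbert2025extremisingeigenvaluesgjmsoperators}.

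\textbf{Step 1 (Ekeland).} The functional $\bar\lambda^p_k$ is $0$-homogeneous, so we may restrict to $\norm{\beta}_{L^p}\geqslant 1$. It is bounded below by $0$, not identically $+\infty$, and lower semi-continuous on $X$. Lower semi-continuity follows from Lemma~\ref{lemma:lower semi-continuity of lambda}, because $L^p$-convergence with $p\geqslant n$ implies $L^n$-convergence, and $\norm{\cdot}_{L^p}$ is continuous. Pick $\beta_0$ with $\norm{\beta_0}_{L^p}=1$ and $\bar\lambda^p_k(\beta_0)\leqslant\Lambda^p_k+\varepsilon^2$. Ekeland's principle with parameters $(\varepsilon^2,\varepsilon)$ then gives $\beta_\varepsilon\in X$ with three properties:
\begin{itemize}
\item $\norm{\beta_\varepsilon-\beta_0}_{L^p}\leqslant\varepsilon$, hence $\norm{\beta_\varepsilon}_{L^p}\leqslant 1+\varepsilon$ (after rescaling if needed to keep $\norm{\beta_\varepsilon}\geqslant1$);
\item $\bar\lambda^p_k(\beta_\varepsilon)\leqslant\bar\lambda^p_k(\beta_0)$;
\item $\bar\lambda^p_k(\beta)\geqslant\bar\lambda^p_k(\beta_\varepsilon)-\varepsilon\norm{\beta-\beta_\varepsilon}_{L^p}$ for all $\beta\in X$.
\end{itemize}
In particular $\lambda_k(\beta_\varepsilon)<\infty$, so Lemma~\ref{lemma:right derivative of lambda_k} applies at $\beta_\varepsilon$.

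\textbf{Step 2 (one-sided variations).} For $b\in L^p_{\geqslant0}(M)$, insert $\beta=\beta_\varepsilon+tb$ into the third Ekeland inequality, divide by $t$, and let $t\to0^+$. The derivative of $\norm{\beta_\varepsilon+tb}_{L^p}$ at $t=0$ is $\norm{\beta_\varepsilon}_{L^p}^{1-p}\int\beta_\varepsilon^{p-1}b$. Combining this with the min--max formula of Lemma~\ref{lemma:right derivative of lambda_k} over $\mathcal G_{k-i(k)+1}(E_k(\beta_\varepsilon))$ yields, after normalizing $\norm{\beta_\varepsilon}$ and choosing suitable constants, the following. For every $b\geqslant0$ there is a $Q(\beta_\varepsilon,\cdot)$-unit $\varphi$ in a fixed $V\in\mathcal G_{k-i(k)+1}(E_k(\beta_\varepsilon))$ such that
\[
\int_M b\Bigl(\bar\lambda^p_k(\beta_\varepsilon)\abs{\varphi}_g^2-\bar\lambda^p_k(\beta_\varepsilon)\beta_\varepsilon^{p-1}\Bigr)dv_g\leqslant\varepsilon\norm{b}_{L^p}.
\]
To get this for a single $V$ uniformly in $b$, we take $V$ realizing the minimum in the first formula. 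The map $\varphi\mapsto\int b\abs{\varphi}^2$ is then a quadratic form on $V$, and we use its maximum over unit vectors.

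\textbf{Step 3 (Hahn--Banach / convexity).} This is the main obstacle: passing from "for each $b$ some $\varphi\in V$" to a single convex combination. Let
\[
K=\Bigl\{\textstyle\sum_i d_i\abs{\varphi_i}^2:\ (\varphi_i)\ Q(\beta_\varepsilon,\cdot)\text{-orthonormal in }V,\ d_i\geqslant0,\ \sum_i d_i=1\Bigr\}.
\]
This is the image of density matrices on the finite-dimensional space $V$, so it is convex and compact in $L^{p/(p-1)}$; the functions $\abs{\varphi}^2$ lie in $L^{q'}$ by Lemma~\ref{lemma:regularity}. Consider the convex set $C=\{\bar\lambda\,\beta_\varepsilon^{p-1}+f-\rho+h:\ \norm{f}_{L^{p/(p-1)}}\leqslant\varepsilon,\ h\geqslant0\}$ with $\rho\in\bar\lambda K$. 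We claim $\bar\lambda K$ meets $\bar\lambda\beta_\varepsilon^{p-1}+B_\varepsilon-L^{p/(p-1)}_{\geqslant0}$. If not, a separating functional $b\in L^p$ exists. It must be nonnegative because of the cone $-L_{\geqslant0}$, and then it violates Step 2 at the maximizing $\varphi$ (a rank-one element of $K$). Diagonalizing the resulting density matrix gives the orthonormal family $\varphi^\varepsilon_{l_\varepsilon},\dots,\varphi^\varepsilon_k$ with weights $d_i>0$ summing to one; the number of nonzero weights is at most $\dim V=k-i(k)+1$, so after relabelling the indices run from some $l_\varepsilon\geqslant i(k)\geqslant1$ up to $k$. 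The remainder gives $f_\varepsilon$ with the required smallness, in the norm the statement calls $L^q$, i.e.\ the dual exponent of $p$. This yields the pointwise inequality almost everywhere.
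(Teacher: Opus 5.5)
Your proposal is correct and follows the route the paper indicates by deferring to Proposition~4.4 of~\cite{humbert2025extremisingeigenvaluesgjmsoperators}: Ekeland's principle (legitimate thanks to the lower semicontinuity of Lemma~\ref{lemma:lower semi-continuity of lambda}), then the right-derivative formula of Lemma~\ref{lemma:right derivative of lambda_k}, then a Hahn--Banach separation against the compact convex set of density-matrix combinations on $V$, with $q$ correctly read as the dual exponent $p/(p-1)$. One small correction in Step~2: you cannot get uniformity from the $V$ realizing the minimum, since that $V$ depends on $b$; instead, the Ekeland bound is a lower bound for the minimum over $V$, so it holds for every fixed $V\in\mathcal{G}_{k-i(k)+1}(E_k(\beta_\varepsilon))$, with $\varphi$ the $Q(\beta_\varepsilon,\cdot)$-unit vector of $V$ that minimizes $\int_M b\abs{\varphi}_g^2\,dv_g$.
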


We are ready to establish the existence of optimizers for $\Lambda^p_k(M,[g],\sigma)$ for $p > n$.

\begin{theorem}\label{theorem:existence of supercritical eigenvalue}
Assume that $p > n$. Then, for any $k \in \mathbb{N}_{>0}$, $\Lambda^p_k(M,[g],\sigma)$ is attained, there exists $\beta \in L^p_{\geqslant 0}(M)$ such that $\bar{\lambda}^p_k(\beta) = \Lambda^p_k(M,[g],\sigma)$.
\end{theorem}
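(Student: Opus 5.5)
The plan is to take the almost-minimizers $\beta_\varepsilon$ given by Lemma~\ref{lemma: existence of almost minimizers}, with $\varepsilon = \varepsilon_m \to 0$, and pass to the limit. Write $\beta_m = \beta_{\varepsilon_m}$ and $\lambda_m = \lambda_k(\beta_m)$. Since $1 \leqslant \norm{\beta_m}_{L^p} \leqslant 1+\varepsilon_m$ and $\bar\lambda^p_k(\beta_m) \to \Lambda^p_k$, the $\lambda_m$ are bounded. The family $(\beta_m)$ is bounded in $L^p$, so after extraction $\beta_m \rightharpoonup \beta$ weakly in $L^p(M)$. The difficulty is that weak convergence alone does not give lower semi-continuity of $\lambda_k$; Lemma~\ref{lemma:lower semi-continuity of lambda} needs strong $L^n$ convergence. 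Therefore the main task is to upgrade to strong convergence using the almost Euler--Lagrange inequality
\[
\bar\lambda^p_k(\beta_m)\sum_{i} d^m_i\abs{\varphi^m_i}_g^2 \leqslant \bar\lambda^p_k(\beta_m)\beta_m^{p-1} + f_m .
\]

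First I show that the eigenspinors $\varphi^m_i$ (normalized by $Q(\beta_m,\varphi^m_i)=1$) stay bounded in $W^{1,\frac{2n}{n+1}}$. Here I apply Lemma~\ref{lemma:analogue of lemma 2.1} with $p_i = p > n$ fixed. Part (3) of that lemma says unboundedness would force $p_i \to n$, which is impossible. So $\varphi^m_i \rightharpoonup \varphi_i$ weakly in $W^{1,\frac{2n}{n+1}}$, and by Rellich--Kondrashov $\varphi^m_i \to \varphi_i$ strongly in $L^s$ for every $s < \frac{2n}{n-1}$. Part (2) also gives $\lambda_m \geqslant c>0$, so $\bar\lambda^p_k(\beta_m)$ stays bounded away from zero.

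Next comes the step I expect to be the main obstacle: strong convergence of $\beta_m$. I set $\Phi_m = \sum_i d^m_i\abs{\varphi^m_i}^2$ and note $\int \beta_m \Phi_m = \sum d_i^m = 1$. Integrating the inequality against $\beta_m$ gives
\[
\bar\lambda^p_k(\beta_m) \leqslant \bar\lambda^p_k(\beta_m)\norm{\beta_m}_{L^p}^p + \int f_m\beta_m .
\]
Here $\int f_m\beta_m \leqslant \varepsilon_m\norm{\beta_m}_{L^p} \to 0$, so this is consistent with $\norm{\beta_m}_p\to1$. To get more than consistency, I use the Hölder exponent $p'=\frac{p}{p-1}$ and compare with $\beta^{p-1}$. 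The weak limit should satisfy $\beta^{p-1} \geqslant \Phi$ almost everywhere, where $\Phi = \sum d_i\abs{\varphi_i}^2$. Since $\Phi_m \to \Phi$ strongly in $L^{p'}$ (this requires $2p' < \frac{2n}{n-1}$, i.e.\ $p'<\frac{n}{n-1}$, which holds because $p>n$), I get
\[
1 = \int \beta_m\Phi_m \to \int \beta\Phi \leqslant \int \beta^p \leqslant \liminf \norm{\beta_m}^p_p = 1 .
\]
This forces $\norm{\beta}_{L^p} = \lim\norm{\beta_m}_{L^p} = 1$. By uniform convexity of $L^p$, it follows that $\beta_m \to \beta$ strongly in $L^p$, hence in $L^n$. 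To make the inequality $\beta^{p-1}\geqslant\Phi$ rigorous, I test the almost Euler--Lagrange inequality against nonnegative functions in $L^p$: the $f_m$ term vanishes, $\Phi_m\to\Phi$ strongly, and $\beta_m^{p-1}\rightharpoonup$ some weak limit $h$ in $L^{p'}$. This gives $\Phi\leqslant h$, and a Minty-type monotonicity argument with $t\mapsto t^{p-1}$ identifies $h$ against $\beta$ in the integrated form above. Alternatively, I can work with $\int \beta_m^{p-1}\beta$ directly.

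Once strong convergence holds, $\beta \neq 0$ and Lemma~\ref{lemma:lower semi-continuity of lambda} gives
\[
\lambda_k(\beta) \leqslant \liminf \lambda_m .
\]
Hence $\bar\lambda^p_k(\beta) \leqslant \liminf \bar\lambda^p_k(\beta_m) = \Lambda^p_k$. The reverse inequality holds by the definition of $\Lambda^p_k$, so $\beta$ attains the infimum. Finiteness of $\lambda_k(\beta)$ follows from the same inequality.
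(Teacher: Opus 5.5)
Your proposal is correct and follows the paper's proof step for step: almost-minimizers from the Ekeland lemma, $W^{1,\frac{2n}{n+1}}$-bounds from Lemma~\ref{lemma:analogue of lemma 2.1} with the exponent fixed at $p>n$, the identity $\int_M\beta\Phi\,dv_g=1$, then $\norm{\beta}_{L^p}=1$, uniform convexity, and lower semi-continuity. The only real difference is how you justify $\int_M\beta\Phi\,dv_g\leqslant\norm{\beta}_{L^p}$. The paper takes $(p-1)$-th roots of the almost Euler--Lagrange inequality and gets $\Phi^{\frac{1}{p-1}}\leqslant\beta$ pointwise, because nonnegativity is preserved under weak limits. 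Your ``direct'' route, $\Phi\leqslant h$ together with $\int_M h\beta\,dv_g=\lim\int_M\beta_m^{p-1}\beta\,dv_g\leqslant\norm{\beta}_{L^p}$ by H\"older, works equally well and makes the Minty identification of $h$ unnecessary.
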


\begin{proof}
By Lemma~\ref{lemma: existence of almost minimizers}, for all $m \in \mathbb{N}_{>0}$ there exist $\beta_m \in L^p_{\geqslant 0}(M)$ such that 
\[
1 \leqslant \norm{\beta_m}_{L^p} \leqslant 1 + \frac{1}{m}
\]
and
\begin{equation*}
\bar{\lambda}^p_k(\beta_m) \leqslant \Lambda^p_k(M,[g],\sigma) + \frac{1}{m^2},
\end{equation*}
a strictly positive integer $l_{m} \leqslant k$, an $Q(\beta_{m},\cdot)$-orthonormal family $\varphi^{m}_{l_m},\ldots,\varphi^{m}_k$ of eigenspinors in $E_k(\beta_{m})$, positive numbers $d_{l_{m}}, \ldots, d_k$ satisfying $\sum\limits_{i = l_{m}}^k d_i = 1$ and a function $f_{m} \in L^q(M)$, where $q \overset{\mathrm{def}}{=} \frac{p}{p-1},$ such that $\norm{f_{m}}_{L^q} \leqslant \frac{1}{m}$ and
\[
\bar{\lambda}^p_k(\beta_{m})\sum_{i = l_{m}}^k d_i \abs{\varphi^{m}_i}_g^2 \leqslant \bar{\lambda}^p_k(\beta_{m})\beta^{p-1}_{m} + f_{m} \quad \text{almost everywhere on } M.
\]
We may, after passing to a subsequence, assume that $l_m = l$ does not depend on $m$. Since $p > n$, it follows from Lemma~\ref{lemma:analogue of lemma 2.1} that $(\varphi^m_i)_{m \in \mathbb{N}}$ is bounded in $W^{1,\frac{2n}{n+1}}$. We may, after passing to a subsequence, assume that $\varphi^m_i \overset{W^{1,\frac{2n}{n+1}}}{\rightharpoonup} \varphi_i$ for all $i \in \{\,l,\ldots,k\,\}$. Finally, we may assume that $\beta_m \overset{L^p}{\rightharpoonup} \beta$, $\lim\limits_{m \to \infty}\lambda_k(\beta_m) = \lambda$ and $\lim\limits_{m \to \infty} d^m_i = d_i$ for all $i \in \{\,l,\ldots,k\,\}$. Note that by Lemma~\ref{lemma:analogue of lemma 2.1}, we have $\lambda > 0$.

First of all, we note that $D_g \varphi_i = \lambda \beta \varphi_i$. Indeed, for a fixed smooth spinor $\Phi$ we have that
\begin{align*}
\int_M \dprod{D_g\varphi_i,\Phi}_gdv_g 
= \int_M \dprod{\varphi_i,D_g\Phi}_gdv_g
&= \lim_{m \to \infty} \int_M \dprod{\varphi_i^m,D_g\Phi}_gdv_g \\
&= \lim_{m \to \infty} \int_M \dprod{D_g\varphi_i^m,\Phi}_gdv_g\\
&= \lim_{m \to \infty} \int_M \dprod{\lambda_k(\beta_m)\beta_m\varphi_i^m,\Phi}_gdv_g\\
&=\int_M \dprod{\lambda\beta\varphi_i,\Phi}_gdv_g,
\end{align*}
where the last equality follows from the fact that $\beta_m \overset{L^p(M)}{\rightharpoonup} \beta$ and $\varphi_i^m \overset{L^\frac{p}{p-1}}{\to}\varphi_i$. Consequently, $D_g\varphi_i = \lambda \beta \varphi_i$.

By Lemma~\ref{lemma:regularity}, we have that $\varphi_i \in L^r$ for all $r \in [1,+\infty)$. Since $\beta \in L^p(M)$, where $p > n$, it follows that $p > 2$. Therefore, $\lambda\beta\varphi_i \in L^2$. Indeed, by Sobolev embedding theorem and H\"older's inequality, we have
\[
\int_M \beta^2 \abs{\varphi_i}_g^2dv_g \leqslant \left(\int_M \beta^p dv_g\right)^\frac{2}{p} \left(\int_M \abs{\varphi_i}_g^\frac{2p}{p-2} \right)^\frac{p-2}{p} < +\infty.
\]
By elliptic regularity, $(\varphi_i)_i$ is bounded in $H^1$. 

By the Rellich-Kondrashov theorem, we may assume after passing to a subsequence that $\varphi^m_i \overset{L^\frac{2p}{p-2}}{\to}\varphi_i$. Consequently, 
\[
\delta_{ij} = \lim_{m \to \infty} \int_M \beta_m \dprod{\varphi^m_i,\varphi^m_j}_g dv_g = \int_M \beta \dprod{\varphi_i,\varphi_j}_gdv_g,
\]
i.e. $(\varphi_l,\ldots,\varphi_k)$ is a $Q(\beta,\cdot)$-orthonormal family. Moreover, 
\[
\left( \sum_{i = l}^k d^m_i \abs{\varphi^m_i}_g^2 \right)^\frac{1}{p - 1} \overset{L^p(M)}{\to} \left( \sum_{i = l}^k d_i \abs{\varphi_i}_g^2 \right)^\frac{1}{p - 1}.
\]
Since pointwise inequalities are preserved under weak convergence and $\lambda > 0$, we obtain
\[
\sum_{i = l}^k d_i \abs{\varphi_i}_g^2 \leqslant \beta^{p - 1} \quad \text{almost everywhere on } M.
\]
Consequently,
\[
1 = \sum_{i = l}^m d_i = \sum_{i = l}^m \int_M d_i \beta\abs{\varphi_i}_g^2dv_g \leqslant \int_M \beta^p dv_g. 
\]
However, $\norm{\beta}_{L^p} \leqslant \liminf \norm{\beta_m}_{L^p} = 1$. We deduce that $\norm{\beta}_{L^p}$. Since $L^p(M)$ is uniformly convex, it follows that 
\[
\beta_m \overset{L^p(M)}{\to} \beta.
\]
By Lemma~\ref{lemma:lower semi-continuity of lambda}, it follows $\lambda_k(\beta) \leqslant \lim\limits_{m \to \infty}\lambda_k(\beta_m) = \lambda$. Therefore,
\[
\bar{\lambda}^p_k(\beta) = \lambda_k(\beta) \leqslant \lambda = \lim_{m \to \infty}\bar{\lambda}^p_k(\beta_m) = \Lambda^p_k(M,[g],\sigma). 
\]
\end{proof}

We conclude this section by showing that $\Lambda_k(M,[g],\sigma)$ is actually equal to the infimum of $\bar{\lambda}_k$ in $[g]$.
\begin{proposition}
\[
\Lambda_k(M,[g],\sigma) = \inf_{\beta \in \mathcal{C}^{\infty}_{>0}(M)} \bar{\lambda}_k(\beta).
\]
\end{proposition}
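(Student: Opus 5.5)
The plan is to show that both sides coincide with the smooth-metric infimum, and then to check that enlarging the admissible class to $L^n_{\geqslant 0}(M)$ does not lower the infimum. In this way the proposition also identifies $\Lambda_k(M,[g],\sigma)$ with $\Lambda^n_k(M,[g],\sigma)=\inf_{\beta\in L^n_{\geqslant 0}(M)}\bar{\lambda}^n_k(\beta)$, which is presumably the form needed later.

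\emph{Step 1 (smooth factors).} For $f\in\mathcal{C}^\infty_{>0}(M)$, Lemma~\ref{lemma:conformal-Dirac-transf} yields the minimax formula~\eqref{minimax for conformal class}. Its right-hand side is exactly the definition of the generalized eigenvalue, since $\mathcal{F}(f)[\varphi]$ is the same quotient. The identification $\tilde T$ multiplies by the smooth positive function $f^{-\frac{n-1}{2}}$, so it preserves $H^1$ and $\ker D$. The Grassmannians $\mathcal{G}_k(H^1\setminus\ker D_g)$ and $\mathcal{G}_k(H^1\setminus\ker D_{f^2g})$ therefore correspond, and the generalized $\lambda_k(f)$ equals the $k$-th positive eigenvalue of $D_{f^2g}$. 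Since $\mathrm{Vol}(M,f^2g)^{1/n}=\norm{f}_{L^n(M)}$, this gives $\Lambda_k(M,[g],\sigma)=\inf_{\beta\in\mathcal{C}^\infty_{>0}(M)}\bar{\lambda}_k(\beta)$ directly from the definitions.

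\emph{Step 2 (the infimum over $L^n_{\geqslant 0}(M)$ is no smaller).} One inequality is trivial, because smooth positive functions lie in $L^n_{\geqslant 0}(M)$. For the other, fix $\beta\in L^n_{\geqslant 0}(M)$, nonzero, with $\lambda_k(\beta)<\infty$; otherwise $\bar{\lambda}^n_k(\beta)=+\infty$ and there is nothing to prove.
\begin{enumerate}
\item Set $\beta_m=\beta+\frac1m$. By the remark following Proposition~\ref{theorem:existence of eigenspinors. Positive case.}, $\lambda_k(\beta_m)\to\lambda_k(\beta)$. Clearly $\norm{\beta_m}_{L^n}\to\norm{\beta}_{L^n}$, so $\bar{\lambda}_k(\beta_m)\to\bar{\lambda}_k(\beta)$.
\item For fixed $m$, mollify $\beta_m$ to obtain smooth functions $\beta_{m,j}\geqslant\frac1m$ with $\beta_{m,j}\to\beta_m$ in $L^n(M)$. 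Proposition~\ref{theorem:continuity in L^n_delta}, applied with $\delta=\frac1m$, gives $\lambda_k(\beta_{m,j})\to\lambda_k(\beta_m)$ as $j\to\infty$.
\item A diagonal choice $j=j(m)$ then produces smooth positive $\tilde\beta_m$ with $\bar{\lambda}_k(\tilde\beta_m)\to\bar{\lambda}_k(\beta)$.
\end{enumerate}
Hence the smooth infimum is at most $\bar{\lambda}_k(\beta)$ for every admissible $\beta$, which gives the equality.

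The main point requiring care is Step 1, specifically the claim that the supremum over $k$-planes in $H^1\setminus\ker D_g$ of the Rayleigh-type quotient reproduces $\lambda_k^{-1}$ of $D_{f^2g}$. This means checking that expanding in an $L^2(f\,dv_g)$-type eigenbasis of $f^{-1}D_g$ gives~\eqref{minimax}, with the kernel excluded exactly as stated. In Step 2, the only subtle point is that mollification preserves the lower bound $\frac1m$, which holds because mollification (in charts with a partition of unity) averages values that are all at least $\frac1m$.
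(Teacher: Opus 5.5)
Your proposal is correct and follows essentially the paper's argument. The paper likewise takes a near-optimal $\beta\in L^n_{\geqslant 0}(M)$, shifts it by a small constant using the directional continuity of $\lambda_k$ with $b\equiv 1$ (equivalent to the remark you invoke), and then replaces $\beta+\delta$ by a smooth function via continuity of $\lambda_k$ on $L^n_{\geqslant\delta}(M)$; your Step~1 just makes explicit that the literal identity is definitional once the conformal minimax formula identifies the generalized $\lambda_k(f)$ with the $k$-th positive eigenvalue of $D_{f^2g}$ for smooth $f>0$.
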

\begin{proof}
Fix $\varepsilon > 0$ and take $\beta \in L^n_{\geqslant 0}(M)$ such that $\bar{\lambda}_k(\beta) \leqslant \Lambda_k(M,[g],\sigma) + \varepsilon$. By Lemma~\ref{lemma:continuity of generalized eigenvalues in positive direction}, we can find $\delta > 0$ such that $\bar{\lambda}_k(\beta + \delta) \leqslant \bar{\lambda}_k(\beta) + \varepsilon$. Now by Proposition~\ref{theorem:continuity in L^n_delta}, we can approximate $\beta + \delta$ by a smooth strictly positive function $\tilde{\beta}$ such that $\bar{\lambda}_k(\tilde{\beta}) \leqslant \bar{\lambda}_k(\beta + \delta) + \varepsilon$. Consequently,
\[
\bar{\lambda}_k(\tilde{\beta}) \leqslant \Lambda_k(M,[g],\sigma) + 3\varepsilon.
\]
That is,
\[
 \Lambda_k(M,[g],\sigma) = \inf_{\beta \in \mathcal{C}^{\infty}_{>0}}\bar{\lambda}_k(\beta).
\]
\end{proof}

\section{Generalized spinorial analogue of Aubin's inequality.}\label{section:Aubin}
In~\cite{Ammann2008}, Ammann, Grosjean, Humbert and Morel proved the following result:
\begin{theorem}[\cite{Ammann2008}]
Let $(M,g,\sigma)$ be an $n$-dimensional closed Riemannian spin manifold. Then,
\[
\Lambda_1(M,[g],\sigma) \leqslant \Lambda_1(\mathbb{S}^n) = \frac{n}{2}\omega_n^\frac{1}{n},
\]
where $\omega_n$ stands for the volume of the standard sphere $\mathbb{S}^n$.
\end{theorem}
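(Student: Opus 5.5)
Following the approach of \cite{Ammann2008}, I will prove the inequality by a test-spinor argument. I construct, for a suitable family of conformal factors, a spinor whose Rayleigh-type quotient reproduces the round-sphere value up to an error that vanishes in a limit. The starting point is the variational characterization of $\Lambda_1$. The first positive eigenvalue satisfies
\[
\lambda_1(f)^{-1}=\sup_{\varphi\notin\ker D_g}\frac{\int_M\langle D_g\varphi,\varphi\rangle_g dv_g}{\int_M f^{-1}\abs{D_g\varphi}_g^2dv_g}.
\]
Optimizing over $f$ with $\norm{f}_{L^n}=1$ by Hölder's inequality gives
\[
\Lambda_1(M,[g],\sigma)^{-1}\geqslant \sup_{\varphi}\frac{\int_M\langle D_g\varphi,\varphi\rangle dv_g}{\norm{D_g\varphi}_{L^{\frac{2n}{n+1}}}^{2}},
\]
with the choice $f\propto \abs{D_g\varphi}^{\frac{2}{n+1}}$, made positive by adding a small constant and then using the continuity results of Section~\ref{section:existence easy case}. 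It therefore suffices to exhibit spinors $\varphi_\varepsilon$ with
\[
\frac{\int_M\langle D_g\varphi_\varepsilon,\varphi_\varepsilon\rangle}{\norm{D_g\varphi_\varepsilon}^2_{L^{\frac{2n}{n+1}}}}\geqslant K(n)-o(1), \qquad K(n)=\Big(\tfrac n2\omega_n^{1/n}\Big)^{-1}.
\]

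To build the test spinors, I fix a point $p\in M$ and use Bourguignon--Gauduchon trivialization in normal (or conformal normal) coordinates around $p$. In these coordinates the spinor bundle is identified with $\mathbb{R}^n\times\Sigma_n$, and $D_g=D_{\mathrm{eucl}}+O(r\abs{\nabla\cdot})+O(1)\cdot$. On $\mathbb{R}^n$ I take the pullback of a Killing spinor on $\mathbb{S}^n$ under stereographic projection,
\[
\psi(x)=\Big(\tfrac{2}{1+\abs{x}^2}\Big)^{\frac n2}(1-x)\cdot\psi_0,
\]
which satisfies $D\psi=\frac n2\abs{\psi}^{\frac2{n-1}}\psi$ and attains equality in the Euclidean spinorial Sobolev inequality. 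I then set $\varphi_\varepsilon=\eta\,\psi_\varepsilon$, where $\psi_\varepsilon(x)=\varepsilon^{-\frac{n-1}{2}}\psi(x/\varepsilon)$ and $\eta$ is a cutoff supported in a small ball around $p$.

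The key step is the expansion of the numerator and the denominator as $\varepsilon\to0$. By scale invariance, the leading terms give exactly $K(n)$. The remaining errors are of three kinds:
\begin{itemize}
\item cutoff terms $\nabla\eta\cdot\psi_\varepsilon$, which are controlled by the decay $\abs{\psi}\sim\abs{x}^{1-n}$ and are $O(\varepsilon^{n-1})$;
\item metric-deviation terms, coming from $g_{ij}-\delta_{ij}=O(r^2)$ and the connection correction, which are $O(\varepsilon)$ or better after scaling;
\item the mismatch between $L^{\frac{2n}{n+1}}$ norms, which also vanishes with $\varepsilon$.
\end{itemize}
The main obstacle is dimension two (and low $n$ generally). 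There $\abs{\psi}^2\sim\abs{x}^{2-2n}$ is only borderline integrable against the error weights, so the $O(r)$ connection error might contribute at order $1$. I will handle this by choosing coordinates in which the first-order terms of the metric vanish, so that the remaining errors carry an extra factor $r$ and are integrable. Letting $\varepsilon\to0$ then yields $\Lambda_1(M,[g],\sigma)\leqslant\frac n2\omega_n^{1/n}$.
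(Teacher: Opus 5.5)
Your argument is correct and is essentially the test-spinor proof of \cite{Ammann2008}, which the paper cites rather than reproduces. The paper's own Theorem~\ref{theorem:Aubin-type} runs the same stereographic transplant, rescale and cut-off construction, with an explicitly glued conformal factor in place of your H\"older-optimal $f\propto\abs{D_g\varphi}_g^{2/(n+1)}$. It does so only on locally conformally flat manifolds, where Lemma~\ref{lemma:lcf manifods} makes $g$ exactly flat near the concentration point, so the metric error terms you estimate never appear.

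Your ``main obstacle'' in dimension two is not real. Normal coordinates, which already kill the first-order terms of $g$, leave an $O(r)$ zeroth-order connection term. On a ball of radius $\rho$ this term contributes at most $C\varepsilon^{2}\int_{\abs{y}<\rho/\varepsilon}\abs{y}\,\abs{\psi(y)}^{2}\,dy=O(\varepsilon\rho)$ to the numerator when $n=2$, and less when $n\geqslant 3$. Even an $O(1)$ zeroth-order term would give only $O(\varepsilon\log\frac{1}{\varepsilon})$, so no extra factor of $r$ is needed.
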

In this section, we extend their theorem to eigenvalues of arbitrary index on locally conformally flat manifolds. We begin by proving the following lemma, which will be instrumental in the sequel.

\begin{lemma}\label{lemma:lcf manifods}
Let $(M,g)$ be a locally conformally flat Riemannian manifold. Then for each finite subset $A \subset M$ there exists a metric $\tilde{g} \in [g]$ such that $\tilde{g}$ is flat around each point of $A$.
\end{lemma}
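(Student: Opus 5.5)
The plan is to build the conformal factor by gluing local flattening factors with a partition of unity, using multiplicativity of conformal changes. Write $A=\{x_1,\ldots,x_N\}$. Local conformal flatness gives, for each $i$, a neighbourhood $U_i$ of $x_i$ and a smooth positive function $u_i$ on $U_i$ such that $u_i^2 g$ is flat on $U_i$.

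First, shrink the neighbourhoods so they are pairwise disjoint: $A$ is finite, so we may choose $r>0$ with the balls $B_g(x_i,2r)\subset U_i$ pairwise disjoint. Next, for each $i$ pick a cutoff $\chi_i\in\mathcal{C}^\infty_c(B_g(x_i,2r))$ with $0\leqslant\chi_i\leqslant 1$ and $\chi_i\equiv 1$ on $B_g(x_i,r)$. Define
\[
f \overset{\mathrm{def}}{=} 1 + \sum_{i=1}^N \chi_i\,(u_i - 1),
\]
where each term $\chi_i(u_i-1)$ is extended by zero outside $B_g(x_i,2r)$. This $f$ is smooth on $M$, because each summand is compactly supported inside the domain of $u_i$. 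It is strictly positive: on $B_g(x_i,2r)$ it equals the convex combination $(1-\chi_i)+\chi_i u_i>0$, and elsewhere it equals $1$. Finally, on $B_g(x_i,r)$ we have $f=u_i$, so $\tilde g = f^2 g\in[g]$ coincides there with the flat metric $u_i^2g$.

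This establishes the lemma. The only point needing care is the choice of disjoint balls, so that the local factors do not interfere where the cutoffs overlap; convexity of positivity then handles the transition regions. No genuine obstacle is expected, since the argument never requires compatibility between different flattening factors: they are used on disjoint sets.
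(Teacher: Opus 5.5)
Your proof is correct and follows the same route as the paper: you choose disjoint neighbourhoods around the points of $A$ and glue the local flattening factors with cutoffs. The only cosmetic difference is that the paper glues the logarithms, writing $g = e^{2u_i}\xi$ locally and taking $\tilde g = e^{-2\sum_i \eta_i u_i} g$ so that positivity is automatic, whereas you interpolate the factors themselves and get positivity by convexity.
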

\begin{proof}
Denote elements of $A$  by $x_1,\ldots, x_r$. Since $g$ is locally conformally flat, it follows that for each $x_i \in A$  there exists an open ball $B_g(x_i,2r_i)$ and a smooth function ${u_i \in \mathcal{C}^{\infty}(B_g(x_i,3r_i))}$ such that $g = e^{2u_i}\xi$, where $\xi$ is a standard flat metric. Without loss of generality, we may assume that $B_g(x_i,3r_i) \cap B_g(x_j,3r_j) = \varnothing$ for $i \neq j$. Take a cut-off function $\eta_i$ such that $\restr{\eta_i}{B_g(x_i,r_i)} = 1$ and $\restr{\eta_i}{M \setminus B_g(x_i,2r_i)} = 0$. Consider a smooth function $u = -\sum\limits_{i = 1}^r \eta_i u_i$, then $\tilde{g} = e^{2u}g$ is a desired metric. Indeed, $\restr{u}{B_g(x_i,r_i)} = -u_i$, i.e.,
\[
\tilde{g} = e^{2u}g = e^{2u}e^{2u_i}\xi = e^{-2u_i + 2u_i}\xi = \xi.
\]
\end{proof}

Having established the necessary preliminary results, we now state and prove the generalized Aubin-type inequality.

\begin{theorem}\label{theorem:Aubin-type}
Let $(M,g,\sigma)$ be an $n$-dimensional closed locally conformally flat Riemannian spin manifold. Then
\begin{equation*}
\Lambda_k(M,[g],\sigma) \leqslant \inf \{\,\left(\Lambda_{l_0}(M,[g],\sigma)^n + \Lambda_{l_1}(\mathbb{S}^n)^n + \ldots + \Lambda_{l_r}(\mathbb{S}^n)^n\right)^\frac{1}{n}\,\},
\end{equation*}
where the infimum is taken over the set of all $r,l_0,\ldots,l_r$ in $\mathbb{N}$ such that
\begin{enumerate}
    \item $l_0 < k$ and $\Lambda_{0}(M,[g],\sigma) = 0$,
    \item $l_0 + l_1 + \ldots + l_r = k$.
\end{enumerate}
\end{theorem}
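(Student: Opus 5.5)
We prove the inequality by a gluing (bubbling) construction of test conformal factors, combined with the min-max characterization \eqref{minimax for conformal class}. Fix an admissible decomposition $l_0+l_1+\dots+l_r=k$ and $\varepsilon>0$. For $l_0\geqslant 1$ we pick a smooth positive $\beta_0$ on $M$ with $\bar\lambda_{l_0}(\beta_0)\leqslant \Lambda_{l_0}(M,[g],\sigma)+\varepsilon$, normalized so that $\lambda_{l_0}(\beta_0)$ equals a common target value $\lambda$ (scaling changes only the volume). For $l_0=0$ we take $\beta_0$ to be essentially zero: a small positive constant $\eta$ that contributes negligible volume. For each $i\geqslant 1$ we similarly pick a smooth positive factor $f_i$ on $\mathbb{S}^n$ with $\bar\lambda_{l_i}(f_i)\leqslant \Lambda_{l_i}(\mathbb{S}^n)+\varepsilon$ and $\lambda_{l_i}(f_i)=\lambda$. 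Then $\mathrm{Vol}_i=(\bar\lambda_{l_i}/\lambda)^n$. The conclusion we aim for is a factor $\beta$ on $M$ with $\lambda_k(\beta)\lesssim\lambda$ and $\|\beta\|_{L^n}^n\approx\sum_i \mathrm{Vol}_i$. This yields $\bar\lambda_k(\beta)\leqslant(\sum_i\bar\lambda_{l_i}^n)^{1/n}+o(1)$, and hence the theorem after letting $\varepsilon\to0$.

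\textbf{Step 1 (geometry).} Choose $r$ distinct points $x_1,\dots,x_r\in M$. By Lemma~\ref{lemma:lcf manifods} we may replace $g$ by a conformal metric that is flat near each $x_i$; this does not change $\Lambda_k$, which is conformally invariant. Near $x_i$ we are in a Euclidean ball, and via stereographic projection $\mathbb{S}^n\setminus\{N\}$ is conformal to $\mathbb{R}^n$. Thus $f_i$ transplants to a factor $\beta_{i,\delta}$ concentrated in $B(x_i,\delta)$ after a dilation by a factor $\delta^{-1}$ centered at $x_i$. We cut off near the north pole, i.e. far out in $\mathbb{R}^n$, and the cost is small because $f_i$ is bounded there while the area near $N$ is small. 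Set $\beta_\delta=\beta_0+\sum_i\beta_{i,\delta}$, where $\beta_0$ is modified to be tiny on the balls. Then $\|\beta_\delta\|_{L^n}^n\to\sum_i\mathrm{Vol}_i$ as $\delta\to0$.

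\textbf{Step 2 (test space).} By the conformal covariance Lemma~\ref{lemma:conformal-Dirac-transf} (so that only $D_g$ and the weight $1/\beta$ appear in $\mathcal F(\beta)$), we transplant eigenspinors. From $\beta_0$ we take the first $l_0$ eigenspinors $\psi^0_1,\dots,\psi^0_{l_0}$. From each sphere we take the first $l_i$ eigenspinors of $f_i$, pulled back and multiplied by cut-offs $\chi_i$ supported in $B(x_i,\sqrt\delta)$ and equal to $1$ on $B(x_i,\delta/\sqrt\delta\cdot\delta)$; a logarithmic cut-off is used if $n=2$. The bubble spinors are localized; the $M$-spinors are multiplied by $1-\chi_i$. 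Let $E$ be the span of these $k$ spinors. Because the supports are essentially disjoint, the quadratic forms $\int\langle D_g\varphi,\varphi\rangle$ and $\int\beta_\delta^{-1}|D_g\varphi|^2$ are almost block diagonal on $E$. On each block $\mathcal F\geqslant\lambda^{-1}-o(1)$, since on the span of the first $l$ eigenspinors the Rayleigh quotient is bounded below by $1/\lambda_l$. Hence $\min_E\mathcal F(\beta_\delta)\geqslant\lambda^{-1}-o(1)$. We also check that $E\cap\ker D_g=\{0\}$, which follows from the lower bound on $\int\langle D\varphi,\varphi\rangle>0$.

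\textbf{Main obstacle.} The hard part is controlling the error terms. Cutting off introduces $\nabla\chi\cdot\psi$, which must be small in the weighted norm $\int\beta^{-1}|D\varphi|^2$. This requires the decay of the sphere eigenspinors near $N$ under the conformal weight $|\varphi|_{f^2g}=f^{-(n-1)/2}|\varphi|$. It also requires that the cross terms between the bubble spinors and the $M$-spinors vanish in the limit. The $M$-spinors are smooth, so they are essentially constant on $B(x_i,\sqrt\delta)$, and their $L^{2n/(n-1)}$ mass there tends to $0$.

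The weight $\beta_\delta^{-1}$ is large where $\beta_\delta$ is small, so I would use $\eta>0$ and the fact that $D_g\varphi=\lambda\beta\varphi$ for the exact eigenspinors. Then $\beta^{-1}|D\varphi|^2=\lambda^2\beta|\varphi|^2$, and only the cut-off annuli need estimates. There the two-scale cut-off gives $\|\nabla\chi\|_{L^n}\to0$ in dimension $n\geqslant 3$, and the log cut-off gives the same in $n=2$. Combined with Hölder estimates, this yields that these errors are $o(1)$.
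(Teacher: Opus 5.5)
This is essentially the paper's proof. Both arguments take near-optimal smooth factors on $M$ and $\mathbb{S}^n$ normalized to a common top eigenvalue (the paper multiplies the $L^n$-normalized $\beta_i$ by $\mu_i=\lambda_{l_i}(\beta_i)$), flatten the metric conformally at the gluing points, transplant and dilate the sphere eigenspinors stereographically, add a positive floor to the weight, and use a $k$-dimensional test space on which both quadratic forms become asymptotically block diagonal, each block having Rayleigh quotient at least the inverse of the common eigenvalue.

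One correction to your error analysis. $\|\nabla\chi\|_{L^n}$ is scale invariant, so a cut-off with a linear profile between two scales never gives $\|\nabla\chi\|_{L^n}\to 0$, in any dimension. You are recalling the $L^2$ capacity statement, which holds for $n\geqslant 3$. For $L^n$ you would need logarithmic cut-offs in every dimension, or the explicit decay of the transplanted spinors on the transition annulus. The paper avoids this entirely: it keeps the cut-offs $\eta_i$ fixed, leaves the $M$-eigenspinors uncut (so your modification of $\beta_0$ on the balls is unnecessary), and controls the gradient terms using the $L^2$-smallness of the dilated spinors on the fixed neighbourhood. That smallness is of order $a^{-1}$, or $a^{-1}\log a$ when $n=2$, which beats the floor $a^{-1/2}$ in $\beta_a$.
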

\begin{proof}
Let $r,l_0,\ldots,l_r$ in $\mathbb{N}$ be such that
\[
l_0 + \ldots + l_r = k.
\]
Fix $\varepsilon > 0$. Then for all $i \in \{\,1\ldots,r\,\}$ there exists $\beta_i \in \mathcal{C}^\infty_{>0}(\mathbb{S}^n)$ such that $\norm{\beta_i}_{L^n} = 1$ and 
\begin{equation*}
\lambda_{l_i}(\beta_i) \leqslant \Lambda_{l_i}(\mathbb{S}^n) + \varepsilon. 
\end{equation*}
If $l_0 = 0$, we let $\beta_0 = \mathrm{Vol}(M,g)^{-\frac{1}{n}}$. Otherwise, there exists $\beta_0 \in \mathcal{C}^{\infty}_{>0}(M)$ such that $\norm{\beta_0}_{L^n} = 1$ and
\begin{equation*}
\lambda_{l_0}(\beta_0) \leqslant \Lambda_{l_0}(M,[g],\sigma) + \varepsilon. 
\end{equation*}

By Theorem~\ref{theorem:existence of eigenspinors. Positive case.}, for all $i \in \{\,1,\ldots,r\,\}$ and $j \in \{\,1,\ldots,l_i\,\}$, there exists $\varphi^i_j \in H^1(\Sigma\mathbb{S}^n)$ such that
\begin{itemize}
    \item $\int_{\mathbb{S}^n} \beta_i \dprod{\varphi^i_{j_1},\varphi^i_{j_2}}_{g_{\mathrm{st}}}dv_{g_\mathrm{st}} = \delta_{j_1,j_2}$ for all $j_1,j_2 \in \{\,1,\ldots,l_i\,\}$,
    \item $D_{g_{\mathrm{st}}}\varphi^i_j = \lambda^i_j \beta_i \varphi^i_j$, where $\lambda^i_j \overset{\mathrm{def}}{=} \lambda_j(\beta_i)$.
\end{itemize}
Fix a point $p \in \mathbb{S}^n$. Then the stereographic projection $\pi \colon (\mathbb{S}^n,g_{\mathrm{st}}) \to (\mathbb{R}^n,\xi)$ is a conformal diffeomorphism. More precisely, $(\pi^{-1})^*g_{\mathrm{st}}(x) = U(x)^2\xi$, where $U(x) = \frac{2}{1 + \abs{x}^2}$. As it is shown in~\cite{Bourguignon1992}, there is an isomorphism of vector bundles
\[
T \colon \Sigma(\mathbb{S}^n\setminus\{\,p\,\}) \to \Sigma\mathbb{R}^n, 
\]
which is a fiberwise isometry. We define $\tau \colon \Sigma(\mathbb{S}^n\setminus\{\,p\,\}) \to \Sigma\mathbb{R}^n$ by 
\[
\tau(\varphi) = T(U^{\frac{n-1}{2}}\varphi).
\]
Then, we have $D_{\xi}\tau(\varphi) = U\tau(D_{g_\mathrm{st}}\varphi)$ (see~\cite{Hijazi1986}) and $\abs{\tau(\varphi)}_{\xi} = U^{\frac{n-1}{2}}\abs{\varphi}_{g_\mathrm{st}}$. We define $\tilde{\beta}_i \colon \mathbb{R}^n \to \mathbb{R}$ by 
\[
\tilde{\beta_i}(x) = U(x) \beta_i(\pi^{-1}(x)).
\]
Therefore, for $\tilde{\varphi}^i_j \overset{\mathrm{def}}{=} \tau(\varphi^i_j)$ we have that
\begin{equation*}
D_{\xi}\tilde{\varphi}^i_j = \lambda^i_j \tilde{\beta}_i\tilde{\varphi}^i_j.  
\end{equation*}
Moreover,
\begin{equation*}
\int_{\mathbb{R}^n}\tilde{\beta}_i\dprod{\tilde{\varphi}^i_{j_1},\tilde{\varphi}^i_{j_2}}_{\xi}dv_{\xi} = \delta_{j_1,j_2} \quad \text{and} \quad \int_{\mathbb{R}^n}\tilde{\beta}_i^n dv_{\xi} = 1. 
\end{equation*}
For all $a > 0$, we define
\begin{equation*}
\psi^{i,a}_j(x) \overset{\mathrm{def}}{=} a^\frac{n - 1}{2}\tilde{\varphi}^i_j(ax) \quad \text{and} \quad \beta_{i,a}(x) \overset{\mathrm{def}}{=} a\tilde{\beta}_i(ax).
\end{equation*}
Therefore, we get that
\begin{align*}
D_{\xi}\psi^{i,a}_j(x) 
= a^\frac{n+1}{2} D_{\xi}\tilde{\varphi}^i_j(ax) 
&= \lambda^i_j\beta_{i,a}(x)\psi^{i,a}_l(x).
\end{align*}
Moreover, 
\[
\int_M \beta_{i,a}^ndv_{\xi} = 1.
\]
We now fix $r$ distinct points $x_1,\ldots,x_r$ on $M$.xLet $\delta > 0$ and $\eta_i \in \mathcal{C}^{\infty}(M)$ be a cut-off function such that 
\[
\chi_{B_g(x_i,\delta)} \leqslant \eta_i \leqslant \chi_{B_g(x_i,2\delta)}.
\]
If $\delta$ is small enough, then the balls $B_g(x_i,2\delta)$ are disjoint and the exponential mapping 
\[
\exp_{x_i}^{-1} \colon B_g(x_i,2\delta) \to V_i
\]
is a diffeomorphism, where $V_i \subset \mathbb{R}^n$ is an open neighborhood of $0$. Moreover, by Lemma~\ref{lemma:lcf manifods}, we may assume that $g = \xi$ on $B_g(x_i,2\delta)$ for all $i \in \{\,1,\ldots,r\,\}$.
As it is shown in~\cite{Bourguignon1992}, there is an isomorphism of vector bundles
\[
T_i \colon \Sigma B_g(x_i,2\delta) \to \Sigma V_i.
\]
and $D_g \varphi = T_i^{-1}D_\xi T_i(\varphi)$.
We define
\[
\mu_i = \lambda^i_{l_i} \quad \text{for all } 1 \leqslant i \leqslant r, \quad \text{and} \quad \mu_{0} = \lambda^{0}_{l_0}(1-\delta_{l_0,0}),
\]
that is, $\mu_0$ is equal to $0$ if $l_0 = 0$, and to $\lambda^0_{l_0}$ otherwise. Also, for $a > 0$ and $c > 0$, we let
\[
\beta_{a} \overset{\mathrm{def}}{=} \mu_0\beta_0 + \sum_{i=1}^r \mu_i\eta_i\beta_{i,a}\circ \exp^{-1}_{x_i} + \frac{1}{\sqrt{a}}
\]
and
\[
\Psi^{i,a}_j = \eta_i T_i^{-1}\left(\restr{\psi^{i,a}_j}{V_i}\right) \quad \text{for } i \in \{\,1,\ldots,r\,\} \text{ and } j \in \{\,1,\ldots,l_i\,\}.
\]
In particular, we obtain
\begin{align*}
T_i(D_g \Psi^{i,a}_j) = D_\xi(\eta_i \psi^{i,a}_j).
\end{align*}
As in~\cite{humbert2025extremisingeigenvaluesgjmsoperators}, we have that 
\[
\lim_{a \to +\infty}\int_M \beta_{a}^n dv_g = \sum_{i = 0}^r\mu_i^n.
\]
Now we want to show the following inequality:
\begin{equation}\label{eq:<Dpsi,psi>}
\lim_{a \to \infty} \int_M \dprod{D_g \Psi^{i,a}_j, \Psi^{i',a}_{j'}}dv_g 
= \lambda^i_j\delta_{i,i'}\delta_{j,j'}
\end{equation}
Indeed, if $i \neq i'$, then $\mathrm{supp}(\eta_i) \cap \mathrm{supp}(\eta_{i'}) = \varnothing$ and the integral above is equal to $0$. Hence, we may assume that $i = i'$. Consequently,
\begin{align*}
\int_M \dprod{D_g \Psi^{i,a}_j, \Psi^{i,a}_{j'}}dv_g 
&= \int_{V_i} \eta_i^2\dprod{D_{\xi}\psi^{i,a}_j,\psi^{i,a}_{j'}}_{\xi}dv_\xi + \int_{V_i} \eta_i \dprod{\grad_\xi \eta_i \cdot \psi^{i,a}_j,\psi^{i,a}_{j'}}dv_\xi.
\end{align*}
Observe that
\begin{equation}\label{eq:L^2-norm of psi goes to 0}
\lim_{a \to \infty} \int_{\mathbb{R}^n}\abs{\psi^{i,a}_j(x)}^2dv_\xi(x) 
= 0,
\end{equation}
because 
\begin{equation}\label{eq:L^2 psi-varphi}
\int_{\mathbb{R}^n}\abs{\psi^{i,a}_j(x)}^2dv_\xi(x) 
=
 \int_{\mathbb{R}^n}a^{n-1}\abs{\tilde{\varphi}^i_j(ax)}dv_\xi(x)
=\frac{1}{a}\int_{\mathbb{R}^n}\abs{\tilde{\varphi}^i_j(x)}^2dv_\xi(x).
\end{equation}
Thus,
\[
\lim_{a \to \infty} \int_{V_i} \eta_i \dprod{\grad_\xi \eta_i \cdot \psi^{i,a}_j,\psi^{i,a}_{j'}}dv_\xi = 0
\]
Moreover, the first term tends to $\lambda^i_j\delta_{j,j'}$ as $a$ goes to $\infty$. 

We now want estimate $\int_M \frac{1}{\beta_a}\dprod{D_g \Psi^{i,a}_j,D_g\Psi^{i',a}_{j'}}dv_g$. Once again, we may assume that $j = j'$ because otherwise this integral is equal to $0$. We note that 
\begin{align*}
\int_M \frac{1}{\beta_a}\dprod{D_g \Psi^{i,a}_j,D_g\Psi^{i,a}_{j'}}dv_g 
&= \int_{V_i} \frac{1}{\beta_a}\eta_i^2\dprod{D_{\xi}\psi^{i,a}_j,D_\xi\psi^{i,a}_{j'}}_{\xi}dv_\xi \\
&+ \int_{V_i} \frac{1}{\beta_a}\eta_i\dprod{D_{\xi}\psi^{i,a}_j,\grad_\xi \eta_i \cdot \psi^{i,a}_{j'}}_{\xi}dv_\xi \\
&+ \int_{V_i} \frac{1}{\beta_a}\eta_i\dprod{\grad_\xi \eta_i \cdot \psi^{i,a}_j,D_{\xi} \psi^{i,a}_{j'}}_{\xi}dv_\xi \\
&+\int_{V_i} \frac{1}{\beta_a}\dprod{\grad_\xi \eta_i \cdot \psi^{i,a}_j,\grad_\xi \eta_i \cdot \psi^{i,a}_{j'}}_{\xi}dv_\xi
\end{align*}
The first integral tends to $\frac{(\lambda^i_j)^2}{\mu_i}$ as $a$ goes to $\infty$. For the second and the third ones, we have
\begin{align*}
\int_{V_i} \frac{1}{\beta_a}\eta_i\dprod{D_{\xi}\psi^{i,a}_j,\grad_\xi \eta_i \cdot \psi^{i,a}_{j'}}_{\xi}dv_\xi
= 
\int_{V_i} \frac{\lambda^i_j\beta_{i,a}}{\beta_a}\eta_i\dprod{\psi^{i,a}_j,\grad_\xi \eta_i \cdot \psi^{i,a}_{j'}}_{\xi}dv_\xi.
\end{align*}
Note that $\frac{\eta_i\beta_{i,a}}{\beta_a} \leqslant 1$. Thus, using~\eqref{eq:L^2-norm of psi goes to 0}, we deduce that
\[
\lim_{a \to \infty} \int_{V_i} \frac{1}{\beta_a}\eta_i\dprod{D_{\xi}\psi^{i,a}_j,\grad_\xi \eta_i \cdot \psi^{i,a}_{j'}}_{\xi}dv_\xi = \lim_{a \to \infty} \int_{V_i} \frac{1}{\beta_a}\eta_i\dprod{\grad_\xi \eta_i \cdot \psi^{i,a}_j,D_{\xi} \psi^{i,a}_{j'}}_{\xi}dv_\xi = 0
\]
Finally,
\begin{align*}
\abs{\int_{V_i} \frac{1}{\beta_a}\dprod{\grad_\xi \eta_i \cdot \psi^{i,a}_j,\grad_\xi \eta_i \cdot \psi^{i,a}_{j'}}_{\xi}dv_\xi}
\leqslant \norm{\grad_\xi \eta_i}_{L^\infty(M)}^2 \abs{\int_{V_i} \sqrt{a}\abs{\psi^{i,a}_j}_\xi \abs{\psi^{i,a}_{j'}}dv_\xi},
\end{align*}
where we used $\beta_a \geqslant \frac{1}{\sqrt{a}}$. By H\"older's inequality,
\begin{align*}
\int_{\mathbb{R}^n} \sqrt{a}\abs{\psi^{i,a}_j}_\xi \abs{\psi^{i,a}_{j'}}dv_\xi 
\leqslant \sqrt{a}\norm{\psi^{i,a}_{j}}_{L^2(\Sigma\mathbb{R}^n)}\norm{\psi^{i,a}_{j}}_{L^2(\Sigma\mathbb{R}^n)}
= \frac{\sqrt{a}}{a}\norm{\tilde{\varphi}^{i,a}_{j}}_{L^2(\Sigma\mathbb{R}^n)}\norm{\tilde{\varphi}^{i,a}_{j}}_{L^2(\Sigma\mathbb{R}^n)},
\end{align*}
where we used~\eqref{eq:L^2 psi-varphi} for the equality. Consequently,
\[
\lim_{a \to \infty} \abs{\int_{V_i} \frac{1}{\beta_a}\dprod{\grad_\xi \eta_i \cdot \psi^{i,a}_j,\grad_\xi \eta_i \cdot \psi^{i,a}_{j'}}_{\xi}dv_\xi} = 0.
\]
Therefore we deduce that 
\[
\lim_{a \to \infty}\int_M \frac{1}{\beta_a}\dprod{D_g \Psi^{i,a}_j,D_g\Psi^{i,a}_{j'}}dv_g  = \frac{(\lambda^i_j)^2}{\mu_i}\delta_{i,i'}\delta_{j,j'}.
\]
Moreover, if we take any spinor $\varphi$ on $M$ such that $D_g \varphi = \lambda \beta_0 \varphi$, where $\lambda \in \mathbb{R}$, then
\begin{align*}
\lim_{a \to \infty} \int_M \dprod{D_g \varphi, \Psi^{i,a}_j}_g dv_g = \lim_{a \to \infty} \int_M \dprod{\varphi, D_g\Psi^{i,a}_j}_g dv_g = 0     
\end{align*}
and 
\begin{align*}
\lim_{a \to \infty}\int_M \frac{1}{\beta_a} \dprod{D_g \varphi, D_g \Psi^{i,a}_j}_g dv_g = 0
\end{align*}
\[
\Psi^{0,a}_j = \begin{cases}
0 \quad &\text{if} \quad l_0 = 0,\\
\varphi_j \quad &\text{otherwise},
\end{cases}
\]
where $D_g \varphi_j = \lambda_{l_0}^0\beta_0 \varphi_j$.
Consequently, for all $i,i' \in \{\,0,\ldots,r\,\}$ for all $j \leqslant l_i$ and $j' \leqslant l_{i'}$ we have
\begin{equation*}
\lim_{a \to \infty} \int_M \dprod{D_g \Psi^{i,a}_j, \Psi^{i',a}_{j'}}dv_g 
= \mu^i_j\delta_{i,i'}\delta_{j,j'}
\end{equation*}
and 
\begin{equation*}
\lim_{a \to \infty} \int_M \frac{1}{\beta_a}\dprod{D_g \Psi^{i,a}_j,D_g\Psi^{i,a}_{j'}}dv_g =  \frac{(\lambda^i_j)^2}{\mu_i} \delta_{i,i'}\delta_{j,j'}
\end{equation*}
Define a vector space 
\[
V_a = 
\begin{cases}
\spanning{\Psi^{i,a}_j \colon i \in \{\,0,\ldots,r\,\}, j \in \{\,1,\ldots,l_i\,\}} \quad \text{if} \quad l_0 > 0,\\
\spanning{\Psi^{i,a}_j \colon i \in \{\,1,\ldots,r\,\}, j \in \{\,1,\ldots,l_i\,\}} \quad \text{otherwise}.
\end{cases}
\]
As we have proved, $\dim V_a = k$ for $a$ big enough. Note that
\[
\Lambda_k(M,[g],\sigma)^{-1} \geqslant \norm{\beta_a}^{-1}_{L^n(M)} \min_{\varphi \in V_a \setminus \{\,0\,\}} \mathcal{F}(\beta_a)[\varphi]
\]
Choose $\varphi_a$ such that $\mathcal{F}(\beta_a)[\varphi_a] = \min_{\varphi \in V_a \setminus \{\,0\,\}} \mathcal{F}(\beta_a)[\varphi]$. Then without loss of generality, 
\[
\varphi_a = \sum_{i,j}\alpha_{a,ij}\Psi^{i,a}_j, \quad \text{where }\sum_{i,j}\abs{\alpha_{a,ij}}^2 = 1.
\]
After passing to a subsequence, we may assume that $\alpha_{a,ij} \to \alpha_{ij}$ as $a$ goes to $\infty$. Thus,
\[
\lim_{a \to \infty} \int_M \dprod{D_g \varphi_a, \varphi_a} dv_g = \sum_{ij}\alpha^2_{ij}\lambda^i_j 
\]
and 
\[
\lim_{a \to \infty} \int_M \frac{1}{\beta_a}\dprod{D_g \varphi_a, D_g \varphi_a} dv_g = \sum_{ij}\alpha^2_{ij}\frac{(\lambda^i_j)^2}{\mu_i} \leqslant \sum_{ij}\alpha^2_{ij}\lambda^i_j.
\]
Hence, 
\begin{align*}
\Lambda_k(M,[g],\sigma)^{-1}
&\geqslant \left(\sum_{i = 0}^r\mu_i^n\right)^{-\frac{1}{n}}.
\end{align*}
Recall that 
\[
\left(\sum_{i = 0}^r\mu_i^n\right)^{-\frac{1}{n}}  \geqslant \left(\Lambda_{l_0}(M,[g],\sigma)^n + \Lambda_{l_1}(\mathbb{S}^n)^n + \ldots + \Lambda_{l_r}(\mathbb{S}^n)^n + (r + 1) \varepsilon \right)^{-\frac{1}{n}}.
\]
Since $\varepsilon > 0$ was arbitrary, it follows that
\[
\Lambda_k(M,[g],\sigma)^{-1} \geqslant \left(\Lambda_{l_0}(M,[g],\sigma)^n + \Lambda_{l_1}(\mathbb{S}^n)^n + \ldots + \Lambda_{l_r}(\mathbb{S}^n)^n\right)^{-\frac{1}{n}}.
\]
\end{proof}

\section{Minimization of eigenvalues.}
\label{section:Minimization}
The goal of this section is to prove the following theorem
\begin{theorem}\label{theorem:the main theorem}
Let $(M,g,\sigma)$ be an $n$-dimensional closed locally conformally flat Riemannian spin manifold. Assume that $\Lambda_k(M,[g],\sigma)$ is not attained. Then:
\begin{itemize}
    \item Either $\exists l_0 \in \{\,1,\ldots,k - 1\,\}$ and $l_1 \in \{\,1,\ldots, k - 1\,\}$ such that $l_0 + l_1 = k$, $\Lambda_{l_0}(M,[g],\sigma)$ is attained and
\[
\Lambda_k(M,[g],\sigma)^n = \Lambda_{l_0}(M,[g],\sigma)^n + \Lambda_{l_1}(\mathbb{S}^n,[g_{\mathrm{st}}],\sigma_{\mathrm{st}})^n
\]
    \item or 
\[
\Lambda_k(M,[g],\sigma) = \Lambda_k(\mathbb{S}^n,[g_{\mathrm{st}}],\sigma_{\mathrm{st}}).
\]
\end{itemize}
\end{theorem}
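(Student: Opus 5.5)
We will follow the subcritical approximation scheme of~\cite{humbert2025extremisingeigenvaluesgjmsoperators}. For $p>n$, $p\to n$, Theorem~\ref{theorem:existence of supercritical eigenvalue} gives minimizers $\beta_p\in L^p_{\geqslant 0}(M)$ of $\Lambda^p_k(M,[g],\sigma)$, normalized by $\norm{\beta_p}_{L^p}=1$. Lemma~\ref{lemma:Euler-Lagrange for lambda_k} then yields a $Q(\beta_p,\cdot)$-orthonormal family $\varphi^p_{r_p},\ldots,\varphi^p_k$ in $E_k(\beta_p)$ and weights $d^p_i$ with
\[
\beta_p^{p-1}=\sum_i d^p_i\abs{\varphi^p_i}_g^2,\qquad D_g\varphi^p_i=\lambda_k(\beta_p)\beta_p\varphi^p_i .
\]
By Remark~\ref{remark:regularity} each $\beta_p$ is H\"older continuous. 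We first check that $\Lambda^p_k\to\Lambda_k(M,[g],\sigma)$ as $p\to n$. The upper bound comes from testing with smooth positive $\beta$ and using $\norm{\beta}_{L^p}\to\norm{\beta}_{L^n}$. The lower bound comes from H\"older, $\norm{\beta}_{L^n}\leqslant \mathrm{Vol}(M,g)^{1/n-1/p}\norm{\beta}_{L^p}$. We then pass to a subsequence with $\lambda_k(\beta_p)\to\lambda=\Lambda_k$, $\beta_p\rightharpoonup\beta$ in $L^n$, $d^p_i\to d_i$, and, using Lemma~\ref{lemma:analogue of lemma 2.1}, $\varphi^p_i\rightharpoonup\varphi_i$ weakly in $W^{1,\frac{2n}{n+1}}$.

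Next comes the concentration analysis. Let $A$ be the set of concentration points of $\beta_p^n$, as in Lemma~\ref{lemma: strong local convergence}. Each point of $A$ carries an $L^n$-mass bounded below by a fixed constant, so $A$ is finite. Away from $A$, Lemma~\ref{lemma: strong local convergence} together with the Euler–Lagrange identity gives strong local convergence. The pointwise identity for $\beta_p^{p-1}$ then passes to the limit as $\beta^{n-1}=\sum d_i\abs{\varphi_i}^2$ on $M\setminus A$. If $A=\varnothing$ and no mass escapes, then $\beta_p\to\beta$ strongly in $L^n$. Lemma~\ref{lemma:lower semi-continuity of lambda} then gives $\bar\lambda_k(\beta)\leqslant\Lambda_k$, so $\Lambda_k$ is attained, contradicting the hypothesis. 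We may therefore assume $A\neq\varnothing$, or that the limit $\beta$ loses mass.

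At each $x\in A$ we blow up. Using Lemma~\ref{lemma:lcf manifods}, we choose $g$ flat near $A$. We rescale $\beta_p$ and $\varphi^p_i$ as in the proof of Theorem~\ref{theorem:Aubin-type}, run in reverse: $x\mapsto a^{-1}\beta_p(x_0+a^{-1}x)$ with the matching spinor scaling, and $a\to\infty$ chosen so that a fixed fraction of the mass remains in the unit ball. The rescaled data solve the same equation on $\mathbb{R}^n$. By stereographic projection and the conformal covariance of Lemma~\ref{lemma:conformal-Dirac-transf}, they converge to a generalized eigenconfiguration on $\mathbb{S}^n$. If concentration happens at several scales or sub-points, we iterate (bubble tree).

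Each bubble, and the weak limit on $M$, carries some number $l_j\geqslant 1$ (resp.\ $l_0\geqslant 0$) of the eigenspinors, which remain asymptotically orthogonal because their supports separate. A counting argument then gives $\sum l_j\geqslant k$, for otherwise the $k$-th eigenvalue could not be supported. The bubble on $\mathbb{S}^n$ has eigenvalue $\lambda$ at position at least $l_j$ with respect to a generalized metric whose volume is the mass $m_j$, hence $\lambda m_j^{1/n}\geqslant\Lambda_{l_j}(\mathbb{S}^n)$. The same reasoning on $M$ gives $\lambda m_0^{1/n}\geqslant\Lambda_{l_0}(M)$. Conservation of mass gives $\sum m_j\leqslant 1$. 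Combining,
\[
\Lambda_k^n\geqslant \Lambda_{l_0}(M)^n+\sum_j\Lambda_{l_j}(\mathbb{S}^n)^n .
\]
Theorem~\ref{theorem:Aubin-type} gives the reverse inequality, so we have equality. Equality forces each piece to be extremal, so $\Lambda_{l_0}(M)$ is attained by $\beta$ when $l_0>0$.

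Finally we use the strict subadditivity of $t\mapsto t^n$ and the monotonicity of the sphere invariants. Having at least two bubbles, or one bubble with $l_1<k$ together with $l_0=0$, can be merged into a configuration of the two stated forms: for example $\Lambda_{l}(\mathbb{S}^n)^n+\Lambda_{l'}(\mathbb{S}^n)^n\geqslant\Lambda_{l+l'}(\mathbb{S}^n)^n$ by Theorem~\ref{theorem:Aubin-type} on the sphere. Equality then reduces everything to the two alternatives of the statement.

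The main obstacle is the bubble analysis. We must prove that no mass is lost in the neck regions, that the rescaled limits are nontrivial, and that the eigenspinor count $\sum l_j\geqslant k$ holds. This needs the quantitative $\varepsilon$-regularity of Lemma~\ref{lemma: strong local convergence} on annuli, combined with the Sobolev inequality of Theorem~\ref{theorem: Sobolev Constant}. Our first attempt would be to bound the neck contribution through the Euler–Lagrange identity, which relates $\beta_p^n$ to $\beta_p\abs{\varphi}^2$, together with the $L^2(\beta)$-normalization.
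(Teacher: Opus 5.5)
There is a genuine gap, and it sits exactly where you locate your ``main obstacle'': the bubble-tree analysis carries essentially all of the content of the theorem, and it is only asserted. You would need to show four things. First, that no $\beta_p^n$-mass is lost in the necks between scales. Second, that every rescaled limit is a nontrivial generalized eigenconfiguration on $\mathbb{S}^n$. Third, that each bubble inherits $l_j$ linearly independent eigenspinors usable as a test space in the min-max on $\mathbb{S}^n$. Fourth, that $\sum_j l_j\geqslant k$. The sentence ``otherwise the $k$-th eigenvalue could not be supported'' is not an argument for the last point, since a single eigenspinor may split its $Q(\beta_p,\cdot)$-mass between several bubbles, the necks and the base. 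Lemma~\ref{lemma: strong local convergence} only controls regions of small $\beta_p^n$-mass, so it yields none of these four facts.

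There is also a concrete defect in the null case. If $\beta_p\rightharpoonup 0$ and $\ker D_g\neq\{0\}$, Lemma~\ref{lemma:analogue of lemma 2.1} does not give boundedness of $\varphi^p_i$ in $W^{1,\frac{2n}{n+1}}$; its third item describes precisely the unbounded scenario. So the weak limits $\varphi_i$ you extract need not exist.

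The idea you are missing is that no rescaling is needed. In the paper, one first gets $\beta_p\to\beta$ in $L^{q_1}$ for some $q_1>\frac{2n}{n+1}$, from the Euler--Lagrange identity via $\tilde\beta_p$ and dominated convergence. Then Lemma~\ref{lemma: strong local convergence} gives strong convergence away from the finite set $A$.

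In the non-null case one diagonalizes $B(\beta,\cdot,\cdot)$ inside each eigenvalue cluster and splits the weak limits $\phi_j$ into two groups.
\begin{itemize}
\item Those with $\beta^{1/2}\phi_j\not\equiv 0$ give an $\omega_0$-dimensional test space on $M$, hence $\Lambda_{\omega_0}(M,[g],\sigma)\leqslant\Lambda_k(M,[g],\sigma)\norm{\beta}_{L^n}$.
\item The others vanish by unique continuation, so the corresponding $\phi_{j,p}$ tend to $0$ strongly off $A$.
\end{itemize}
One cuts the second group off near $A$ and makes $g$ standard on the balls $B(x,2\delta)$, $x\in A$, via Lemma~\ref{lemma:lcf manifods}. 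All these balls are then transplanted at once onto disjoint balls of $\mathbb{S}^n$, together with $\theta_p=\beta_p\circ f^{-1}$. This produces a single $(k-\omega_0)$-dimensional test space on $\mathbb{S}^n$ whose cut-off errors vanish, and whose volume satisfies $\int\theta_p^n\to 1-\int_{M\setminus\bigcup_{x\in A}B(x,2\delta)}\beta^n\,dv_g$. The result is $\Lambda_{k-\omega_0}(\mathbb{S}^n)^n\leqslant(1-\norm{\beta}_{L^n}^n)\Lambda_k(M,[g],\sigma)^n$, with all scales and all necks automatically included. Theorem~\ref{theorem:Aubin-type} then forces equality, and with it the attainment of $\Lambda_{\omega_0}(M,[g],\sigma)$.

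In the null case the same transplant is done with the projections $\psi_{i,p}$ of $\varphi_{i,p}$ onto $(\ker D_g)^\perp$. These stay bounded and converge weakly to $0$. They leave both $\int\dprod{D_g\cdot,\cdot}\,dv_g$ and $\int\beta_p^{-1}\abs{D_g\cdot}^2\,dv_g$ unchanged, because $D_g\kappa_{i,p}=0$.

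Your closing merging step via Theorem~\ref{theorem:Aubin-type} is fine once such a bubble-free inequality is in hand.
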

First of all, by Theorem~\ref{theorem:existence of supercritical eigenvalue}, we know that, for all $p > n$, $\Lambda^p_k(M,[g],\sigma)$ is attained at some $\beta_p \in L^p(M)$. Therefore, by Lemma~\ref{lemma:Euler-Lagrange for lambda_k}, there exists $l_p \in \{\,1,\ldots,k\,\}$ such that $\lambda_{l_p}(\beta_p) = \lambda_k(\beta_p)$, a $Q(\beta_p,\cdot)$-orthonormal family $(\varphi_{l_p,p},\ldots,\varphi_{k,p})$ in $E_k(\beta_p)$ and positive numbers $d_{l_p,p},\ldots,d_{k,p}$ satisfying $\sum\limits_{i = l_p}^k d_{i,p} = 1$ such that
\begin{equation}\label{equation:beta_p^p-1}
\beta^{p - 1}_p = \sum^k_{i = l_p}d_{i,p}\abs{\varphi_{i,p}}^2_g.
\end{equation}
After passing to a subsequence, we may assume that $l_p$ does not depend on $p$ and is equal to $l$. Moreover, we can find eigenspinors $\varphi_{1,p},\ldots,\varphi_{l-1,p}$ associated to eigenvalues $\lambda_1(\beta_p),\ldots,\lambda_{l - 1}(\beta_p)$ such that the family $(\varphi_{1,p},\ldots,\varphi_{k,p})$ is $Q(\beta_p,\cdot)$-orthonormal. In order to simplify the notations, we denote $\lambda_i(\beta_p)$ by $\lambda_{i,p}$.

After passing to a subsequence, we may assume that 
$\lim\limits_{p \to n} d_{i,p} = d_i$ and $\lim\limits_{p \to \infty} \lambda_{i,p} = \lambda_i$. Note that $\sum\limits_{i = 1}^k d_i = 1$ and $\lambda_k = \Lambda_k(M,[g],\sigma)$. Moreover, by Lemma~\ref{lemma:analogue of lemma 2.1}, $\lambda_i > 0$. Define two sets
\[
I \overset{\mathrm{def}}{=} \{\,i \in \{\,l,\ldots,k\,\} \colon d_i > 0\,\} \quad \text{and} \quad J \overset{\mathrm{def}}{=} \{\,l,\ldots,k\,\} \setminus I.
\]

The proofs of the following two lemmas are identical to that of Lemma 6.1 and Lemma 6.2 in~\cite{humbert2025extremisingeigenvaluesgjmsoperators}.
\begin{lemma}
If $i \in I$, then $(\varphi_{i,p})$ is bounded in $W^{1,\frac{2n}{n+1}}$.
\end{lemma}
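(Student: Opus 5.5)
The plan is to decompose $\varphi_{i,p} = \psi_{i,p} + \kappa_{i,p}$ with $\psi_{i,p} \in \ker D_g^\perp$ and $\kappa_{i,p} \in \ker D_g$, and to apply Lemma~\ref{lemma:analogue of lemma 2.1}. Its hypotheses are satisfied. We have $D_g\varphi_{i,p} = \lambda_{i,p}\beta_p\varphi_{i,p}$ and $Q(\beta_p,\varphi_{i,p}) = 1$. The exponents $p \geqslant n$ play the role of $p_i$, and $\int_M\beta_p^p dv_g = 1$ after normalization. Finally, $\lambda_{i,p} \leqslant \lambda_{k,p}$ stays bounded because $\bar\lambda^p_k(\beta_p) = \Lambda^p_k(M,[g],\sigma)$ is bounded as $p \to n$, which one checks by testing with a fixed smooth $\beta$. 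The lemma then gives that $(\psi_{i,p})$ is bounded in $W^{1,\frac{2n}{n+1}}$. Since $\ker D_g$ is finite dimensional, the only remaining issue is to show that $\norm{\kappa_{i,p}}$ stays bounded.

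We argue by contradiction and assume, along a subsequence, that $\norm{\varphi_{i,p}}_{W^{1,\frac{2n}{n+1}}} \to \infty$. Part (3) of Lemma~\ref{lemma:analogue of lemma 2.1} then gives $\beta_p \overset{L^n(M)}{\rightharpoonup} 0$. Normalizing as in that proof, $\Phi_p = \varphi_{i,p}/\norm{\varphi_{i,p}}$ converges strongly to some $K \in \ker D_g\setminus\{0\}$, and $\abs{K}_g>0$ almost everywhere by unique continuation.

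Now use the Euler--Lagrange identity~\eqref{equation:beta_p^p-1}. Since $i \in I$, we have $d_{i,p} \geqslant d_i/2 > 0$ for $p$ close to $n$, so
\[
\beta_p^{p-1} \geqslant \frac{d_i}{2}\abs{\varphi_{i,p}}_g^2 = \frac{d_i}{2}\norm{\varphi_{i,p}}^2\abs{\Phi_p}_g^2 .
\]
Multiply by $\beta_p$ and integrate. The left-hand side gives $\int_M\beta_p^p dv_g = 1$, hence
\[
1 \geqslant \frac{d_i}{2}\norm{\varphi_{i,p}}^2\int_M \beta_p\abs{\Phi_p}^2_g dv_g .
\]
On the other hand, $\int_M\beta_p\abs{\Phi_p}_g^2 dv_g = Q(\beta_p,\varphi_{i,p})/\norm{\varphi_{i,p}}^2 = \norm{\varphi_{i,p}}^{-2}$, so this inequality is only an identity and gives nothing. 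The bound has to be used pointwise instead.

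From $\beta_p^{p-1} \geqslant c\,\norm{\varphi_{i,p}}^2\abs{\Phi_p}^2$ we get $\beta_p \geqslant \bigl(c\,\norm{\varphi_{i,p}}^2\abs{\Phi_p}^2\bigr)^{1/(p-1)}$. The right-hand side is large wherever $\abs{\Phi_p}$ is bounded below, which by the strong convergence $\Phi_p \to K$ holds on a set of positive measure (in $L^{\frac{2n}{n-1}}$, so almost everywhere along a subsequence). On a set $S$ of positive measure we therefore have $\abs{\Phi_p} \geqslant \epsilon$ eventually, and so $\beta_p \geqslant (c\epsilon^2\norm{\varphi_{i,p}}^2)^{1/(p-1)} \geqslant 1$ on $S$ for $p$ close to $n$. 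Hence $\int_S\beta_p\,dv_g \geqslant \abs{S} > 0$. Testing the weak convergence $\beta_p \rightharpoonup 0$ against $\chi_S \in L^{n/(n-1)}(M)$ gives a contradiction.

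The main obstacle is making this uniform in $p$. One needs Egorov's theorem to pass from almost everywhere convergence of $\Phi_p$ to $K$ to a uniform lower bound $\abs{\Phi_p}\geqslant\epsilon$ on a fixed set $S$. One also needs to check that the exponent $1/(p-1) \to 1/(n-1)$ causes no trouble, which holds because the base $c\epsilon^2\norm{\varphi_{i,p}}^2$ tends to $\infty$. A further technical point is that $\Phi_p$ varies with $p$; this is handled by passing to the limit $K$ along the convergent subsequence.
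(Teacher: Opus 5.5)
Your proof is correct, but it takes a more roundabout route than the one the paper points to. The paper gives no argument of its own and simply cites Lemma 6.1 of \cite{humbert2025extremisingeigenvaluesgjmsoperators}. The natural, and presumably intended, proof is a direct integration of the Euler--Lagrange identity \emph{without} first multiplying it by $\beta_p$. By H\"older's inequality and $\norm{\beta_p}_{L^p}=1$,
\[
d_{i,p}\int_M \abs{\varphi_{i,p}}_g^2\,dv_g \leqslant \int_M \beta_p^{p-1}\,dv_g \leqslant \mathrm{Vol}(M,g)^{1/p},
\]
so $(\varphi_{i,p})$ is bounded in $L^2$ once $d_{i,p}\to d_i>0$. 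Part (1) of Lemma~\ref{lemma:analogue of lemma 2.1} bounds $(\psi_{i,p})$ in $W^{1,\frac{2n}{n+1}}\hookrightarrow L^2$. Hence $\kappa_{i,p}=\varphi_{i,p}-\psi_{i,p}$ is bounded in $L^2$, and therefore in every norm on the finite-dimensional space $\ker D_g$.

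This route needs no contradiction argument. It also avoids parts (2)--(3) of that lemma, the normalized limit $K$, and Egorov's theorem. You correctly noticed that multiplying by $\beta_p$ produces a tautology. The fix is just to integrate $\beta_p^{p-1}$ itself, whose integral is controlled by H\"older's inequality.

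Your actual argument has these steps:
\begin{itemize}
\item contradiction, giving $\beta_p\rightharpoonup 0$ from part (3);
\item strong convergence $\Phi_p\to K\neq 0$;
\item Egorov's theorem, to get a fixed set $S$ of positive measure with $\abs{\Phi_p}_g\geqslant\epsilon$;
\item the pointwise lower bound $\beta_p\geqslant 1$ on $S$, which contradicts weak convergence.
\end{itemize}
Each step is valid. Unique continuation is not even needed, only $K\not\equiv 0$. What this buys is a pointwise picture: blow-up of the kernel component would force $\beta_p$ to be large on a set of positive measure. That is not needed later.

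Even within your approach, you could stop at $\beta_p^{p-1}\geqslant c\norm{\varphi_{i,p}}^2\abs{\Phi_p}_g^2$ and integrate. The right-hand side has integral $c\norm{\varphi_{i,p}}^2\norm{\Phi_p}_{L^2}^2\to\infty$, because $\norm{\Phi_p}_{L^2}\to\norm{K}_{L^2}>0$. The left-hand side has integral at most $\mathrm{Vol}(M,g)^{1/p}$, which is the contradiction without any appeal to Egorov.
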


\begin{lemma}\label{lemma:tilde beta}
For $p > n$, we define
\[
\tilde{\beta}_p = \left(\beta_p^{p-1} - \sum_{i \in J} d_{i,p} \abs{\varphi_{i,p}}^2_g \right)^\frac{1}{p-1}.
\]
Then $\lim\limits_{p \to n+0} \norm{\beta_p - \tilde{\beta}_p}_{L^p(M)} = 0$.
\end{lemma}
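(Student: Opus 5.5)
The plan is to show that the part of $\beta_p^{p-1}$ carried by the indices in $J$ is small in the appropriate norm, and then pass from $(p-1)$-st powers back to $\beta_p$ itself.

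First, write $a_p \overset{\mathrm{def}}{=} \beta_p^{p-1}$ and $b_p \overset{\mathrm{def}}{=} \tilde{\beta}_p^{p-1} = \sum_{i \in I} d_{i,p}\abs{\varphi_{i,p}}_g^2$. Both are non-negative, and $0 \leqslant \tilde{\beta}_p \leqslant \beta_p$ pointwise. By \eqref{equation:beta_p^p-1}, $a_p - b_p = \sum_{i\in J} d_{i,p}\abs{\varphi_{i,p}}_g^2$ with $d_{i,p} \to 0$ for $i \in J$.

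The key quantity to control is
\[
\int_M \beta_p\,(a_p - b_p)\,dv_g = \sum_{i \in J} d_{i,p}\int_M \beta_p\abs{\varphi_{i,p}}_g^2\,dv_g = \sum_{i\in J} d_{i,p} \to 0,
\]
where we use the $Q(\beta_p,\cdot)$-normalization of $\varphi_{i,p}$. This is the crucial point: no $W^{1,\frac{2n}{n+1}}$ bound on $\varphi_{i,p}$ for $i\in J$ is needed, only the normalization.

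Next we convert this into an $L^p$ estimate on $\beta_p - \tilde\beta_p$. Since $0\leqslant \tilde\beta_p\leqslant\beta_p$, we have the elementary inequality
\[
(\beta_p - \tilde\beta_p)^p \leqslant \beta_p\,(\beta_p - \tilde\beta_p)^{p-1} \leqslant \beta_p\,(\beta_p^{p-1} - \tilde\beta_p^{p-1}).
\]
The second inequality comes from $(x-y)^{p-1} \leqslant x^{p-1}-y^{p-1}$ for $x\geqslant y\geqslant 0$ and $p-1\geqslant 1$, which holds by superadditivity of $t\mapsto t^{p-1}$. In dimension $n=2$ with $p>n$ we have $p-1>1$, and in general $p>n\geqslant 2$ gives the same. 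Integrating yields
\[
\norm{\beta_p - \tilde\beta_p}_{L^p}^p \leqslant \sum_{i\in J} d_{i,p}.
\]
Finally we need the $p$-th root of this to tend to $0$. Since $p\to n$ stays bounded, $\left(\sum_{i\in J}d_{i,p}\right)^{1/p}\to 0$ as well.

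The main obstacle I anticipate is checking the pointwise inequality $(x-y)^{p-1}\leqslant x^{p-1}-y^{p-1}$ carefully, and confirming that $\varphi_{i,p}$ for $i\in J$ really are $Q(\beta_p,\cdot)$-normalized (they are, by the construction following Lemma~\ref{lemma:Euler-Lagrange for lambda_k}). If the exponents were such that $p-1<1$, I would instead use the mean value theorem together with H\"older's inequality, but this case does not arise here.
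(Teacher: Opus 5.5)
Your argument is correct and complete. Since $0\leqslant\tilde\beta_p\leqslant\beta_p$ and $p-1>1$, the pointwise bound $(\beta_p-\tilde\beta_p)^p\leqslant\beta_p\bigl(\beta_p^{p-1}-\tilde\beta_p^{p-1}\bigr)$ integrates, using the $Q(\beta_p,\cdot)$-normalization, to $\norm{\beta_p-\tilde\beta_p}_{L^p(M)}^p\leqslant\sum_{i\in J}d_{i,p}\to0$, and no bound on $\varphi_{i,p}$ for $i\in J$ is needed. The paper gives no proof of its own and only says the argument is identical to Lemma 6.2 of \cite{humbert2025extremisingeigenvaluesgjmsoperators}; your estimate is the natural argument for that step, and it proves the statement as it stands here.
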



We may, after passing to a subsequence, assume that $\beta_p \overset{L^n(M)}{\rightharpoonup} \beta$. Moreover, if $(\varphi_{i,p})_p$ is bounded in $W^{1,\frac{2n}{n+1}}$, we may assume that $\varphi_{i,p} \overset{W^{1,\frac{2n}{n+1}}}{\rightharpoonup} \varphi_i$ for all $i \in \{\,l,\ldots,k\,\}$.

By Lemma~\ref{lemma:tilde beta}, we also have that $\tilde{\beta}_p \overset{L^n(M)}{\rightharpoonup} \beta$. Note that 
\[
\tilde{\beta}_p^{p-1} = \sum_{i \in I} d_{i,p}\abs{\varphi_{i,p}}^2_g.
\]
By Rellich-Kondrashov theorem, the right-hand side of this equality converges strongly in $L^q(M)$ for $q_0 < \frac{n}{n-1}$. Denote this strong limit by $\Upsilon_1$. After passing to a subsequence, we may assume that $\beta_p^{p - 1}$ converges to $\Upsilon_1$ almost everywhere. Consequently, $\beta_p$ converges almost everywhere to $\Upsilon_1^\frac{1}{n-1}$.

Moreover, by the characterization of strong convergence in $L^{q_0}(M)$, there exists a function $\Upsilon_2 \in L^{q_0}(M)$ such that $\beta_p^{p-1} \leqslant \Upsilon_2$ almost everywhere. Without loss of generality, we may assume that $\Upsilon_2 \geqslant 1$ almost everywhere, because we can always replace $\Upsilon_2$ by $\Upsilon_2 + 1$. Therefore, $\Upsilon_2^\frac{1}{p-1} \leqslant \Upsilon_2^\frac{1}{n-1}$ almost everywhere. In particular, $\tilde{\beta}_p \leqslant \Upsilon_2^\frac{1}{n-1}$ almost everywhere.

Let's prove that $\tilde{\beta}_p$ converges strongly in $L^{q_1}(M)$ for some $q_1 > \frac{2n}{n+1}$. First of all, note that we can choose $q_0$ such that
$(n-1)q_0 > \frac{2n}{n+1}$ for all $n\geqslant 2$. Let's fix $q_1$ in this interval. Since $\frac{q_1}{n-1} < q_0$, it follows that $\Upsilon_2^\frac{q_1}{n-1} \in L^1(M)$. Hence, by the dominated convergence theorem, $(\tilde{\beta}_p)_{p}$ converges strongly to $\Upsilon_1^\frac{1}{n-1}$ in $L^{q_1}(M)$.

Since $\tilde{\beta}_p \overset{L^n(M)}{\rightharpoonup}\beta$ and since the weak limit is unique, it follows that $\tilde{\beta}_p \overset{L^{q_1}(M)}{\to} \beta$ for some $q_1 > \frac{2n}{n+1}$. Thus, by Lemma~\ref{lemma:tilde beta}, $\beta_p \overset{L^{q_1}(M)}{\to} \beta$. It means that we are in the setting of Lemma~\ref{lemma: strong local convergence}. This observation allows us to prove the following proposition.

\begin{lemma}\label{lemma:final version of local convergence}
Define 
\[
A \overset{\mathrm{def}}{=} \left\{\,x \in M \mid \forall \delta > 0, \quad \limsup_{m \to +\infty} \int_{B_{g}(x,\delta)} \beta_m^n dv_g  > \frac{1}{2^n}\left(\frac{\Lambda_1(\mathbb{S}^n,[g_{\mathrm{st}}],\sigma_{\mathrm{st}})}{\Lambda_k(M,[g],\sigma)}\right)^\frac{n}{4} \,\right\}
\]
Then, as $p \to n$, we have $\beta_p \overset{L^n_\mathrm{loc}(M\setminus A)}{\to} \beta$ and $\varphi_{i,p} \overset{W^{1,\frac{2n}{n+1}}_\mathrm{loc}(\Sigma(M \setminus A))}{\to} \varphi_i$ for all $i$ such that $(\varphi_{i,p})_p$ is bounded.
\end{lemma}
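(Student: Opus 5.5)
We will obtain both convergences by feeding the sequence of equations
\[
D_g\varphi_{i,p} = \lambda_{i,p}\beta_p\varphi_{i,p}
\]
into Lemma~\ref{lemma: strong local convergence}, with $\beta_m$ there replaced by $\gamma_p \overset{\mathrm{def}}{=} \lambda_{i,p}\beta_p$. We then bootstrap the local convergence of the spinors back to local $L^n$ convergence of $\beta_p$ through the Euler--Lagrange identity~\eqref{equation:beta_p^p-1}.

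\textbf{Step 1: hypotheses of Lemma~\ref{lemma: strong local convergence}.} Fix $i$ such that $(\varphi_{i,p})_p$ is bounded in $W^{1,\frac{2n}{n+1}}$, so that $\varphi_{i,p}\rightharpoonup\varphi_i$ along our subsequence. Since $\norm{\beta_p}_{L^p}=1$, Hölder's inequality shows that $(\beta_p)$ is bounded in $L^n(M)$, hence $\gamma_p\in L^n_{\geqslant 0}(M)$. By the discussion preceding the statement, $\beta_p\to\beta$ strongly in $L^{q_1}(M)$ for some $q_1>\frac{2n}{n+1}$. Since $\lambda_{i,p}\to\lambda_i$, it follows that $\gamma_p\to\lambda_i\beta$ strongly in $L^{q_1}(M)$.

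\textbf{Step 2: matching the concentration sets.} We must check that every $x\notin A$ lies outside the concentration set of Lemma~\ref{lemma: strong local convergence} for $(\gamma_p)$. Since $\lambda_{i,p}\leqslant\lambda_{k,p}\to\Lambda_k(M,[g],\sigma)$, we have for any ball
\[
\limsup_p\int_{B_g(x,\delta)}\gamma_p^n\,dv_g\leqslant \Lambda_k(M,[g],\sigma)^n\limsup_p\int_{B_g(x,\delta)}\beta_p^n\,dv_g .
\]
The threshold in the definition of $A$ was chosen, with $K(n)=\Lambda_1(\mathbb{S}^n)^{-1}$, precisely so that this product falls below $2^{-n}K(n)^{-n/4}$. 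If the exponents do not match exactly, we shrink the threshold in the definition of $A$; this is harmless, because $A$ will only be used through its finiteness, which follows from the bound on $\int\beta_p^n$. Lemma~\ref{lemma: strong local convergence} then gives
\[
\varphi_{i,p}\to\varphi_i \quad\text{in } W^{1,\frac{2n}{n+1}}_{\mathrm{loc}}\bigl(\Sigma(M\setminus A)\bigr).
\]
To pass from balls to an arbitrary compact $K\subset M\setminus A$, we cover $K$ by finitely many good balls and use a partition of unity.

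\textbf{Step 3: local convergence of $\beta_p$.} On a compact $K\subset M\setminus A$, Sobolev embedding upgrades the convergence of Step 2 to $\varphi_{i,p}\to\varphi_i$ in $L^{\frac{2n}{n-1}}(K)$ for each $i\in I$ (these are bounded by the first lemma of this section). Hence
\[
\tilde\beta_p^{\,p-1}=\sum_{i\in I}d_{i,p}\abs{\varphi_{i,p}}_g^2\longrightarrow \sum_{i\in I}d_i\abs{\varphi_i}_g^2 \quad\text{in } L^{\frac{n}{n-1}}(K).
\]
We already know that $\tilde\beta_p\to\beta$ almost everywhere. For uniform integrability we use $\tilde\beta_p^{\,n}=(\tilde\beta_p^{\,p-1})^{n/(p-1)}\leqslant 1+\tilde\beta_p^{\,p-1\cdot\frac{n}{n-1}}$, which holds because $\frac{n}{p-1}\leqslant\frac{n}{n-1}$. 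The right-hand side converges in $L^1(K)$, so $\{\tilde\beta_p^{\,n}\}$ is uniformly integrable on $K$. Vitali's theorem then gives $\tilde\beta_p\to\beta$ in $L^n(K)$. Finally, Lemma~\ref{lemma:tilde beta} gives $\norm{\beta_p-\tilde\beta_p}_{L^n}\leqslant C\norm{\beta_p-\tilde\beta_p}_{L^p}\to0$, which transfers the conclusion to $\beta_p$.

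\textbf{Main obstacle.} The delicate point is Step 2, namely making the quantitative smallness threshold compatible. Lemma~\ref{lemma: strong local convergence} is stated for $\beta_m$ with unit eigenvalue, whereas here the eigenvalue factor $\lambda_{i,p}$ must be absorbed, using the upper bound $\lambda_{i,p}\leqslant\lambda_{k,p}\to\Lambda_k(M,[g],\sigma)$. A second, lesser issue is the $p$-dependent exponent in Step 3, which the uniform integrability argument above is designed to handle.
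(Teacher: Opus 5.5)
Your proposal is correct and follows the same route as the paper. You apply Lemma~\ref{lemma: strong local convergence} to obtain local $W^{1,\frac{2n}{n+1}}$ convergence of the bounded spinor sequences, deduce local $L^n$ convergence of $\tilde\beta_p$ from $\tilde\beta_p^{\,p-1}=\sum_{i\in I}d_{i,p}\abs{\varphi_{i,p}}_g^2$, and transfer it to $\beta_p$ via Lemma~\ref{lemma:tilde beta}. Your use of all $i\in I$ and your uniform-integrability/Vitali argument only make explicit what the paper states in one line.

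One caveat concerns Step 2. With the exponent $\frac{n}{4}$ in the stated threshold, the factor $\lambda_{i,p}^n$ is not absorbed: a factor $\Lambda_k(M,[g],\sigma)^{3n/4}$ survives. So your claim that the threshold was "chosen precisely" for this is false. It is your fallback of lowering the threshold, i.e.\ enlarging $A$, that actually makes the step work. The paper's own one-line proof passes over the same constant mismatch.
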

\begin{proof}
Let $i$ be an index such that $(\varphi_{i,p})_{p}$ is bounded in $W^{1,\frac{2n}{n+1}}$. Note that such index exists because $I \neq \varnothing$. Then, by Lemma~\ref{lemma: strong local convergence}, we have that $(\varphi_{i,p})_p$ converges strongly in $W^{1,\frac{2n}{n+1}}_\mathrm{loc}(\Sigma(M \setminus A))$. Consequently, $\tilde{\beta}_p$ converges strongly to $\beta$ in $L^n_\mathrm{loc}(M \setminus A)$. By Lemma~\ref{lemma:tilde beta}, we have that $\beta_p \overset{L^n_\mathrm{loc}(M \setminus A)}{\to} \beta$.
\end{proof}

Recall that we assumed that $\Lambda_k(M,[g],\sigma)$ is not attained. This implies that ${A \neq \varnothing}$. Indeed, if it was not the case, then, by Lemma~\ref{lemma:final version of local convergence}, $\beta_p \overset{L^n(M)}{\to}\beta$. Also, since in this case $\norm{\beta}_{L^n(M)} = 1$, it follows from Lemma~\ref{lemma:analogue of lemma 2.1}, that $(\varphi_{i,p})_p$ is bounded in $W^{1,\frac{2n}{n+1}}$. Consequently, by Lemma~\ref{lemma:final version of local convergence}, $\varphi_{i,p} \overset{W^{1,\frac{2n}{n+1}}}{\to} \varphi_i$. In particular, we obtain that $(\varphi_i)_{1 \leqslant i \leqslant k}$ is a $Q(\beta,\cdot)$-orthonormal family and $D_g \varphi_i = \lambda_i \beta \varphi_i$ for all $i \in \{\,1,\ldots,k\,\}$. By Lemma~\ref{lemma:lower semi-continuity of lambda}, we have that $\lambda_k(\beta) \leqslant \liminf\limits_{p \to n}\lambda_k(\beta_p) = \lambda_k = \Lambda_k(M,[g],\sigma)$. However, since $\norm{\beta}_{L^n(M)} = 1$, it follows that $\Lambda_k(M,[g],\sigma) \leqslant \lambda_k(\beta)$. Thus, $\Lambda_k(M,[g],\sigma)$ is attained by $\beta$.

There are two different cases, namely, either $\beta \equiv 0$, or $\beta  \not \equiv 0$. We start with the first one.

\subsection{The null case.}
Assume that $\beta \equiv 0$, then Lemma~\ref{lemma:analogue of lemma 2.1} motivates us to work with $(\psi_{i,p})_p$ instead of $(\varphi_{i,p})_{p}$, where $\varphi_{i,p} = \psi_{i,p} + \kappa_{i,p}$ with $\psi_{i,p} \in ker D_g^\perp$ and $\kappa_{i,p} \in \ker D_g$.  By Lemma~\ref{lemma:analogue of lemma 2.1}, $(\psi_{i,p})_p$ is bounded in $W^{1,\frac{2n}{n+1}}$, so we may, after passing to a subsequence, assume that $\psi_{i,p} \overset{W^{1,\frac{2n}{n+1}}}{\rightharpoonup} \psi_i$. 

If $(\varphi_{i,p})_p$ is not bounded in $W^{1,\frac{2n}{n+1}}$, then Lemma~\ref{lemma:analogue of lemma 2.1} guarantees that $\psi_i = 0$. Assume now that $(\varphi_{i,p})$ is bounded in $W^{1,\frac{2n}{n+1}}$, then the weak limit of this sequence, which we denoted by $\varphi_i$, verifies $D_g \varphi_i = \lambda_i \beta_i \varphi_i$. Recall that $\beta \equiv 0$, and hence $\varphi_i \in \ker D_g$. Consequently, $\psi_i = 0$, because $\psi_i$ is exactly the orthogonal projection of $\varphi_i$ on $\ker D_g^\perp$.

First, we write the results of Lemma~\ref{lemma: strong local convergence} for this sequences.
\begin{lemma}\label{lemma:local convergence for orthogonal projections}
After passing to a subsequence, we have that $\psi_{i,p}$ converges strongly to $0$ in $W^{1,\frac{2n}{n+1}}_{\mathrm{loc}}(\Sigma(M \setminus A))$ as $p$ goes to $n$.
\end{lemma}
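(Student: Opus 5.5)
We would rerun the argument of Lemma~\ref{lemma: strong local convergence} with $\psi_{i,p}$ in place of $\varphi_{i,p}-\varphi$, exploiting the fact that the kernel component $\kappa_{i,p}$ is invisible to $D_g$. Since $D_g\kappa_{i,p}=0$, we have
\[
D_g\psi_{i,p} = D_g\varphi_{i,p} = \lambda_{i,p}\beta_p\varphi_{i,p}.
\]
By Lemma~\ref{lemma:analogue of lemma 2.1}, $(\psi_{i,p})_p$ is bounded in $W^{1,\frac{2n}{n+1}}$, and we showed above that its weak limit is $0$. Rellich--Kondrashov then gives $\psi_{i,p}\to 0$ in $L^q$ for every $q<\frac{2n}{n-1}$. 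Fix $x\in M\setminus A$ and $\delta>0$ with $\limsup_p\int_{B_g(x,\delta)}\beta_p^n\,dv_g$ below the threshold defining $A$, and fix $\eta\in\mathcal{C}^\infty_c(B_g(x,\delta))$. As before,
\[
D_g(\eta\psi_{i,p})=\grad_g\eta\cdot\psi_{i,p}+\eta\lambda_{i,p}\beta_p\varphi_{i,p},
\]
and the first term tends to $0$ in $L^{\frac{2n}{n+1}}$.

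The main step is to control $\norm{\eta\beta_p\varphi_{i,p}}_{L^{\frac{2n}{n+1}}}$. We cannot replace $\varphi_{i,p}$ by $\psi_{i,p}$, since $\kappa_{i,p}$ may blow up. Instead we use the normalization $\int_M\beta_p\abs{\varphi_{i,p}}^2dv_g=1$ and H\"older's inequality:
\[
\norm{\eta\beta_p\varphi_{i,p}}_{L^{\frac{2n}{n+1}}}\leqslant\Bigl(\int_{B_g(x,\delta)}\beta_p^n\Bigr)^{\frac{1}{2n}}\Bigl(\int_M\beta_p\eta^2\abs{\varphi_{i,p}}^2\Bigr)^{\frac12}.
\]
Next we rewrite the second factor:
\[
\lambda_{i,p}\int\beta_p\eta^2\abs{\varphi_{i,p}}^2=\int\eta^2\langle D_g\psi_{i,p},\varphi_{i,p}\rangle=\int\eta^2\langle D_g\psi_{i,p},\psi_{i,p}\rangle+\int\eta^2\langle D_g\psi_{i,p},\kappa_{i,p}\rangle.
\]
The last term equals $\int\langle\psi_{i,p},D_g(\eta^2\kappa_{i,p})\rangle=\int\langle\psi_{i,p},2\eta\grad_g\eta\cdot\kappa_{i,p}\rangle$. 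This cross term is the step we expect to be the obstacle. When $(\varphi_{i,p})$ is bounded, $\kappa_{i,p}$ is bounded in the finite-dimensional smooth space $\ker D_g$, so the term is $o(1)$ because $\psi_{i,p}\to0$ in $L^1$. In the unbounded case we would first try the identity
\[
\int\beta_p\langle\varphi_{i,p},\kappa\rangle=\lambda_{i,p}^{-1}\int\langle D_g\varphi_{i,p},\kappa\rangle=0\quad\text{for all }\kappa\in\ker D_g.
\]
This gives $\int\beta_p\abs{\kappa_{i,p}}^2=-\int\beta_p\langle\psi_{i,p},\kappa_{i,p}\rangle$, hence $\int\beta_p\abs{\kappa_{i,p}}^2\leqslant\int\beta_p\abs{\psi_{i,p}}^2$. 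The quantity $\int\beta_p\abs{\psi_{i,p}}^2$ is bounded by $\norm{\beta_p}_{L^n}\norm{\psi_{i,p}}^2_{L^{\frac{2n}{n-1}}}$. Since $\ker D_g$ is finite dimensional and $\abs{\kappa}$ vanishes only on a null set (unique continuation), the norms $\norm{\kappa}_{L^\infty}$ and $\bigl(\int\beta_p\abs{\kappa}^2\bigr)^{1/2}$ should be comparable uniformly whenever $\beta_p$ retains mass. If instead $\beta_p$ concentrates near $A$, we would localize the identity away from $A$ using the strong convergence $\beta_p\to\beta=0$ in $L^{q_1}$. If this fails, the fallback is to work directly with the quantity $\int\beta_p\eta^2\abs{\varphi_{i,p}}^2$, bounding it through the same Sobolev-type inequality, Theorem~\ref{theorem: Sobolev Constant}, applied to $\eta\psi_{i,p}$, and absorbing.

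With the cross term handled, the first term becomes $\int\langle D_g(\eta\psi_{i,p}),\eta\psi_{i,p}\rangle+o(1)$, which Theorem~\ref{theorem: Sobolev Constant} bounds by $\frac32K(n)\norm{D_g(\eta\psi_{i,p})}^2_{L^{\frac{2n}{n+1}}}+o(1)$. Combining everything yields
\[
\norm{D_g(\eta\psi_{i,p})}_{L^{\frac{2n}{n+1}}}\leqslant o(1)+c\,\norm{D_g(\eta\psi_{i,p})}_{L^{\frac{2n}{n+1}}},
\]
where $c<1$ because of the smallness of $\int_{B_g(x,\delta)}\beta_p^n$; the threshold in the definition of $A$ (with $\lambda_{i,p}\leqslant\lambda_{k,p}\to\Lambda_k(M,[g],\sigma)$) is exactly what makes $c<1$. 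Hence $D_g(\eta\psi_{i,p})\to0$ in $L^{\frac{2n}{n+1}}$. Together with the $L^{\frac{2n}{n+1}}$ convergence and elliptic estimates, this gives $\eta\psi_{i,p}\to0$ in $W^{1,\frac{2n}{n+1}}$. A diagonal subsequence over a countable cover of $M\setminus A$ completes the proof.
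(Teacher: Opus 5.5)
\textbf{The bounded case is fine; the unbounded case has a gap.} Your bounded case is correct and matches the paper, which simply invokes Lemma~\ref{lemma: strong local convergence} once $\kappa_{i,p}$ converges strongly in the finite-dimensional kernel. The unbounded case, however, is not proved. Your absorption scheme needs the cross term $\int_M\langle\psi_{i,p},2\eta\,\grad_g\eta\cdot\kappa_{i,p}\rangle_g\,dv_g$ to be $o(1)$. In that regime $\|\kappa_{i,p}\|\to\infty$ while $\psi_{i,p}\to0$ in $L^q$ with no rate, so nothing controls the product. None of your three fixes closes this:
\begin{itemize}
\item The first one is self-defeating. You correctly note that $\int_M\beta_p|\kappa_{i,p}|^2\le\int_M\beta_p|\psi_{i,p}|^2$, and the right side is bounded. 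So a uniform comparability between $\|\kappa\|_{L^\infty}$ and $(\int_M\beta_p|\kappa|^2)^{1/2}$ would force $(\kappa_{i,p})_p$ to be bounded, contradicting the case assumption. Indeed, Lemma~\ref{lemma:analogue of lemma 2.1} gives $\beta_p\rightharpoonup0$, so $\beta_p$ does not retain mass.
\item Localizing via the $L^{q_1}$-convergence cannot make $\int_{B_g(x,\delta)}\beta_p^n$ small, since $q_1<n$.
\item The fallback is the same absorption and runs into the same cross term.
\end{itemize}

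\textbf{The missing ingredient.} It is Lemma~\ref{lemma:final version of local convergence}. Using the indices $i\in I$, for which the eigenspinors are bounded, that lemma gives $\beta_p\to\beta$ strongly in $L^n_{\mathrm{loc}}(M\setminus A)$, and in the null case $\beta\equiv0$. With this you need no absorption, no Theorem~\ref{theorem: Sobolev Constant}, and no control of $\kappa_{i,p}$. Take $\eta\in\mathcal{C}^\infty_c(M\setminus A)$ with $\|\eta\|_{L^\infty}\le1$. H\"older's inequality and the normalization $\int_M\beta_p|\varphi_{i,p}|^2dv_g=1$ give
\[
\|\eta\lambda_{i,p}\beta_p\varphi_{i,p}\|_{L^{\frac{2n}{n+1}}}\le\lambda_{i,p}\Bigl(\int_M\eta^{2n}\beta_p^n\,dv_g\Bigr)^{\frac{1}{2n}}\Bigl(\int_M\beta_p|\varphi_{i,p}|^2\,dv_g\Bigr)^{\frac12}\le\lambda_{i,p}\|\eta\beta_p\|_{L^n}^{\frac12}\longrightarrow0 .
\]
Hence $D_g(\eta\psi_{i,p})\to0$ in $L^{\frac{2n}{n+1}}$, and you conclude as you did. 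This is exactly the paper's argument. Your own H\"older step already had its second factor bounded by $1$. What was missing was smallness of $\int\beta_p^n$ near $x$: the threshold in the definition of $A$ does not supply it, but the $L^n_{\mathrm{loc}}$ convergence does.
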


\begin{proof}
We have just shown that $\psi_{i,p} \overset{W^{1,\frac{2n}{n+1}}}{\rightharpoonup} 0$. If $(\varphi_{i,p})_{p}$ is bounded in $W^{1,\frac{2n}{n+1}}$, then so is $(\kappa_{i,p})_p$. Since $\dim \ker D_g < \infty$, it follows that we may, after passing to a subsequence, assume that $(\kappa_{i,p})_p$ converges strongly in $W^{1,\frac{2n}{n+1}}$. Thus, by Lemma~\ref{lemma: strong local convergence}, $\psi_{i,p}$ converges strongly to $0$ in $W^{1,\frac{2n}{n+1}}_{\mathrm{loc}}(\Sigma(M \setminus A))$.

Now, assume that $(\varphi_{i,p})_p$ is not bounded in $W^{1,\frac{2n}{n+1}}$. Lemma~\ref{lemma:analogue of lemma 2.1} implies that $\beta \equiv 0$. Choose a function $\eta \in \mathcal{C}^{\infty}_c(M \setminus A)$ such that $\norm{\eta}_{L^\infty(M\setminus A)} \leqslant 1$. Then
\begin{align*}
D_g(\eta \psi_{i,p}) = \eta \lambda_{i,p}\beta_p \varphi_{i,p} + \grad_g \eta \cdot \psi_{i,p}.   
\end{align*}
By Rellich-Kondrashov theorem, $\grad_g \eta \cdot \psi_{i,p} \overset{L^\frac{2n}{n+1}}{\to} 0$. By H\"older's inequality,
\begin{align*}
\norm{\eta \lambda_{i,p}\beta_p \varphi_{i,p}}_{L^\frac{2n}{n+1}(M)} 
&\leqslant \lambda_{i,p}\left(\int_M\abs{\eta \beta_p}^ndv_g\right)^\frac{1}{n+1} \left(\int_M \beta_p \abs{\varphi_{i,p}}^2_g dv_g\right)^\frac{n}{n+1}\\
&= \lambda_{i,p} \norm{\eta \beta_p}^\frac{n}{n+1}_{L^n(M)}.
\end{align*}
By Lemma~\ref{lemma:final version of local convergence}, $\eta \beta_p \overset{L^n(M)}{\to} 0$, because $\beta = 0$, and $\lambda_{i,p} \to \lambda_i$ as $p$ goes to $n$, it follows that $\lim\limits_{p \to n} \norm{\eta \lambda_{i,p}\beta_p \varphi_{i,p}}_{L^\frac{2n}{n+1}} = 0$. Once again, by Rellich-Kondrashov theorem, $\eta\psi_{i,p} \overset{L^\frac{2n}{n+1}}{\to} 0$. Thus, $\psi_{i,p}$ converges strongly to $0$ in $W^{1,\frac{2n}{n+1}}_\mathrm{loc}(\Sigma(M \setminus A))$.
\end{proof}

As in~\cite{humbert2025extremisingeigenvaluesgjmsoperators}, we fix $\delta > 0$ such that $B(x_1,2\delta) \cap B(x_2,2\delta) = \varnothing$ for all $x_1 \neq x_2 \in A$. Take $\eta \in \mathcal{C}^\infty([0,\infty))$ such that $\restr{\eta}{[0,\delta]} \equiv 1$, $\restr{\eta}{[2\delta,+\infty)} \equiv 0$ and $\sup \abs{\eta^{(m)}} \leqslant \frac{C}{\delta^m}$, where $C$ does not depend on $\delta$. We introduce a function $\chi \overset{\mathrm{def}}{=} \sum\limits_{x \in A} \eta(d_g(x,\cdot))$, where $d_g$ is a metric on $(M,g)$.   We also define $\Psi_{i,p} \overset{\mathrm{def}}{=} \chi \psi_{i,p}$ for each $i \in \{\,1,\ldots,k\,\}$. Our aim now is to understand the behavior of 
\[
\int_M \dprod{D_g\Psi_{i,p},\Psi_{j,p}}_g dv_g 
\quad \text{and} \quad 
\int_M \frac{1}{\beta_p + \delta}\dprod{D_g \Psi_{i,p}, D_g \Psi_{j,p}}_g dv_g
\] 
as $p$ goes to $n$ and $\delta$ goes to $0$.
We start by proving the following lemma:
\begin{lemma}\label{lemma:7.6}
\[
\lim_{p \to n}\abs{\int_M \dprod{D_g\Psi_{i,p},\Psi_{j,p}}_g dv_g - \int_M \dprod{D_g\psi_{i,p},\psi_{j,p}}_g dv_g} = 0.
\]
\end{lemma}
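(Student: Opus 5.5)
We expand $\dprod{D_g(\chi\psi_{i,p}),\chi\psi_{j,p}}_g$ using the Leibniz rule $D_g(\chi\psi)=\grad_g\chi\cdot\psi+\chi D_g\psi$:
\[
\int_M \dprod{D_g\Psi_{i,p},\Psi_{j,p}}_g dv_g = \int_M \chi^2\dprod{D_g\psi_{i,p},\psi_{j,p}}_g dv_g + \int_M \chi\dprod{\grad_g\chi\cdot\psi_{i,p},\psi_{j,p}}_g dv_g .
\]
Subtracting $\int_M\dprod{D_g\psi_{i,p},\psi_{j,p}}_g dv_g$ leaves two error terms, and we show each one tends to $0$:
\[
\int_M \chi\dprod{\grad_g\chi\cdot\psi_{i,p},\psi_{j,p}}_g dv_g \quad\text{and}\quad \int_M (1-\chi^2)\dprod{D_g\psi_{i,p},\psi_{j,p}}_g dv_g .
\]

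\emph{First term.} Since $\chi$ is smooth, $\grad_g\chi$ is bounded. By Lemma~\ref{lemma:analogue of lemma 2.1}, $(\psi_{i,p})_p$ is bounded in $W^{1,\frac{2n}{n+1}}$, and we have shown that $\psi_{i,p}\overset{W^{1,\frac{2n}{n+1}}}{\rightharpoonup}0$. Rellich--Kondrashov therefore gives $\psi_{i,p}\to 0$ strongly in $L^2$, because $2<\frac{2n}{n-1}$. Cauchy--Schwarz then shows the term is $O(\norm{\psi_{i,p}}_{L^2}\norm{\psi_{j,p}}_{L^2})=o(1)$.

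\emph{Second term.} The function $1-\chi^2$ is smooth and vanishes on the $\delta$-balls around the finitely many points of $A$, so its support $K$ is a compact subset of $M\setminus A$. Choose a cutoff $\zeta\in\mathcal{C}^\infty_c(M\setminus A)$ equal to $1$ on $K$, and write the integrand as $(1-\chi^2)\dprod{D_g(\zeta\psi_{i,p}),\zeta\psi_{j,p}}_g$. By Lemma~\ref{lemma:local convergence for orthogonal projections}, $\zeta\psi_{i,p}\to0$ strongly in $W^{1,\frac{2n}{n+1}}$. In particular $\norm{D_g(\zeta\psi_{i,p})}_{L^{\frac{2n}{n+1}}}\to0$, and the Sobolev embedding gives that $\norm{\zeta\psi_{j,p}}_{L^{\frac{2n}{n-1}}}$ is bounded (indeed it tends to $0$). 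The exponents $\frac{2n}{n+1}$ and $\frac{2n}{n-1}$ are H\"older conjugate, so H\"older's inequality shows this term also tends to $0$.

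\emph{Main obstacle.} The key point is that the weak $W^{1,\frac{2n}{n+1}}$-limit alone does not control $\int_M\dprod{D_g\psi_{i,p},\psi_{j,p}}_g dv_g$ on the region away from $A$. This step therefore relies on the strong local convergence established in Lemma~\ref{lemma:local convergence for orthogonal projections}. That lemma is stated only after passing to a subsequence, so the limit here is taken along that subsequence, which is enough for the subsequent argument. The result holds for each fixed $\delta$, and no uniformity in $\delta$ is needed.
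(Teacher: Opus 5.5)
Your proof is correct and follows essentially the paper's argument: expand with the Leibniz rule, then show that the part of $\int_M \chi^2\dprod{D_g\psi_{i,p},\psi_{j,p}}_g\,dv_g$ supported where $\chi^2\neq 1$ is $o(1)$, using the strong local convergence of $\psi_{i,p}$ to $0$ on compact subsets of $M\setminus A$. The only minor difference is that you handle the gradient term with global Rellich--Kondrashov compactness into $L^2$ (from $\psi_{i,p}\rightharpoonup 0$ weakly), whereas the paper invokes the local strong convergence there as well; both are valid.
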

\begin{proof}
By definition of $\Psi_{i,p}$, we get that
\begin{align*}
\int_M \dprod{D_g \Psi_{i,p},\Psi_{j,p}}_gdv_g = \int_M \left(\chi^2 \dprod{D_g \psi_{i,p},\psi_{j,p}}_g + \chi \dprod{\grad_g \chi \cdot \psi_{i,p},\psi_{j,p}}_g\right)dv_g.
\end{align*}
Recall that, by Lemma~\ref{lemma:local convergence for orthogonal projections}, $\psi_{i,p}$ converges strongly to $0$ in $W^{1,\frac{2n}{n+1}}(\Sigma(M \setminus A))$. Therefore, we have that the second integral goes to $0$ as $p$ goes to $n$.

Moreover,
\begin{align*}
\int_M \chi\dprod{D_g \psi_{i,p},\psi_{j,p}}_gdv_g 
&= \left(\int_{\bigcup_{x \in A}B(x,\delta)} + \int_{M \setminus \bigcup_{x \in A}B(x,\delta)}\right)\chi\dprod{D_g \psi_{i,p},\psi_{j,p}}_gdv_g\\
&=\int_{\bigcup_{x \in A}B(x,\delta)}\dprod{D_g \psi_{i,p},\psi_{j,p}}_gdv_g + o(1),
\end{align*}
because $\restr{\chi}{\bigcup_{x \in A}B(x,\delta)} \equiv 1$.
Applying Lemma~\ref{lemma:local convergence for orthogonal projections}, we deduce that 
\begin{align*}
\int_{M \setminus \bigcup_{x \in A}B(x,\delta)} \dprod{D_g \psi_{i,p},\psi_{j,p}}_gdv_g = o(1).
\end{align*}
Thus,
\[
\int_M \chi\dprod{D_g \psi_{i,p},\psi_{j,p}}_gdv_g = \int_M \dprod{D_g \psi_{i,p},\psi_{j,p}}_g dv_g + o(1),
\]
which in turn yields that
\[
\lim_{p \to n}\abs{\int_M \dprod{D_g\Psi_{i,p},\Psi_{j,p}}_g dv_g - \int_M \dprod{D_g\psi_{i,p},\psi_{j,p}}_g dv_g} = 0.
\]
\end{proof}
We now proceed to the integral $\int_M \frac{1}{\beta_p + \delta}\dprod{D_g \Psi_{i,p},D_g \Psi_{j,p}}_g dv_g$ for which the similar statement holds.

\begin{lemma}\label{lemma:7.7}
\[
\abs{\int_M \frac{1}{\beta_p + \delta}\dprod{D_g \Psi_{i,p},D_g \Psi_{j,p}}_g dv_g - \int_M \frac{1}{\beta_p + \delta}\dprod{D_g \psi_{i,p},D_g \psi_{j,p}}_g dv_g} = 0.
\]
\end{lemma}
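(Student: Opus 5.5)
We expand $D_g\Psi_{i,p}=\chi D_g\psi_{i,p}+\grad_g\chi\cdot\psi_{i,p}$ and show that every term of the difference is supported in the compact set $K_\delta\overset{\mathrm{def}}{=}\overline{\bigcup_{x\in A}B(x,2\delta)\setminus\bigcup_{x\in A}B(x,\delta)}\subset M\setminus A$. The local convergence results (Lemmas~\ref{lemma:final version of local convergence} and~\ref{lemma:local convergence for orthogonal projections}) then kill each term. Concretely,
\begin{align*}
\frac{1}{\beta_p+\delta}\Bigl(\dprod{D_g\Psi_{i,p},D_g\Psi_{j,p}}_g-\dprod{D_g\psi_{i,p},D_g\psi_{j,p}}_g\Bigr)
&=\frac{\chi^2-1}{\beta_p+\delta}\dprod{D_g\psi_{i,p},D_g\psi_{j,p}}_g\\
&\quad+\frac{\chi}{\beta_p+\delta}\Bigl(\dprod{D_g\psi_{i,p},\grad_g\chi\cdot\psi_{j,p}}_g+\dprod{\grad_g\chi\cdot\psi_{i,p},D_g\psi_{j,p}}_g\Bigr)\\
&\quad+\frac{1}{\beta_p+\delta}\dprod{\grad_g\chi\cdot\psi_{i,p},\grad_g\chi\cdot\psi_{j,p}}_g .
\end{align*}
The gradient terms are supported in $K_\delta$. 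The first term is supported in $\{\chi<1\}$. On the part of $\{\chi<1\}$ outside $\bigcup_{x\in A}B(x,2\delta)$ we have $\chi=0$, so this region still lies away from $A$. We therefore work with compact sets $K\subset M\setminus A$, together with an exhaustion argument for the exterior region; there the same local estimates apply.

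\textbf{Step 1 (uniform weighted bound).} Since $D_g\psi_{i,p}=D_g\varphi_{i,p}=\lambda_{i,p}\beta_p\varphi_{i,p}$ and $\beta_p^2/(\beta_p+\delta)\leqslant\beta_p$, we get
\[
\int_M\frac{\abs{D_g\psi_{i,p}}_g^2}{\beta_p+\delta}dv_g\leqslant\lambda_{i,p}^2\int_M\beta_p\abs{\varphi_{i,p}}_g^2dv_g=\lambda_{i,p}^2 .
\]
This is uniformly bounded. By Cauchy--Schwarz in the weighted $L^2$ product, each term above is therefore controlled by the product of this bounded quantity and one of two localized quantities:
\[
\int_K\frac{\abs{\psi_{i,p}}_g^2}{\beta_p+\delta}dv_g
\qquad\text{or}\qquad
\int_K\frac{\abs{D_g\psi_{i,p}}_g^2}{\beta_p+\delta}dv_g .
\]

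\textbf{Step 2 (zeroth-order terms).} We bound $1/(\beta_p+\delta)\leqslant 1/\delta$. By Lemma~\ref{lemma:local convergence for orthogonal projections}, $\psi_{i,p}\to0$ in $W^{1,\frac{2n}{n+1}}_{\mathrm{loc}}(\Sigma(M\setminus A))$, hence in $L^2(K)$ by the Sobolev embedding. This gives $\int_K\abs{\psi_{i,p}}_g^2/(\beta_p+\delta)\to0$ for fixed $\delta$. Combined with Step 1, the cross terms and the $\abs{\grad_g\chi}^2$ term tend to $0$ as $p\to n$.

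\textbf{Step 3 (main obstacle: the $(1-\chi^2)$ term).} It remains to prove
\[
\int_K\frac{\abs{D_g\psi_{i,p}}_g^2}{\beta_p+\delta}dv_g=\lambda_{i,p}^2\int_K\beta_p\,\frac{\beta_p}{\beta_p+\delta}\abs{\varphi_{i,p}}_g^2dv_g\longrightarrow0 .
\]
By Lemma~\ref{lemma:final version of local convergence} and $\beta\equiv0$, we have $\beta_p\to0$ in $L^n(K)$. Hence $\beta_p/(\beta_p+\delta)\to0$ in measure on $K$, and the factor is bounded by $1$. It therefore suffices to show that the measures $\beta_p\abs{\varphi_{i,p}}_g^2dv_g$ do not concentrate on $K$, i.e. that $\int_K\eta^2\beta_p\abs{\varphi_{i,p}}_g^2\to0$ for a cutoff $\eta$ equal to $1$ on $K$ and supported away from $A$.

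I would first try the localized identity
\[
\lambda_{i,p}\int_M\eta^2\beta_p\abs{\varphi_{i,p}}_g^2dv_g=\int_M\eta^2\dprod{D_g\psi_{i,p},\psi_{i,p}+\kappa_{i,p}}_g dv_g .
\]
Integrating by parts in the $\kappa_{i,p}$ part, and using $D_g\kappa_{i,p}=0$, gives
\[
\int_M\eta^2\dprod{D_g\psi_{i,p},\kappa_{i,p}}_g dv_g=\int_M\dprod{\psi_{i,p},\grad_g\eta^2\cdot\kappa_{i,p}}_g dv_g .
\]
The remaining term $\int\eta^2\dprod{D_g\psi_{i,p},\psi_{i,p}}$ tends to $0$ by the local strong convergence of $\psi_{i,p}$ in $W^{1,\frac{2n}{n+1}}\hookrightarrow L^{\frac{2n}{n-1}}$, the exponents being dual. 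The delicate point is the $\kappa_{i,p}$ term when $(\varphi_{i,p})_p$ is unbounded.

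\emph{Bounded case.} Here $\kappa_{i,p}$ converges in the finite-dimensional space $\ker D_g$, and the term tends to $0$.

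\emph{Unbounded case.} If $\norm{\kappa_{i,p}}\to\infty$, I would instead avoid $\kappa_{i,p}$ and use H\"older directly. From the bound $\norm{\eta\beta_p\varphi_{i,p}}_{L^{\frac{2n}{n+1}}}\leqslant\norm{\eta\beta_p}_{L^n}^{\frac{n}{n+1}}$ obtained in the proof of Lemma~\ref{lemma:local convergence for orthogonal projections}, combined with $\beta_p^2/(\beta_p+\delta)\leqslant\min(\beta_p,\beta_p^2/\delta)$, one should get a bound by a positive power of $\norm{\eta\beta_p}_{L^n}$. Failing that, I would use the Euler--Lagrange relation~\eqref{equation:beta_p^p-1} to dominate $\abs{\varphi_{i,p}}_g^2$ by $\beta_p^{p-1}/d_{i,p}$ for $i\in I$, and Lemma~\ref{lemma:tilde beta} for $i\in J$.
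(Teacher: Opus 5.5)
The gap is in the unbounded case of your Step~3, which you leave open, and neither of your suggested fixes closes it. The rest is sound. Steps 1--2 are correct. The paper handles these gradient terms the same way: it bounds $1/(\beta_p+\delta)$ by $1/\delta$ and uses Lemma~\ref{lemma:local convergence for orthogonal projections}, which suffices because those terms are at most linear in $D_g\psi_{i,p}$. The bounded case of Step~3 is also correct, and quicker via H\"older: $\int_K\beta_p\abs{\varphi_{i,p}}_g^2\,dv_g\leqslant\norm{\beta_p}_{L^n(K)}\norm{\varphi_{i,p}}^2_{L^{\frac{2n}{n-1}}}\to0$ by Lemma~\ref{lemma:final version of local convergence}.

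\textbf{What the unbounded case requires.} Write $\varphi_{i,p}=\psi_{i,p}+\norm{\kappa_{i,p}}\hat\kappa_{i,p}$ with $\hat\kappa_{i,p}=\kappa_{i,p}/\norm{\kappa_{i,p}}$. Here $\norm{\kappa_{i,p}}\to\infty$, $\hat\kappa_{i,p}$ converges (along a subsequence) to a nonzero harmonic spinor, and $\psi_{i,p}\to0$ on $K$. So what you need is equivalent to
\[
\norm{\kappa_{i,p}}^2\int_K\frac{\beta_p^2}{\beta_p+\delta}\abs{\hat\kappa_{i,p}}_g^2\,dv_g\longrightarrow0 .
\]
That is, you need a decay rate of $\beta_p$ on $K$ that beats the blow-up of the kernel component. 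Nothing in the proposal provides such a rate.
\begin{itemize}
\item The orthogonality $\int_M\beta_p\dprod{\varphi_{i,p},\kappa}_g\,dv_g=0$ for $\kappa\in\ker D_g$ only gives $\int_M\beta_p\abs{\kappa_{i,p}}_g^2\,dv_g=\int_M\beta_p\abs{\psi_{i,p}}_g^2\,dv_g-1$. This is bounded, not small.
\item Your integration by parts leaves $\int_M\dprod{\psi_{i,p},\grad_g\eta^2\cdot\kappa_{i,p}}_g\,dv_g$. This would need $\psi_{i,p}\to0$ on the annulus faster than $\norm{\kappa_{i,p}}^{-1}$.
\item Your H\"older suggestion cannot work from the bounds you quote. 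Take $\beta_p=\delta$ and $\abs{\varphi}^2=(\delta m_p)^{-1}$ on a set of measure $m_p\to0$, both vanishing elsewhere. Then $\int\beta_p\abs{\varphi}^2=1$, $\norm{\eta\beta_p\varphi}_{L^{\frac{2n}{n+1}}}\to0$ and $\norm{\eta\beta_p}_{L^n}\to0$ all hold, yet $\int\beta_p^2\abs{\varphi}^2/(\beta_p+\delta)=\frac12$.
\item The Euler--Lagrange suggestion misses the relevant indices. For $i\in I$ the sequence is bounded anyway. For $i\in J$ one has $d_{i,p}\to0$, so $\abs{\varphi_{i,p}}_g^2\leqslant\beta_p^{p-1}/d_{i,p}$ degenerates, and Lemma~\ref{lemma:tilde beta} only controls $d_{i,p}\abs{\varphi_{i,p}}_g^2$. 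For $i<l$ the relation says nothing.
\end{itemize}

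\textbf{Comparison with the paper.} The paper disposes of this $(1-\chi^2)$ contribution by citing Lemma~\ref{lemma:local convergence for orthogonal projections} once more. As you noticed, that lemma controls $D_g\psi_{i,p}$ only in $L^{\frac{2n}{n+1}}_{\mathrm{loc}}$ with $\frac{2n}{n+1}<2$, while this term is quadratic in $D_g\psi_{i,p}$. So the paper does not supply the missing ingredient either.

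\textbf{A repair for what is actually used.} The proof of Theorem~\ref{theorem:the main theorem} only needs an upper bound, since it immediately bounds the denominator by $\int_M\abs{D_g\psi_p}_g^2/\beta_p\,dv_g$. That upper bound follows from your Steps 1--2 alone. Since $0\leqslant\chi\leqslant1$,
\[
\int_M\frac{\abs{D_g\Psi_p}_g^2}{\beta_p+\delta}\,dv_g\leqslant\int_M\frac{\abs{D_g\psi_p}_g^2}{\beta_p+\delta}\,dv_g+o(1)
\]
uniformly over $\Psi_p=\sum_ia_i\Psi_{i,p}$ and $\psi_p=\sum_ia_i\psi_{i,p}$ with $\sum_i\abs{a_i}^2=1$. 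Stated in this one-sided form, your argument is complete. The two-sided statement as posed remains unproved in the unbounded case.
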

\begin{proof}
Note that
\begin{align*}
\int_M \frac{1}{\beta_p + \delta}\dprod{D_g \Psi_{i,p},D_g \Psi_{j,p}}_g dv_g 
&= \int_M \frac{\abs{\grad_g\chi}_g^2}{\beta_p + \delta}\dprod{\psi_{i,p},\psi_{j,p}}_g dv_g\\ 
&+ \int_M\frac{\chi}{\beta_p + \delta}\dprod{\grad_g \chi \cdot \psi_{i,p},D_g\psi_{j,p}}_gdv_g \\
&+ \int_M\frac{\chi}{\beta_p + \delta}\dprod{D_g\psi_{i,p},\grad_g \chi \cdot \psi_{j,p}}_gdv_g \\
&+ \int_M \frac{\chi^2}{\beta_p + \delta}\dprod{D_g\psi_{i,p},D_g \psi_{j,p}}_gdv_g. 
\end{align*}

Since $\beta_p \geqslant 0$ almost everywhere, it follows that $\frac{1}{\beta_p + \delta} \in L^\infty(M)$. This observation implies that, due to Lemma~\ref{lemma:local convergence for orthogonal projections}, the first, the second and the third summand converge to $0$.

We now consider the last one. By Lemma~\ref{lemma:local convergence for orthogonal projections}, we have
\begin{align*}
\int_M {\chi^2}\frac{\dprod{D_g \psi_{i,p}, D_g \psi_{j,p}}_g}{\beta_p + \delta} dv_g 
&= \left(\int_{\bigcup_{x \in A} B(x,\delta)}  + \int_{M \setminus \bigcup_{x \in A} B(x,\delta)}\right)   {\chi^2}\frac{\dprod{D_g \psi_{i,p}, D_g \psi_{j,p}}_g}{\beta_p + \delta} dv_g\\
&= \int_{\bigcup_{x \in A}B(x,\delta)}\frac{\dprod{D_g \psi_{i,p}, D_g \psi_{j,p}}_g}{\beta_p + \delta} dv_g + o(1).
\end{align*}
Moreover, applying Lemma~\ref{lemma:local convergence for orthogonal projections} once again, we get that
\[
\int_{M \setminus \bigcup_{x \in A}B(x,\delta)}\frac{\dprod{D_g \psi_{i,p}, D_g \psi_{j,p}}_g}{\beta_p + \delta} dv_g = o(1).
\]
Thus,
\[
\int_M \frac{\dprod{D_g \Psi_{i,p}, D_g \Psi_{j,p}}_g}{\beta_p + \delta} dv_g  = \int_M \frac{\dprod{D_g \psi_{i,p}, D_g \psi_{j,p}}_g}{\beta_p + \delta} dv_g + o(1). 
\]
\end{proof}

We use now that the metric $g$ is locally conformally flat. Therefore, by Lemma~\ref{lemma:lcf manifods}, we may assume that $g$ is flat around each point of $A$. Moreover, $\delta$ is chosen in such a way that $g = g_{\mathrm{st}}$, where $g_{\mathrm{st}}$ is the standard round metric on $\mathbb{S}^n$, on $B(x,2\delta)$ for all $x \in A$. 

Let's fix $r$ distinct points $y_1,\ldots,y_r$ in $\mathbb{S}^n$ such that $B_{g_{\mathrm{st}}}(y_i,2\delta) \cap B_{g_{\mathrm{st}}}(y_j,2\delta) = \varnothing$ for all $\delta$ small enough and $i \neq j$. Note that there is an isometry $f_i \colon B_g(x_i,2\delta) \to B_{g_{\mathrm{st}}}(y_i,2\delta)$. As it is shown in~\cite{Bourguignon1992}, $f_i$ induces an isomorphism of vector bundles 
\[
T_i \colon \Sigma_g B_g(x_i,2\delta) \to \Sigma_{g_\mathrm{st}} B_{g_\mathrm{st}}(y_i,2\delta),
\]
which is a fiberwise isometry. Moreover, $D_{g_\mathrm{st}} T_i(\phi) = T_i(D_g(\phi))$ for all $\phi \in \Sigma_g(B_g(x_i,2\delta))$. 

We define test spinors $\tilde{\Psi}_{j,p} \overset{\mathrm{def}}{=} \sum\limits_{i = 1}^rT_i\left(\restr{\Psi_{j,p}}{B_g(x_i,2\delta)}\right)$. Note that these spinors are well defined, because $(\Psi_{i,p})_{i,p}$ are supported in $\bigcup\limits_{i = 1}^r B_g(x_i,2\delta)$. We also introduce a function $\theta_{p} \overset{\mathrm{def}}{=} \beta_p \circ f^{-1}$, where $\theta_p$ is extended by zero outside of the image of $f$. 

Consequently,
\[
\int_{\mathbb{S}^n} \dprod{D_{g_\mathrm{st}}\tilde{\Psi}_{i,p},\tilde{\Psi}_{j,p}}_{g_\mathrm{st}} dv_{g_\mathrm{st}} = \int_{M}\dprod{D_g\Psi_{i,p},\Psi_{j,p}}_g dv_g
\]
and
\[
\int_{\mathbb{S}^n} \frac{\dprod{D_{g_\mathrm{st}} \tilde{\Psi}_{i,p}, D_{g_\mathrm{st}} \tilde{\Psi}_{j,p}}_{g_{\mathrm{st}}}}{\theta_p + \delta} dv_{g_\mathrm{st}} = \int_M \frac{\dprod{D_g \Psi_{i,p}, D_g \Psi_{j,p}}_g}{\beta_p + \delta} dv_g.
\]
We are now ready to prove Theorem~\ref{theorem:the main theorem} when $\beta \equiv 0$.

\begin{proof}[Proof of Theorem~\ref{theorem:the main theorem} when $\beta \equiv 0$]
Consider a vector space $W_p \overset{\mathrm{def}}{=} \spanning{\tilde{\Psi}_{1,p},\ldots,\tilde{\Psi}_{k,p}}$. Let $\tilde{\Psi}_p = \sum\limits_{i = 1}^k a_{i,p}\tilde{\Psi}_{i,p}$ be a spinor such that 
that $\sum\limits_{i = 1}^k \abs{a_{i,p}}^2 = 1$. By Lemma~\ref{lemma:7.6} and Lemma~\ref{lemma:7.7}, for $\psi_p = \sum\limits_{i = 1}^k a_i \psi_{i,p}$, we have
\[
\int_{\mathbb{S}^n} \dprod{D_{g_\mathrm{st}}\tilde{\Psi},\tilde{\Psi}}_{g_\mathrm{st}} dv_{g_\mathrm{st}} = \int_M \dprod{D_g \psi_p, \psi_p}_g dv_g + o(1)
\]
and
\[
\int_{\mathbb{S}^n} \frac{\abs{D_{g_\mathrm{st}} \tilde{\Psi}}^2_{g_{\mathrm{st}}}}{\theta_p + \delta} dv_{g_\mathrm{st}} = \int_M \frac{\abs{D_g \psi_p}^2}{\beta_p + \delta}dv_g +  o(1) \leqslant \int_M \frac{\abs{D_g \psi_p}^2}{\beta_p}dv_g +  o(1).
\]
Consequently, 
\[
\min_{\tilde{\Psi} \in W_p \setminus \{\,0\,\}} \frac{\int_{\mathbb{S}^n} \dprod{D_{g_\mathrm{st}}\tilde{\Psi},\tilde{\Psi}}_{g_\mathrm{st}} dv_{g_\mathrm{st}}}{\int_{\mathbb{S}^n} \frac{\abs{D_{g_\mathrm{st}} \tilde{\Psi}}^2_{g_{\mathrm{st}}}}{\theta_p + \delta} dv_{g_\mathrm{st}}} \geqslant \frac{1}{\lambda_k(\beta_p)} + o(1).
\]
Hence, 
\begin{align*}
\Lambda_k(\mathbb{S}^n,[g_\mathrm{st}],\sigma_\mathrm{st})^{-1}
\geqslant \norm{\theta_p + \delta}_{L^n(\mathbb{S}^n)}^{-1} (\lambda_k(\beta_p)^{-1} + o(1)).
\end{align*}
As $\delta$ goes to $0$, we obtain that 
\[
\Lambda_k(\mathbb{S}^n,[g_\mathrm{st}],\sigma_\mathrm{st})^{-1}
\geqslant \norm{\theta_p}_{L^n(\mathbb{S}^n)}^{-1} (\lambda_k(\beta_p)^{-1} + o(1)).
\]
And since $\norm{\theta_p}_{L^n(\mathbb{S}^n)} \leqslant \norm{\beta_p}_{L^n(M)}$, we deduce that, passing to the limit as $p$ goes to $n$, 
\[
\Lambda_k(\mathbb{S}^n,[g_\mathrm{st}],\sigma_\mathrm{st}) \leqslant \Lambda_k(M,[g],\sigma).
\]
However, by Theorem~\ref{theorem:Aubin-type}, we have that the above inequality is in fact an equality. 
\end{proof}

\subsection{The non-null case.}
We now assume that $\beta \not \equiv 0$. Following notations in~\cite{humbert2025extremisingeigenvaluesgjmsoperators}, we let $\lambda_1',\ldots, \lambda_m'$ be the distinct values of $\lambda_1,\ldots, \lambda_k$. For $i \in \{\,1,\ldots,k\,\}$, we define the sets of indices $S_i \overset{\mathrm{def}}{=}\{\, j \in \{\,1,\ldots,k\,\} \mid \lambda_j = \lambda_i' \,\}$. 

We define $V_{i,p} \overset{\mathrm{def}}{=} \spanning{\varphi_{j,p} \mid j \in S_i}$ and $V_p = \bigoplus\limits_{i = 1}^n V_{i,p}$. Hence, $V_p = \spanning{\varphi_{1,p},\ldots, \varphi_{k,p}}$. The bilinear form $B(\beta_p,\cdot,\cdot) = \int_M \beta_p \dprod{\cdot,\cdot}_g dv_g$ defines an Hermitian scalar product on $V_{i,p}$ and $(\varphi_{j,p})_{j \in S_i}$ is an orthonormal basis of $V_{i,p}$ for $B(\beta_p,\cdot,\cdot)$. 

Note that $B(\beta,\cdot,\cdot)$ is an Hermitian form on $V_{i,p}$, that's why we can find a basis $(\phi_{j,p})_{j \in S_i}$ of $V_{i,p}$ such that $B(\beta_p,\phi_{j_1,p},\phi_{j_2,p}) = \delta_{j_1,j_2}$ for all $j_1,j_2 \in S_i$ and $B(\beta,\phi_{j_1,p},\phi_{j_2,p}) = 0$ for all $j_1 \neq j_2 \in S_i$.

Assume that $j \in S_i$. Note that there exist complex numbers $(\alpha_{r,p})_{r \in S_i}$ such that $\sum\limits_{r \in S_i} \abs{\alpha_{r,p}}^2 = 1$ and $\phi_{j,p} = \sum\limits_{r \in S_i} \alpha_{r,p}\varphi_{r,p}$. Therefore,
\begin{equation}\label{equation:phi_i,p}
D_g \phi_{j,p} = \beta_p \sum\limits_{r \in S_i} \lambda_{r,p}\alpha_{r,p}\varphi_{r,p} = \lambda_i' \beta_p \phi_{j,p} + \beta_p \tilde{\phi}_{j,p},
\end{equation}
where $\tilde{\phi}_{j,p} = \sum\limits_{r \in S_i} (\lambda_{r,p} - \lambda_i')\alpha_{r,p}\varphi_{r,p}$. Since $(\varphi_{r,p})_{p}$ is bounded in $W^{1,\frac{2n}{n+1}}$ and $\lambda_{r,p} \to \lambda_i'$ as $p$ goes to $n$, it follows that $\norm{\tilde{\phi}_{j,p}}_{W^{1,\frac{2n}{n+1}}} = o(1)$ and  $\lim\limits_{p \to n} Q(\beta_p,\tilde{\phi}_{j,p}) = 0$, where $Q(\beta_p,\cdot) = \int_M \beta_p \abs{\cdot}^2_g dv_g$. 

Note that $(\phi_{j,p})_p$ is bounded in $W^{1,\frac{2n}{n+1}}$, hence we may assume that $\phi_{j,p} \overset{W^{1,\frac{2n}{n+1}}}{\rightharpoonup} \phi_j$. Thus, passing to the weak limit in~\eqref{equation:phi_i,p}, we deduce that $D_g \phi_j = \lambda_i'\beta\phi_j$, where $j \in S_i$. Moreover for all $j_1 \neq j_2 \in S_i$ we have $B(\beta,\phi_{j_1},\phi_{j_2}) = 0$. Indeed, by Rellich-Kondrashov theorem, $\phi_{j_1,p} \overset{L^\frac{2n}{n+1}}{\to}\phi_{j_1}$ and, by Sobolev embedding theorem, $\phi_{j_2,p} \overset{L^\frac{2n}{n-1}}{\rightharpoonup} \phi_{j_2}$, that's why
\[
0 = \lim_{p \to n}B(\beta,\phi_{j_1,p},\phi_{j_2,p}) = B(\beta,\phi_{j_1},\phi_{j_2}).
\]

Applying the same construction to all $i \in \{\,1,\ldots,m\,\}$, we obtain spinors $\phi_1,\ldots,\phi_k$ such that $D_g \phi_j = \lambda'_i \beta \phi_j$, where $j \in S_i$, and $B(\beta,\phi_{j_1},\phi_{j_2}) = 0$ for $j_1 \neq j_2 \in S_i$. Note that we can not say anything about $B(\beta,\phi_{j_1},\phi_{j_2})$ if $j_1 \in S_{i_1}$ and $j_2 \in S_{i_2}$ with $i_1 \neq i_2$. 

Define a set $\Omega_0 \overset{\mathrm{def}}{=} {\{\,i \in \{\,1,\ldots,k\,\} \mid \beta^\frac{1}{2} \phi_i \not \equiv 0\,\}}$. We demote that its cardinality by $\omega_0$. Note that $B(\beta,\cdot,\cdot)$ defines an Hermitian product on $\spanning{\phi_j \mid j \in \Omega_0}$. Moreover, $(\phi_j)_j \in \Omega_0$ is an orthonormal family with respect to $B(\beta,\cdot,\cdot)$. This observation implies that $\dim \spanning{\phi_j \mid j \in \Omega_0} = \omega_0$. In particular, 
\[
\lambda_{\omega_0}(\beta)^{-1} \geqslant \min_{\phi \in \spanning{\phi_j \mid j \in \Omega_0} \setminus \{\,0\,\}} \frac{\int_M \dprod{D_g \phi,\phi}_g dv_g}{\int_M \frac{\abs{D_g \phi}_g^2}{\beta}dv_g} \geqslant \lambda_k^{-1}.
\]
Thus, 
\begin{equation}\label{eq:Lambda(M)}
\Lambda_{\omega_0}(M,[g],\sigma) \leqslant \lambda_{\omega_0}(\beta)\norm{\beta}_{L^n(M)} \leqslant \lambda_k \norm{\beta}_{L^n(M)} = \Lambda_k(M,[g],\sigma) \norm{\beta}_{L^n(M)}.
\end{equation}

We now consider $j \in \{\,1,\ldots,k\,\} \setminus \Omega_0$. Our first claim is that $\phi_j = 0$. Indeed, by definition of $\Omega_0$, $\beta \phi_j = 0$ and therefore $D_g \phi_j = 0$. By the unique continuation property, either $\phi_j = 0$ or $\abs{\phi_j}_g > 0$ almost everywhere.if the latter occurs, it would follow that $\beta \equiv 0$, which is a contradiction. 

In particular, Lemma~\ref{lemma:final version of local convergence} implies that $\phi_{j,p}$ converges strongly to $0$ in $W^{1,\frac{2n}{n+1}}_\mathrm{loc}(\Sigma_g (M \setminus A))$. Thus, using exactly the same reasoning as for the sequences $(\psi_{i,p})_{p}$ in the null-case, we can construct  $(k-\omega_o)$-dimensional vector subspaces $W_p$ of spinors on $\mathbb{S}^n$ such that   
\[
\min_{\tilde{\Psi} \in W_p \setminus \{\,0\,\}} \frac{\int_{\mathbb{S}^n} \dprod{D_{g_\mathrm{st}}\tilde{\Psi},\tilde{\Psi}}_{g_\mathrm{st}} dv_{g_\mathrm{st}}}{\int_{\mathbb{S}^n} \frac{\abs{D_{g_\mathrm{st}} \tilde{\Psi}}^2_{g_{\mathrm{st}}}}{\theta_p + \delta} dv_{g_\mathrm{st}}} \geqslant \frac{1}{\lambda_k(\beta_p)} + o(1).
\]
However, if we define $\theta_p$ as in the null-case, we obtain that
\[
\int_{\mathbb{S}^n}\theta_p^n dv_{g_\mathrm{st}} = \int_{\bigcup_{x \in A}B(x,2\delta)} \beta_p^n dv_g = \int_M \beta_p^n dv_g -  \int_{M \setminus \bigcup_{x \in A}B(x,2\delta)} \beta_p^n dv_g. 
\]
As $p$ goes to $n$, the first integral tends to $1$, while,  by Lemma~\ref{lemma:final version of local convergence},  the second one tends to $\int_{M \setminus \bigcup_{x \in A}B(x,2\delta)} \beta^n dv_g$. Consequently, for all $p$ we have 
\[
\Lambda_{k - \omega_o}(\mathbb{S}^n,[g_\mathrm{st}],\sigma_\mathrm{st})^{-1} \geqslant \left(1 - \int_{M \setminus \bigcup_{x \in A}B(x,2\delta)} \beta^n dv_g + o(1)\right)^{-\frac{1}{n}}\left(\frac{1}{\lambda_k(\beta)} + o(1)\right),
\]
and hence
\begin{equation}\label{eq:Lambda(S^n)}
\Lambda_{k - \omega_0}(\mathbb{S}^n,[g_\mathrm{st}],\sigma_\mathrm{st}) \leqslant \left(1 - \int_{M} \beta^n dv_g \right)^{\frac{1}{n}} \Lambda_k(M,[g],\sigma).
\end{equation}

We are now ready to finish the proof of Theorem~\ref{theorem:the main theorem}.

\begin{proof}[Proof of Theorem~\ref{theorem:the main theorem} when $\beta \not\equiv 0$] Combining inequalities~\eqref{eq:Lambda(M)} and~\eqref{eq:Lambda(S^n)}, we get that
\begin{equation}\label{eq:Lambda(M) + Lambda(S^n)}
\Lambda_{\omega_0}(M,[g],\sigma)^n + \Lambda_{k - \omega_o}(\mathbb{S}^n,[g_\mathrm{st}],\sigma_\mathrm{st})^n \leqslant \Lambda_k(M,[g],\sigma)^n.    
\end{equation}
Moreover, Theorem~\ref{theorem:Aubin-type} implies that the above inequality is an equality.

If $\omega_0 = 0$, then we are done, so we assume that it is not the case. It requires to show that if $\omega_0 \neq 0$, then $\Lambda_{\omega_o}(M,[g],\sigma)$. 

Since~\eqref{eq:Lambda(M) + Lambda(S^n)} is an equality, it follows that both~\eqref{eq:Lambda(M)} and~\eqref{eq:Lambda(M) + Lambda(S^n)}, but then~\eqref{eq:Lambda(M)} implies that $\Lambda_{\omega_0}(M,[g],\sigma) = \lambda_{\omega_0}(\beta)\norm{\beta}_{L^n(M)}$, i.e., $\Lambda_{\omega_0}(M,[g],\sigma)$ is attained.
\end{proof}

Moreover, if $(M,g)$ is conformally diffeomorphic to the round sphere only the first case can take place. The proof proceeds identically to that of~\cite{humbert2025extremisingeigenvaluesgjmsoperators}.

\begin{lemma}\label{lemma:main theorem for S^n}
If $(M,g)$ is conformally diffeomorphic to $(\mathbb{S}^n,g_\mathrm{st})$, then $\beta \not \equiv 0$ and $\omega_0 > 0$, i.e., if $\Lambda_k(\mathbb{S}^n,[g_\mathrm{st}],\sigma_\mathrm{st})$ is not attained, then $\exists \omega_0 \in \{\,1,\ldots,k-1\,\}$ such that $\Lambda_{\omega_0}(\mathbb{S}^n,[g_\mathrm{st}],\sigma_\mathrm{st})$ is attained and
\[
\Lambda_k(\mathbb{S}^n,[g_\mathrm{st}],\sigma_\mathrm{st})^n = \Lambda_{\omega_0}(\mathbb{S}^n,[g_\mathrm{st}],\sigma_\mathrm{st})^n + \Lambda_{k - \omega_0}(\mathbb{S}^n,[g_\mathrm{st}],\sigma_\mathrm{st})^n.
\]
\end{lemma}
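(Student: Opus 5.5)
The plan is to run the proof of Theorem~\ref{theorem:the main theorem} with $M=\mathbb{S}^n$ and show that the two degenerate alternatives cannot occur. The tool is the conformal group of the sphere, used to renormalize the minimizing sequence $(\beta_p)$ before passing to the limit, as in~\cite{humbert2025extremisingeigenvaluesgjmsoperators}. Since $\Lambda_k^p$ and the Euler--Lagrange structure of Lemma~\ref{lemma:Euler-Lagrange for lambda_k} are only conformally invariant at the critical exponent, I will work with the almost-minimizers $\beta_p$ and the measures $\beta_p^n\,dv_{g_\mathrm{st}}$. By the conformal covariance of Lemma~\ref{lemma:conformal-Dirac-transf}, a Möbius transformation $\Phi$ acts by $\beta\mapsto (\beta\circ\Phi)\,\abs{d\Phi}$ on conformal factors and by the corresponding weighted pullback on spinors. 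This action preserves $\norm{\beta}_{L^n}$, the quadratic forms $\int\dprod{D\varphi,\varphi}$ and $\int\beta^{-1}\abs{D\varphi}^2$, and hence $\bar\lambda_k$.

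First, I choose Möbius maps $\Phi_p$ so that the pushed-forward measures $\mu_p=\Phi_p^*(\beta_p^n dv)$ are ``balanced''. Concretely, I fix the center of mass, $\int_{\mathbb{S}^n} x\,d\mu_p=0$, which the standard Hersch argument allows, or alternatively I require that no cap of fixed radius carries more than half the mass. With $\tilde\beta_p$ replacing $\beta_p$, everything proved before Lemma~\ref{lemma:final version of local convergence} goes through, because it only used $\norm{\tilde\beta_p}_{L^n}\to1$, the eigen-equation and the pointwise relation~\eqref{equation:beta_p^p-1} up to errors. The concentration set $A$ is again finite: each point carries mass at least $2^{-n}(\Lambda_1(\mathbb{S}^n)/\Lambda_k)^{n/4}$.

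Second, I rule out $\beta\equiv0$. In that case the limiting measure $\lim \mu_p$ is a sum of Dirac masses at the points of $A$, with total mass $1$. A single atom contradicts the balancing, since the barycenter would then lie on the sphere rather than at $0$. If there are at least two atoms, I localize on disjoint balls as in the null-case argument of Theorem~\ref{theorem:the main theorem}. The $k$ test spinors $\Psi_{i,p}$ then split into groups of sizes $k_1+\dots+k_s=k$ with each $k_j<k$, each group living on its own bubble. This produces the Aubin-type lower bound $\Lambda_k^n\ge\sum_j\Lambda_{k_j}(\mathbb{S}^n)^n$. Combined with Theorem~\ref{theorem:Aubin-type} this becomes an equality and reduces the situation to the case $\beta\not\equiv0$, or to a splitting in which one factor is attained. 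The alternative I would try first is to pick $\Phi_p$ so that the largest atom is spread out, which directly forces $\beta\not\equiv0$.

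Third, once $\beta\not\equiv0$, the non-null case gives the identity~\eqref{eq:Lambda(M) + Lambda(S^n)} with $M=\mathbb{S}^n$. If $\omega_0=k$, then~\eqref{eq:Lambda(S^n)} forces $\norm{\beta}_{L^n}=1$, so $\beta_p\to\beta$ strongly and $\Lambda_k$ is attained, contradicting the hypothesis. If $\omega_0=0$, the limit configuration consists only of bubbles carrying the full index $k$. This is the balancing-violating situation excluded in the previous step, unless the bubbles are several, in which case we are back to the splitting argument. Hence $1\le\omega_0\le k-1$, and~\eqref{eq:Lambda(M)} together with equality in~\eqref{eq:Lambda(M) + Lambda(S^n)} shows that $\Lambda_{\omega_0}(\mathbb{S}^n)$ is attained.

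I expect the main obstacle to be the renormalization step. One must check that $\tilde\beta_p$, which is a Möbius pullback of a $p$-subcritical minimizer with $p>n$, still satisfies the estimates (Lemma~\ref{lemma:tilde beta} and the a.e. domination used to obtain strong $L^{q_1}$ convergence). This is delicate because the $L^p$ norm is not Möbius-invariant, so a blow-up factor $\abs{d\Phi_p}^{1-n/p}$ appears. My plan is to choose $\Phi_p$ with $\sup\abs{d\Phi_p}^{p-n}\to1$, which should be possible by letting $p\to n$ quickly relative to the concentration scale along a diagonal subsequence.
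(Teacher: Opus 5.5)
Your strategy is the natural one: renormalize $\beta_p$ by Möbius maps, then show that the null case cannot survive. The paper gives no argument of its own beyond the reference to \cite{humbert2025extremisingeigenvaluesgjmsoperators}. However, the step you flag as the main obstacle is not resolved by what you propose.

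After pulling back, \eqref{equation:beta_p^p-1} becomes $\tilde\beta_p^{\,p-1}=\abs{d\Phi_p}^{p-n}\sum_i d_{i,p}\abs{\tilde\varphi_{i,p}}_g^2$. Everything you want to reuse needs $\abs{d\Phi_p}^{p-n}\to1$ uniformly: the boundedness of $\varphi_{i,p}$ for $i\in I$, Lemma~\ref{lemma:tilde beta}, the a.e.\ domination giving $L^{q_1}$-convergence, and the bound on $\norm{\tilde\beta_p}_{L^p}$. ``Letting $p\to n$ quickly relative to the concentration scale along a diagonal subsequence'' cannot achieve this. $\beta_p$ is \emph{the} minimizer at exponent $p$, so its concentration scale $r_p$, and hence the dilation of $\Phi_p$, is a function of $p$. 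Passing to subsequences does not decouple them.

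What is needed is an a priori estimate, and it does hold. Take $\norm{\beta_p}_{L^p}=1$ and $\int\beta_p^n\,dv_g\to1$; the latter follows from $\Lambda^p_k\to\Lambda_k$. Splitting $\int\beta_p^p=\int\beta_p^{p-n}\beta_p^n$ over level sets gives, for $s\geqslant1>\eta>0$,
\[
\bigl(s^{p-n}-\eta^{p-n}\bigr)\int_{\{\beta_p\geqslant s\}}\beta_p^n\,dv_g\leqslant 1-\eta^{p-n}\Bigl(\int_{\mathbb{S}^n}\beta_p^n\,dv_g-\eta^n\omega_n\Bigr).
\]
The right-hand side is $o(1)$ for $\eta=\eta_p\to0$ chosen with $\eta_p^{p-n}\to1$. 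Suppose $\Phi_p$ blows up a cap of radius $r_p$ carrying a fixed fraction of $\beta_p^n\,dv_g$; this is true for balancing maps. Then $\beta_p\gtrsim r_p^{-1}$ on a set of fixed $\beta_p^n$-mass, so $r_p^{-(p-n)}\to1$, i.e.\ $\sup\abs{d\Phi_p}^{\pm(p-n)}\to1$. Without this estimate, the renormalized sequence is not known to satisfy the hypotheses used in Section~\ref{section:Minimization}.

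The second gap is the null case with several atoms, which balancing does not exclude: two antipodal atoms of mass $\frac12$ are balanced. Your splitting is not correct as stated, because the $\Psi_{i,p}$ are not individually localized at one atom. Two things are needed, and neither appears in your proposal:
\begin{enumerate}
\item An index count. The forms $\int\dprod{D\Psi,\Psi}$ and $\int(\theta_p+\delta)^{-1}\abs{D\Psi}^2$ split over the disjoint balls. Additivity of the positive index of $a-\mu b$, with $\mu$ slightly below $\lambda_k(\beta_p)^{-1}$, then yields integers $N_x$ with $\sum_x N_x\geqslant k$ and $\Lambda_{N_x}(\mathbb{S}^n)^n\leqslant m_x\Lambda_k^n+o(1)$. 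Since $m_x<1$, this gives $N_x<k$.
\item An induction on $k$, to extract a two-term splitting with an attained factor. The base case is that $\Lambda_1(\mathbb{S}^n)$ is attained, and Theorem~\ref{theorem:Aubin-type} supplies the reverse inequalities.
\end{enumerate}

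It is simpler to avoid this case altogether by normalizing with a concentration function. Pick $x_p,r_p$ with $\int_{B(x_p,r_p)}\beta_p^n=\sup_x\int_{B(x,r_p)}\beta_p^n=\theta$, where $\theta$ is below the threshold defining $A$. Let $\Phi_p$ send $B(x_p,r_p)$ to a fixed cap; the estimate above applies, since that ball carries mass $\theta$. Then $A$ can only contain the image of the far point, the fixed cap retains mass $\theta$ in the limit, and so $\beta\not\equiv0$. Blowing up ``the largest atom'' at its own scale is not enough, since that atom may itself split further.

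Finally, your third step needs no bubble discussion. For $M=\mathbb{S}^n$ and $\beta\not\equiv0$, \eqref{eq:Lambda(S^n)} directly excludes $\omega_0=0$. When $\omega_0=k$, it is \eqref{eq:Lambda(M)}, not \eqref{eq:Lambda(S^n)}, that forces $\norm{\beta}_{L^n}=1$ and hence attainment.
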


\subsection{Existence of a minimizer.}
A direct application of Theorem~\ref{theorem:the main theorem} is the existence of a minimizer provided $\Lambda_k$ is small enough. The proof follows exactly the same argument as that of Theorem 1.2 in~\cite{humbert2025extremisingeigenvaluesgjmsoperators}. We give it here for the sake of completeness.

\begin{theorem}\label{theorem:main existence theorem}
Let $(M,[g],\sigma)$ be a locally conformally flat Riemannian spin manifold, then $\Lambda_k(M,[g],\sigma)$ is attained if 
\begin{equation}\label{ineq:main inequality}
\Lambda_k(M,[g],\sigma) < \inf_{l_0 + \ldots + l_r = k}
\left(\Lambda_{l_0}(M,[g],\sigma)^n + \Lambda_{l_1}(\mathbb{S}^n,[g_\mathrm{st}],\sigma_\mathrm{st})^n + \ldots + \Lambda_{l_r}(\mathbb{S}^n,[g_\mathrm{st}],\sigma_\mathrm{st})^n \right)^\frac{1}{n},    
\end{equation}
where 
\begin{itemize}
    \item either $l_0 = 0$ and $\Lambda_{l_0}(M,[g],\sigma) = 0$ or $\Lambda_{l_0}(M,[g],\sigma)$ is attained and $l_0 < k$,
    \item for all $i >0$, $l_i > 0$ and $\Lambda_{l_i}(\mathbb{S}^n,[g_\mathrm{st}],\sigma_\mathrm{st})$ is attained.
\end{itemize}
\end{theorem}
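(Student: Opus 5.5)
We argue by contradiction, reducing everything to Theorem~\ref{theorem:the main theorem}. Suppose that \eqref{ineq:main inequality} holds but $\Lambda_k(M,[g],\sigma)$ is not attained. Theorem~\ref{theorem:the main theorem} then leaves only two possible degenerations, and we show that each one produces an admissible decomposition $l_0+\ldots+l_r=k$ that makes the right-hand side of \eqref{ineq:main inequality} at most $\Lambda_k(M,[g],\sigma)$. This contradicts the strict inequality. The proof is by strong induction on $k$, so that all the conformal invariants of the sphere appearing with smaller index are already under control.

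\textbf{First case.} The first alternative of Theorem~\ref{theorem:the main theorem} gives
\[
\Lambda_k(M)^n=\Lambda_{l_0}(M)^n+\Lambda_{l_1}(\mathbb{S}^n)^n,
\]
where $1\le l_0,l_1\le k-1$ and $\Lambda_{l_0}(M)$ is attained. The index $l_0$ is admissible. The difficulty is that $\Lambda_{l_1}(\mathbb{S}^n)$ need not be attained. We therefore decompose it recursively: if it is not attained, Lemma~\ref{lemma:main theorem for S^n} yields
\[
\Lambda_{l_1}(\mathbb{S}^n)^n=\Lambda_{a}(\mathbb{S}^n)^n+\Lambda_{l_1-a}(\mathbb{S}^n)^n,
\]
with $1\le a\le l_1-1$ and $\Lambda_a(\mathbb{S}^n)$ attained. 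We then iterate on $\Lambda_{l_1-a}(\mathbb{S}^n)$. The index strictly decreases at each step, and $\Lambda_1(\mathbb{S}^n)$ is attained by the round metric (Ammann et al.). The process therefore terminates after finitely many steps. The outcome is
\[
\Lambda_{l_1}(\mathbb{S}^n)^n=\sum_{i\ge1}\Lambda_{m_i}(\mathbb{S}^n)^n,
\]
where each $\Lambda_{m_i}(\mathbb{S}^n)$ is attained and $\sum_i m_i=l_1$. Combining this with the index $l_0$ gives an admissible decomposition with $l_0+\sum_i m_i=k$ whose associated sum equals $\Lambda_k(M)^n$. This contradicts \eqref{ineq:main inequality}.

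\textbf{Second case.} The second alternative gives $\Lambda_k(M)=\Lambda_k(\mathbb{S}^n)$. We take $l_0=0$, which is admissible since $\Lambda_0(M)=0$ by convention. If $\Lambda_k(\mathbb{S}^n)$ is attained, the choice $r=1$, $l_1=k$ gives equality in \eqref{ineq:main inequality}, a contradiction. If it is not attained, we apply the same recursive decomposition via Lemma~\ref{lemma:main theorem for S^n} to $\Lambda_k(\mathbb{S}^n)$. This writes $\Lambda_k(M)^n$ as a finite sum of attained $\Lambda_{m_i}(\mathbb{S}^n)^n$ with $\sum_i m_i=k$, again contradicting \eqref{ineq:main inequality}.

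\textbf{Main obstacle.} The delicate point is the recursive decomposition of the sphere invariants. It requires three ingredients. First, Lemma~\ref{lemma:main theorem for S^n} must exclude the null case on the sphere, so that the index $\omega_0$ it produces is at least $1$. Second, the base case $k=1$ must hold, namely that $\Lambda_1(\mathbb{S}^n)$ is attained; note that Lemma~\ref{lemma:main theorem for S^n} itself forces this, since for $k=1$ there is no admissible $\omega_0\in\{1,\dots,0\}$. Third, each step of the decomposition must keep the indices positive and let them add up exactly. Once these are checked, the contradiction in both cases is immediate.
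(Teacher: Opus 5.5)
Your proof is correct and follows the paper's argument. You assume non-attainment, invoke Theorem~\ref{theorem:the main theorem}, and recursively split each non-attained sphere invariant using the sphere case of that theorem; the recursion terminates because the indices strictly decrease and $\Lambda_1(\mathbb{S}^n)$ is attained. The only differences are cosmetic: you build a fully attained decomposition first and then contradict the strict inequality once, whereas the paper uses the inequality at each step to show the remainder is not attained. Also, the strong induction on $k$ you announce is never actually used and can be dropped.
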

\begin{remark}
Theorem~\ref{theorem:Aubin-type} implies that $\Lambda_k(M,[g],\sigma)$ is bounded above by the right-hand side of inequality~\eqref{ineq:main inequality}.
\end{remark}
\begin{proof}
Assume that $\Lambda_k(M,[g],\sigma)$ is not attained. Then, by Theorem~\ref{theorem:the main theorem}, we have that either 
$\Lambda_k(M,[g],\sigma)^n = \Lambda_{\omega_0}(M,[g],\sigma)^n + \Lambda_{k - \omega_0}(\mathbb{S}^n,[g_{\mathrm{st}}],\sigma_{\mathrm{st}})^n$,
where $\Lambda_{\omega_0}(M,[g],\sigma)$ is attained and $\omega_0 > 0$, or 
$\Lambda_k(M,[g],\sigma) = \Lambda_k(\mathbb{S}^n,[g_{\mathrm{st}}],\sigma_{\mathrm{st}})$.

Assume that the first possibility takes place. From inequality~\eqref{ineq:main inequality} it follows that $\Lambda_{k - \omega_0}(\mathbb{S}^n,[g_{\mathrm{st}}],\sigma_{\mathrm{st}})^n$ is not attained. Therefore, by Lemma~\ref{lemma:main theorem for S^n}, there exists $\omega_1 > 0$ such that
\[
\Lambda_{k - \omega_0}(\mathbb{S}^n,[g_{\mathrm{st}}],\sigma_{\mathrm{st}})^n = \Lambda_{\omega_1}(\mathbb{S}^n,[g_{\mathrm{st}}],\sigma_{\mathrm{st}})^n + \Lambda_{k - \omega_0 - \omega_1}(\mathbb{S}^n,[g_{\mathrm{st}}],\sigma_{\mathrm{st}})^n,
\]
where $\Lambda_{\omega_1}(\mathbb{S}^n,[g_{\mathrm{st}}],\sigma_{\mathrm{st}})$ is attained. 
Consequently,
\[
\Lambda_k(M,[g],\sigma)^n = \Lambda_{\omega_0}(M,[g],\sigma)^n + \Lambda_{\omega_1}(\mathbb{S}^n,[g_{\mathrm{st}}],\sigma_{\mathrm{st}})^n + \Lambda_{k - \omega_0 - \omega_1}(\mathbb{S}^n,[g_{\mathrm{st}}],\sigma_{\mathrm{st}})^n. 
\]
Once again, inequality~\eqref{ineq:main inequality} implies that $\Lambda_{k - \omega_0 - \omega_1}(\mathbb{S}^n,[g_{\mathrm{st}}],\sigma_{\mathrm{st}})$ is not attained. We now repeat this process for $\Lambda_{k - \omega_0 - \omega_1}(\mathbb{S}^n,[g_{\mathrm{st}}],\sigma_{\mathrm{st}})$.  After $m$ iterations, we obtain the following equality:
\[
\Lambda_k(M,[g],\sigma)^n = \Lambda_{\omega_0}(M,[g],\sigma)^n 
+ \sum_{j = 1}^m \Lambda_{\omega_j}(\mathbb{S}^n,[g_{\mathrm{st}}],\sigma_{\mathrm{st}})^n 
+ \Lambda_{k - \omega_0 - \sum_{j = 1}^m \omega_j}(\mathbb{S}^n,[g_{\mathrm{st}}],\sigma_{\mathrm{st}})^n.
\]
Since $\omega_0 + \omega_1 + \ldots + \omega_m \geqslant m +1$, it follows that we can repeat this process only finitely many times. However, note that $\Lambda_1(\mathbb{S}^n,[g_\mathrm{st}],\sigma_\mathrm{st})$ is attained. Thus, if $\Lambda_{k - \omega_0 - \sum_{j = 1}^m \omega_j}(\mathbb{S}^n,[g_{\mathrm{st}}],\sigma_{\mathrm{st}})$ is not attained, then $k - \omega_0 - \sum_{j = 1}^m \omega_j > 1$ and we can do one more step. This leads to a contradiction.

If the second case occurs, then inequality~\eqref{ineq:main inequality} implies that $\Lambda_k(\mathbb{S}^n,[g_\mathrm{st}],\sigma_\mathrm{st})$ is not attained. Following the same line of reasoning as above, we reach a contradiction.
\end{proof}

Note that Theorem~\ref{main theorem for surfaces} is a particular case of Theorem~\ref{theorem:main existence theorem}. It only rests to prove the estimates for nodal sets. It will be done in the upcoming section. 

\section{Applications.}\label{section:applications}

In this section we give two applications of the techniques that we have developed throughout the article.

\subsection{Conformal eigenvalues of a Riemann surface.}
 in two-dimensional case we have that 
\[
\Lambda_{2k - 1}(M,[g],\sigma) = \Lambda_{2k}(M,[g],\sigma).
\]
Our result in this section is that the sequence $(\Lambda_{2k}(M,[g],\sigma)_{k \geqslant 1}$ is strictly increasing.

First, we need to prove the following lemma which is similar to Lemma $8.1$ in~\cite{humbert2025extremisingeigenvaluesgjmsoperators}.

\begin{lemma}\label{lemma:linear algebra}
Let $(M,g,\sigma)$ be a two-dimensional closed Riemannian spin manifold. Let $\beta$ be a function on $M$. If there exists a finite-dimensional vector space $X$ of sections of $\Sigma_g M$ such that for any subspace $Y \subset X$ of codimension $2$ there exist spinors $\phi_1,\ldots,\phi_r \in Y$ such that
\[
\beta = \abs{\phi_1}_g^2 + \ldots + \abs{\phi_r}_g^2,
\]
then $\beta \equiv 0$.
\end{lemma}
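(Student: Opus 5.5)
The plan is to exploit the fact that, pointwise, the map $\phi \mapsto \abs{\phi}_g^2$ is a nonnegative quadratic form on the fibre $\Sigma_{g,x}M \cong \mathbb{C}^2$. In dimension two the spinor bundle has complex rank $2$. Fix any point $x \in M$ and consider the evaluation map $\mathrm{ev}_x \colon X \to \Sigma_{g,x}M$, $\phi \mapsto \phi(x)$. Its kernel $Y_x \overset{\mathrm{def}}{=} \ker \mathrm{ev}_x$ has codimension at most $2$ in $X$, since the target has complex dimension $2$.

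If the codimension is strictly less than $2$, we enlarge: pick any subspace $Y \subset Y_x$ of codimension exactly $2$ in $X$. This is possible as long as $\dim X \geqslant 2$. If $\dim X < 2$, then no codimension-$2$ subspace exists and the hypothesis is vacuous; I would treat that degenerate case by a convention, or simply note that the lemma is intended for $\dim X \geqslant 2$. Since codimension-$2$ subspaces of $Y_x$ are codimension $\geqslant 2$ in $X$, I must be careful to choose $Y$ of codimension exactly $2$ in $X$ and contained in $Y_x$. This works because $\operatorname{codim}_X Y_x \leqslant 2 \leqslant \dim X$.

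Now apply the hypothesis to this $Y$. We obtain $\phi_1,\ldots,\phi_r \in Y$ with $\beta = \sum_j \abs{\phi_j}_g^2$. Every $\phi_j \in Y \subset \ker \mathrm{ev}_x$ vanishes at $x$, so $\beta(x) = 0$. Since $x$ is arbitrary, $\beta \equiv 0$.

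The only genuine subtlety is regularity and pointwise meaning. The identity $\beta = \sum \abs{\phi_j}^2$ must hold at the specific point $x$, not merely almost everywhere. So I would assume, as in the intended application via Lemma~\ref{lemma:Euler-Lagrange for lambda_k} and Remark~\ref{remark:regularity}, that the sections in $X$ are continuous and $\beta$ is continuous, so that the equality holds everywhere.

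If only almost-everywhere equality is available, I would instead argue on a full-measure set where all identities hold simultaneously. There are countably many choices if we restrict $x$ to a countable dense set, which suffices by continuity of the right-hand sides once $X$ consists of continuous sections. I expect this bookkeeping about pointwise evaluation to be the only real obstacle; the linear algebra itself is immediate from $\operatorname{rank}_{\mathbb{C}} \Sigma_g M = 2$.
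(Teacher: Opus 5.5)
Your proof is correct and is essentially the paper's argument: evaluate at $x$, use $\dim_{\mathbb{C}}\Sigma_xM = 2$ to find a codimension-$2$ subspace $Y \subset \ker \mathrm{ev}_x$, and conclude $\beta(x)=0$ for every $x$. You are also right that the existence of such a $Y$ requires $\dim X \geqslant 2$, which the paper leaves implicit. This costs nothing in the application, where $\dim X = 2k+3-i(k) \geqslant 3$.
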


\begin{proof}
Fix $x \in M$. Then we can consider a map $\mathrm{ev}_x \colon X \to \Sigma_xM$ that assigns to each spinor its value at $x$. Since $\dim M = 2$, it follows that $\dim_\mathbb{C} \Sigma_x M = 2$, it follows that $\dim_\mathbb{C} \ker \mathrm{ev}_x \geqslant \dim_\mathbb{C} X - 2$. Thus, there exists a vector subspace $Y \subset \ker \mathrm{ev}_x$ such that $\dim_\mathbb{C} Y = \dim_\mathbb{C}X - 2$. By assumption, there exist spinors $\phi_1,\ldots,\phi_r \in Y$ such that $\beta = \abs{\phi_1}_g^2 + \ldots + \abs{\phi_r}_g^2$,
but note that, by construction,
\[
\beta(x) = \abs{\phi_1(x)}_g^2 + \ldots + \abs{\phi_r(x)}_g^2 = 0.
\]
Since $x \in M$ was arbitrary, it follows that $\beta \equiv 0$.
\end{proof}

This observation permits us to show that $\Lambda_{2k}(M,[g],\sigma) < \Lambda_{2k + 2}(M,[g],\sigma)$.

\begin{lemma}\label{lemma:strict monotonicity}
Let $(M,g,\sigma)$ be a two-dimensional closed Riemannian spin manifold. Assume that $\Lambda_{2k}(M,[g],\sigma)$ is attained at $\beta$, then $\lambda_{2k}(\beta) < \lambda_{2k + 2}(\beta)$.
\end{lemma}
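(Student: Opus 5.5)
We argue by contradiction, following Lemma 8.1 of~\cite{humbert2025extremisingeigenvaluesgjmsoperators} and using Lemma~\ref{lemma:linear algebra}. So suppose $\Lambda_{2k}(M,[g],\sigma)$ is attained at $\beta$ with $\lambda_{2k}(\beta) = \lambda_{2k+2}(\beta)$. After rescaling, $\norm{\beta}_{L^2(M)}=1$ and $\lambda_{2k}(\beta) = \Lambda_{2k}(M,[g],\sigma)$. Since $\beta$ minimizes $\bar\lambda_{2k} = \bar\lambda^2_{2k}$ (here $p=n=2$), Lemma~\ref{lemma:Euler-Lagrange for lambda_k} applies. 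The integer $k$ in that lemma is replaced by $2k$, and $i(2k)\leqslant 2k$.

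Set $X \overset{\mathrm{def}}{=} E_{2k}(\beta)$, which is finite-dimensional. Because $\lambda_{2k}=\lambda_{2k+1}=\lambda_{2k+2}$, we have $I(2k)\geqslant 2k+2$, hence
\[
\dim X = I(2k)-i(2k)+1 \geqslant (2k-i(2k)+1)+2 .
\]
Thus every complex subspace $Y\subset X$ of codimension $2$ has dimension at least $2k-i(2k)+1$. Choose any $V\subset Y$ with $V \in \mathcal{G}_{2k-i(2k)+1}(E_{2k}(\beta))$. Lemma~\ref{lemma:Euler-Lagrange for lambda_k} applied to $V$ gives a $Q(\beta,\cdot)$-orthonormal family $\varphi_r,\ldots,\varphi_{2k}\in V\subset Y$ and weights $d_i>0$ with $\beta=\sum_i d_i\abs{\varphi_i}_g^2$ almost everywhere. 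Setting $\phi_i = \sqrt{d_i}\,\varphi_i \in Y$, this is exactly the hypothesis of Lemma~\ref{lemma:linear algebra}.

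That lemma is pointwise, so we need the identity at every point, not just almost everywhere. The eigenspinors in $X$ solve $D_g\varphi = \lambda\beta\varphi$ with $\beta\in L^2$, so they are in $L^p$ for all $p$ by Lemma~\ref{lemma:regularity}. By the bootstrap of Remark~\ref{remark:regularity}, $\beta$ is smooth when $p=n=2$, so all elements of $X$ are smooth. Hence both sides of $\beta=\sum\abs{\phi_i}^2$ are continuous and the identity holds everywhere.

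Lemma~\ref{lemma:linear algebra} then gives $\beta\equiv 0$, contradicting $\norm{\beta}_{L^2}=1$. Therefore $\lambda_{2k}(\beta)<\lambda_{2k+2}(\beta)$.

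The main point to check is the dimension count. The Euler–Lagrange lemma only produces the decomposition inside some subspace of dimension $2k-i(2k)+1$, while Lemma~\ref{lemma:linear algebra} needs it inside \emph{every} codimension-$2$ subspace of $X$. These match only because the assumed equality $\lambda_{2k}=\lambda_{2k+2}$ adds at least two dimensions to $E_{2k}(\beta)$ beyond index $2k$. This is also precisely why the argument gives strict monotonicity in steps of $2$, in line with $\dim_\mathbb{C}\Sigma_xM=2$. A secondary point is that the Euler–Lagrange lemma is stated for local minima of $\bar\lambda^p_k$ over $L^p_{\geqslant0}$. This applies here because a minimizer of $\Lambda_{2k}$ over the generalized class is in particular a global minimizer of $\bar\lambda^2_{2k}$, by the proposition identifying $\Lambda_k$ with the infimum over generalized conformal factors.
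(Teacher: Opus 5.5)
Your proof is correct and follows the paper's argument. You assume $\lambda_{2k}(\beta)=\lambda_{2k+2}(\beta)$, apply Lemma~\ref{lemma:Euler-Lagrange for lambda_k} inside every codimension-$2$ subspace of a space of eigenspinors for $\lambda_{2k}(\beta)$, upgrade the a.e.\ identity to a pointwise one by the regularity bootstrap, and conclude $\beta\equiv 0$ from Lemma~\ref{lemma:linear algebra}; this is exactly the paper's route. The differences are cosmetic. The paper takes $X=\spanning{\varphi_{i(k)},\ldots,\varphi_{2k+2}}$ of dimension exactly $2k+3-i(k)$, which makes finite-dimensionality automatic, whereas you take $X=E_{2k}(\beta)$ and choose $V\subset Y$. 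You also keep track of the weights $d_i$ and of why the Euler--Lagrange lemma applies to a minimizer, both of which the paper leaves implicit.
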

\begin{proof}
We argue by contradiction, i.e., we assume that $\lambda_{2k}(\beta) = \lambda_{2k + 2}(\beta)$.

Without loss of generality, we may assume that $\norm{\beta}_{L^n(M)} = 1$. Consequently, $\lambda_{2k}(\beta) = \Lambda_{2k}(M,[g],\sigma)$. 

Recall that $i(k) = \min \{\,r \geqslant 1 \colon \lambda_r(\beta) = \lambda_{2k}(\beta)\,\}$. Let $\varphi_{i(k)},\ldots,\varphi_{2k}$ be linearly independent generalized eigenspinors corresponding to $\lambda_{2k}(\beta)$. Define $X \overset{\mathrm{def}}{=} \spanning{\varphi_{i(k)},\ldots,\varphi_{2k + 2}}$. Note that $\dim_\mathbb{C} X = 2k + 2 - i(k) + 1 = 2k + 3 - i(k)$.

Now, take a subspace $Y \subset X$ such that $\dim_\mathbb{C} Y = 2k + 1 - i(k)$. By Lemma~\ref{lemma:Euler-Lagrange for lambda_k}, there exists $Q(\beta,\cdot)$-orthonormal family $\psi_{i(k)},\ldots,\psi_{2k}$ such that
\[
\beta^{n-1} = \abs{\psi_{i(k)}}_g^2 + \ldots \abs{\psi_{2k}}_g^2 \quad \text{almost everywhere on } M.
\]
Recall that, since $\beta$ is a minimizer, the generalized eigenspinors are in fact continuous (see Lemma~\ref{lemma:regularity} and Remark~\ref{remark:regularity}). Hence, this equality holds pointwise. Thus, by Lemma~\ref{lemma:linear algebra}, $\beta \equiv 0$. This is a contradiction, since we assumed that $\norm{\beta}_{L^n(M)}=1$. 
\end{proof}

We can now state and prove the main theorem of this section.

\begin{proof}[Proof of the Theorem~\ref{theorem:strict mononicity}]
Assume that $\Lambda_{2k}(M,[g],\sigma) = \Lambda_{2k + 2}(M,[g],\sigma)$. 

If $\Lambda_{2k + 2}(M,[g],\sigma)$ is attained at $\beta$, then $\Lambda_{2k}(M,[g],\sigma)$ is also attained at $\beta$. Indeed,
$\Lambda_{2k}(M,[g],\sigma) \leqslant \lambda_{2k}(\beta) \norm{\beta}_{L^n(M)} \leqslant \lambda_{2k+2}(\beta)\norm{\beta}_{L^n(M)} = \Lambda_{2k + 2}(M,[g],\sigma)$, hence all inequalities are actually equalities. In particular, $\lambda_{2k}(\beta) = \lambda_{2k + 2}(\beta)$ and this contradicts Lemma~\ref{lemma:strict monotonicity}.

Suppose that $\Lambda_{2k + 2}(M,[g],\sigma)$ is not attained. By Theorem~\ref{theorem:main existence theorem},
\[
\Lambda_{2k+2}(M,[g],\sigma)^2 = \Lambda_{l_0}(M,[g],\sigma)^2 + \Lambda_{l_1}(\mathbb{S}^2,[g_\mathrm{st}],\sigma_\mathrm{st})^2 + \ldots + \Lambda_{l_r}(\mathbb{S}^2,[g_\mathrm{st}],\sigma_\mathrm{st})^2,
\]
where $\Lambda_{l_i}(\mathbb{S}^2,[g_\mathrm{st}],\sigma_\mathrm{st})^n$ is attained for all $i \in \{\,1,\ldots,r\,\}$ and $\Lambda_{l_0}(M,[g],\sigma)$ is attained if $l_0 > 0$ (recall that if $l_0 = 0$, then $\Lambda_{l_0}(M,[g],\sigma) = 0$). Moreover, $l_0 + l_1 + \ldots + l_r = 2k + 2$.

First of all, since $\Lambda_{l_0}(M,[g],\sigma) < \Lambda_{2k+2}(M,[g],\sigma) = \Lambda_{2k}(M,[g],\sigma)$. Thus, $l_0 < 2k$.
Without loss of generality, we may assume that $l_1 \leqslant l_2 \leqslant \ldots \leqslant l_r$. Assume that $l_r = 1$. Then $l_1 = \ldots = l_r = 1$ and, since $l_0 < 2k$, $r \geqslant 3$. Note that Theorem~\ref{theorem:Aubin-type} implies that
\[
\Lambda_{2k}(M,[g],\sigma)^2 \leqslant \Lambda_{l_0}(M,[g],\sigma)^2 + \Lambda_{l_1}(\mathbb{S}^2,[g_\mathrm{st}],\sigma_\mathrm{st})^2 + \ldots + \Lambda_{l_{r - 2}}(\mathbb{S}^2,[g_\mathrm{st}],\sigma_\mathrm{st})^2.
\]
However, we clearly have that
\[
\Lambda_{l_0}(M,[g],\sigma)^2 + \sum_{i = 1}^{r-2}\Lambda_{l_i}(\mathbb{S}^2,[g_\mathrm{st}],\sigma_\mathrm{st})^2 < \Lambda_{l_0}(M,[g],\sigma)^2 + \sum_{i = 1}^{r}\Lambda_{l_i}(\mathbb{S}^2,[g_\mathrm{st}],\sigma_\mathrm{st})^2,
\]
and the right-hand side of this inequality is equal to $\Lambda_{2k}(M,[g],\sigma)$. This leads to a contradiction. Thus, we may suppose that $l_r \geqslant 2$. Then, using Theorem~\ref{theorem:Aubin-type}, we obtain an inequality
\[
\Lambda_{2k}(M,[g],\sigma)^2 \leqslant \Lambda_{l_0}(M,[g],\sigma)^2 + \sum_{i = 1}^{r-1}\Lambda_{l_i}(\mathbb{S}^2,[g_\mathrm{st}],\sigma_\mathrm{st})^2 + \Lambda_{l_r - 2}(\mathbb{S}^2,[g_\mathrm{st}],\sigma_\mathrm{st})^2.
\]
Combining it with the equality $
\Lambda_{2k}(M,[g],\sigma)^2 = \Lambda_{l_0}(M,[g],\sigma)^2 + \sum\limits_{i = 1}^{r}\Lambda_{l_i}(\mathbb{S}^2,[g_\mathrm{st}],\sigma_\mathrm{st})^2$, we deduce that
\[
\Lambda_{l_r - 2}(\mathbb{S}^2,[g_\mathrm{st}],\sigma_\mathrm{st}) = \Lambda_{l_r}(\mathbb{S}^2,[g_\mathrm{st}],\sigma_\mathrm{st}).
\]
Recall that $\Lambda_{l_r}(\mathbb{S}^2,[g_\mathrm{st}],\sigma_\mathrm{st})$ is attained. Therefore, $\Lambda_{l_r - 2}(\mathbb{S}^2,[g_\mathrm{st}],\sigma_\mathrm{st})$ is also attained and, arguing as in the beginning of the proof, we proceed to a contradiction with Lemma~\ref{lemma:linear algebra}.

Thus, $\Lambda_{2k}(M,[g],\sigma) < \Lambda_{2k+2}(M,[g],\sigma)$. 
\end{proof}

\subsection{Conformal eigenvalues of a two-dimensional sphere.}
We can now compute explicit values of $\Lambda_{2k}(\mathbb{S}^2,[g_\mathrm{st}],\sigma_\mathrm{st})$. 

\begin{proof}[Proof of Theorem~\ref{theorem:spectrum of S^2}]
It is well-known, see for example~\cite{Baer1992}, that $\Lambda_1(\mathbb{S}^2,[g_\mathrm{st}],\sigma_\mathrm{st}) = \Lambda_2(\mathbb{S}^2,[g_\mathrm{st}],\sigma_\mathrm{st}) = 2\sqrt{\pi}$. Note that by Theorem~\ref{theorem:Aubin-type}, we have that for all $k \in \mathbb{Z}_{>0}$
\begin{equation}\label{ineq:upper bound for Lambda(S^2)}
\Lambda_{2k}(\mathbb{S}^2,[g_\mathrm{st}],\sigma_\mathrm{st})^2 \leqslant k\Lambda_{2}(\mathbb{S}^2,[g_\mathrm{st}],\sigma_\mathrm{st})^2 = 4k\pi.
\end{equation}
In particular, we always have an inequality $\Lambda_{2k}(\mathbb{S}^2,[g_\mathrm{st}],\sigma_\mathrm{st}) \leqslant 2\sqrt{\pi k}$.

We now prove the theorem by induction on $k$. 
Take $k > 1$. Assume that for all $m < k$ we have $\Lambda_{2m}(\mathbb{S}^2,[g_\mathrm{st}],\sigma_\mathrm{st}) = 2\sqrt{m\pi}$. Therefore, for all $l_0,\ldots,l_r \in \mathbb{Z}_{\geqslant 0}$ such that $l_i < r$ for all $i$ in $\{\,0,\ldots,r\,\}$ we have that 
\[
\Lambda_{l_0}(\mathbb{S}^2,[g_\mathrm{st}],\sigma_\mathrm{st})^2 + \ldots + \Lambda_{l_r}(\mathbb{S}^2,[g_\mathrm{st}],\sigma_\mathrm{st})^2 \in 4\pi\mathbb{Z}_{\geqslant 0}.
\]
Consequently, by Theorem~\ref{theorem:main existence theorem}, we have that if $\Lambda_{2k}(\mathbb{S}^2,[g_\mathrm{st}],\sigma_\mathrm{st})$ is not attained, then $\Lambda_{2k}(\mathbb{S}^2,[g_\mathrm{st}],\sigma_\mathrm{st})^2$ is an integer multiple of $4\pi$. Recall that, by Theorem~\ref{theorem:strict mononicity}, the sequence $\left(\Lambda_{2m}(\mathbb{S}^2,[g_\mathrm{st}],\sigma_\mathrm{st})\right)_{m \in \mathbb{Z}}$ is strictly increasing. In particular,
\[
\Lambda_{2k}(\mathbb{S}^2,[g_\mathrm{st}],\sigma_\mathrm{st})^2 > \Lambda_{2k-2}(\mathbb{S}^2,[g_\mathrm{st}],\sigma_\mathrm{st})^2 = 4\pi(k-1).
\]
Therefore, combining this inequality with~\eqref{ineq:upper bound for Lambda(S^2)} and the fact that $\frac{\Lambda_{2k}(\mathbb{S}^2,[g_\mathrm{st}],\sigma_\mathrm{st})^2}{4\pi}$ is an integer, we deduce that $\Lambda_{2k}(\mathbb{S}^2,[g_\mathrm{st}],\sigma_\mathrm{st}) = 2\sqrt{k\pi}$.

Thus, we may suppose that $\Lambda_{2k}(\mathbb{S}^2,[g_\mathrm{st}],\sigma_\mathrm{st})$ is attained, i.e., there exist $\beta \in L^2_{\geqslant 0}(\mathbb{S}^2)$ such that $\bar{\lambda}_{2k}(\beta) = \Lambda_{2k}(\mathbb{S}^2,[g_\mathrm{st}],\sigma_\mathrm{st})$. Note that actually $\beta$ is smooth, because $\mathbb{S}^2$ is a surface, then as it was shown in Section $4$ of~\cite{KarpukhinMetrasPolterovich2024}, there exists a branched minimal immersion $\Psi \colon M \to (\mathbb{CP}^{2m-1},g_{\mathbb{CP}^{2m-1}})$, where $g_{\mathbb{CP}^{2m-1}} = 4g_\mathrm{FS}$ is the standard Fubini-Study metric of holomorphic sectional curvature $4$ such that $\exists \alpha > 0$  such that $\beta g = \alpha \Psi^*g_{\mathbb{CP}^{2m-1}}$ and $\lambda_{2k}(\Psi^*g_{\mathbb{CP}^{2m-1}}) = 1$. Thus,
\[
\bar{\lambda}_{2k}(\beta)^2 = \mathrm{Area}\left(\mathbb{S}^2,\Psi^*g_{\mathbb{CP}^{2m-1}}\right).
\]
As it is explained, for example, in Section $3$ of~\cite{BoltonJensenRigoliWoodward1988}, $\mathrm{Area}(\mathbb{S}^2,\Psi^*g_\mathrm{FS}) \in \pi \mathbb{Z}$. 
Thus, we have shown that even if $\Lambda_{2k}(\mathbb{S}^2,[g_\mathrm{st}],\sigma_\mathrm{st})$ is attained, $\Lambda_{2k}(\mathbb{S}^2,[g_\mathrm{st}],\sigma_\mathrm{st})^2 \in 4\pi\mathbb{Z}_{\geqslant 0}$. Arguing as above, we deduce that
\[
\Lambda_{2k}(\mathbb{S}^2,[g_\mathrm{st}],\sigma_\mathrm{st}) = 2\sqrt{k\pi}.
\]
\end{proof}

This result can be seen as an isoperimetric inequality on $\mathbb{S}^2$. Namely, for any metric $g$ on $\mathbb{S}^2$ we have
\[
\lambda_{2k}(D_g)\mathrm{Vol(\mathbb{S}^2,g)}\geqslant 2\sqrt{\pi k}.
\]

\subsection{Remark on the nodal set of minimizers.}
In this section we prove the last claim of Theorem~\ref{main theorem for surfaces}.
Let $(M,g,\sigma)$ be a $2$-dimensional closed spin Riemannian manifold. It follows from Theorems~\ref{theorem:Aubin-type} and~\ref{theorem:spectrum of S^2} that 
\[
\Lambda_{2k}(M,[g],\sigma) \leqslant 2\sqrt{k\pi}.
\]
We will use this observation in order to extend Ammann's result on the nodal set of minimizer corresponding to the first positive Dirac eigenvalue (see Theorem $1.6.$ of ~\cite{ammann2009smallest}).

Let $\beta^2 g$ be a generalized metric such that $\bar{\lambda}_{k}(\beta) = \Lambda_{k}(M,[g],\sigma)$. Then $\beta$ is smooth and, by Lemma~\ref{lemma:Euler-Lagrange for lambda_k}, we can find generalized eigenspinors $\varphi_1, \ldots, \varphi_l$ with $l \leqslant k$ such that
\begin{equation}\label{eq:nodal set}
\beta = \sum_{i = 1}^l \abs{\varphi_i}_g^2.    
\end{equation}
Recall that each $\varphi_i$ satisfies $D_g \varphi_i = \lambda_k(\beta) \beta \varphi_i$. Since $\beta$ is smooth, then the nodal set of $\varphi_i$ is discrete (see Main Theorem of~\cite{Bar1997}). Consequently, if follows from~\eqref{eq:nodal set} that the set $\{\,x \in M \colon \beta(x) = 0\,\}$ is discrete. 

The following lemma is the straightforward generalization of Lemma $4.4.$ from~\cite{ammann2009smallest} to our setting.

\begin{lemma}\label{lemma:Gauss curvature estimate}
Let $(M,g,\sigma)$ be a $2$-dimensional closed spin Riemannian manifold. Assume that there exist spinors $\varphi_1,\ldots,\varphi_l$ such that $D_g \varphi_i = \lambda \varphi_i$ for all $i \in \{\,1,\ldots,l\,\}$ and 
\[
\sum_{i = 1}^l \abs{\varphi_i}_g^2 = 1.
\]
Then $K_g \leqslant \lambda^2$, where $K_g$ is the Gauss curvature of the metric $g$.
\end{lemma}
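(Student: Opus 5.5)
The plan is the classical Bochner/Lichnerowicz computation applied to the function $f \overset{\mathrm{def}}{=} \sum_{i=1}^l \abs{\varphi_i}_g^2 \equiv 1$, as in Lemma 4.4 of~\cite{ammann2009smallest}. Since $f$ is constant, $\Delta_g f = 0$. We compute $\Delta_g \abs{\varphi_i}_g^2$ using the Schr\"odinger--Lichnerowicz formula $D_g^2 = \nabla^*\nabla + \frac{S_g}{4}$, where $S_g = 2K_g$ in dimension two. With the convention $\Delta_g = -\mathrm{div}\,\mathrm{grad}$ this gives
\[
\tfrac12 \Delta_g \abs{\varphi_i}^2 = \mathrm{Re}\dprod{\nabla^*\nabla \varphi_i,\varphi_i} - \abs{\nabla\varphi_i}^2 = \left(\lambda^2 - \tfrac{K_g}{2}\right)\abs{\varphi_i}^2 - \abs{\nabla \varphi_i}^2 .
\]
Here we use $D_g^2\varphi_i = \lambda^2\varphi_i$. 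All spinors involved are smooth, by elliptic regularity for $D_g\varphi_i=\lambda\varphi_i$, so the computation is pointwise.

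Summing over $i$ and using $\sum\abs{\varphi_i}^2 = 1$, we obtain the pointwise identity
\[
0 = \lambda^2 - \tfrac{K_g}{2} - \sum_{i=1}^l \abs{\nabla\varphi_i}^2 .
\]
It remains to bound $\sum_i\abs{\nabla \varphi_i}^2$ from below by a multiple of the curvature-free quantity $\lambda^2$, or more precisely to combine a sharp lower bound with the identity. The key pointwise inequality is the twistor (Friedrich) decomposition: for any spinor in dimension $n$, $\abs{\nabla\varphi}^2 \geqslant \frac1n\abs{D\varphi}^2$, which for $n=2$ and $D\varphi_i=\lambda\varphi_i$ gives $\abs{\nabla\varphi_i}^2\geqslant \frac{\lambda^2}{2}\abs{\varphi_i}^2$. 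Summing yields $\sum_i\abs{\nabla\varphi_i}^2 \geqslant \frac{\lambda^2}{2}$. Substituting into the identity gives $\frac{K_g}{2} = \lambda^2 - \sum\abs{\nabla\varphi_i}^2 \leqslant \frac{\lambda^2}{2}$, i.e. $K_g\leqslant\lambda^2$.

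The main point requiring care is the sign and normalization conventions: the Laplacian sign, the factor $\frac{S_g}{4}$, and $S_g = 2K_g$. These must line up so that the final constant is exactly $K_g\leqslant\lambda^2$. I would double-check this on the round unit sphere, where $\lambda=1$, $K=1$ and the Killing spinors give equality in the twistor inequality. This confirms both the constants and that the bound is sharp.

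Note that the nodal set of an individual $\varphi_i$ poses no problem. We never divide by $\abs{\varphi_i}$, since the twistor inequality $\abs{\nabla\varphi}^2\geqslant\frac12\abs{D\varphi}^2$ holds pointwise everywhere.
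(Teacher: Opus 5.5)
Your proof is correct and is essentially the paper's argument. The paper runs the same Bochner identity for $\sum_i \abs{\varphi_i}_g^2$, using the Friedrich connection $\tilde\nabla_X = \nabla_X + \frac{\lambda}{2}X\cdot$ and the Weitzenb\"ock formula for $(D_g - \frac{\lambda}{2})^2$, and then discards $\sum_i\abs{\tilde\nabla\varphi_i}_g^2 \geqslant 0$. Since $D_g\varphi_i = \lambda\varphi_i$, we have $\tilde\nabla_X\varphi_i = \nabla_X\varphi_i + \frac{1}{2}X\cdot D_g\varphi_i$, which is exactly the twistor operator, so $\abs{\tilde\nabla\varphi_i}_g^2 = \abs{\nabla\varphi_i}_g^2 - \frac{\lambda^2}{2}\abs{\varphi_i}_g^2$ is the same nonnegative term you drop via the twistor inequality.
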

\begin{proof}
Define the Friedrich connection $\tilde{\nabla}_X \psi \overset{\mathrm{def}}{=} \nabla_X \psi + \frac{\lambda}{2}X \cdot \psi$, where $\nabla$ is the standard spin connection. One can check that 
\begin{equation}\label{eq:gen S-L formula}
\left(D - \frac{\lambda}{2}\right)^2 = \tilde{\nabla}^*\tilde{\nabla} + \frac{K_g}{2} - \frac{1}{4}\lambda^2. 
\end{equation}
Moreover, Friedrich connection is a metric connection. Thus,
\begin{align*}
0 = \frac{1}{2}d^*d \left(\sum_{i = 1}^l \abs{\varphi_i}_g^2\right) = 
\sum_{i = 1}^l \frac{1}{2}d^*d \abs{\varphi_i}_g^2
=
\sum_{i = 1}^l \left(\Re\dprod{\tilde{\nabla}^* \tilde{\nabla} \varphi_i,\varphi_i} - \abs{\tilde{\nabla}\varphi_i}_g^2\right)
\end{align*}
Using that $D_g \varphi_i = \lambda \varphi$, We deduce from~\eqref{eq:gen S-L formula} that 
\[
\tilde{\nabla}^* \tilde{\nabla} \varphi_i =  \frac{\lambda^2}{2}\varphi_i - \frac{K_g}{2}\varphi_i.
\]
Thus,
\[
0 = \sum_{i = 1}^l \left(\frac{\lambda^2}{2}\dprod{\varphi_i, \varphi_i}_g - \frac{K_g}{2}\dprod{\varphi_i,\varphi_i}  -\abs{\tilde{\nabla}\varphi_i}_g^2\right).
\]
Since 
$\sum\limits_{i = 1}^l \abs{\varphi_i}_g^2 = 1$ and $\sum\limits_{i = 1}^l\abs{\tilde{\nabla}\varphi_i}_g^2 \geqslant 0$, it follows that
\[
K_g \leqslant \lambda^2.
\]
\end{proof}

We can now conclude with the following proposition whose proof is basically the same as the proof of Proposition $8.4$ from~\cite{ammann2009smallest}. 

\begin{proposition}
\[
\#\{\,x \colon \beta(x) = 0\,\} \leqslant \frac{k}{2} + \gamma - 1,
\]  
where $\gamma$ is the genus of $M$.
\end{proposition}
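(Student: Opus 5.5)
The plan is to run a Gauss--Bonnet argument for the singular metric $\tilde g = \beta^2 g$, following Ammann's proof of Proposition 8.4 in \cite{ammann2009smallest}. By \eqref{eq:nodal set}, $\beta = \sum_{i=1}^l \abs{\varphi_i}_g^2$ with $l \leqslant k$ and $D_g\varphi_i = \lambda\beta\varphi_i$, where $\lambda = \lambda_k(\beta)$. We normalise $\norm{\beta}_{L^2(M)}=1$, so that $\lambda = \Lambda_k(M,[g],\sigma)$ and $\mathrm{Area}(M,\tilde g) = 1$. The zero set $Z = \{\beta = 0\}$ is discrete, hence finite.

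On $M\setminus Z$ we use the conformal identification of Lemma~\ref{lemma:conformal-Dirac-transf} with $n=2$ and $f=\beta$. The spinors $\tilde\varphi_i = \tilde T(\varphi_i)$ satisfy $D_{\tilde g}\tilde\varphi_i = \beta^{-1}D_g\varphi_i = \lambda\tilde\varphi_i$. They also satisfy $\sum_i\abs{\tilde\varphi_i}_{\tilde g}^2 = \beta^{-1}\sum_i\abs{\varphi_i}_g^2 = 1$. Lemma~\ref{lemma:Gauss curvature estimate} is pointwise, so it applies on the open set $M\setminus Z$ and gives $K_{\tilde g} \leqslant \lambda^2$ there.

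Next we need the local behaviour of $\beta$ at each $x\in Z$. By the main theorem of \cite{Bar1997}, each $\varphi_i$ vanishes at $x$ to some finite order $m_i(x) \geqslant 1$ (or is identically zero nearby, which cannot happen for a nonzero eigenspinor). Locally $\varphi_i$ behaves like a holomorphic or antiholomorphic polynomial of degree $m_i$. Hence $\beta \sim c\,r^{2m(x)}$ with $m(x) = \min_i m_i(x) \geqslant 1$. The metric $\tilde g = \beta^2 g$ then has a conical singularity at $x$ of cone angle $2\pi(2m(x)+1) \geqslant 6\pi$.

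Gauss--Bonnet for conical metrics then gives
\[
\int_{M\setminus Z} K_{\tilde g}\, dv_{\tilde g} = 2\pi\chi(M) + \sum_{x\in Z} 2\pi\cdot 2m(x) \geqslant 2\pi(2-2\gamma) + 4\pi\,\#Z .
\]
The left side is at most $\lambda^2\,\mathrm{Area}(\tilde g) = \Lambda_k(M,[g],\sigma)^2$. Using the bound $\Lambda_k \leqslant \Lambda_{2\lceil k/2\rceil} \leqslant 2\sqrt{\lceil k/2\rceil\pi}$ (from Theorem~\ref{theorem:Aubin-type}, Theorem~\ref{theorem:spectrum of S^2} and monotonicity), we get
\[
4\pi\,\#Z \leqslant 4\pi\lceil k/2\rceil + 4\pi(\gamma-1),
\]
that is, $\#Z \leqslant \lceil k/2\rceil + \gamma - 1$. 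Since $\Lambda_{2j-1}=\Lambda_{2j}$, this equals $k/2+\gamma-1$ for even $k$. For odd $k$ the bound $\lceil k/2 \rceil$ overshoots $k/2$ by $1/2$, but $\#Z$ is an integer, so we need a sharper count. We can either use $\Lambda_k^2 \leqslant 4\pi\lceil k/2\rceil$ together with a strict inequality somewhere, or simply read the stated bound as the integer part.

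The main obstacle is the local analysis at the zeros. We need the precise asymptotics of $\beta$ near $x$, and we need to justify Gauss--Bonnet for $\tilde g$. The plan is to write $K_{\tilde g} = \beta^{-2}(K_g - \Delta_g\log\beta)$ and integrate over $M$ minus small geodesic discs $B(x,\varepsilon)$. The boundary terms $-\oint \partial_\nu\log\beta$ tend to $2\pi\cdot 2m(x)$ as $\varepsilon\to0$, since $\log\beta = 2m(x)\log r + O(1)$ with gradient control. That gradient control follows from the Taylor expansion of the $\varphi_i$ in \cite{Bar1997}: the leading term is a homogeneous polynomial of degree $m_i$ with no zero on the unit circle, so $\abs{\varphi_i}^2$ is comparable to $r^{2m_i}$ with matching derivative bounds.
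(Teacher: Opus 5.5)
Your route is the paper's: Gauss--Bonnet on $M$ minus small discs around $Z$, the pointwise bound $K_{\tilde g}\leqslant\lambda^2$ from Lemma~\ref{lemma:Gauss curvature estimate}, a boundary contribution $2\pi(1+\mathrm{ord}_x\beta)$ with $\mathrm{ord}_x\beta$ even and at least $2$, and $\Lambda_k^2\leqslant 4\pi\lceil k/2\rceil$. You handle the conformal change and the local asymptotics $\beta\sim c\,r^{2m(x)}$ (via \cite{Bar1997}) more carefully than the paper does. For even $k$ your proof is complete as it stands.

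The gap is the odd case, which you spotted but did not close. Reading the bound as its integer part does not help. For odd $k$ the integer part of $k/2+\gamma-1$ is $\lceil k/2\rceil+\gamma-2$, exactly one less than what you proved. No sharpening of the Gauss--Bonnet computation alone can supply this. For an arbitrary minimizer the odd-$k$ claim actually fails: the round metric attains $\Lambda_1(\mathbb{S}^2,[g_\mathrm{st}],\sigma_\mathrm{st})=2\sqrt{\pi}$ with $\beta>0$, so $\#Z=0>\tfrac12+0-1$.

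The paper's written proof has the same hole, hidden in its arithmetic. With $\lambda_k(\beta)^2\leqslant 2\pi(k+1)$, the inequality $2\pi\chi(M)\leqslant\lambda_k(\beta)^2-2\pi\sum_p\mathrm{ord}_p(\beta)$ gives $\sum_p\mathrm{ord}_p(\beta)\leqslant k+2\gamma-1$, not $k+2\gamma-2$.

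The ``strict inequality somewhere'' you allude to has to come from the hypothesis of Theorem~\ref{main theorem for surfaces}, which is the setting of this proposition. Take $l_0=0$ and $r=\lceil k/2\rceil$ blocks $l_i\in\{1,2\}$ with $l_1+\dots+l_r=k$. The round metric attains $\Lambda_1(\mathbb{S}^2,[g_\mathrm{st}],\sigma_\mathrm{st})=\Lambda_2(\mathbb{S}^2,[g_\mathrm{st}],\sigma_\mathrm{st})=2\sqrt{\pi}$, so this decomposition is admissible. The hypothesis then gives $\Lambda_k(M,[g],\sigma)^2<4\pi\lceil k/2\rceil$. Feed this into your inequality $2\pi(2-2\gamma)+4\pi\,\#Z\leqslant\Lambda_k(M,[g],\sigma)^2$. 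You get $\#Z<\lceil k/2\rceil+\gamma-1$, hence $\#Z\leqslant\lceil k/2\rceil+\gamma-2\leqslant k/2+\gamma-1$ for every $k$.
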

\begin{proof}
Define $M' \overset{\mathrm{def}}{=} M \setminus \{\,x \colon \beta(x) = 0\,\}$. Then $\tilde{g} = \beta^2 g$ is a Riemannian metric on $M'$. Without loss of generality, we may assume that $\norm{\beta}_{L^4(M)} = 1$. Thus, $\mathrm{Vol}(M',\tilde{g}) = 1$. Moreover, by Lemma~\ref{lemma:Gauss curvature estimate}, $K_{\tilde{g}} \leqslant \lambda_k(\beta)^2$. 

Assume that $\beta(p) = 0$ for some $p \in M$. Then the integral of the geodesic curvature with respect to $\tilde{g}$ over small simply closed loop around $p$ is equal to $-2\pi(1 + \mathrm{ord}_p(\beta))$, where $\mathrm{ord}_p(\beta)$ is the order of $\beta$ at $p$. We now remove small open disks around each point where $\beta$ vanishes and obtain a new surface $M'' \subset M'$. By Gauss-Bonnet Theorem,
\[
\chi(M'') = \int_{M''} K_{\tilde{g}} + \int_{\partial M''}k_{\tilde{g}} \leqslant \lambda_k(\beta)^2 - \sum_{p \colon \beta(p) = 0}2\pi(1 + \mathrm{ord}_p(\beta)).
\]
Consequently,
\[
\chi(M) \leqslant \lambda_k(\beta)^2 - 2\pi\sum_{p \colon \beta(p) = 0} \mathrm{ord}_p(\beta).
\]
Since $\lambda_k(\beta) = \bar{\lambda}_k(\beta) = \Lambda_k(M,[g],\sigma)$, it follows that $\lambda_k(\beta)^2 \leqslant 4\pi\lceil\frac{k}{2}\rceil \leqslant 2\pi(k + 1)$.

Thus,
\[
\sum_{p \colon \beta(p) = 0} \mathrm{ord}_p(\beta) \leqslant k + 2\gamma - 2. 
\]
Note that $\beta$ is a nonnegative function. Therefore, if $\beta(p) = 0$, then $\mathrm{ord}_p(\beta) = 0$, because $p$ is a local minimum of $\beta$. Consequently,
\[
\#\{\,x \colon \beta(x) = 0\,\} \leqslant\sum_{p \colon \beta(p) = 0} \frac{\mathrm{ord}_p(\beta)}{2} \leqslant  \frac{k}{2} + \gamma - 1.
\]
\end{proof}

\appendix
\section{Sobolev-like inequality.}\label{appendix}
The goal of the appendix is to prove the following theorem:
\begin{theorem}
Let $(M,[g],\sigma)$ be an $n$-dimensional closed compact Riemannian spin manifold. Then for all $\varepsilon > 0$, there exists a constant $B_{\varepsilon}$ such that 
\begin{equation*}\label{ineqaulity: Sobolev-like inequality}
\abs{\int_M \dprod{D_g\varphi,\varphi}_g dv_g} \leqslant (K(n) + \varepsilon)\left(\int_M\abs{D_g \varphi}_g^\frac{2n}{n+1} dv_g\right)^\frac{n+1}{n} + B_{\varepsilon} \left(\int_M \abs{\varphi}_g^\frac{2n}{n+1} dv_g \right)^\frac{n + 1}{n},
\end{equation*}\label{theorem: Sobolev Constant}
where $K(n) \overset{\mathrm{def}}{=} \Lambda_1(\mathbb{S}^n,[g_{\mathrm{st}}],\sigma_{\mathrm{st}})^{-1}$.
\end{theorem}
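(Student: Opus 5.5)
We follow the classical route for optimal Sobolev inequalities on compact manifolds: first prove the sharp inequality on flat space, then localize by a partition of unity, paying the lower-order term for the error.

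\emph{Step 1: the model inequality on $\mathbb{R}^n$.} For compactly supported $\psi$ on $(\mathbb{R}^n,\xi)$ we claim
\[
\abs{\int \dprod{D_\xi\psi,\psi}dv_\xi}\leqslant K(n)\norm{D_\xi\psi}_{L^{\frac{2n}{n+1}}}^2 .
\]
Transport $\psi$ to $\mathbb{S}^n$ via the conformal identification $\tau$ of Section~\ref{section:Aubin}. The quantity $\int\dprod{D\psi,\psi}$ is conformally invariant. The norm $\norm{D\psi}_{L^{2n/(n+1)}}$ is also conformally invariant, since $D$ picks up a factor $f^{-(n+1)/2}$ on spinors and the volume a factor $f^{n}$. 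On the sphere, $\Lambda_1=\inf_\beta \lambda_1(\beta)\norm{\beta}_{L^n}$ together with \eqref{minimax for conformal class} gives, for $\varphi$ orthogonal to the kernel (which is trivial on $\mathbb{S}^n$) and decomposed into positive and negative parts, $\int\dprod{D\varphi,\varphi}\leqslant \Lambda_1^{-1}\norm{\beta}_{L^n}\int\beta^{-1}\abs{D\varphi}^2$ for every $\beta$. The choice $\beta=\abs{D\varphi}^{2n/(n+1)}$ makes the right-hand side equal to $K(n)\norm{D\varphi}^2_{L^{2n/(n+1)}}$. Negative parts only decrease $\int\dprod{D\varphi,\varphi}$, and the symmetric argument with $-D$ controls the absolute value. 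This is the known Hijazi/Ammann form of the spinorial Sobolev inequality, and we would cite or adapt it.

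\emph{Step 2: localization.} Given $\varepsilon$, cover $M$ by finitely many small balls in which, in normal coordinates with the Bourguignon–Gauduchon identification, $D_g=D_\xi+E$. Here $E$ has small first-order coefficients (of size $O(r)$) and bounded zero-order terms, and the volume density is $1+O(r^2)$. This makes Step 1 hold with constant $K(n)+\varepsilon/2$ plus an error $C\norm{\varphi}^2_{L^{2n/(n+1)}}$-type term on each ball. Take $\eta_j^2$ a partition of unity with $\sum\eta_j^2=1$. Then
\[
\int\dprod{D\varphi,\varphi}=\sum_j\int\dprod{D(\eta_j\varphi),\eta_j\varphi},
\]
since the cross terms $\eta_j\dprod{\nabla\eta_j\cdot\varphi,\varphi}$ are purely imaginary pointwise (Clifford multiplication is skew) and drop out after taking real parts; the left side is real. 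Applying the local inequality gives
\[
\sum_j(K+\varepsilon/2)\norm{\eta_jD\varphi+\nabla\eta_j\cdot\varphi}^2_{L^{p}},\qquad p=\tfrac{2n}{n+1}.
\]

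\emph{Step 3: recombination, the main obstacle.} We must bound $\sum_j\norm{\eta_j D\varphi}_{L^p}^2$ by $\norm{D\varphi}_{L^p}^2$ with constant $1$. Since $p/2<1$, this does not come from a naive $\ell^2$ argument. We use $\norm{\eta_j F}^2_{L^p}=\norm{\eta_j^2\abs{F}^2}_{L^{p/2}}$, and because $p/2=\frac{n}{n+1}<1$ the reverse triangle inequality goes the wrong way. Two fixes are available. The first is to use instead the partition with $\sum\eta_j^{p}=1$ and take the real-part identity only up to an error; the second is to argue by concavity, choosing $\eta_j^2$ so that $\sum_j\norm{\eta_j^2 G}_{L^{s}}\leqslant(1+\varepsilon)\norm{G}_{L^s}$ fails in general. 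We would therefore follow Hebey's trick instead. First prove the estimate with $\abs{\int\dprod{D\varphi,\varphi}}^{q}$ where the exponent makes the norm additive, namely $\norm{F}^{p}_{L^p}=\sum\norm{\eta_j^{2/p}F}_{L^p}^{p}$ using $\sum\eta_j^2=1$. Then apply Hölder on the finite sum, using that each local $\abs{\int}$ is controlled by $\norm{\cdot}^2_{L^p}$ and $2\geqslant p$, so $\sum a_j^{2/p}\leqslant(\sum a_j)^{2/p}$. This is the step we expect to need the most care. Finally, the terms $\nabla\eta_j\cdot\varphi$ are absorbed by $(a+b)^2\leqslant(1+\varepsilon)a^2+C_\varepsilon b^2$ into $B_\varepsilon\norm{\varphi}^2_{L^p}$, which is exactly the stated form.
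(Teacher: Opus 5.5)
Your overall architecture is the same as the paper's: a sharp local estimate built on the flat model, a partition of unity with $\sum_j\eta_j^2=1$, the exact splitting $\int\langle D\varphi,\varphi\rangle=\sum_j\int\langle D(\eta_j\varphi),\eta_j\varphi\rangle$ obtained by taking real parts, and absorption of the $\nabla\eta_j\cdot\varphi$ terms into $B_\varepsilon$. The gap is Step 3, and it comes from a wrong diagnosis.

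For $0<s<1$ the functional $f\mapsto\norm{f}_{L^s}$ is superadditive on nonnegative functions. This is the reverse Minkowski inequality: the functional is concave and positively $1$-homogeneous on the positive cone. Take $s=\frac{p}{2}=\frac{n}{n+1}$ and $f_j=\eta_j^2\abs{D\varphi}^2$. Then, with constant exactly $1$,
\[
\sum_j\norm{\eta_j D\varphi}_{L^p}^2=\sum_j\norm{f_j}_{L^s}\leqslant\Bigl\|\sum_j f_j\Bigr\|_{L^s}=\norm{D\varphi}_{L^p}^2 .
\]
This is exactly the paper's step. The paper applies it to $f_j=\abs{D(\eta_j\varphi)}^2$, after the pointwise bound $\sum_j\abs{D(\eta_j\varphi)}^2\leqslant(1+\varepsilon)\abs{D\varphi}^2+C'\abs{\varphi}^2$. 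So the inequality you need is true and elementary; it does not go ``the wrong way''.

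The substitute you propose does not close.
\begin{itemize}
\item With $a_j=\int\eta_j^2\abs{D\varphi}^p$, the bound $\sum_j a_j^{2/p}\leqslant(\sum_j a_j)^{2/p}=\norm{D\varphi}^2_{L^p}$ controls $\sum_j\norm{\eta_j^{2/p}D\varphi}^2_{L^p}$. But the local inequality was applied to $\eta_j\varphi$, and $\norm{\eta_jD\varphi}_{L^p}\geqslant\norm{\eta_j^{2/p}D\varphi}_{L^p}$ because $0\leqslant\eta_j\leqslant 1$ and $2/p>1$. This is the wrong direction.
\item With $a_j=\int\eta_j^p\abs{D\varphi}^p$ you pick up the factor $\sup_M(\sum_j\eta_j^p)^{2/p}>1$, since $\sum_j\eta_j^p\geqslant\sum_j\eta_j^2=1$ for $p<2$. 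That destroys the sharp constant $K(n)$, which is the whole content of the theorem.
\item Cut-offs $\theta_j$ with $\sum_j\theta_j^p=1$ break the exact splitting of $\int\langle D\varphi,\varphi\rangle$, which requires $\sum_j\theta_j^2=1$. The defect $\int(1-\sum_j\theta_j^2)\langle D\varphi,\varphi\rangle$ is of top order, not a lower-order error.
\end{itemize}
If you prefer a H\"older argument, the correct version is $(\int\eta_j^p\abs{F}^p)^{2/p}\leqslant(\int\eta_j^2\abs{F}^p)\,\norm{F}_{L^p}^{2-p}$, which sums over $j$ to $\norm{F}_{L^p}^2$.

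Two smaller points.
\begin{itemize}
\item In Step 1 the optimal weight is $\beta=\abs{D\varphi}^{2/(n+1)}$. Then $\beta^n=\beta^{-1}\abs{D\varphi}^2=\abs{D\varphi}^p$, so $\norm{\beta}_{L^n}\int\beta^{-1}\abs{D\varphi}^2=\norm{D\varphi}^2_{L^p}$. Your exponent $2n/(n+1)$ does not make the two factors match. Also, since this $\beta$ may vanish, you need to use $\beta+\delta$ and pass to the limit.
\item In Step 2, absorbing the small first-order perturbation needs the estimate $\norm{\nabla\psi}_{L^p(\mathbb{R}^n)}\leqslant C\norm{D_\xi\psi}_{L^p(\mathbb{R}^n)}$ (the paper gets it from Riesz transforms). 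It also needs $\norm{\psi}_{L^{2n/(n-1)}}\leqslant C\norm{D_\xi\psi}_{L^{2n/(n+1)}}$. You should state both.
\end{itemize}
Your derivation of the flat sharp inequality from the definition of $\Lambda_1(\mathbb{S}^n)$, via conformal invariance of both sides, is a good addition; the paper only cites it.
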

A similar result was proved in~\cite{RAULOT20091588}, under the assumption that $D_g$ is invertible. The idea of the proof heavily relies on Raulot’s original argument, but we need to extend Raulot’s result to accommodate a possibly non-trivial kernel. 

The Dirac operator in $\mathbb{R}^n$ is 
\[
D = \sum_{j = 1}^n \gamma^j \partial_j,
\]
where $\partial_j = \frac{\partial}{\partial x^j}$ and $\gamma^j$ are skew-Hermitian matrices satisfying the Clifford relation:
\begin{equation}\label{eq:Clifford relation}
\gamma^k \gamma^j + \gamma^j\gamma^k = -2\delta_{kj}\mathrm{Id},
\end{equation}
where $\mathrm{Id}$ is the identity matrice. 

First of all we prove the following elliptic estimate.
\begin{lemma}\label{lemma:Fourier}
Let $\nabla$ be a spin connection in $\mathbb{R}^n$ and $D$ the corresponding Dirac operator. Then there exists a constant $C > 0$ such that 
\[
\norm{\nabla \psi}_{L^p(\mathbb{R}^n)} \leqslant C\norm{D \psi}_{L^p(\mathbb{R}^n)} 
\]
\end{lemma}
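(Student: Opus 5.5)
The estimate is a constant-coefficient Calderón--Zygmund bound, and I will prove it through the identity $D^2=-\Delta$, which follows from the Clifford relation~\eqref{eq:Clifford relation}. I take $\psi\in\mathcal{C}^\infty_c(\Sigma\mathbb{R}^n)$ and $1<p<\infty$; the general case follows by density. I also take the spin connection on $\mathbb{R}^n$ to be the trivial one in the standard trivialization, so $\nabla_j\psi=\partial_j\psi$. Since the $\gamma^j$ are constant, $D^2=\sum_{j,k}\gamma^j\gamma^k\partial_j\partial_k=\frac12\sum_{j,k}(\gamma^j\gamma^k+\gamma^k\gamma^j)\partial_j\partial_k=-\Delta\,\mathrm{Id}$.

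The plan is to express each $\partial_j\psi$ through $D\psi$ by a zero-order Fourier multiplier. On the Fourier side, $\widehat{D\psi}(\xi)=i\sum_k\xi_k\gamma^k\hat\psi(\xi)$. Writing $\gamma(\xi)=\sum_k\xi_k\gamma^k$, we have $\gamma(\xi)^2=-|\xi|^2\mathrm{Id}$, so $\gamma(\xi)^{-1}=-\gamma(\xi)/|\xi|^2$ for $\xi\neq0$. Hence
\[
\widehat{\partial_j\psi}(\xi)=i\xi_j\hat\psi(\xi)=-\frac{\xi_j}{|\xi|^2}\gamma(\xi)\,\widehat{D\psi}(\xi)\cdot\frac{1}{i}\cdot i=-\sum_k\frac{\xi_j\xi_k}{|\xi|^2}\gamma^k\,\widehat{D\psi}(\xi).
\]
Equivalently, $\partial_j\psi=\sum_k\gamma^k R_jR_k(D\psi)$ up to sign, where $R_j$ is the Riesz transform with symbol $-i\xi_j/|\xi|^2\cdot|\xi|$. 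The same identity can be obtained in physical space: from $D\psi=f$ we get $-\Delta\psi=Df$, so $\partial_j\psi=\partial_j(-\Delta)^{-1}Df=\sum_k\gamma^k\partial_j\partial_k(-\Delta)^{-1}f$.

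The multiplier $m_{jk}(\xi)=\xi_j\xi_k/|\xi|^2$ is smooth away from the origin and homogeneous of degree $0$. By the Mikhlin--Hörmander multiplier theorem, or equivalently by $L^p$-boundedness of the Riesz transforms, it is bounded on $L^p$ for $1<p<\infty$. Summing over $j$ and $k$, with the constant matrices $\gamma^k$ bounded, gives $\norm{\nabla\psi}_{L^p}\leqslant C(n,p)\norm{D\psi}_{L^p}$.

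The only subtle point is justifying the division by $|\xi|^2$ at $\xi=0$. For compactly supported smooth $\psi$ this is not a problem, because the identity $\widehat{\partial_j\psi}=m_{jk}\gamma^k\widehat{D\psi}$ holds pointwise for $\xi\neq0$, and the origin has measure zero. I also note that the constant must depend on $p$, and the statement should be read with $p\in(1,\infty)$ fixed; the estimate fails at $p=1$ and $p=\infty$. In the paper the lemma is used with $p=\frac{2n}{n+1}$, which lies in that range.
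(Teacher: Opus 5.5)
Your proposal is correct and follows the paper's route: you invert the symbol $i\gamma(\xi)$ using the Clifford relation to get $\partial_j\psi=\sum_k\gamma^k R_jR_k(D\psi)$, then conclude from the $L^p$-boundedness of the Riesz transforms. Your explicit restriction to $1<p<\infty$, which the paper leaves implicit, is the correct reading of the statement and covers the only case used later, $p=\frac{2n}{n+1}$.
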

\begin{proof}
Denote by $\mathcal{F}$ the Fouirier transform in $\mathbb{R}^n$, then
\[
\mathcal{F}(D \psi) = \sum_{j = 1}^n i \xi_j \gamma^j \mathcal{F}(\psi).
\]
Note that since $(\gamma^j)_j$ satisfy~\eqref{eq:Clifford relation}, it follows that 
\[
\left(\sum_{j = 1}^n i \xi_j \gamma^j\right)^2 = \abs{\xi}^2 \mathrm{Id}
\]
Hence, for $\xi \neq 0$ we have that
\[
\mathcal{F}(\psi) =  \frac{\sum_{j = 1}^n i \xi_j \gamma^j} {\abs{\xi}^2} \mathcal{F}(D\psi).
\]
Moreover, 
$\mathcal{F}(\partial_k\psi) = i \xi_k \mathcal{F}(\psi)$. Consequently,
\[
\mathcal{F}(\partial_k \psi) =  -\frac{\sum_{j = 1}^n \xi_k \xi_j \gamma^j} {\abs{\xi}^2} \mathcal{F}(D\psi).
\]
Recall that the Riesz transform is given by
\[
\mathcal{F}(R_j\psi) = \frac{i \xi_j}{\abs{\xi}}\mathcal{F}(\psi).
\]
Hence, 
\[
\partial_k \psi = \sum_{j = 1}^n \gamma^j R_kR_j(D\psi).
\]
Since Riesz transform is a bounded map from $L^p$ to $L^p$, it follows that there exists a constant $C_k > 0$ such that
\[
\norm{\partial_k \psi}_{L^p(\mathbb{R}^n)} \leqslant C_k \norm{D \psi}_{L^p(\mathbb{R}^n)}. 
\]
\end{proof}

To prepare for the proof of Theorem~\ref{theorem: Sobolev Constant}, we state a key inequality in $\mathbb{R}^n$. This result follows from the invertibility of the Dirac operator on the standard sphere $(\mathbb{S}^n,g_\mathrm{st},\sigma_\mathrm{st})$, combined with the fact that $\mathbb{R}^n$ is conformally equivalent to the punctured sphere $\mathbb{S}^n \setminus \{\,\mathrm{pt}\,\}$. For a detailed proof, we refer the reader to Lemma $1$ and Remark $1(2)$ in~\cite{RAULOT20091588}. 

\begin{lemma}\label{lemma:L^pq Dirac in R^n}
There exists $C > 0 $ such that for any compactly supported spinor field $\psi \in \Gamma(\Sigma_\xi\mathbb{R}^n)$ we have
\[
\norm{\psi}_{L^\frac{2n}{n-1}(\mathbb{R}^n)} \leqslant C \norm{\psi}_{L^\frac{2n}{n+1}(\mathbb{R}^n)}
\]
\end{lemma}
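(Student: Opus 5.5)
The inequality as literally written cannot hold, because it is not scale invariant. If $\psi_a(x)=\psi(ax)$, the left-hand side scales like $a^{-\frac{n-1}{2}}$ and the right-hand side like $a^{-\frac{n+1}{2}}$. Letting $a\to 0$ then breaks the inequality for any fixed $C$. The intended statement, which is the one used afterwards and the one in Raulot's Lemma 1, is
\[
\norm{\psi}_{L^\frac{2n}{n-1}(\mathbb{R}^n)} \leqslant C \norm{D_\xi\psi}_{L^\frac{2n}{n+1}(\mathbb{R}^n)}
\]
for compactly supported smooth $\psi$. This version is scale invariant, and I would prove it.

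The plan is to transport the inequality to the round sphere, where it follows from invertibility of $D_{g_\mathrm{st}}$. I use the stereographic identification already set up in the proof of Theorem~\ref{theorem:Aubin-type}. There $(\pi^{-1})^*g_\mathrm{st}=U^2\xi$ with $U(x)=\frac{2}{1+\abs{x}^2}$, and $\tau(\varphi)=T(U^{\frac{n-1}{2}}\varphi)$ satisfies
\[
D_\xi\tau(\varphi)=U\tau(D_{g_\mathrm{st}}\varphi), \qquad \abs{\tau(\varphi)}_\xi=U^{\frac{n-1}{2}}\abs{\varphi}_{g_\mathrm{st}}.
\]
Given $\psi$ compactly supported on $\mathbb{R}^n$, set $\varphi=\tau^{-1}\psi$. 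It is a smooth spinor on $\mathbb{S}^n$ vanishing near the pole, hence smooth on all of $\mathbb{S}^n$.

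The first step is a direct computation using $dv_{g_\mathrm{st}}=U^n\,dx$. It gives
\[
\int_{\mathbb{R}^n}\abs{\psi}^{\frac{2n}{n-1}}dx=\int U^n\abs{\varphi}^{\frac{2n}{n-1}}dx=\int_{\mathbb{S}^n}\abs{\varphi}^{\frac{2n}{n-1}}dv_{g_\mathrm{st}},
\]
and similarly $\abs{D_\xi\psi}^{\frac{2n}{n+1}}=U^{\frac{n+1}{2}\cdot\frac{2n}{n+1}}\abs{D_{g_\mathrm{st}}\varphi}^{\frac{2n}{n+1}}=U^n\abs{D_{g_\mathrm{st}}\varphi}^{\frac{2n}{n+1}}$. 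Both norms are therefore exactly conformally invariant, and the claim reduces to
\[
\norm{\varphi}_{L^{\frac{2n}{n-1}}(\mathbb{S}^n)}\leqslant C\norm{D_{g_\mathrm{st}}\varphi}_{L^{\frac{2n}{n+1}}(\mathbb{S}^n)} .
\]

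The second step proves this reduced inequality. Since $\ker D_{g_\mathrm{st}}=\{0\}$ (its spectrum is $\pm(\frac n2+j)$), $D_{g_\mathrm{st}}^{-1}$ is a pseudodifferential operator of order $-1$. As in the proof of Lemma~\ref{lemma:regularity}, it is bounded $L^{\frac{2n}{n+1}}\to W^{1,\frac{2n}{n+1}}$. Composing with the Sobolev embedding $W^{1,\frac{2n}{n+1}}\hookrightarrow L^{\frac{2n}{n-1}}$ (item (1) with $p=\frac{2n}{n+1}$, so $\frac{np}{n-p}=\frac{2n}{n-1}$) gives the inequality.

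The only delicate point is the bookkeeping of conformal weights. One must check that the exponents $\frac{2n}{n\pm1}$ are precisely those making both norms invariant, and that $\tau^{-1}\psi$ really extends smoothly across the pole. Compact support handles the latter. As an alternative route, one could represent $\psi=G*D_\xi\psi$ with Green kernel $G(x)=c\,\frac{x}{\abs{x}^n}\cdot$ (Clifford multiplication), which is $O(\abs{x}^{1-n})$. The Hardy--Littlewood--Sobolev inequality with exponents $\frac{2n}{n+1}\to\frac{2n}{n-1}$ then concludes directly.
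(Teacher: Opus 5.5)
Your argument is correct and is the same route the paper takes: the paper simply cites Raulot (Lemma 1 and Remark 1(2) there) for "invertibility of $D_{g_{\mathrm{st}}}$ plus the conformal equivalence of $\mathbb{R}^n$ with the punctured sphere". You carry this out explicitly. You check the exact conformal invariance of $\|\psi\|_{L^{\frac{2n}{n-1}}}$ and $\|D_\xi\psi\|_{L^{\frac{2n}{n+1}}}$ under stereographic projection, and then use elliptic $L^p$ bounds for $D_{g_{\mathrm{st}}}^{-1}$ together with Sobolev embedding. You are also right that the statement is missing $D_\xi$ on the right-hand side, which is how the lemma is used in the appendix.

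One small slip: in the literal inequality the ratio of the left side to the right side scales like $a$, so it is the concentration limit $a\to+\infty$ that breaks it, not $a\to 0$.
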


\begin{proof}[Proof of Theorem~\ref{theorem: Sobolev Constant}.]
Fix $\varepsilon > 0$ and $x \in M$. Then there exists an open neighborhood $U \subset M$ of $x$ and open neighborhood $V \subset T_xM \simeq \mathbb{R}^n$ of $0$ such that
\begin{itemize}
    \item $\exp_x \colon V \to U$ is a diffeomorphism,
    \item $\frac{1}{(1 + \varepsilon)^2}\xi \leqslant g \leqslant (1 + \varepsilon)^2\xi$ on $U$, where $\xi$ is a standard Euclidean metric. 
\end{itemize}
By~\cite{Bourguignon1992}, there is an isomorphism of vector bundles
\[
T \colon \Sigma_{\xi}V \to \Sigma_g U, 
\]
which is a fiberwise isometry. Moreover, as explained in~\cite{Ammann2008}, Dirac operators $D_g$ and $D_\xi$ are related by the following formula,
\begin{equation*}
D_g T(\psi) = T(D_\xi \psi) + X \cdot T(\psi)  + \sum_{i,j}(b^j_i - \delta^j_i) T(\gamma^i \partial_j\psi),
\end{equation*}
where $B = (b^j_i)_{i,j}$ is a matrix satisfying $B^2 = G^{-1}$ with $G = (g_{ij})_{i,j}$, $X$ is a smooth section of the Clifford algebra bundle $\mathrm{Cl}(TV)$ and $(\gamma^i)_i$ are skew-Hermitian matrices satisfying~\eqref{eq:Clifford relation}.

Let $\varphi \in \Gamma(\Sigma_gM)$ be a smooth spinor field. We fix a function $\eta \in \mathcal{C}^{\infty}(M)$ such that $\mathrm{supp}(\eta) \subset U$. Then
\begin{align*}
\int_M \eta^2\dprod{D_g\varphi, \varphi}_gdv_g
&= \int_M \dprod{D_g(\eta \varphi), \eta\varphi}_g dv_g - \int_M \dprod{\grad_g \eta \cdot \varphi, \eta \varphi}_gdv_g.
\end{align*}
Note that $\int_M \dprod{D_g \varphi, \varphi}_g dv_g \in \mathbb{R}$ and  $\Re\left(\int_M \dprod{\grad_g \eta \cdot \varphi, \eta\varphi}dv_g\right) = 0$. Consequently,
\begin{align*}
\mathrm{Re}\int_M \eta^2\dprod{D_g\varphi, \varphi}_gdv_g 
= \int_M \dprod{D_g(\eta\varphi), \eta\varphi}_gdv_g.
\end{align*}
Define a spinor field $\psi \overset{\mathrm{def}}{=}T^{-1}(\eta \varphi)$, then
\begin{align*}
\int_M \dprod{D_g(\eta\varphi), \eta\varphi}_gdv_g= \mathbf{A} + \mathbf{B} + \mathbf{C},
\end{align*}
where
\begin{align*}
    &\mathbf{A} \overset{\mathrm{def}}{=} \int_V \dprod{D_{\xi}\psi,\psi}_\xi dv_g,\\
    &\mathbf{B} \overset{\mathrm{def}}{=} \int_V \dprod{T^{-1}(X)  \cdot \psi,\psi}_\xi dv_g,\\
    &\mathbf{C} \overset{\mathrm{def}}{=} \sum_{i,j}\int_V \dprod{(b^j_i - \delta^j_i) \gamma^i \partial_j\psi,\psi}_\xi dv_g.
\end{align*}
We now estimate the last summand. Note that
\begin{align*}
\abs{\int_V \dprod{(b^j_i - \delta^j_i) \gamma^i \partial_j\psi,\psi}_\xi dv_g} 
&=
\abs{\int_V \dprod{\partial_j\psi,(b^j_i - \delta^j_i) \gamma^i \psi}_\xi dv_g} \\
&\leqslant \int_V \abs{\partial_j\psi}_\xi \abs{(b_i^j - \delta^j_i)\psi}_{\xi} dv_g \\
&\leqslant (1 + \varepsilon)^n \int_V \abs{\partial_j\psi}_\xi \abs{(b_i^j - \delta^j_i)\psi}_{\xi} dx\\
&\leqslant (1 + \varepsilon)^n \norm{\nabla^\xi \psi}_{L^\frac{2n}{n+1}(\mathbb{R}^n)} \norm{(b^j_i - \delta^j_i) \psi}_{L^\frac{2n}{n-1}(\mathbb{R}^n)}\\
&\leqslant (1 + \varepsilon)^n \left(\varepsilon \norm{\nabla^\xi \psi}_{L^\frac{2n}{n+1}(\mathbb{R}^n)}^2 + \frac{1}{\varepsilon}\norm{(b^j_i - \delta^j_i) \psi}^2_{L^\frac{2n}{n-1}(\mathbb{R}^n)}\right).
\end{align*}
By replacing $U$ by a smaller neighborhood, we may assume that $\sup\abs{b^j_i - \delta^j_i} < \varepsilon^2$ for all $i,j$. Moreover, by Lemma~\ref{lemma:Fourier}, $\exists C_1 > 0$ such that for all $\phi \in \mathcal{C}^\infty_c(\mathbb{R}^n)$ we have
\[
\norm{\nabla^\xi \phi}_{L^\frac{2n}{n+1}(\mathbb{R}^n)} \leqslant C_1 \norm{D_\xi \phi}_{L^\frac{2n}{n+1}(\mathbb{R}^n)}
\]
Therefore, we deduce that
\[
\mathbf{C} \leqslant n^2(1 + \varepsilon)^n \left(\varepsilon C_1 \norm{D_\xi \psi}_{L^\frac{2n}{n+1}(\mathbb{R}^n)}^2 + \varepsilon\norm{\psi}^2_{L^\frac{2n}{n-1}(\mathbb{R}^n)}\right).
\]
Combining this inequality with Lemma~\ref{lemma:L^pq Dirac in R^n}, we deduce that
\[
\mathbf{C} \leqslant \varepsilon n^2(1 + \varepsilon)^n  (C_1 + C^2) \norm{D_\xi \psi}_{L^\frac{2n}{n+1}(\mathbb{R}^n)}^2.
\]
In order to simplify notations we denote $n^2(C_1 + C^2)$ by $\tilde{C}$. Hence, we can write 
\begin{equation}\label{eq:|C|}
\mathbf{C} \leqslant \varepsilon (1 + \varepsilon)^n  \tilde{C} \norm{D_\xi \psi}_{L^\frac{2n}{n+1}(\mathbb{R}^n)}^2
\end{equation}
By sharp Sobolev-like inequality in $\mathbb{R}^n$, we obtain that 
\begin{equation}\label{eq:|A|}
\mathbf{A} \leqslant K(n) (1 + \varepsilon)^n\norm{D_\xi \psi}^2_{L^\frac{2n}{n+1}(\mathbb{R}^n)}.    
\end{equation}

Let us now estimate $\mathbf{B}$. Firstly, $\exists C_2 > 0$ such that $B \leqslant C_2(1 + \varepsilon)^n \int_V \abs{\psi}_\xi^2 dv_\xi$. Note that
\[
\int_{\mathbb{R}^n}\abs{\psi}^2_\xi dv_\xi \leqslant \norm{\psi_\xi}_{L^\frac{2n}{n-1}(\mathbb{R}^n)}\norm{\psi_\varepsilon}_{L^\frac{2n}{n+1}(\mathbb{R}^n)} \leqslant \varepsilon \norm{\psi_\xi}_{L^\frac{2n}{n-1}(\mathbb{R}^n)}^2 + \frac{1}{\varepsilon}\norm{\psi_\varepsilon}_{L^\frac{2n}{n+1}(\mathbb{R}^n)}^2
\]
Consequently, by Lemma~\ref{lemma:L^pq Dirac in R^n}, we have
\begin{equation}\label{eq:|B|}
\mathbf{B} \leqslant (1 + \varepsilon)^n C_2 \left(\varepsilon C\norm{D_\xi\psi}_{L^\frac{2n}{n + 1}(\mathbb{R}^n)}^2 + \frac{1}{\varepsilon} \norm{\psi}_{L^\frac{2n}{n+1}(\mathbb{R}^n)}^2\right). 
\end{equation}
Combining~\eqref{eq:|A|},~\eqref{eq:|B|} and~\eqref{eq:|C|}, we dedeuce that
\[
\abs{\int_M \dprod{D_g(\eta\varphi), \eta\varphi}_gdv_g} \leqslant (1 + \varepsilon)^n\left((K(n) + \varepsilon \bar{C})\norm{D_\xi \psi}^2_{L^\frac{2n}{n+1}(\mathbb{R}^n)} + \frac{1}{\varepsilon} \norm{\psi}^2_{L^\frac{2n}{n+1}(\mathbb{R}^n)}\right),
\]
where $\bar{C} = \tilde{C} + CC_2$.

Note that $\norm{\psi}^2_{L^\frac{2n}{n+1}(\mathbb{R}^n)} \leqslant (1 + \varepsilon)^{2n}\norm{\eta \varphi}^2_{L^\frac{2n}{n+1}(M)}$. 

Now we want to express $D_\xi$ in terms of $D_g$ and $\nabla^g$. Recall that (see~\cite{Ammann2008}) 
\[
\nabla^g_{e_i} T(\psi) = T(\nabla^\xi_{e_i}\psi) + \frac{1}{4}\sum_{l,k}\tilde{\Gamma}^k_{il}e_i \cdot e_l \cdot T(\psi),
\]
where $e_i = b^j_i\partial_j$ and $\tilde\Gamma^k_{ij} \overset{\mathrm{def}}{=} \dprod{\nabla^g_{e_i} e_j, e_k}_g$. Define a matrice $(c^i_j)_{i,j} \overset{\mathrm{def}}{=} (b^j_i)_{i,j}^{-1}$. Therefore,
\[
T(\partial_j\psi) = c^i_j\nabla^g_{e_i} T(\psi) -\frac{1}{4}\sum_{l,k}c^i_j\tilde{\Gamma}^k_{il}e_i \cdot e_l \cdot T(\psi). 
\]
For the sake of simplicity denote $\frac{1}{4}\sum_{l,k}c^i_j\tilde{\Gamma}^k_{il}e_i \cdot e_l$ by $X_j$. Therefore,
\begin{align*}
T(D_\xi \psi) 
&= \sum_{j=1}^n e_j \cdot c^i_j \nabla^g_{e_i}T(\psi) - e_j \cdot X_j \cdot T(\psi)\\ 
&= D_g(T(\psi)) + \sum_{j=1}^n(c^i_j - \delta^i_j)e_j\cdot \nabla^g_{e_i}T(\psi) - e_j \cdot X_j \cdot T(\psi).
\end{align*}
Once again, in order to simplify notations, we denote $\sum\limits_{j = 1}^n e_j \cdot X_j$ by $X$. Also, we may assume that for all $i,j$ we have $\sup \abs{c^i_j - \delta^i_j} < \varepsilon$. 
Therefore, there exists $C_3 > 0$ such that
\begin{align*}
&\norm{D_\xi \psi}_{L^\frac{2n}{n+1}(\mathbb{R}^n)} \\
&\leqslant
(1 + \varepsilon)^n \left(\norm{D_g(T(\psi))}_{L^\frac{2n}{n+1}(M)} + \varepsilon n^2 \norm{\nabla^g T(\psi)}_{L^\frac{2n}{n+1}(M)} + C_3 \norm{T(\psi)}_{L^\frac{2n}{n+1}(M)}
\right)   
\end{align*}
Moreover, since $(M,g)$ is a closed manifold, there exists a constant $C_4 > 0$ such that 
\[
\norm{\nabla^g T(\psi)}_{L^\frac{2n}{n+1}(M)} \leqslant C_4\left(
\norm{D_g (T(\psi))}_{L^\frac{2n}{n+1}(M)} + \norm{T(\psi)}_{L^\frac{2n}{n+1}(M)} \right)
\]
Hence, there exists a constant $C_5 > 0$ such that
\[
\norm{D_\xi \psi}_{L^\frac{2n}{n+1}(\mathbb{R}^n)} \leqslant (1 + \varepsilon)^n(1 + \varepsilon n^2C_4)\norm{D_g (T(\psi))}_{L^\frac{2n}{n+1}(M)} + C_5\norm{T(\psi)}_{L^\frac{2n}{n+1}(M)}.
\]
Using an inequality: $(a + b)^2 \leqslant (1 + \varepsilon)a^2 + (1 + \varepsilon^{-1})b^2$, we deduce that
\[
\norm{D_\xi \psi}_{L^\frac{2n}{n+1}(\mathbb{R}^n)} \leqslant (1 + \varepsilon)^{2n + 1}(1 + \varepsilon n^2C_4)^2\norm{D_g (T(\psi))}_{L^\frac{2n}{n+1}(M)}^2 + (1 + \varepsilon^{-1})C_5^2\norm{T(\psi)}_{L^\frac{2n}{n+1}(M)}^2. 
\]

Consequently, we deduce that there exist a constant $C_\varepsilon > 0$ such that around each point $x \in M$ there exists a neighbourhood $U_x$ such that for all $\varphi \in \Sigma_gM$ and $\eta \in \mathcal{C}^\infty_c(U)$ we have
\[
\abs{\int_M \dprod{D_g(\eta\varphi), \eta\varphi}_gdv_g} \leqslant (K(n) + \varepsilon)\norm{D_g (\eta \varphi)}_{L^\frac{2n}{n+1}(M)}^2  + C_\varepsilon \norm{\eta \varphi}^2_{L^\frac{2n}{n+1}(M)}.
\]

Since $M$ is compact, we can extract a finite subcover $(U_i)_{1 \leqslant i \leqslant N}$. Let $(\eta_i)_{1 \leqslant i \leqslant N}$ be a smooth subordinate partition of unity, that is
\begin{itemize}
    \item $0 \leqslant \eta_i \leqslant 1$ on $M$,
    \item $\mathrm{supp}(\eta_i) \subset U_i$,
    \item $\sum\limits_{i = 1}^N \eta_i^2 = 1$.
\end{itemize}
Since $\int_M \dprod{D_g \varphi, \varphi}_g dv_g \in \mathbb{R}$, it follows that
\[
\int_M \dprod{D_g \varphi, \varphi}_gdv_g = \sum_{i = 1}^N \Re\int_M \eta_i^2 \dprod{D_g(\varphi), \varphi}_gdv_g = \sum_{i = 1}^N \int_M \dprod{D_g(\eta_i \varphi), \eta_i \varphi}_gdv_g.
\]
Thus,
\begin{align*}
\abs{\int_M \dprod{D_g \varphi, \varphi}_gdv_g} 
&\leqslant \sum_{i = 1}^N \abs{\int_M \dprod{D_g(\eta_i \varphi), \eta_i \varphi}_gdv_g} \\
&\leqslant \sum_{i = 1}^N (K(n) + \varepsilon)\norm{D_g (\eta_i \varphi)}_{L^\frac{2n}{n+1}(M)}^2  + C_\varepsilon \norm{\eta_i \varphi}^2_{L^\frac{2n}{n+1}(M)}.
\end{align*}
Now,
\begin{align*}
\sum_{i = 1}^N \norm{D_g (\eta_i \varphi)}_{L^\frac{2n}{n+1}(M)}^2 
&= \sum_{i = 1}^N \left(\int_M \left(\abs{D_g (\eta_i \varphi)}_g^2\right)^\frac{n}{n+1} dv_g\right)^\frac{n+1}{n}\\ 
&\leqslant \left(\int_M \left(\sum_{i = 1}^N\abs{D_g (\eta_i \varphi)}_g^2\right)^\frac{n}{n+1} dv_g\right)^\frac{n+1}{n}.
\end{align*}
Recall that
\begin{align*}
\abs{D_g (\eta_i\varphi)}^2_g 
&\leqslant \abs{\eta_i D_g (\varphi)}_g^2 + 2\abs{\eta_i D_g\varphi}_g \abs{\grad_g\eta_i\cdot \varphi}_g + \abs{\grad_g\eta_i\cdot \varphi}_g^2\\
&\leqslant (1 + \varepsilon)\abs{\eta_i D_g (\varphi)}_g^2 + \left(1 + \varepsilon^{-1}\right) \abs{\grad_g \eta_i \cdot \varphi}_g^2.
\end{align*}
Hence, there exists a constant $C' > 0$ such that
\[
\sum_{i = 1}^N \norm{D_g (\eta_i \varphi)}_{L^\frac{2n}{n+1}(M)}^2  
\leqslant  \left(\int_M \left(1 + \varepsilon)\abs{ D_g (\varphi)}_g^2 + C'\abs{\varphi}_g^2\right)^\frac{n}{n+1} dv_g\right)^\frac{n+1}{n}.
\]
Since $\frac{n}{n+1} < 1$, it follows that there exists $C_\varepsilon > 0$ such that
\[
\left(1 + \varepsilon)\abs{ D_g (\varphi)}_g^2 + C'\abs{\varphi}_g^2\right)^\frac{n}{n+1} 
\leqslant (1 + \varepsilon)^\frac{2n + 1}{n+ 1}\abs{D_g \varphi}_g^\frac{2n}{n+1} + C_\varepsilon\abs{\varphi}_g^\frac{2n}{n+1}.
\]
Hence,
\[
\sum_{i = 1}^N \norm{D_g (\eta_i \varphi)}_{L^\frac{2n}{n+1}(M)}^2 \leqslant \left(\int_M (1 + o(\varepsilon))\abs{D_g \varphi}_g^\frac{2n}{n+1} + C_\varepsilon\abs{\varphi}_g^\frac{2n}{n+1}dv_g\right)^\frac{n+1}{n}.
\]
Recall that for all $\delta > 0$, there exists a constant $C_\delta$ such that for all $a,b \geqslant 0$ we have
\[
(a + b)^\frac{n+1}{n} \leqslant (1 + \delta)a^\frac{n+1}{n} + C_\delta b^\frac{n+1}{n}.
\]
Using this inequality, we obtain that
\[
\sum_{i = 1}^N \norm{D_g (\eta_i \varphi)}_{L^\frac{2n}{n+1}(M)}^2 \leqslant  (1 + o(\varepsilon))\norm{D_g \varphi}^2_{L^\frac{2n}{n+1}(M)} + \tilde{C}_{\varepsilon}\norm{\varphi}_{L^\frac{2n}{n+1}(M)}^2.
\]
Similarly,
\begin{align*}
\sum_{i = 1}^N \norm{\eta_i \varphi}_{L^\frac{2n}{n+1}(M)}^2 
&= \sum_{i = 1}^N \left(\int_M \left(\abs{\eta_i \varphi}_g^2\right)^\frac{n}{n+1} dv_g\right)^\frac{n+1}{n}\\ 
&\leqslant \left(\int_M \left(\sum_{i = 1}^N\abs{\eta_i \varphi}_g^2\right)^\frac{n}{n+1} dv_g\right)^\frac{n+1}{n}\\ 
&= \norm{\varphi}^2_{L^\frac{2n}{n+1}(M)}.
\end{align*}
Thus, there exists $B_{\varepsilon} > 0$ such that for all $\varphi \in \Gamma(\Sigma_g M)$ we have
\[
\abs{\int_M \dprod{D_g \varphi, \varphi}_g dv_g} \leqslant K(n)(1 + o(\varepsilon)) \norm{D_g \varphi}^2_{L^\frac{2n}{n+1}(M)} + B_\varepsilon \norm{\varphi}_{L^\frac{2n}{n+1}(M)}^2.
\]
\end{proof}

\bibliographystyle{alphaurl} 
\bibliography{refs} 

\end{document}